\newtheorem{theorem}{Theorem}[section]
\newtheorem{lemma}[theorem]{Lemma}
\newtheorem{Assumption}[theorem]{Assumption}
\newtheorem{proposition}[theorem]{Proposition}
\theoremstyle{definition}
\theoremstyle{remark}
\newtheorem{remark}[theorem]{Note}
\newcommand{\cB}{\mathcal{B}}
\newcommand{\N}{\mathbb{N}}
\newcommand{\cP}{\mathcal{P}}
\newcommand{\pa}{\partial}
\newcommand{\be}{\begin{equation}}
	\newcommand{\ee}{\end{equation}}
\newcommand{\R}{\mathbb{R}}
\newcommand{\T}{\mathbb T}
\newcommand{\iin}{^{i,N}}
\newcommand{\jn}{^{j,N}}
\newcommand{\fL}{\mathfrak{L}}
\newcommand{\cF} {\mathcal F}
\newcommand{\Z} {\mathbb Z}
\newcommand{\mP}{\mathbb{P}}
\newcommand{\mE}{\mathbb{E}}
\newcommand{\cC}{\mathcal{C}}
\newcommand{\cG}{\mathcal{G}}
\newcommand{\cI}{\mathcal{I}}
\newcommand{\mX}{\mathbb{X}}
\newcommand{\mY}{\mathbb{Y}}
\newcommand{\mM}{\mathbb{M}}
\newcommand{\tT}{\tilde{T}}
\begin{document}

	\title{Well-posedness and Stationary solutions of McKean-Vlasov (S)PDEs}
    \author{L. Angeli$^{(1)}$}
    \address{$^{(1)}$Mathematics Department, Heriot-Watt University, and Maxwell Institute, Edinburgh, l.angeli@hw.ac.uk}

    \author{J. Barr\'{e}$^{(2)}$}
    \address{$^{(2)}$Institut Denis Poisson, Universit\'e d’Orl\'{e}ans, CNRS, Universit\'{e} de Tours, France and Institut Universitaire de France, julien.barre@univ-orleans.fr}

    \author{M. Kolodziejczyk$^{(3)}$}
    \address{$^{(3)}$Mathematics Department, Heriot-Watt University, and Maxwell Institute, Edinburgh, mk2006@hw.ac.uk}

    \author{M. Ottobre$^{(4)}$}
    \address{$^{(4)}$Mathematics Department, Heriot-Watt University, and Maxwell Institute, Edinburgh, m.ottobre@hw.ac.uk}

	\begin{abstract}
		This paper is composed of two parts. In the first part
		we consider McKean-Vlasov Partial Differential Equations (PDEs), obtained as thermodynamic  limits of interacting particle systems (i.e. in the limit $N\rightarrow \infty$, where $N$ is the number of particles). It is well-known that, even when the particle system has a unique invariant measure (stationary solution), the limiting PDE very often displays a phase transition: for certain choices of  (coefficients and) parameter values, the PDE has a unique stationary solution, but as  the value of the parameter varies multiple stationary states appear. In the first part of this paper, we add to this stream of literature and consider a specific instance of a McKean-Vlasov type equation, namely the Kuramoto model on the torus perturbed by a symmetric double-well potential, and show that this PDE  undergoes the type of phase transition just described, as the diffusion coefficient is varied.    In the second part of the paper, we consider a rather general class of McKean-Vlasov PDEs on the torus (which includes both the original Kuramoto model and the Kuramoto model in double well potential of part one) perturbed by (strong enough) infinite-dimensional additive noise. To the best of our knowledge, the resulting  Stochastic PDE, which we refer to as the {\em Stochastic McKean-Vlasov equation},   has not been studied before, so we first study its well-posedness.  We then show that the addition of noise to the PDE has the effect of restoring uniqueness of the stationary state in the sense that,  irrespective of the choice of coefficients and parameter values in the McKean-Vlasov PDE,  the Stochastic McKean-Vlasov PDE always admits at most one invariant measure.

  \medskip
{\bf Keywords}. {McKean Vlasov PDE, Stochastic McKean Vlasov equation, Stochastic Partial Differential equations, Ergodic theory for SPDEs, Stationary solutions of PDEs. }

\medskip
  {\bf AMS Subject Classification.} 35Q83, 35Q84, 35Q70, 60H15, 35R60, 37A30.
	\end{abstract}
	\maketitle
	
	\section{Introduction}

	Consider the following system of $N$ interacting particles
	\be\label{initialPS}
	dX_t\iin= - V'(X_t\iin) + \frac{1}{N}\sum_{j=1}^N F'(X_t\jn-X_t\iin)  dt+ \sqrt{2\sigma} d\beta_t^i, 
	\ee
	where, for every $i=1\dots N$, $X_t\iin \in \T$ represents the position of the $i-$th particle on the torus $\T$ of length $2\pi$,  $\T:= \mathbb R/2\pi \mathbb Z$, the potentials $V$ and $F$,   $V,F: \T \rightarrow \R$, are, respectively the {\em environmental} and {\em inter-particle potential},  $'$ is derivative with respect to the argument of the function and the $\beta_t^i$'s are independent one-dimensional standard Brownian motions. 
	
	It is well known that, as $N \rightarrow \infty$, the particle system \eqref{initialPS} converges to the non-local PDE  
	\begin{equation}\label{PDEsimpleparticlesystem}
		\pa_t \rho_t(x)= \sigma \pa_{xx} \rho_t(x) +\pa_x \left[\left(V'(x) + (F'\ast \rho_t)(x)\right) \rho_t(x)\right]\,,     
	\end{equation}
	for the unknown $\rho_t(x)= \rho(t,x):\R_+ \times \T \rightarrow \R$,\footnote{Throughout the 
		paper, for any quantity, say $Y$, that depends on time, we use interchangeably the notation $Y_t$ or
		$Y(t)$ to denote time-dependence} in the sense that the empirical measure $\mu^N_t:= \frac1 N \sum
	\delta_{X_t^{i,N}}$ of the particle system \eqref{initialPS}, which is, for each $t>0$,  a random probability measure,  converges weakly to the (deterministic) function $\rho_t$, 
	provided this is true for the corresponding initial data, i.e provided $\mu_0^N$ converges to 
	$\rho_0$ \cite{KipnisLandim, Pulvirenti}. Another way of seeing this is the following: as 
	$N\rightarrow \infty$, the particles become independent ({\em propagation of chaos}) and, in the 
	limit, the motion of each of them is described by the following SDE
	\be\label{non-linearSDE}
	dX_t  = - \left(V'(X_t) + \int_{\T} F'(y-X_t)\rho_t(dy)
	\right) dt+ \sqrt{2\sigma} d\beta_t \,,  \quad X_t \in \T\, ,
	\ee
	where $\beta_t$ is a one-dimensional standard Brownian motion and $\rho_t$ is the law of $X_t$ at time $t$, so that the above evolution is non-linear in the sense that the process depends on its own law, i.e. it is {\em non-linear in the sense of McKean}. By It\^o's formula, the law $\rho_t$ of $X_t$ is a solution of the PDE \eqref{PDEsimpleparticlesystem} and invariant measures of the SDE \eqref{non-linearSDE} are precisely the stationary solutions of \eqref{PDEsimpleparticlesystem}. 
	
	In this paper we will consider (specific instances of) the PDE \eqref{PDEsimpleparticlesystem} as well as the following SPDE
	\begin{align}\label{SPDEintro}
		& \pa_t u = \pa_{xx}u+\pa_x \left[ V^{'}u+(F^{'}*u) u \right ]+ {Q}^{1/2} \pa_t W,\quad  (0,T) \times \T  \nonumber\\
		& u(t,0) =u(t,2\pi),\qquad t \in [0,T]  \\
		& u(0,x)  =u_0(x), \qquad  x \in \T,  \nonumber
	\end{align} 
	for the unknown $u(t,x):\R_+ \times \mathbb T \rightarrow \R$ (having omitted, as customary, dependence on the realization $\omega$). 
	The evolution \eqref{SPDEintro} is obtained from \eqref{PDEsimpleparticlesystem} by adding  infinite dimensional  noise, as  in  the above $W(t,x)$ is  cylindrical Wiener noise while $Q$ is a positive, symmetric and trace class operator (precise notation and assumptions in Section \ref{sec:notation}).   Evolutions of the type \eqref{PDEsimpleparticlesystem}  are often called  McKean-Vlasov PDEs  (or also  granular media or aggregation equation) and for this reason we refer to the SPDE \eqref{SPDEintro} as to {\em McKean-Vlasov  SPDE} or, more accurately,  {\em Stochastic McKean-Vlasov equation} (SMKV). The former name  could be misleading so we clarify that  the solution to \eqref{SPDEintro}, seen as a function-space-valued process,    is {\em not} an (infinite dimensional) McKean-Vlasov SDE, as the process does not depend on its own law. The investigation of infinite-dimensional McKean-Vlasov SDEs has been recently tackled in \cite{vlasovwellposedness, {vlasovwellposedness2}}. However, to the best of our knowledge, the McKean-Vlasov SPDE \eqref{SPDEintro} that we consider here has not been studied in the literature, so this paper constitutes a first work on the topic. We will give more detail on comparison between the infinite dimensional McKean-Vlasov SDEs of \cite{vlasovwellposedness, {vlasovwellposedness2}}  and the evolution \eqref{SPDEintro} in  Note \ref{note:wellposthm}.  
	
	\bigskip
	The evolution \eqref{PDEsimpleparticlesystem} and many of its variants have been extensively studied in the PDE, statistical physics, stochastic analysis and modelling literature. In particular well-posedness  for \eqref{PDEsimpleparticlesystem} and \eqref{non-linearSDE} have been studied in a number of works, see e.g. \cite{CarrilloMcCannVillani, ImkellerPeithmann}, for a PDE and probabilistic perspective, respectively. So we will not discuss this aspect in the present work.  Beyond an intrinsic theoretical interest, McKean-Vlasov evolutions emerge naturally as models in opinion formation, animal navigation, in the study of rating systems and of neural networks, to mention just a few application fields where this equation plays a central role, and for this reason they have attracted growing attention for a few decades now, see \cite{PareschiToscani, Meleard, Pulvirenti} and references therein for modelling aspects. 
	
	When $V \equiv 0$ and $F(x) = - K\cos x$, for some $K>0$,   $F$ acts as an attractive force between 
	particles and the parameter $K>0$ modulates the strength of the force; for these choices of $V$ and
	$F$,  system \eqref{initialPS} and the related PDE are often referred to as the {\em Kuramoto 
	model}  (or also  the mean field classical XY model), which has been subject of careful study, see 
	e.g.  \cite{BertiniGiacominPoquet, constantin, constantin2}.\footnote{We point out for completeness
	that in the physics literature the Kuramoto model also includes the effect of an intrinsic 
	oscillation frequency for each particle, see \cite{Kuramoto, Kuramoto2}.}  In particular, the 
	asymptotic behaviour of this model has been described in detail,  using various approaches, see 
	\cite{Vukadinovic, GiacominPakPel, CarrilloMcCannVillani},  and references therein and, more 
	recently, \cite{pavliotis}.  One of the phenomena of interest  is the following: while (for each 
	$N$ fixed) the Kuramoto particle system has a unique invariant measure (uniqueness being 
	straightforward in view of ellipticity), the Kuramoto PDE undergoes a phase transition. Namely, 
	there exists $K_c>0$ (depending on the noise strength $\sigma$) such that for $K<K_c$ the PDE has a
	unique stationary solution -- the uniform distribution on the torus --  while for $K>K_c$ the 
	equation has a whole manifold of stationary states. This can be intuitively understood as follows: 
	under the effect of the force  $F(x) = -K \cos x$, particles are attracted to each other; however, 
	as soon as $\sigma > 0$, there is a competition between such an attractive force and the effect of 
	the noise, which makes particles diffuse on the torus (and in this sense it can be seen as having a
	formally `repulsive' effect). If the noise is strong enough then the particles spread homogeneously
	around the torus; if this is not the case then a non-homogeneous steady state appears, concentrated
	say at zero; then,  by rotational symmetry, a whole manifold of steady states follows.  
	Indeed, the Kuramoto model enjoys a rotational symmetry which is key in the study of the dynamics: it is easy to see that, if $X_t^{i,N}$ is a solution of the Kuramoto particle system, then also $X_t^{i,N}+ \theta$ is a solution, for any $\theta \in [0, 2\pi]$; accordingly,   if $\rho_t$ solves the Kuramoto PDE, then also $\rho_t(x+\theta)$ solves the same PDE, for any $\theta \in [0,2\pi]$.

	For a general  $V\neq 0$ the rotational symmetry of the Kuramoto model  no longer holds; this is the case on which we focus  in the first part of this paper. In particular, in Section \ref{sec:sec4} we consider the McKean-Vlasov PDE \eqref{PDEsimpleparticlesystem}   when  
	\begin{equation}\label{doppia stella}
		V(x)=\cos(2x)\, ,\quad F(x)=-\cos x,\quad  x \in \T  \, , 
	\end{equation}	 
	and show that, in this setting, there exists a critical value of the noise, $\sigma_c$, such that when $\sigma > \sigma_c$ the PDE \eqref{PDEsimpleparticlesystem} has a unique stationary solution, whereas when $\sigma< \sigma_c$ (and small enough) there exist exactly three stationary solutions (see Theorem \ref{thm:mainthm3-inv-meas} for a precise statement), namely the homogeneous distribution and the other two concentrated at either minima of the double-well potential $V$.\footnote{In the discussion of the Kuramoto model, to be consistent with the cited literature,  we implicitly fixed the value of $\sigma$ and discussed the phase transition as $K$ varies. In this paper we (equivalently) fix the value of $K$ ($K=1$, see \eqref{doppia stella}) and study the behaviour as $\sigma$ varies.} This is due to the fact that, with respect to the case when $V=0$, the particles are subject not only to the competition between attractive force and noise, but also to the environmental potential, which introduces a tendency for the particles to converge towards the minima of the potential. 
	
	Overall, the above discussion should serve the purpose of showing that the behaviour of the PDE \eqref{PDEsimpleparticlesystem} can be rather complex, and in particular the number of stationary solutions of the PDE depends on the detailed properties of the potential $V$ and of the inter-particle force $F$, as well as on the value of $\sigma$. This is certainly not the only PDE that has a complicated set of stationary solutions, and indeed similar observations could be made e.g. for the Allen Cahn and Navier-Stokes equations, to mention just a few examples;  for such equations it has been observed that addition of (appropriately strong) noise to the PDE `restores' uniqueness of the equilibrium state, in the sense that the Stochastic Allen-Cahn and the Stochastic Navier Stokes equations have a unique invariant measure (stationary state), see \cite{NilsBerglundbook, {FIMas},{Kuksin}, Weinan, {FlandoliNavier}}.    In  the second part of this  paper we add to this stream of literature  and show that the SMKV equation \eqref{SPDEintro} admits at most one invariant measure, irrespective of the choice of $V, F$ and $\sigma$.  We will discuss more the technical aspects involved in proving this result below and in the next section, however we point out that  we work here under the assumption that the added noise is `strong enough', see Theorem \ref{theorem:existence and uniqueness inv measure} and comments afterwards. It is not a priori obvious what is the `minimum amount of noise' one can add to the PDE \eqref{PDEsimpleparticlesystem} so that the resulting SPDE has a unique invariant measure. This is a question that requires more sophisticated tools than those we use in this paper, such as those developed in \cite{MattinglyHairer},   and we will tackle such a question in future work. 
	
	The fact that the set of stationary solutions of the PDE is generally more complex than the set of stationary solutions  of the corresponding SPDE can be seen as the infinite dimensional analogue of what is well known to happen in finite dimension: to fix ideas, let $V(x)$ be a multi-well potential on the torus (but clearly the same is true in $\R$ for multi-well confining potentials); then the deterministic ODE 
	\be\label{ODEintro}
	dx_t = -V'(x_t) dt, \quad x_t \in \T, t \geq 0, 
	\ee
	has multiple steady states (as many as the critical points of $V$). However the Langevin equation
	$$
	dx_t= - V'(x_t) dt+ d\beta_t, 
	$$
	has a unique invariant measure, as the (elliptic, hence `strong enough') Brownian noise $\beta_t$ allows full exploration of state space. If we see the steady states of \eqref{ODEintro} as invariant measures (by considering Dirac deltas concentrated at the critical points), then we can say that the addition of noise has `restored' uniqueness of the invariant measure. 
	
	To summarise, this paper is divided in two parts: in the first part  (Section \ref{sec:sec4}) we 
	study the PDE \eqref{PDEsimpleparticlesystem} when $V$ and $F$ are as in \eqref{doppia stella} and
	show that the number of steady states depends on the strength of the noise $\sigma$. In the second
	part of the paper, we first prove the well-posedness in mild sense of the SPDE \eqref{SPDEintro}
	(Section \ref{Well-Posedness McKean_Vlasov}) and then we show that such an SPDE admits at most one invariant measure. In 
	order to do so, one needs to prove that the semigroup associated with the evolution 
	\eqref{SPDEintro} is irreducible and Strong Feller. We prove irreducibility in Section \ref{sec: 
		irreducibility} and Strong Feller property in Section \ref{sec:sec7}. The next section, Section 
	\ref{sec:notation},  contains precise statements of the main results and more thorough relation to
	literature.  The proofs of Section \ref{sec:sec4} are independent of the proofs  Section \ref{Well-Posedness McKean_Vlasov} and following sections.

	{\bf Further Motivation. }  As we have mentioned,  we are not aware of any works on either well-posedness or  ergodic properties of the SMKV equation \eqref{SPDEintro}, so this paper is, primarily,  a first contribution towards establishing properties of such an evolution.  In contrast,  there is a large literature on the following SPDE
	\begin{align}\label{SPDEPScomnoise}
		\pa_t u_t(x)  = \pa_x \left[\left(V'(x) + (F'\ast u_t)(x)\right) u_t(x) + (\sigma+\tilde\sigma) \pa_x u_t(x)\right] - \sqrt{2 \tilde\sigma} (\pa_x u_t)\pa_t \beta_t \,,
	\end{align}
	which can be viewed, for the purposes of this discussion,  as a different stochastic perturbation of the PDE \eqref{PDEsimpleparticlesystem}, \cite{Lacker} and references therein.
	The difference between \eqref{SPDEintro} and \eqref{SPDEPScomnoise} is that in the former the (infinite dimensional) noise is additive, while in the latter noise is multiplicative; more importantly, \eqref{SPDEPScomnoise} has transport (gradient) structure, while \eqref{SPDEintro} does not - fact that is source of many complications. The SPDE \eqref{SPDEPScomnoise} has attracted a lot of attention as it can be obtained in the limit $N\rightarrow \infty$ of the following particle system
	\be\label{PSwithcommonnoise}
	dX_t^{i,N}= - V'(X_t^{i,N}) - \frac{1}{N}\sum_{j=1}^N F'(X_t^{i,N}-X_t^{j,N})  dt+ \sqrt{2\sigma} \beta_t^i+ \sqrt{2\tilde\sigma} d\beta_t,  
	\ee
	where, crucially, the Brownian noise $\beta_t$  is the same for each particle, and  $\tilde\sigma>0$, \cite{Lacker}. 
	Because the noise same $\beta_t$ acts on all the particles, the limit of the particle system is no longer deterministic, and it is stochastic instead. In upcoming work we will investigate one possible interpretation  of \eqref{SPDEintro} in relation to interacting particle limits and some of the results of this work form a basis for future work in this direction.

	\section{Notation and Main Results}\label{sec:notation}
	In this section we first introduce some notation and recall some basic facts; we then state the main results of this paper and comment on them in turn. 
	
	\subsection{Notation}\label{subsec:notation} In what follows  $L^2(\T;\R)$ denotes the separable Hilbert space of $2\pi$-periodic real-valued  square-integrable functions, endowed  with the scalar product 
	$$\langle f,g \rangle_{L^2(\T;\R)}:=\int_{\T} f(x)g(x)\,dx,\qquad f,g \in L^2(\T;\R).$$
	We fix  $\{e_k\}_{k \in \mathbb{Z}}$ to be the following orthonormal Fourier basis of $L^2(\T;\R)$
	\begin{equation}\label{fourier}
		\begin{cases}
			e_k(x)= \frac{1}{\sqrt{\pi}}\sin(kx),\quad k>0, & \\ 
			e_0(x)=\frac{1}{\sqrt{2\pi}},\quad k=0, & \\ 
			e_k(x)=\frac{1}{\sqrt{\pi}}\cos(kx),\quad k<0 \, ,
		\end{cases}
	\end{equation}
	and for any  $f \in L^2(\T;\R)$, we denote by $f_k:=\langle f,e_k \rangle_{L^2(\T;\R)},\,k \in \Z$  the $k$'th Fourier coefficient of $f$, so that 
	\begin{equation}\label{serie}
		f=\sum \limits_{k \in \Z} f_k e_k,  \notag
	\end{equation}
	where the equality holds in $L^2(\T;\R)$.
	%As a particular choice of $Q$ in \eqref{spde Q} we set $Q:=\left( -B \right)^{-\gamma}$ where $B:=A-id$ and $\gamma$ is a real parameter ($id$ denotes the identity operator of $L^2(\T;\R)$). It is clear that $\left(-B \right)^{-\gamma}$ is of trace class if and only if $\gamma > \frac{1}{2}$. Indeed, since $$ \left( -B \right)^{-\gamma}e_k=(1+k^2)^{-\gamma}e_k,\quadk \in \Z,$$ it follows that $$\text{Tr}\left( \left( -B \right)^{-\gamma} \right)=\sum \limits_{k \in \Z} (1+k^2)^{-\gamma}$$ which is finite if and only if $\gamma > \frac{1}{2}$.  
	
	We denote by $A$ the one-dimensional Laplacian, i.e. the unbounded linear operator $A:\mathcal{D}(A) \subset L^2(\T;\R) \to L^2(\T;\R)$, $A=\pa_{xx}$,  which acts on the elements of the  basis \eqref{fourier} as 
	$$Ae_k=-k^2e_k,\quad k \in \Z.$$
	With this notation we rewrite the problem \eqref{SPDEintro} as 
	\begin{equation}\label{spde Q}
		\begin{cases}
			\pa_t u=Au+\pa_x \left[ V^{'}u+(F^{'}*u) u \right ]+ {Q}^{1/2} \pa_t W,\quad  (0,T) \times \T , & \\
			u(t,0)=u(t,2\pi),\quad t \in [0,T] , & \\
			u(0,x)=u_0(x), \quad  x \in \T, & \\
		\end{cases}
	\end{equation} 
	for the unknown $ u=u(t)\in L^2(\T;\R)$ for every $t>0$ and initial datum $u_0 \in L^2(\T;\R)$. As a {\em standing assumption}, throughout the operator $Q:L^2(\T;\R) \rightarrow L^2(\T;\R)$ is a  positive, symmetric and trace-class operator such that
	\be\label{Qactsonbasis}
	Q e_k=\lambda_k^2 e_k\, , \quad k \in\Z. 
	\ee
	Because $Q$ is trace class, the eigenvalues of $Q$ are summable, namely
	\begin{equation}\label{eqn:Qtrace-class}
		\sum_{k\in\Z}\lambda_k^2<+\infty \,.
	\end{equation}
	Moreover,  $W$ is an $L^2(\T;\R)$-valued cylindrical Wiener process defined over a filtered probability space $(\Omega,\mathcal{F},\mathcal{F}_t,\mathbb{P})$. That is, $W$ can be represented as
	$$W(t,x)=\sum \limits_{k \in \Z} e_k(x) \beta_t^k,$$
	where $\{e_k\}_{k \in \Z}$ is the orthonormal basis given in \eqref{fourier} and $\{\beta_k\}_{k \in \Z}$ is a family of standard real-valued independent Brownian motions. 
	
	We will work with mild solutions of \eqref{spde Q}, so we recall that a continuous $L^2(\T;\R)$-valued stochastic process $u(t)$, $t \in [0,T]$, is said to be a {\em mild solution} to \eqref{spde Q} if the following holds
	\begin{equation} \label{mild solution}
		\begin{split}
			u(t)& =e^{tA}u_0+\int_0^t e^{(t-s)A}\partial_x \left[  V^{'}u(s) \right]\,ds\\
			& +\int_0^t e^{(t-s)A}\partial_x \left[(F^{'} \ast u)(s)u(s) \right]\,ds+W_{A}(t),\qquad \text{$t \in [0,T]$, $\mathbb{P}$-a.s.}, 
		\end{split}
	\end{equation}
	where in the above $W_A$ denotes the {\em stochastic convolution}, namely 
	$$W_{A}(t):= \int_0^t e^{(t-s)A}{Q}^{\frac{1}{2}}\,dW(s),\qquad t \geq 0.$$	
	The stochastic convolution is differentiable and, for each $t>0$,  $W_A(t)$ belongs to $H^1(\T; \R)$, see Appendix \ref{appendix: proofs of sec 7}. 
	We will denote by  $\{\cP_t\}_{ \geq 0}$ the semigroup associated with the evolution \eqref{spde Q}, namely 
	\begin{equation}\label{semigroup non linear spde}
		\left ( \cP_{t}\psi \right )(u_0):=\mathbb{E} \left( \psi(u(t;u_0))\right),\, u_0 \in L^2(\T;\R),\,\psi \in \mathcal{B}_b(L^2(\T;\R);\R),
	\end{equation}
	where $\mathcal{B}_b(L^2(\T;\R);\R)$ is the class of real-valued bounded Borel measurable functions on $L^2(\T;\R)$ and $u(t;u_0)$ denotes the mild solution to \eqref{spde Q} with initial datum $u_0$. We will use such a notation every time we want to emphasize the dependence of the solution on the initial datum.

	Finally,  we denote by $\{ e^{tA} \}_{t \geq 0}$ the heat semigroup on $\T$ which acts on $f \in L^2(\T;\R)$ as follows
	\begin{equation}
		e^{tA}f=\sum \limits_{k \in \Z} e^{-tk^2}f_k e_k,\quad t \geq 0,\, f \in L^2(\T;\R).\notag
	\end{equation}
	Equivalently, $e^{tA}f$, $t \geq 0$, can be expressed as the convolution with respect to the space variable on $\T$ between the periodic heat kernel $G_{t}^{per}$ (defined in Appendix \ref{estimate heat kernel}, equation \eqref{heat kernel on the torus}) and $f$, i.e.
	\begin{equation}\label{heat semigroup via convoluzione}
		e^{tA}f=G_t^{per} * f,\quad  t \geq 0.
	\end{equation}

	\subsection{Statement of Main Results}
	As explained in the introduction, in the first part of this paper we study  the number of stationary solutions of the PDE \eqref{PDEsimpleparticlesystem}, when $V$ and $F$ are as in \eqref{doppia stella}. The  main result of this first part is  Theorem \ref{thm:mainthm3-inv-meas} below. In order to state and explain this result, let us start by recalling that stationary solutions of the PDE \eqref{PDEsimpleparticlesystem} can be characterised as solutions of an appropriate fixed point problem; namely, the following holds. 
	\begin{lemma} \label{stationary proposition}
		Consider the stationary problem associated to the evolution \eqref{PDEsimpleparticlesystem}, i.e.
		\begin{align}\label{stationary equation}
		 \sigma \pa_{xx} \rho(x) +\pa_x \left[\left(V'(x) + (F'\ast \rho)(x)\right) \rho(x)\right]=0 \,,   
		\end{align}
		with $V,F$ any two functions in $C^{\infty}(\T;\R)$. If $\rho \in \mathcal{P}_{ac}(\T) \cap H^1(\T;\R)$ is a weak solution to \eqref{stationary equation}, then $\rho$ is smooth, i.e. $\rho \in \cP_{ac}(\T) \cap C^{\infty}(\T;\R)$ and solves the following fixed point equation
		\begin{equation}\label{fixed point}
			\rho(x)= \frac{1}{Z_{\sigma}} e^{-\frac{1}{\sigma} \big(V(x)+F\ast \rho(x)\big)},\quad x \in \T,\,
		\end{equation}
		where $Z_{\sigma}$ is the normalization constant so that $\|\rho\|_{L^1(\T;\R)}=1$.
		Conversely, any probability measure whose density satisfies \eqref{fixed point} is smooth and it is a solution to \eqref{stationary equation}.
	\end{lemma}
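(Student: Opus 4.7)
The strategy is to reduce the second-order stationary equation to a first-order ODE whose integration constant must vanish by periodicity. Since the PDE has the divergence form $\pa_x[\sigma \pa_x \rho + (V' + F'\ast\rho)\rho] = 0$, I first extract from the $H^1$ weak formulation the pointwise identity
\[
\sigma \pa_x \rho(x) + \bigl(V'(x) + (F'\ast\rho)(x)\bigr)\rho(x) = C \quad \text{a.e.~}x \in \T,
\]
for some real constant $C$: testing the weak formulation against derivatives $\phi'$ of periodic smooth $\phi$ shows that the bracketed expression has zero distributional derivative on $\T$, hence is a.e.\ constant.

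Next, I bootstrap to smoothness. Because $H^1(\T) \hookrightarrow C(\T)$ in one dimension and $V, F$ are smooth, the product $(V' + F'\ast\rho)\rho$ is continuous, and the identity above forces $\pa_x \rho$ to have a continuous representative, so $\rho \in C^1(\T)$. Iterating, $\rho \in C^\infty(\T)$. With $\rho$ smooth, set $\Phi := V + F\ast\rho \in C^\infty(\T)$ and rewrite the ODE as $\pa_x(\rho\, e^{\Phi/\sigma}) = (C/\sigma)\, e^{\Phi/\sigma}$. Integrating over $[0,2\pi]$, the left-hand side vanishes by periodicity of $\rho$ and $\Phi$, while the right-hand side equals $(C/\sigma)\int_\T e^{\Phi/\sigma}\,dx$, whose integrand is strictly positive; hence $C=0$.

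Consequently $\rho\, e^{\Phi/\sigma}$ is a constant, and the normalization $\int_\T \rho = 1$ gives exactly the fixed-point expression \eqref{fixed point}, with $Z_\sigma := \int_\T e^{-\Phi(y)/\sigma}\,dy$. For the converse, smoothness of any $\rho$ satisfying \eqref{fixed point} is immediate: $F\ast\rho$ is smooth whenever $\rho$ is integrable (since $F$ is), so the right-hand side of \eqref{fixed point} is the exponential of a smooth function; direct differentiation then gives $\sigma \pa_x \rho + \Phi' \rho = 0$, and a further $\pa_x$ recovers \eqref{stationary equation}.

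The only delicate step is the first one: extracting the pointwise ODE from the $H^1$ weak formulation. This works precisely because the stationary equation is already in divergence form, so a single integration by parts yields an expression of the form $\int g\, \phi'\,dx=0$ to which the standard characterization of distributions on $\T$ with vanishing derivative applies. Once this is in hand, the bootstrap for smoothness and the periodicity argument forcing $C=0$ are routine.
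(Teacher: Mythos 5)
Your proof is correct and its skeleton coincides with the paper's: reduce the divergence-form stationary equation to the first-order identity $\sigma\pa_x\rho + (V'+F'\ast\rho)\rho = C$, show $C=0$ by periodicity, and solve by integrating factor to obtain \eqref{fixed point}. The differences are matters of presentation. For regularity, the paper appeals to elliptic regularity for the uniformly elliptic linear operator $\mathcal L$ with smooth coefficients, whereas you bootstrap directly from the first-order ODE together with the Sobolev embedding $H^1(\T)\hookrightarrow C(\T)$ — a more elementary route that needs no regularity theory beyond iterated differentiation. For the vanishing of the constant, the paper writes the full variation-of-constants solution and imposes $\rho(0)=\rho(2\pi)$ (after checking $\int_\T(V'+F'\ast\rho)=0$), while you integrate $\pa_x(\rho e^{\Phi/\sigma}) = (C/\sigma)e^{\Phi/\sigma}$ over $\T$ and use strict positivity of the integrand; these are the same computation in two guises. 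Finally, the paper frames the reduction through the linearized equation in a frozen $\mu$ and the map $K(\mu)=\rho_\mu$, a framing that connects naturally to the fixed-point reformulation used later, whereas you work directly with the nonlinear equation at a fixed weak solution $\rho$. Both are valid; your version is self-contained and slightly more economical.
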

	The proof of the above lemma is standard, but we could not find it in the literature for the exact setup we are considering here, so we briefly sketch it in Appendix \ref{uniqueness sigmageq1}.

	We are interested in solutions of the problem \eqref{stationary equation} when $V$ and $F$ are as in \eqref{doppia stella}, i.e. in solutions of the following problem
	\begin{equation}\label{PDEFV}
		\pa_{xx}\rho(x)+\pa_x \left[ \left( -2\sin(2x)+ \int_{\T} \sin(x-y)\rho(y)\,dy  \right) \rho(x)\right ]=0, \quad x \in \T,  
	\end{equation}
	so we  further specify the fixed point equation \eqref{fixed point} for such choices of $V$ and $F$. To this end, by  the addition formula for the cosine we have
	\begin{equation}\label{f2par}
		\begin{split}
			(F \ast \rho)(x) = -\int_{\T} \cos(x-y) \rho(y)\,dy  = -m_1 \cos x -m_2 \sin x ,     
		\end{split}
	\end{equation}
	having set 
	\begin{equation} \label{constant M1 and M2}
		m_1:=\int_{\T} \cos y  \rho(y)\,dy\, , \quad
		m_2:=\int_{\T} \sin y \rho(y)\,dy.
	\end{equation}
	Hence, for our choice of $V$ and $F$ we can  rewrite the fixed point problem \eqref{fixed point} as follows 
	\begin{equation} \label{fixed point equation}
		\rho_{m_1,m_2}(x)= \frac{1}{Z_{\sigma}(m_1,m_2)} e^{-\frac{1}{\sigma} \left(\cos(2x)-m_1\cos x -m_2\sin x   \right)},\quad x \in \T,   
	\end{equation}
	where 
	$$Z_{\sigma}(m_1,m_2):=\int_{\T} e^{-\frac{1}{\sigma} \left(\cos(2x)- m_1 \cos x-m_2 \sin x  \right)}\,dx$$ 
	is the normalization constant. By multiplying both sides of \eqref{fixed point equation} by $\cos x$ ($\sin x$, respectively), we obtain  
	\begin{eqnarray} \label{m1}
		&& m_1=\int_{\T} \frac{\cos x}{Z_{\sigma}(m_1,m_2)} e^{-\frac{1}{\sigma} \left(\cos(2x)-m_1\cos x-m_2\sin x  \right)}\,dx, \\
		&& m_2=\int_{\T}\frac{\sin x}{Z_{\sigma}(m_1,m_2)} e^{-\frac{1}{\sigma} \left(\cos(2x)-m_1\cos x-m_2\sin x  \right)}\,dx. \label{m2}
	\end{eqnarray}
	Then, if we define the map 
	\begin{equation} \label{fixed point map}
		\begin{split}
			g_{\sigma}:\R^2 & \to \R^2 \\ 
			(m_1,m_2) & \to g_{\sigma}(m_1,m_2):=(g_{1,\sigma}(m_1,m_2),g_{2,\sigma}(m_1,m_2))    ,
		\end{split}
	\end{equation}
	where 
	\begin{eqnarray} \label{g1}
		&& g_{1,\sigma}(m_1,m_2):= \int_{\T} \frac{\cos x}{Z_{\sigma}(m_1,m_2)} e^{-\frac{1}{\sigma} \left(\cos(2x)-m_1\cos x-m_2\sin x  \right)}\,dx,  \\
		&& g_{2,\sigma}(m_1,m_2):= \int_{\T}\frac{\sin x}{Z_{\sigma}(m_1,m_2)} e^{-\frac{1}{\sigma} \left(\cos(2x)-m_1\cos x-m_2\sin x  \right)}\,dx, \label{g2}
	\end{eqnarray}
	it follows that the density $\rho_{m_1,m_2}$ is stationary solution of \eqref{PDEFV} if and only if $(m_1,m_2)$ is a fixed point of the map $g_{\sigma}$.  We have therefore reformulated the problem of finding solutions of equation \eqref{PDEFV}  as a two parameter problem of finding fixed points of the map $g_{\sigma}$. This is the basic approach that allows one to prove the following result.

	\begin{theorem}\label{thm:mainthm3-inv-meas}
		There exists $\sigma_c>0$ such that if $\sigma$ is either sufficiently small (i.e. $\sigma \ll 1$) or $ \frac{1}{2} \leq \sigma < \sigma_c$ the stationary problem \eqref{PDEFV} has exactly three solutions, while for $\sigma>\sigma_c$ it has exactly one solution.
		Furthermore, the density
		\begin{equation}\label{centred stat sol}
			\rho_{0,0}(x)=\frac{1}{Z_{\sigma}(0,0)}e^{-\frac{\cos(2x)}{\sigma}},\quad x \in \T , 
		\end{equation}
		is always a solution of \eqref{PDEFV}, irrespective of the value of $\sigma$.  The two additional stationary solutions for $\sigma \ll 1$ and  $ \frac{1}{2} \leq \sigma < \sigma_c$ are centered around the minima of the double well potential $V$, i.e. around $x=\pm \frac{\pi}{2}$. Finally,  the critical value $\sigma_c$ can be explicitly characterized as the (unique) zero of the  function $f_c:(0,+\infty) \to \R$ defined as 
		\begin{equation}\label{function f}
			f_c(\sigma)=\frac{1}{\sigma} -2 +\frac{1}{\sigma}r_0 \left ( \frac{1}{\sigma} \right ),\quad\sigma >0, 
		\end{equation}
		where $r_0$ is the modified Bessel function, see \eqref{Besselr} for a definition. 
		Analytical computations show that  $\sigma_c \simeq 0.7709$.
	\end{theorem}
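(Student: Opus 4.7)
The plan is to use Lemma~\ref{stationary proposition} to reduce the stationary problem \eqref{PDEFV} to counting fixed points of the map $g_\sigma:\R^2\to\R^2$ of \eqref{fixed point map}, and then to exploit the symmetries coming from $V(x) = \cos(2x)$ (which is $\pi$-periodic) and $F(x) = -\cos x$.

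First, $(m_1,m_2) = (0,0)$ is always a fixed point: $\cos(2x)$ is $\pi$-periodic while $\cos x$ and $\sin x$ are anti-periodic under $x \mapsto x+\pi$, so the integrands in $g_{1,\sigma}(0,0)$ and $g_{2,\sigma}(0,0)$ are odd under that shift and their integrals vanish, yielding $\rho_{0,0}$. Moreover the problem is invariant under $x\mapsto -x$ (sending $(m_1,m_2)\to(m_1,-m_2)$) and under $x\mapsto x+\pi$ (sending $(m_1,m_2)\to(-m_1,-m_2)$), so non-trivial fixed points come in orbits under the induced $\Z_2\times\Z_2$ action. I would then show that every non-trivial fixed point lies on the axis $\{m_1 = 0\}$. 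Writing $(m_1,m_2) = (M\cos\phi, M\sin\phi)$ and introducing the translated density $\tilde\rho_{M,\phi}(u) := \rho_{M\cos\phi, M\sin\phi}(u+\phi)\propto\exp\bigl(-(\cos(2u+2\phi) - M\cos u)/\sigma\bigr)$, the fixed-point equation becomes
\begin{equation*}
M = \int_{\T} \cos u\,\tilde\rho_{M,\phi}(u)\,du, \qquad 0 = \int_{\T} \sin u\,\tilde\rho_{M,\phi}(u)\,du.
\end{equation*}
The second identity holds automatically when $\sin(2\phi)=0$ by the reflection $u\to -u$, and I would argue those four values of $\phi$ are the only admissible ones. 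On the branch $\phi\in\{0,\pi\}$ (i.e.\ $m_2=0$) the exponent $\cos(2u)-M\cos u$ has its minima at $u=\pm\arccos(M/4)$ for $M<4$, so a Laplace-type analysis shows that the self-consistency equation $M=g_{1,\sigma}(M,0)$ pushes $M$ towards $\cos(\arccos(M/4))=M/4$ and thus forces $M=0$; intuitively the two wells of $V$ at $\pm\pi/2$ are incompatible with alignment in the $\cos$-direction. This leaves $\phi=\pm\pi/2$, i.e.\ fixed points $(0,\pm m^*)$ whose densities are peaked at $\pm\pi/2$.

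Setting $h_\sigma(m) := g_{2,\sigma}(0, m)$, $h_\sigma$ is odd, smooth, and $|h_\sigma(m)|<1$, with $h_\sigma(0)=0$. Differentiating under the integral gives
\begin{equation*}
h_\sigma'(0) \;=\; \frac{\langle \sin^2 x\rangle_0}{\sigma} \;=\; \frac{1-\langle\cos(2x)\rangle_0}{2\sigma},
\end{equation*}
where $\langle\cdot\rangle_0$ is the expectation under $\rho_{0,0}$. The substitution $y=2x$ identifies $\langle\cos(2x)\rangle_0$ with $-I_1(1/\sigma)/I_0(1/\sigma)=-r_0(1/\sigma)$ via the integral representations of the modified Bessel functions, so the critical condition $h_\sigma'(0)=1$ becomes $(1+r_0(1/\sigma))/\sigma=2$, which is exactly $f_c(\sigma)=0$. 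Since $r_0$ is strictly increasing from $0$ to $1$ on $(0,\infty)$, the function $f_c$ is strictly decreasing with $f_c(0^+)=+\infty$ and $f_c(\infty)=-2$, hence it has a unique zero $\sigma_c$, with $\sigma_c\simeq 0.7709$ by numerical evaluation. For $\sigma>\sigma_c$ one has $h_\sigma'(0)<1$, and combining this with $h_\sigma(1)<1$ and a monotonicity estimate on $h_\sigma'$ I would conclude $h_\sigma(m)<m$ for all $m\in(0,1]$, so $(0,0)$ is the only fixed point; for $\sigma<\sigma_c$ one has $h_\sigma'(0)>1$, so the intermediate value theorem applied to $H_\sigma(m)=h_\sigma(m)-m$ on $[0,1]$ yields at least one positive fixed point which, together with its reflection, gives the two additional solutions.

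The main obstacle is the \emph{uniqueness} of the positive fixed point of $h_\sigma$ for all $\sigma<\sigma_c$, and this is why the theorem is restricted to $\sigma\ll 1$ and to $[1/2,\sigma_c)$. In the regime $\sigma\ll 1$ I would use Laplace asymptotics for the density $\rho_{0,m}$ near $x=\pi/2$ to reduce $h_\sigma$ to an explicit limiting expression for which uniqueness is transparent. In the regime $\sigma\in[1/2,\sigma_c)$ I would instead aim for a strict concavity (or strict monotonicity of $h_\sigma'$) estimate on $[0,1]$ based on explicit bounds for the first and second derivatives of $h_\sigma$, which become effective precisely because $1/\sigma\le 2$. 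The gap $(\sigma\ll 1,\, 1/2)$ reflects exactly the difficulty of closing these elementary bounds in the intermediate regime.
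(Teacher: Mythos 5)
Your outline correctly identifies the fixed-point reformulation, the symmetry reduction to the two axes, the Bessel-function computation of $h_\sigma'(0)$, and the characterization of $\sigma_c$ as the zero of $f_c$ --- all of which match the paper. But there are two genuine gaps, and a misattribution of where the difficulty lies.

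First, you mislocate the source of the restriction to $\sigma\ll1$ or $\sigma\in[1/2,\sigma_c)$. You attribute it to the uniqueness of the positive fixed point on the axis $\{m_1=0\}$ (your $h_\sigma=g_{2,\sigma}(0,\cdot)$, the paper's $\bar g_\sigma$ on $\mM_2$). In fact the paper settles that axis \emph{unconditionally in $\sigma$}: Proposition~\ref{uniqueness related to first derivative} shows via the power-series expansion $\zeta_\sigma(m)=\sum_{k}\frac{1}{(2k)!}(m/\sigma)^{2k+1}s_{2k}\Upsilon_k(\sigma)$ and the strict monotonicity of $\{\Upsilon_k(\sigma)\}_k$ (Lemma~\ref{decrescente}) that $\zeta_\sigma$ has at most one positive zero for every $\sigma>0$. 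Your proposal invokes ``a monotonicity estimate on $h_\sigma'$'' without producing it; that estimate is precisely what the series/ $\Upsilon_k$ argument supplies, and your plan to prove concavity ``because $1/\sigma\le 2$'' is aimed at the wrong branch. The restriction in the theorem actually comes from the \emph{other} axis $\mM_1=\{m_2=0\}$: one must show that the only fixed point of $g_\sigma$ restricted to $\mM_1$ is $0$, and the paper can do this only for $\sigma\ll1$ (Theorem~\ref{punti fissi M2=0}, Laplace asymptotics) and $\sigma\ge 1/2$ (Appendix~\ref{uniqueness h}, a Bessel-function sign argument).

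Second, your treatment of the $m_2=0$ branch is incomplete in exactly the same range. You say that ``a Laplace-type analysis \dots forces $M=0$'' there. That works only as $\sigma\downarrow0$. For $\sigma\ge 1/2$ the paper does \emph{not} use Laplace asymptotics at all; it writes the analogous power series $\xi_\sigma(m)=\sum_k\frac{1}{(2k)!}(m/\sigma)^{2k+1}c_{2k}\iota_k(\sigma)$ and shows every coefficient is strictly negative when $\sigma\ge 1/2$ by the Bessel-function identity leading to $c_2-\sigma c_0<(1/2-\sigma)I_0(1/\sigma)\le0$. Without supplying either this or an equivalent argument, your proposal does not establish the $\sigma\in[1/2,\sigma_c)$ case, and the numerically-suggested gap between $\sigma\ll1$ and $1/2$ remains open for the correct reason --- but on the other axis.

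Finally, a smaller point: in Step~1 you assert that ``those four values of $\phi$ are the only admissible ones'' but give no argument. The paper's Proposition~\ref{dove sono i punti fissi} proves this by folding the integral $I(M,\varphi)$ into $\int_0^{\pi/2}\sin x\,[e^{-\cos(2x+2\varphi)/\sigma}-e^{-\cos(2x-2\varphi)/\sigma}][e^{M\cos x/\sigma}-e^{-M\cos x/\sigma}]\,dx$ and establishing a strict sign in each open quadrant; your reflection remark gives sufficiency but not the needed necessity.
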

 The proof of Theorem \ref{thm:mainthm3-inv-meas} can be found in Section \ref{sec:sec4}.
	\begin{remark} Results in the spirit of the above theorem have been known for a long time, see e.g. \cite{Dawson, constantin,hermann, tugaut, Dawson, pavliotis, GiacominPakPel, DuTu} and references therein and indeed  parts of the (long) proof of the above theorem is inspired by \cite{tugaut, pavliotis}. More precisely, 
		\begin{itemize}
			\item As we mentioned in the introduction,  the setup of Theorem \ref{thm:mainthm3-inv-meas} can be seen  as being a non-rotationally invariant modification of the Kuramoto model considered in \cite{pavliotis, GiacominPakPel, BertiniGiacominPoquet} (or of the Smoluchowski equation of \cite{constantin,constantin2}). Because of this break of rotational invariance, most of the arguments used in   \cite{pavliotis, GiacominPakPel, BertiniGiacominPoquet, constantin, constantin2} cannot be adapted in the current setup. 
		\item Other results similar to Theorem \ref{thm:mainthm3-inv-meas} are those in \cite{hermann, Dawson}: besides the fact that the state space considered in  \cite{hermann, Dawson} is $\R$ as opposed to the torus,  also in \cite{hermann, Dawson} the authors consider a double -well potential and an attractive inter-particle force and they reduce the problem of finding stationary solutions of the PDE they consider to a fixed point problem; however, because of the exact analytic form of the attractive force they consider, they can reduce the fixed point problem \eqref{fixed point} to a one-parameter fixed point problem. In our case, because of our choice of $F$ \eqref{doppia stella}, we end up with a two-parameter fixed point problem (see \eqref{f2par}), which cannot be a priori further reduced to a one parameter problem, again because of lack of rotational symmetry. 
		\item We further conjecture, based on numerical evidence,  that \eqref{PDEsimpleparticlesystem} has exactly three stationary solutions for any $0<\sigma<\sigma_c$, but we have been able to prove it only for $\sigma \ll 1$ and $\frac12 \leq \sigma<\sigma_c$.
		\item Similar results could be obtained for multi-well potentials (i.e. by considering $V(x)=\cos (mx)$). This is done (for $m=4$) in \cite{Martinthesis}. 
		\end{itemize}
	\end{remark}
	
	\begin{comment}
	\emph{In particular, the setup of theorem \ref{thm:mainthm3-inv-meas} can be seen as being either the analogous on the torus of the interacting particle system and PDE in $\R$ considered in \cite{hermann} {\color{red}Julien: if we say this then we should also explain why we need to redo the proofs and why these proofs are not a straightforward readaptation of the proofs in $\R$. Well...they partly are}, or, as we mentioned in the introduction, as being a non-rotationally invariant modification of the  Kuramoto model considered in \cite{pavliotis, GiacominPakPel, BertiniGiacominPoquet}. With respect to all these works, we are much more {\color{red} Julien: is this an overstatement?} {\color{blue} A bit of an overstatement: some of these works like \cite{tugaut,pavliotis} are precise about $\sigma_c$, so I am not sure if insisting on this... A proposition below, emphasizing another difference of this theorem with most of the literature; the lack of translation invariance actually creates difficulties with respect to the literature.} precise regarding the critical value $\sigma_c$ where the bifurcation occurs (the typical result being that for $\sigma$ small enough there are multiple stationary solutions and for $\sigma$ large enough there is only one).  }  
	\end{comment}

	Let us now move on to the second part of the paper, where we study the SPDE \eqref{spde Q}. We clarify that, when studying \eqref{spde Q}, we always consider $V,F$ to be arbitrary coefficients in $C^{\infty}(\T;\R)$, i.e. we no longer restrict to the choice \eqref{doppia stella}. We first state the main results of part two, Theorem \ref{mild solution global existence} and Theorem \ref{theorem:existence and uniqueness inv measure} below, and then comment on them. 
	
	\begin{theorem}\label{mild solution global existence}
		Let $Q$ satisfy the standing assumption \eqref{Qactsonbasis}-\eqref{eqn:Qtrace-class} and let $V, F \in C^{\infty}(\T;\R)$. Then,  for any $T>0$ (independent of $\omega \in \Omega$) and initial datum $u_0 \in L^2(\T;\R)$ there exists a unique mild solution (in the sense clarified in Subsection \ref{subsec:notation}) $u$ to equation \eqref{spde Q}; such a solution is  $\mP$-a.s.  in  $C \left( [0,T];L^2(\T;\R) \right )$.
	\end{theorem}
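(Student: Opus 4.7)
I would establish the statement in two stages: local existence and uniqueness via a Banach fixed point argument in the mild formulation \eqref{mild solution}, followed by extension to arbitrary $T>0$ by an a priori $L^2$ bound obtained pathwise after removing the stochastic convolution.

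For the local stage, I introduce the Picard operator
\begin{equation*}
\Gamma(u)(t) := e^{tA}u_0 + \int_0^t e^{(t-s)A}\pa_x[V'u(s)]\,ds + \int_0^t e^{(t-s)A}\pa_x[(F'*u)(s)u(s)]\,ds + W_A(t).
\end{equation*}
The key analytic input is the one-derivative smoothing $\|\pa_x e^{tA}f\|_{L^2(\T;\R)}\leq Ct^{-1/2}\|f\|_{L^2(\T;\R)}$ on the torus. Combined with $\|V'u\|_{L^2}\leq \|V'\|_{\infty}\|u\|_{L^2}$ (since $V'\in C^{\infty}(\T;\R)$ is bounded), Young's inequality for convolutions giving $\|F'*u\|_{\infty}\leq \|F'\|_{L^2}\|u\|_{L^2}$ (hence $\|(F'*u)u\|_{L^2}\leq \|F'\|_{L^2}\|u\|_{L^2}^2$), and the bilinear Lipschitz bound
\begin{equation*}
\|(F'*u_1)u_1-(F'*u_2)u_2\|_{L^2}\leq C(\|u_1\|_{L^2}+\|u_2\|_{L^2})\|u_1-u_2\|_{L^2},
\end{equation*}
one obtains, on any ball $B_R=\{u\in C([0,\tau];L^2(\T;\R)):\sup_{t\leq\tau}\|u(t)\|_{L^2}\leq R\}$,
\begin{equation*}
\sup_{t\leq\tau}\|\Gamma(u_1)(t)-\Gamma(u_2)(t)\|_{L^2}\leq C\tau^{1/2}(1+R)\sup_{t\leq\tau}\|u_1(t)-u_2(t)\|_{L^2}.
\end{equation*}
Since $W_A\in C([0,T];L^2(\T;\R))$ almost surely, by standard stochastic convolution estimates using \eqref{eqn:Qtrace-class}, picking $R$ and $\tau=\tau(R,\omega)$ appropriately in terms of $\|u_0\|_{L^2}$ and $\sup_{s\leq T}\|W_A(s)\|_{L^2}$ turns $\Gamma$ into a contraction on $B_R$ that maps $B_R$ into itself; this yields a unique local mild solution up to a (random) maximal existence time $T^{*}=T^{*}(\omega)\in(0,T]$.

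It remains to rule out $T^{*}<T$, i.e.\ to prove $\limsup_{t\to T^{*}}\|u(t)\|_{L^2}<\infty$ almost surely on $\{T^{*}<T\}$. The idea is to subtract the stochastic convolution: setting $v(t):=u(t)-W_A(t)$, $v$ satisfies pathwise (in the mild sense) the random PDE
\begin{equation*}
\pa_t v = \pa_{xx}v + \pa_x\bigl[V'(v+W_A) + (F'*(v+W_A))(v+W_A)\bigr],\qquad v(0)=u_0.
\end{equation*}
Since $W_A(t)\in H^1(\T;\R)\hookrightarrow L^{\infty}(\T;\R)$ for every $t>0$ almost surely, as recalled in Appendix \ref{appendix: proofs of sec 7}, with $\sup_{t\leq T}\|W_A(t)\|_{H^1}<\infty$ almost surely, an $L^2$ energy estimate on $v$ together with Young's inequality to absorb the transport and convective contributions into the parabolic dissipation $\|\pa_x v\|_{L^2}^2$ produces an integral inequality for $\|v(t)\|_{L^2}^2$ whose right-hand side depends only on $\|v\|_{L^2}$ and on pathwise norms of $W_A$. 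Closing this inequality yields a pathwise a priori bound on $[0,T]$ for $v$, hence for $u$, and global existence follows by the standard continuation principle.

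The delicate point, and what I expect to be the main obstacle, is the convective term $\int(F'*u)u\,\pa_x v\,dx$ in the energy estimate, which is trilinear in $u=v+W_A$: after absorbing one factor of $\|\pa_x v\|_{L^2}$ into the dissipation, one is left with a contribution super-quadratic in $\|v\|_{L^2}$, so a naive Gr\"onwall yields only local bounds. Closing the estimate globally requires either exploiting the $H^1$ regularity of $W_A$ together with a one-dimensional Gagliardo--Nirenberg interpolation to trade the super-quadratic term against the remaining dissipation, or introducing a cutoff $\chi_R(\|u\|_{L^2})$ in the nonlinearity (so that the cutoff SPDE has globally Lipschitz dynamics and a global solution), and then showing that the stopping time $\tau_R:=\inf\{t:\|u(t)\|_{L^2}\geq R\}$ satisfies $\tau_R\to\infty$ almost surely as $R\to\infty$. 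This is where the technical work rests.
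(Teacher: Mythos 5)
Your overall architecture matches the paper exactly: subtract the stochastic convolution, obtain a random PDE for $v=u-W_A$, prove local well-posedness by a Banach fixed-point argument in $C([0,\tau];L^2(\T;\R))$ using the half-derivative smoothing of the heat semigroup, and then attempt a pathwise $L^2$ a priori bound to continue to time $T$. You also correctly identify where the proof must live or die: the cubic term in the energy estimate. But neither of your two proposed escape routes actually closes the bound, and this is a real gap, not a detail.

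The paper's way out is neither interpolation nor a cutoff: it is a preliminary $L^1$ estimate on $v$, exploited through an integration by parts that moves the spatial derivative off $v$ and onto $F'$. Concretely, the dangerous term after expanding $u=v+W_A$ is $-\int_{\T}(F'*v)\,v\,\pa_x v\,dx$. Writing $v\,\pa_x v=\tfrac12\pa_x(v^2)$ and integrating by parts turns it into $\tfrac12\int_{\T}(F''*v)\,v^2\,dx\le \tfrac12\|F''\|_{L^\infty}\|v\|_{L^1}\|v\|_{L^2}^2$. This is \emph{linear} in $\|v\|_{L^2}^2$ once one separately controls $\|v(t)\|_{L^1}$, and the paper obtains that control first (Proposition \ref{regularised PDE}: differentiate $\int_\T\chi_\varepsilon(v)\,dx$ for a smooth convex approximation $\chi_\varepsilon$ of $|\cdot|$, exploit $\chi_\varepsilon''\ge 0$ to drop the dissipative term, and Gr\"onwall). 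Everything is first proved for a smooth deterministic replacement $\varphi$ of $W_A$ and then transferred to $W_A$ by approximation in $C([0,T];H^1)$. By contrast, your first suggestion (Gagliardo--Nirenberg) still yields, after Young's inequality, a term of order $\|v\|_{L^2}^4$ (or worse), which cannot be absorbed by the dissipation; and your second suggestion (cutoff plus stopping times $\tau_R$) is circular without a further idea, because showing $\tau_R\to T$ a.s.\ uniformly in $R$ requires precisely the a priori bound you are trying to establish. (The paper does use a cutoff, but only later, in Section \ref{sec:sec7} for the strong Feller property, and there the required uniform bound comes from the already-proven a priori estimate of Proposition \ref{stime sistema deterministico}.) So the missing ingredient you should add is: derive an $L^1$ bound on $v$ first, then integrate the convective term by parts to trade $\pa_x v$ for $F''$ and close the $L^2$ Gr\"onwall with a coefficient $\|F''\|_{L^\infty}\|v\|_{L^1}$.
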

	
	The proof of Theorem \ref{mild solution global existence} is in Section \ref{Well-Posedness McKean_Vlasov}.

	\begin{Assumption}\label{ass:QstrongFeller}
		The covariance operator  $Q$ satisfies the standing assumption \eqref{Qactsonbasis}-\eqref{eqn:Qtrace-class} and, moreover, the  eigenvalues $\{\lambda_k^2\}_{k \in \mathbb Z}$ of $Q$ satisfy the following growth condition: there exist $c>0$ and  {$\gamma<1$} such that
		\begin{equation}\label{cond_lambda}
			\lambda_k^2\geq c(1+k^2)^{-\gamma}, \quad  k\in\Z.
			%,\,  \gamma<1.  
		\end{equation}
	\end{Assumption}

	\begin{theorem}\label{theorem:existence and uniqueness inv measure}
		Let $ V$ and  $F$ be as in the statement of Theorem \ref{mild solution global existence} and let $\mathcal P_t$ be the semigroup associated with the SMKV equation \eqref{spde Q}, see \eqref{semigroup non linear spde}. 
		\begin{description}
		    \item[i)]If $Q$ satisfies the standing assumption \eqref{Qactsonbasis}-\eqref{eqn:Qtrace-class} then the semigroup is irreducible.
		    \item[ii)] If, furthermore, $Q$ satisfies Assumption \ref{ass:QstrongFeller}, then $\mathcal P_t$ is Strong Feller as well, hence the dynamics \eqref{spde Q} admits at most one invariant measure. 
		\end{description} 
	\end{theorem}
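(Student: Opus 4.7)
The final conclusion in (ii), uniqueness of the invariant measure, is an immediate corollary of irreducibility (i) and the Strong Feller property through the classical Doob--Khas'minskii theorem; so the work reduces to proving (i) and establishing the Strong Feller property in (ii).

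For (i), my plan is a control / support-theorem argument in the spirit of Da Prato--Zabczyk. Fix $T>0$, $u_0, v \in L^2(\T;\R)$ and $\ep>0$; the aim is to show $\mP(\|u(T;u_0)-v\|_{L^2}<\ep)>0$. I would first construct a sufficiently regular deterministic trajectory $y:[0,T]\to L^2(\T;\R)$ with $y(0)=u_0$ and $y(T)=v$; a suitably mollified affine interpolation is a natural choice. Plugging $y$ into the mild formulation \eqref{mild solution} and solving for the forcing term, one reads off a deterministic path
\begin{equation*}
\eta(t) := y(t) - e^{tA}u_0 - \int_0^t e^{(t-s)A}\pa_x\bigl[V'y(s)+(F'\ast y(s))y(s)\bigr]\,ds,
\end{equation*}
such that the mild equation driven by $\eta$ (in place of $W_A$) reproduces $y$. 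Since the standing assumption forces $\lambda_k>0$ for every $k\in\Z$, a Gaussian support theorem implies that $\eta$ lies in the topological support of $W_A$ in $C([0,T];L^2(\T;\R))$, i.e.\ $\mP(\|W_A-\eta\|_{C([0,T];L^2)}<\delta)>0$ for every $\delta>0$. Continuous dependence of the mild solution on the stochastic-convolution input (a Gronwall argument analogous to the well-posedness proof of Theorem \ref{mild solution global existence}) then yields $\mP(\|u(T;u_0)-v\|_{L^2}<\ep)>0$, which is irreducibility.

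For the Strong Feller part, I plan to adapt the Bismut--Elworthy--Li (BEL) formula. Differentiating the mild equation in $u_0$ along a direction $h\in L^2(\T;\R)$ produces a linearized evolution for the Jacobian $\xi^h(t):=D_h u(t;u_0)$, driven by the (first-order differential) derivative of the drift. A BEL identity then represents the directional derivative of $\cP_t\psi$ as
\begin{equation*}
\bigl(D\cP_t\psi(u_0)\bigr)\cdot h = \frac{1}{t}\,\mE\!\left[\psi(u(t;u_0))\int_0^t \bigl\langle Q^{-1/2}\xi^h(s),\, dW(s)\bigr\rangle\right],
\end{equation*}
and the desired gradient estimate $|D(\cP_t\psi)(u_0)\cdot h|\leq C(t)\|\psi\|_\infty\|h\|_{L^2}$ follows by Cauchy--Schwarz once $\mE\int_0^t\|Q^{-1/2}\xi^h(s)\|_{L^2}^2\,ds$ is controlled by $\|h\|_{L^2}^2$. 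Assumption \ref{ass:QstrongFeller} enters precisely here: since $Q^{-1/2}$ has eigenvalues bounded by $c^{-1/2}(1+k^2)^{\gamma/2}$, one obtains $\|Q^{-1/2}e^{sA}\|_{L^2\to L^2}\lesssim s^{-\gamma/2}$, which is square-integrable at $s=0$ exactly because $\gamma<1$.

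The main obstacle common to both parts is the roughness of the drift $\pa_x[V'u+(F'\ast u)u]$, which contains a spatial derivative and hence sends $L^2(\T;\R)$ only into $H^{-1}(\T;\R)$. In the irreducibility proof this forces one to rely on the parabolic smoothing $\|\pa_x e^{sA}\|_{L^2\to L^2}\lesssim s^{-1/2}$ when constructing and approximating the control $\eta$; in the Strong Feller argument it produces an additional $s^{-1/2}$ factor when one propagates $\xi^h$ through the nonlinear convolution term via $\pa_x e^{(t-s)A}$, which must be absorbed simultaneously with the $s^{-\gamma/2}$ singularity coming from $Q^{-1/2}$. Combining these bounds uniformly and closing a Gronwall-type estimate on $\xi^h$ is, I expect, the technical heart of both proofs.
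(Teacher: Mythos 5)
Your overall plan coincides with the paper's: irreducibility via a controllability argument, the Strong Feller property via a Bismut--Elworthy--Li (BEL) formula exploiting $\gamma<1$, and uniqueness from Doob--Khas'minskii. Two substantive gaps remain.

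For the Strong Feller part: the nonlinearity $\pa_x[(F'\ast u)u]$ is only \emph{locally} Lipschitz in $u$, so the BEL identity, the well-definedness of the martingale $Z_h$ in \eqref{Zeta}, and the a priori bounds on the Jacobian $\xi^h=D_{u_0}u(t;u_0)h$ cannot be applied directly to \eqref{spde Q}. The paper truncates first: it introduces the damped nonlinearity $\mathcal{M}_R(u)=(F'\ast u)\,u\,\xi_R(\|u\|_{L^2(\T;\R)}^2)$ in \eqref{damped non linearity}, proves Strong Feller for each truncated semigroup $\cP_t^R$ via BEL in Subsection \ref{sec: sec 7.2} (where the drift \emph{is} globally Lipschitz, so Lemma \ref{prop:differentiability of u w.r.t u0} and Proposition \ref{prop bismut} are available), then shows $\cP_t^R\psi\to\cP_t\psi$ locally uniformly using the a priori $L^2$-bound of Proposition \ref{stime sistema deterministico} to control the exit time from large balls, and passes to the limit. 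Without this localization step the BEL machinery is not justified for \eqref{spde Q} itself; this is the main missing ingredient.

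For irreducibility: you read off a forcing path $\eta$ from the mild equation and appeal to a Gaussian support theorem. This only works if $\eta$ lies in the closure of $\{f_A:f\in L^2([0,T];L^2(\T;\R))\}$, i.e.\ is a genuine $Q^{1/2}$-driven convolution. The forcing $\beta_{y_0,y_1}$ that makes the affine interpolation a solution of \eqref{auxiliary PDE} is in general \emph{not} in the range of $Q^{1/2}$, and $\eta$ is not automatically a Cameron--Martin-type path. The paper resolves this by: (a) approximating $\beta_{y_0,y_1}$ in $L^2$ by $Q^{1/2}f$ (possible because $Q^{1/2}$ has dense range) and showing via a Gronwall estimate that the controlled trajectory stays near the interpolation, which requires $C^2$ endpoints so that $\beta_{y_0,y_1}$ is well-defined (Lemma \ref{approxiamte lemma}); (b) using the parabolic smoothing of the zero-control flow to reduce general $L^2$ endpoints to $C^2$ ones (Proposition \ref{approximate proposition}); and (c) only then invoking the support property of $W_A$ around $f_A$, in $H^1$ topology so that the nonlinear Gronwall estimate closes (Lemma \ref{lemma irriducibilità}, Theorem \ref{irreducibility of the semigroup}). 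Your ``mollified affine interpolation'' gestures at (b), but you conflate an arbitrary continuous path $\eta$ with a stochastic-convolution trajectory.

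A minor presentational difference: you propose to use $\|Q^{-1/2}e^{sA}\|_{\fL(L^2(\T;\R))}\lesssim s^{-\gamma/2}$ to close the BEL estimate. The paper instead uses the interpolation $\|Q^{-1/2}f\|_{L^2(\T;\R)}\lesssim\|f\|_{L^2(\T;\R)}^{1-\gamma}\|f\|_{H^1(\T;\R)}^{\gamma}$ (Lemma \ref{interpolation formula}) together with the $L^2$ and $H^1$ energy bounds \eqref{inequality u}--\eqref{inequality derivata di u} on the Jacobian $\eta_h$, which sidesteps the need to track the $s^{-\gamma/2}\cdot s^{-1/2}$ convolution of singularities you anticipate. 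Both routes exploit the same parabolic regularization, so this is a choice rather than a gap.
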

	
	The proof of part i) can be found in Section \ref{sec: irreducibility}, the proof of ii) in Section \ref{sec:sec7}.

	\begin{remark}\label{note:wellposthm} Some comments  on Theorem \ref{mild solution global existence} and Theorem \ref{theorem:existence and uniqueness inv measure}. 
		\begin{itemize}
			\item The scheme of proof of Theorem \ref{mild solution global existence}   uses the  trick of reducing the SPDE at hand to a PDE with random coefficients by ``subtracting the noise". In particular, we mostly use a combination of the arguments of e.g. \cite{burgers} developed for the stochastic Burgers equation and of those developed in the  PDE literature for the evolution \eqref{PDEsimpleparticlesystem}, with particular reference to \cite{pav-car}.  Let us note that, while the PDE \eqref{PDEsimpleparticlesystem} (which is in gradient form) preserves total mass and positivity,  the SPDE \eqref{spde Q} does not enjoy any of these properties; this is one of the main reasons why in this paper the PDE arguments typically used for \eqref{PDEsimpleparticlesystem} (which heavily rely on such properties) could be  used  very sparingly and we rely instead on the methods for Stochastic Burgers'. 
			 
			  The similarity between the nonlinearity we consider and the Burgers' nonlinearity may become apparent once we observe that, taking the convolution with $F'$  (which is a smooth function on the torus, hence bounded),  the nonlinear term $\pa_x ((F'\ast u (x)) u(x))$ is, substantially, the derivative of an expression which is quadratic in the unknown $u$.  	 
			\item As mentioned in the introduction, the works \cite{vlasovwellposedness, {vlasovwellposedness2}} deal with strong well-posedness of infinite-dimensional McKean-Vlasov SDEs (via Galerkin approximation). Such equations  are different from \eqref{spde Q}, as in  \cite{vlasovwellposedness, {vlasovwellposedness2}} the solution of the process depends on the law of the process itself, whereas this is not the case for \eqref{spde Q}. In particular the evolution  \eqref{spde Q} enjoys both the Markov Property and the Markov family property (see \cite{Lacker}), while the evolutions in \cite{vlasovwellposedness, {vlasovwellposedness2}} satisfy the Markov property only. 
			Moreover \cite{vlasovwellposedness} contains some very nice results on averaging for McKean-Vlasov type SPDEs; producing such a result requires studying the ergodic properties of a linearization of \eqref{spde Q} (namely, in our context, the equation obtained from \eqref{spde Q} by replacing $F'\ast u$ with $F'\ast \nu$, for a given $\nu$), but not of \eqref{spde Q} itself. So Theorem \ref{theorem:existence and uniqueness inv measure} above constitutes, to the best of our knowledge, a first attempt at a partial description of the ergodic behaviour of \eqref{spde Q}. 
			\item The `basic' method to prove the Strong-Feller property requires the covariance operator $Q$ to have bounded inverse, see \cite{peszat, manca} and references therein.   Our proof of the Strong-Feller property relies on the use of Bismut-Elworthy-Li type of formulas (see Section \ref{sec:sec7}, formula \eqref{Zeta} and Note \ref{nota doppio pallino} in particular), where the inverse of the operator $Q$ appears. While we do not require $Q$ to have bounded inverse, we still need to control the growth of $Q^{-1}$, hence Assumption \ref{ass:QstrongFeller}. In other words, we need to require that the eigenvalues of $Q$ do not decay too fast, i.e. that the noise is `strong enough'. This is not unexpected, see e.g. \cite{cerrai} (we will make more comparisons with \cite{cerrai} in Section \ref{sec:sec7}).  It is worth mentioning that by using the same technique of Section \ref{sec:sec7} a strong Feller result can be obtained in the weaker noise setting as well, at the cost of changing space. Namely, it can be shown that when in Assumption \ref{ass:QstrongFeller} we take $\gamma \geq 1$ then the semigroup generated by \eqref{spde Q} is strong Feller in the fractional Sobolev space $H^{\gamma}(\T;\R)$(see \cite{Oh} for its definition), but we don't do this here for brevity.  
			\item The method of proof we use to show the Strong-Feller property relies on proving first that such a property holds for a class of equations with Lipshitz non-linearity, where the nonlinearity depends on the derivative of the solution (and for this reason it is different from the nonlinearities considered in \cite{cerrai, manca}); this general result is contained in Subsection \ref{sec: sec 7.2}. 
			\item In this paper we do not cover the purely cylindrical noise case, i.e. $Q=I$, $I$ being the identity operator. While our proofs of irreducibility and Strong Feller property would still hold if $Q=I$, our well-posedness proof requires some smoothness. We leave this further extension for future work. 
			\item Finally and most importantly, in this paper we do not study the {\em existence} of the invariant measure for \eqref{spde Q}.  It turns out that this is not a straightforward task so,  to contain the length of this paper,  we will do so in separate  forthcoming work. \end{itemize}
	\end{remark}

	\section{Proof of Theorem \ref{thm:mainthm3-inv-meas}}\label{sec:sec4}
	
	In this section we prove Theorem \ref{thm:mainthm3-inv-meas}. The main argument of the proof is described below and it is divided into four steps. The first three steps are proved in Subsection \ref{dove_sono} to Subsection \ref{Exactly three invmeas}, respectively, step four is proved in Appendix \ref{uniqueness sigmageq1} (as the ideas are completely analogous to those in Subsection \ref{More invmeas}). The proof that the critical value $\sigma_c$ is a zero of the function $f_c$ \eqref{function f} and the consequent approximation for the numerical value of $\sigma_c$ can be found in the proof of Theorem \ref{teorema segno zeta}.

	\begin{proof}[Proof of Theorem \ref{thm:mainthm3-inv-meas}]
		From \eqref{fixed point map}, \eqref{g1}  and \eqref{g2} it follows that $g_{\sigma}(0,0)=(0,0)$. Hence, $(m_1,m_2)=(0,0)$ is a fixed point of $g_{\sigma}$ for any $\sigma$, so that \eqref{centred stat sol} is a stationary solution to \eqref{PDEsimpleparticlesystem} for every $\sigma >0$ (note that $Z_{\sigma}(0,0)$ coincides exactly with the normalization constant $Z_{\sigma}$ appearing in \eqref{fixed point}).\\
		Second, exploiting the symmetries of $\sin$ and $\cos$ functions, it is also 
		easy to see that the axes $\mM_1:= \left \{(m_1,0) \right \}_{m_1 \in \R}$ and $\mM_2:= \left \{ (0,m_2) \right \}_{m_2 \in \R}$ are invariant for $g_\sigma$ i.e. $g_{\sigma}(m_1,0) \in \mM_1$ and $g_{\sigma}(0,m_2) \in \mM_2$ for all $m_1,m_2 \in \R$.
		
		We now divide the proof into four steps. 
		In Step 1) we show that all fixed points of $g_\sigma$ lie on either $\mM_1$ or $\mM_2$. This 
		allows us to reduce the search of fixed points to two one-parameter fixed point problems, one 
		on $\mM_1$ and the other on $\mM_2$. In Step 2) we consider the fixed point problem on $\mM_2$
		and show that there exists $\sigma_c \in(0,1)$ such that for $0<\sigma <\sigma_c$ there are 
		exactly two additional stationary states other than \eqref{centred stat sol}, while, for 
		$\sigma > \sigma_c$ those two additional solutions collapse into \eqref{centred stat sol} which lies on $\mM_2$, so 
		for $\sigma >\sigma_c$ there is exactly one steady state of \eqref{PDEFV} which lies on 
		$\mM_2$.
		At this point what one would want to show is that the only fixed point on $\mM_1$ is (0,0), 
		irrespective of the value of $\sigma$. However, we are able to prove this fact only for 
		$0< \sigma \ll 1$ (Step 3) and for $ \sigma \geq \frac{1}{2}$ (Step 4). Hence, from steps 1 to 
		4, we deduce the uniqueness for $\sigma >\sigma_c$ and the existence of exactly three stationary 
		states for $\frac{1}{2} \leq \sigma < \sigma_c $ and $0 < \sigma \ll 1$. 
		
		Steps 1, 2 and 3 are proven in Subsections \ref{dove_sono}, \ref{More invmeas}, \ref{Exactly three invmeas}, respectively, and the details for Step 4 are referred to Subsection \ref{uniqueness h} of Appendix \ref{uniqueness sigmageq1}.
	\end{proof}
	
	\subsection{Proof of Step 1}\label{dove_sono}
	In this section we prove that the fixed points of $g_{\sigma}$ belong to either the axis $\mM_1$ or the axis $\mM_2$. More precisely, the following statement holds.
	\begin{proposition} \label{dove sono i punti fissi}
		Let $(m_1,m_2) \in \R^2$ be a fixed point of the map $g_{\sigma}$ different from (0,0); then either $m_1=0$ or $m_2=0$.
	\end{proposition}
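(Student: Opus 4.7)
The plan is to exploit the rotational structure of the problem to reduce the two-dimensional fixed-point system to two scalar equations, one of which will force $(m_1,m_2)$ onto the axes. Given a nontrivial fixed point, I would write it in polar form $m_1=r\cos\theta$, $m_2=r\sin\theta$ with $r>0$, so that $m_1\cos x+m_2\sin x=r\cos(x-\theta)$; the exponent in \eqref{fixed point equation} then depends only on a single cosine of $x-\theta$, while $\cos(2x)$ is unaffected by the shift by $\theta$ only up to a phase. Substituting $y=x-\theta$ in the numerators and in the normalization constant of \eqref{g1}--\eqref{g2}, and expanding $\cos(y+\theta), \sin(y+\theta)$ via addition formulas, I would introduce
$$A(\theta):=\frac{1}{Z}\int_{\T}\cos y\,e^{-\frac{1}{\sigma}(\cos(2y+2\theta)-r\cos y)}\,dy,\quad B(\theta):=\frac{1}{Z}\int_{\T}\sin y\,e^{-\frac{1}{\sigma}(\cos(2y+2\theta)-r\cos y)}\,dy.$$
The fixed-point system becomes a linear system in $(A(\theta),B(\theta))$ whose matrix is the rotation by $\theta$; inverting this rotation reduces the problem to the equivalent scalar conditions $A(\theta)=r$ and $B(\theta)=0$.

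Since $r>0$, the whole argument reduces to analyzing when $B(\theta)=0$. Here I would split the exponent into its even and odd parts in $y$: the even part is $-\tfrac{1}{\sigma}(\cos 2\theta\cos 2y-r\cos y)$ and the odd part is $\tfrac{\sin 2\theta}{\sigma}\sin 2y$. Applying the symmetry $y\to -y$, the $\cosh$ of the odd part drops out against $\sin y$ (odd), leaving
$$B(\theta)Z=2\int_{0}^{\pi}\sin y\,e^{-\frac{1}{\sigma}(\cos 2\theta\cos 2y-r\cos y)}\sinh\!\Big(\tfrac{\sin 2\theta}{\sigma}\sin 2y\Big)dy.$$
A second application of the reflection $y\to\pi-y$ on $[0,\pi]$ (under which $\sin y, \cos 2y$ are invariant while $\cos y, \sin 2y$ flip sign) converts the remaining asymmetric factor $e^{(r/\sigma)\cos y}$ into a $\sinh$, giving the key identity
$$B(\theta)Z=2\int_{0}^{\pi}\sin y\,e^{-\frac{\cos 2\theta}{\sigma}\cos 2y}\sinh\!\Big(\tfrac{\sin 2\theta}{\sigma}\sin 2y\Big)\sinh\!\Big(\tfrac{r}{\sigma}\cos y\Big)dy.$$

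To conclude, I would observe that $\sin 2y$ and $\cos y$ change sign simultaneously at $y=\pi/2$, so the product of the two $\sinh$ factors has the same sign on both $(0,\pi/2)$ and $(\pi/2,\pi)$, namely the sign of $\sin 2\theta$. Hence the integrand has constant sign on $(0,\pi)\setminus\{\pi/2\}$ and does not vanish identically, so $B(\theta)=0$ forces $\sin 2\theta=0$, i.e.\ $\theta\in\{0,\pi/2,\pi,3\pi/2\}$, placing $(m_1,m_2)$ on $\mM_1\cup\mM_2$.

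The main obstacle is the sign analysis in the second paragraph: after a single use of $y\to -y$ the remaining factor $e^{(r/\sigma)\cos y}$ is neither symmetric nor antisymmetric about $\pi/2$, and the integrand's sign cannot be read off directly. Only the \emph{chained} use of both discrete symmetries produces the clean product-of-sinhs form whose sign is manifest; setting up this double reduction correctly (tracking which substitutions act on which factors of the exponential) is the delicate point of the proof.
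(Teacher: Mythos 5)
Your proposal is correct and takes essentially the same approach as the paper: both pass to polar coordinates, reduce the fixed-point system to the single scalar condition $B(\theta)=0$ (the paper's $I(M,\varphi)=0$), and exploit the discrete symmetries $y\mapsto -y$ and $y\mapsto \pi-y$ to reach a sign-definite integrand. The paper carries out the reduction via four sub-interval substitutions on $[-\pi,\pi]$ and leaves the result as a product of exponential differences over $[0,\pi/2]$, whereas you phrase the same reduction through even/odd and $\sinh$/$\cosh$ decompositions; the two final expressions coincide up to a factor of $2$ from the choice of domain.
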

	\begin{proof}
		If $(m_1,m_2) \in \R^2$ is a fixed point of $g_{\sigma}$ then we know $(m_1,m_2)$ satisfies the fixed point equations \eqref{m1} and \eqref{m2}.
		Next, we write $(m_1,m_2) \in \R^2$ in polar coordinates, namely we let $(M,\varphi) \in \R_{+} \times [0,2\pi)$; that is,  
		\[
		M=(m_1^2+m_2^2)^{1/2},~m_1=M\cos\varphi,~m_2=M\sin\varphi,\qquad \varphi \in [0,2\pi),  M \in \R_{+}.
		\]
		From the above and from the addition formula for the cosine we can rewrite the fixed point equations \eqref{m1} and \eqref{m2} as 
		\begin{eqnarray}
			m_1 = \frac{1}{Z_{\sigma}(m_1,m_2)}\int_{\T}\cos x\, e^{-\frac{1}{\sigma}(\cos(2x)-M\cos(x-\varphi))}\,dx , \label{polar coordinates fixed point 1} \\
			m_2 = \frac{1}{Z_{\sigma}(m_1,m_2)} \int_{\T}\sin x \,e^{-\frac{1}{\sigma}(\cos(2x)-M\cos(x-\varphi))}\,dx, \label{polar coordinates fixed point 2}
		\end{eqnarray}
		respectively.
		Define  
		\begin{equation}\label{eq:integral}
			I(M,\varphi):=\int_{\T} \sin(x-\varphi) e^{-\frac{1}{\sigma}(\cos(2x)-M\cos(x-\varphi))}\,dx.
		\end{equation}
		If $(m_1,m_2)$ is a fixed point of $g_{\sigma}$ then it follows that $I(M,\varphi)=0$. Indeed, from the addition formula for the sine and from \eqref{polar coordinates fixed point 1} and \eqref{polar coordinates fixed point 2} we have 
		\begin{align}
			I(M,\varphi)& =\cos \varphi \int_{0}^{2\pi} \sin x \, e^{-\frac{1}{\sigma}(\cos(2x)-M\cos(x-\varphi)}\,dx \notag\\
			&-\sin \varphi  \int_{0}^{2\pi} \cos x \,e^{-\frac{1}{\sigma}(\cos(2x)-M\cos(x-\varphi)}\,dx  \notag\\
			& = Z_{\sigma}(m_1,m_2)\left( m_2\cos \varphi-m_1 \sin \varphi \right) \notag \\
			& =Z_{\sigma}(m_1,m_2)M \left(\sin \varphi \cos \varphi-\cos \varphi \sin \varphi \right)=0.\notag
		\end{align}
		Our goal is to show that the equation $I(M,\varphi)=0$ implies either $M=0$ or $\varphi\in \left\{0,\pi/2,\pi,3\pi/2\right\}$.
		By applying the change of variable to $x \to x-\varphi,\,x \in \T$ and using periodicity we obtain
		\begin{equation}
			I(M,\varphi)=\int_{-\pi}^{\pi} \sin x \,e^{-\frac{1}{\sigma}\cos(2x+2\varphi)}e^{\frac{1}{\sigma}M\cos x}\,dx. \notag
		\end{equation}
		We split the above integral into four parts: $I_{-1}(M,\varphi)$, $I_{0}(M,\varphi)$, $I_1(M,\varphi)$ and $I_2(M,\varphi)$, namely 
		\begin{equation}
			I(M,\varphi)=\sum \limits_{i=-1}^2 I_i(M,\varphi):=\sum \limits_{i=-1}^2 \quad\, \int_{(i-1)\frac{\pi}{2}}^{i\frac{\pi}{2}} \sin x\, e^{-\frac{1}{\sigma}\cos(2x+2\varphi)}\,e^{\frac{1}{\sigma} M \cos x}\,dx. \notag
		\end{equation}
		We now apply respectively the changes of variable $x \to x+\pi$, $x \to -x$ and $x \to \pi-x$ to $I_{-1}(M,\varphi),I_0(M,\varphi)$ and $I_2(M,\varphi)$, respectively, and then add together the resulting four expressions we obtain the following expression arriving at 
		\begin{align}\notag
			 I(M,\varphi)  =\int_0^{\frac{\pi}{2}} &\sin x  \bigg[
			-e^{-\frac{1}{\sigma}\cos(2x+2\varphi)}e^{-\frac{1}{\sigma}M\cos x}
			-e^{-\frac{1}{\sigma}\cos(2x-2\varphi)}e^{\frac{1}{\sigma}M\cos x} \\
			&
			e^{-\frac{1}{\sigma}\cos(2x+2\varphi)}e^{\frac{1}{\sigma} M\cos x}+e^{-\frac{1}{\sigma}\cos(2x-2\varphi)}e^{-\frac{1}{\sigma}M\cos x}  
			\bigg] \,dx \notag \\
			 =\int_{0}^{\frac{\pi}{2}}&\sin x \left[e^{-\frac{1}{\sigma}\cos(2x+2\varphi)}-e^{-\frac{1}{\sigma}\cos(2x-2\varphi)}\right] \left[e^{\frac{1}{\sigma}M\cos x}-e^{-\frac{1}{\sigma}M\cos x}\right]\,dx .\label{eq: rearranged}
		\end{align}
		Our first aim is to prove that $I(M, \varphi)=0$ cannot hold when $M>0$ and $\varphi \in \left ( 0, \frac{\pi}{2} \right )$. As a consequence we deduce that there are no fixed points of the map $g_{\sigma}$ in the first quadrant of $\R^2$ (i.e. when $m_1>0$ and $m_2>0$). We will then repeat the procedure on the other quadrants, in turn. In the first place, we note
		\begin{equation} \label{inequality cosine}
			\cos(2x+2\varphi)<\cos(2x-2\varphi),\quad\text{for all $x,\varphi \in \left (0,\frac{\pi}{2}\right )$}.
		\end{equation}
		Indeed, from the addition formula for the cosine, equation \eqref{inequality cosine} is equivalent to 
		\begin{equation}
			\cos(2x)\cos(2\varphi)-\sin(2x)\sin(2\varphi) < \cos(2x)\cos(2\varphi)+\sin(2x)\sin(2\varphi),\notag
		\end{equation}
		which is in turn equivalent to
		\begin{equation}\label{inequality sine}
			\sin(2x) \sin(2\varphi)>0.
		\end{equation}
		Clearly, \eqref{inequality sine} holds for all $x,\varphi \in \left (0,\frac{\pi}{2}\right )$.	Hence,  
		\begin{equation}
			e^{-\frac{1}{\sigma}\cos(2x+2\varphi)}>e^{-\frac{1}{\sigma}\cos(2x-2\varphi)},\quad\text{for all $x,\varphi \in \left (0,\frac{\pi}{2}\right ).$}
		\end{equation}
		Furthermore, since 
		$$\cos x > 0 > -\cos x,\quad x \in \left (0,\frac{\pi}{2}\right ),$$
		we deduce
		\begin{equation}
			e^{\frac{1}{\sigma}M\cos x}>1>e^{-\frac{1}{\sigma}M\cos x},\quad\text{for all $x \in \left (0,\frac{\pi}{2}\right )$}.
		\end{equation}
		Using \eqref{eq: rearranged} we then conclude that $I(M,\varphi)>0$ for all $\varphi \in \left (0,\frac{\pi}{2}\right )$ and $M>0$.\\
		Similarly, if we let $M>0$, $\varphi \in \left (\frac{\pi}{2},\pi \right )$ then 
		\begin{equation} \label{inequality cosine 2}
			\cos(2x+2\varphi)>\cos(2x-2\varphi),\quad\text{for all $x,\varphi \in \left(\frac{\pi}{2},\pi \right )$},
		\end{equation}
		which is equivalent to 
		\begin{equation}\label{inequality sine 2}
			\sin(2x) \sin(2\varphi)<0.
		\end{equation}
		The above holds for all $x \in (0,\frac{\pi}{2})$ and for all $\varphi \in (\frac{\pi}{2},\pi)$.
		Hence, 
		\begin{equation}
			e^{-\frac{1}{\sigma}\cos(2x+2\varphi)}<e^{-\frac{1}{\sigma}\cos(2x-2\varphi)},\quad\text{for all $x,\varphi \in \left (\frac{\pi}{2},\pi \right ).$}
		\end{equation}
		\begin{equation}
			e^{\frac{1}{\sigma}M\cos x}>1>e^{-\frac{1}{\sigma}M \cos x},\quad\text{for all $x \in \left (0,\frac{\pi}{2} \right )$},
		\end{equation}
		we have that $I(M,\varphi)<0$ for all $\varphi \in \left (\frac{\pi}{2},\pi \right )$ and $M>0$. The remaining cases i.e. $M>0$, $\varphi \in \left (\pi,\frac{3\pi}{2} \right )$ and $M>0$, $\varphi \in \left (\frac{3\pi}{2},2\pi \right )$ can be dealt with analogously. 
	\end{proof}
	
	\subsection{Proof of Step 2}\label{More invmeas}
	We recall that the axis $\mM_2$ is invariant for $g_{\sigma}$, hence we can define the map $\bar{g}_{\sigma}:\R \to \R$ to be the restriction of $g_{\sigma}:\R^2 \to \R^2$ to $\mM_2$; namely
	\begin{equation} \label{one dimension fixed point map}
		\bar{g}_{\sigma}(m):= \frac{1}{Z_{\sigma}(m)} \int_{\T} \sin x \,e^{-\frac{1}{\sigma} \left(\cos(2x)-m\sin x \right)}\,dx,\,\,\, Z_{\sigma}(m):=\int_{\T} e^{-\frac{1}{\sigma}\left(\cos(2x)-m\sin x \right)}\,dx. 
	\end{equation}
	Then $m$ is a fixed point of $\bar{g}_{\sigma}$ if and only if $(0,m)$ is a fixed point of $g_{\sigma}$. 
	In what follows we prove that the map \eqref{one dimension fixed point map} has the following property: there exists $\sigma_c>0$ such that, when $\sigma> \sigma_c$, $\bar{g}_{\sigma}$ admits a unique fixed point, while, when $\sigma < \sigma_c$, $\bar{g}_{\sigma}$ admits exactly three fixed points.
	The idea of the proof is inspired by \cite[Theorem 2.1]{tugaut}. Thus, let us introduce the map $\zeta_{\sigma}:\R \to \R$ defined as 
	\begin{equation} \label{zeta}
		\zeta_{\sigma}(m):= \int_{\T} (\sin x-m)e^{-\frac{1}{\sigma}\cos(2x)+\frac{m}{\sigma}\sin x}\,dx,\quad m \in \R,
	\end{equation}
	and note that $m \in \R$ is a fixed point of $\bar{g}_{\sigma}$ if and only if $m$ is a zero of $\zeta_{\sigma}$ i.e. $\zeta_{\sigma}(m)=0$. {Furthermore, $\zeta_{\sigma}(1)<0$.} The statement is then a consequence of Proposition \ref{uniqueness related to first derivative} and Theorem \ref{teorema segno zeta} below, which we first state and then prove in turn. 
	
	\begin{proposition}\label{uniqueness related to first derivative} 
		
		The map $\zeta_{\sigma}:\R \to \R$ defined in \eqref{zeta} is odd and it admits either exactly one zero or exactly three zeroes, depending on the value of $\sigma$. Furthermore, the following holds:
		\begin{itemize}
			\item If $\zeta_{\sigma}^{'}(0) < 0$ then $\zeta_{\sigma}$ is strictly decreasing on $[0,+\infty)$. Since $\zeta_{\sigma}(0)=0$ then we deduce that $\zeta_{\sigma}$ does not vanish on $(0,+\infty)$. The function $\zeta_{\sigma}$ is odd so that we can easily deduce  that it admits a unique zero on $\R$.
			\item If $\zeta_{\sigma}^{'}(0)> 0$ then there exists $\bar{m}>0$ such that $\zeta_{\sigma}$ is strictly increasing in $[0,\bar{m})$ and then strictly decreasing for $m \geq \bar{m}$. Since $\zeta_{\sigma}(0)=0$ {and $\zeta_{\sigma}(1)<0$}, we deduce that $\zeta_{\sigma}$ has a unique zero on $(0,\textcolor{blue}{1})$. The function $\zeta_{\sigma}$ is odd, hence it admits exactly three zeroes on $\R$.
		\end{itemize}
	\end{proposition}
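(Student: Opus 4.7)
The plan is to analyze the shape of $\zeta_\sigma$ on $[0, \infty)$ and then transfer the zero count to $\R$ by oddness. Throughout, it is convenient to use the identity
\[
\zeta_\sigma(m) = Z_\sigma(m)\bigl(\bar{g}_\sigma(m) - m\bigr),
\]
which follows from comparing \eqref{one dimension fixed point map} with \eqref{zeta}; in particular zeros of $\zeta_\sigma$ coincide with fixed points of $\bar{g}_\sigma$, and the sign of $\zeta_\sigma$ equals that of $\bar{g}_\sigma(m)-m$.

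First I would verify two preliminary facts. Oddness, $\zeta_\sigma(-m) = -\zeta_\sigma(m)$, is obtained by the change of variables $x \mapsto -x$ in \eqref{zeta}, using parity of $\cos(2x)$ and oddness of $\sin x$; in particular $\zeta_\sigma(0) = 0$. The inequality $\zeta_\sigma(1) < 0$ follows at once from the identity above, since $\bar{g}_\sigma(m)$ is the average of $\sin x$ against a probability density and $\sin x < 1$ except on a set of measure zero, so $\bar{g}_\sigma(1) < 1$ strictly.

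The heart of the argument is to show that on $(0, \infty)$ the function $\zeta_\sigma$ has at most one critical point and that, when it exists, it is a strict local maximum. A sufficient condition is $\zeta_\sigma''(m_*) < 0$ at every critical point $m_* \in (0, \infty)$: two consecutive strict local maxima would force a local minimum in between, at which $\zeta_\sigma'' \geq 0$. To verify this sign condition I would compute $\zeta_\sigma'$ and $\zeta_\sigma''$ by differentiation under the integral sign, exploit $\cos(2x) = 1 - 2\sin^2 x$ together with the $x \leftrightarrow \pi - x$ symmetry, and apply the change of variables $u = \sin x$ to rewrite
\[
\zeta_\sigma(m) = 2e^{-1/\sigma}\int_{-1}^{1}(u - m)\,e^{(2u^2 + mu)/\sigma}\,\frac{du}{\sqrt{1-u^2}},
\]
so that derivatives in $m$ become polynomial moments of a log-concave tilt of an explicit measure on $(-1,1)$. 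Using the critical-point condition $\zeta_\sigma'(m_*) = 0$ to eliminate one of these moments, $\zeta_\sigma''(m_*)<0$ reduces to controlling the sign of a third-order centered moment, which captures the intuition that for $m_* > 0$ the tilt $e^{m_*u/\sigma}$ skews the law of $\sin x$ towards $u = 1$. This explicit sign analysis is the main obstacle of the proof.

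Once the shape is established, the proposition follows in a few lines. If $\zeta_\sigma'(0) < 0$, the absence of critical points on $(0,\infty)$ forces $\zeta_\sigma' < 0$ throughout, so $\zeta_\sigma$ is strictly decreasing on $[0, \infty)$; combined with $\zeta_\sigma(0) = 0$, no zero in $(0, \infty)$ exists and oddness yields a unique zero on $\R$. If $\zeta_\sigma'(0) > 0$, then $\zeta_\sigma$ is positive just past the origin and, since $\zeta_\sigma(1) < 0$, must have a unique critical point $\bar m \in (0, 1)$; the shape result gives $\zeta_\sigma$ strictly increasing on $[0, \bar m]$ and strictly decreasing on $[\bar m, \infty)$, so the intermediate value theorem produces exactly one zero in $(\bar m, 1)$, and oddness yields three zeros on $\R$.
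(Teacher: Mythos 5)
Your reduction of the problem to a shape analysis of $\zeta_\sigma$ is natural, and your preliminary steps are correct: oddness via $x\mapsto -x$ is fine, and the observation that $\zeta_\sigma(m)=Z_\sigma(m)(\bar g_\sigma(m)-m)$ gives $\zeta_\sigma(1)<0$ because $\bar g_\sigma(1)$ is a strict average of $\sin x<1$. But the proof has a genuine gap at exactly the point you flag as ``the main obstacle'': the entire argument rests on showing $\zeta_\sigma''(m_*)<0$ at every critical point $m_*>0$, and you never carry this out. You describe a change of variables $u=\sin x$ and an intended reduction to the sign of a ``third-order centered moment,'' but this is not a proof — it is a plan for a proof. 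Moreover, the intuition you invoke is questionable: after the substitution the relevant density on $(-1,1)$ is proportional to $e^{(2u^2+m u)/\sigma}/\sqrt{1-u^2}$, which is log-\emph{convex} in $u$ (the exponent is convex and $-\tfrac12\log(1-u^2)$ is also convex), not log-concave, so the standard skewness/moment heuristics for log-concave tilts do not directly apply. Until the sign of $\zeta_\sigma''$ at critical points is actually established, the assertion that $\zeta_\sigma$ has at most one critical point on $(0,\infty)$ is unsupported, and with it the entire dichotomy.

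The paper avoids this obstacle entirely by a Descartes-rule-of-signs type argument. It expands $\zeta_\sigma$ as a power series $\sum_k a_k m^{2k+1}$ with $a_k=\tfrac{1}{(2k)!\sigma^{2k+1}}s_{2k}\Upsilon_k(\sigma)$, and proves (Lemma \ref{decrescente}) that the sequence $\Upsilon_k(\sigma)$ is strictly decreasing in $k$ with limit $-\sigma$, so the coefficients $a_k$ change sign exactly once, from nonnegative to nonpositive, at some index $k_\sigma$. Factoring out $m^{2k_\sigma+1}$ leaves a factor of the form $\sum_{k<k_\sigma}|a_k|m^{2k-2k_\sigma}-\sum_{k\ge k_\sigma}|a_k|m^{2k-2k_\sigma}$, which is manifestly strictly decreasing on $(0,\infty)$ since it is a sum of decreasing powers with positive coefficients minus a sum of increasing powers. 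This immediately yields at most one positive root, and applying the identical argument to the term-by-term derivative handles the monotonicity dichotomy in the two cases $\zeta_\sigma'(0)<0$ and $\zeta_\sigma'(0)>0$. If you want to make your version rigorous you would need to actually compute and bound the centered moments, and it is not obvious that this is any easier than the series argument; the series route has the advantage of converting the sign question into the monotonicity of $\Upsilon_k$, which is an explicit integration-by-parts calculation.
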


	\begin{theorem}\label{teorema segno zeta}
		There exists $\sigma_c >0$ such that the following holds
		\begin{equation}\label{segno zeta preciso}
			\begin{cases}
				& \zeta_{\sigma}^{'}(0) < 0,\quad \sigma > \sigma_c, \\
				& \zeta_{\sigma}^{'}(0) > 0,\quad \sigma < \sigma_c. 
			\end{cases}
		\end{equation}
		In particular, due to Proposition \ref{uniqueness related to first derivative}, we have the following:
		\begin{itemize}
			\item If $\sigma > \sigma_c$ then $\zeta_{\sigma}$ admits a unique zero ($m=0$).
			\item If $\sigma < \sigma_c$ then $\zeta_{\sigma}$ admits exactly three zeroes (one of which is $m=0$).
		\end{itemize}
		Furthermore, an analytical approximation of $\sigma_c$ is given by $\sigma_c \simeq 0.7709$.
	\end{theorem}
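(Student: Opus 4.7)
The plan is to compute $\zeta'_\sigma(0)$ explicitly, express it in terms of modified Bessel functions so that its sign reduces to the sign of the function $f_c$ defined in \eqref{function f}, and then show that $f_c$ is strictly monotone with the appropriate limits at $0$ and $+\infty$. First, differentiating \eqref{zeta} under the integral sign gives
\[
\zeta'_\sigma(0) = -\int_\T e^{-\cos(2x)/\sigma}\,dx + \frac{1}{\sigma}\int_\T \sin^2 x\, e^{-\cos(2x)/\sigma}\,dx.
\]
Using $\sin^2 x = (1-\cos 2x)/2$, this becomes a linear combination of $\int_\T e^{-\cos(2x)/\sigma}\,dx$ and $\int_\T \cos(2x)\, e^{-\cos(2x)/\sigma}\,dx$. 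The substitution $y=2x$ together with the $\pi$-periodicity of the integrands converts these to standard Bessel integrals, yielding $2\pi I_0(1/\sigma)$ and $-2\pi I_1(1/\sigma)$ respectively, where $I_0,I_1$ are modified Bessel functions of the first kind.

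Writing $r_0(y):=I_1(y)/I_0(y)$, a short algebraic rearrangement then gives
\[
\zeta'_\sigma(0) = \frac{\pi\, I_0(1/\sigma)}{\sigma}\, f_c(\sigma),
\]
so that $\mathrm{sign}(\zeta'_\sigma(0)) = \mathrm{sign}(f_c(\sigma))$ for every $\sigma>0$. It therefore suffices to prove that $f_c$ has a unique zero $\sigma_c$, is positive on $(0,\sigma_c)$ and negative on $(\sigma_c,\infty)$.

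To establish this, I would change variables to $y=1/\sigma$ and study $h(y):=y-2+y\,r_0(y)$, showing it is strictly increasing from $-2$ (at $y=0^+$, where $r_0(y)\sim y/2$) to $+\infty$ (at $y\to\infty$, where $r_0(y)\to 1$). Using the standard Bessel identities $I_0'=I_1$ and $I_1'=I_0 - I_1/y$, one obtains $r_0'(y) = 1 - r_0(y)/y - r_0(y)^2$, and hence
\[
h'(y) = 1 + r_0(y) + y\, r_0'(y) = 1 + y\bigl(1 - r_0(y)^2\bigr).
\]
Since $0 \le r_0(y) \le 1$ (in fact $r_0$ maps $(0,\infty)$ strictly into $(0,1)$), we get $h'(y)\ge 1>0$ on $(0,\infty)$, which yields strict monotonicity and hence the existence and uniqueness of $\sigma_c>0$. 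Translating back to $\sigma$, this gives the sign dichotomy \eqref{segno zeta preciso}.

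For the numerical estimate $\sigma_c \simeq 0.7709$, I would evaluate $f_c$ at, e.g., $\sigma=1/2$ and $\sigma=1$ using tabulated values of $I_0,I_1$, confirming $f_c(1/2)>0$ and $f_c(1)<0$, and then locate $\sigma_c$ by a standard root-finding procedure (e.g.\ bisection or Newton, using the explicit derivative above). The main obstacle in the proof is essentially the monotonicity step: the Bessel manipulations have to be carried out carefully, but once $h'(y)=1+y(1-r_0(y)^2)$ is obtained, the conclusion is immediate. The connecting computation that identifies $\zeta'_\sigma(0)$ with $f_c(\sigma)$ (up to a positive factor) is routine, as are the asymptotics at $0$ and $\infty$ via standard small- and large-argument expansions of $I_0,I_1$.
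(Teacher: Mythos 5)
Your proposal follows essentially the same path as the paper's proof: compute $\zeta'_\sigma(0)$, reduce it (via $\sin^2 x = (1-\cos 2x)/2$, equivalently $\cos 2x = 1-2\sin^2 x$) to a positive multiple of $f_c(\sigma)$ expressed through $I_0$ and $r_0$, and then establish that $f_c$ is strictly monotone with the right limits. The one genuine difference is in the monotonicity step. The paper differentiates $f_c$ directly in $\sigma$, obtaining $f_c'(\sigma) = -\sigma^{-2} - \sigma^{-2}r_0(1/\sigma) - \sigma^{-3}r_0'(1/\sigma)$, and concludes negativity by citing $r_0>0$ and $r_0'>0$ from the literature on Bessel function ratios. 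You instead pass to $y=1/\sigma$, set $h(y)=y-2+y\,r_0(y)$, and use the recurrence-relation identity $r_0'(y)=1-r_0(y)/y-r_0(y)^2$ to collapse $h'(y)=1+r_0(y)+y\,r_0'(y)$ into the manifestly positive expression $1+y\bigl(1-r_0(y)^2\bigr)$, needing only the elementary bound $0\le r_0\le 1$. This is a genuinely cleaner and more self-contained derivation of the monotonicity: it trades the external reference for $r_0'>0$ for a one-line algebraic identity. The rest — the asymptotics $h(0^+)=-2$ and $h(y)\to\infty$ from $r_0(y)\sim y/2$ and $r_0(y)\to 1$, and the numerical bisection for $\sigma_c$ — matches the paper.

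One small slip: your intermediate formula $\zeta'_\sigma(0)=\frac{\pi I_0(1/\sigma)}{\sigma}f_c(\sigma)$ carries a spurious extra factor of $1/\sigma$; the correct constant is $\zeta'_\sigma(0)=\pi I_0(1/\sigma)f_c(\sigma)$ (with $I_0$ the standard modified Bessel function; the paper's $I_0$ absorbs a $2\pi$). This is harmless for the argument, since the sign is unchanged, but it is worth catching when writing out the "short algebraic rearrangement".
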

	Before proving the above statements we state and prove some technical lemmata, which will be needed in the proofs of the above main results. More precisely, we first prove Lemma \ref{series expansion} and Lemma \ref{decrescente} after that we move on to proving Proposition \ref{uniqueness related to first derivative} and Theorem \ref{teorema segno zeta}.
	To this end, let us introduce the sequence $\{s_k\}_{k \in \N}\subset \R$ defined as 
	\begin{equation}\label{sk}
		s_k:=\int_{\T} (\sin x )^k e^{-\frac{1}{\sigma}\cos(2x)}\,dx,\quad k \in \N.
	\end{equation}
	Since $\sin \, x$ is an odd function, $\cos(2x)$ is an even function and $ (\sin \, x)^2 < 1$ for all $x \in \T \setminus \left \{ -\frac{\pi}{2}, \frac{\pi}{2} \right \} $, the following properties hold for $\{s_k\}_{k \in \N}$:
	\begin{eqnarray}
		&& s_{2k+1}=0,\quad\text{for all $k \in \N$}, \label{zero on odd numbers} \\
		&& s_{2k+2} < s_{2k},\quad\text{for all $k \in \N$} \label{decreases on even numbers}.
	\end{eqnarray}
	\begin{lemma}\label{series expansion}
		For every $\sigma>0$, the function $\zeta_{\sigma}:\R \to \R$ admits the following series expansion 
		\begin{equation}\label{series expansion zeta}
			\zeta_{\sigma}(m)= \sum \limits_{k \in \N} \frac{1}{(2k)!} \left( \frac{m}{\sigma}\right )^{2k+1} s_{2k}
			\Upsilon_k(\sigma),\quad m \in \R,
		\end{equation}
		where $\{\Upsilon_k(\sigma)\}_{k \in \N}$ is the sequence defined as 
		\begin{equation}\label{gamma}
			\Upsilon_k(\sigma):=\frac{s_{2k+2}}{(2k+1)s_{2k}}-\sigma,\quad k \in \N,\, \sigma >0.
		\end{equation}
	\end{lemma}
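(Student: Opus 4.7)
The plan is to obtain the stated series expansion by expanding the factor $e^{\frac{m}{\sigma}\sin x}$ in the definition of $\zeta_\sigma$ as a power series in $m$, and then using the vanishing of the odd moments $s_{2k+1}$ to collapse the resulting double sum into the claimed form.

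More concretely, I would start from
\begin{equation*}
\zeta_\sigma(m)=\int_\T (\sin x - m)\, e^{-\frac{1}{\sigma}\cos(2x)}\, e^{\frac{m}{\sigma}\sin x}\,dx,
\end{equation*}
write $e^{\frac{m}{\sigma}\sin x}=\sum_{n\in\N}\frac{1}{n!}\bigl(\frac{m}{\sigma}\bigr)^n(\sin x)^n$, and then swap sum and integral. This exchange is easy to justify by dominated convergence: the partial sums are bounded by $e^{\frac{|m|}{\sigma}}(1+|m|)e^{\frac{1}{\sigma}}$, uniformly in $x\in\T$, which is integrable on the compact torus. After the interchange I get
\begin{equation*}
\zeta_\sigma(m)=\sum_{n\in\N}\frac{1}{n!}\left(\frac{m}{\sigma}\right)^n \bigl(s_{n+1}-m\, s_n\bigr),
\end{equation*}
recalling the definition \eqref{sk} of $s_k$.

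At this point I would split the sum into even and odd indices $n$ and use \eqref{zero on odd numbers}. When $n=2k$, the term $s_{n+1}=s_{2k+1}$ vanishes, leaving $-m\,s_{2k}(m/\sigma)^{2k}/(2k)!=-\sigma\,s_{2k}(m/\sigma)^{2k+1}/(2k)!$. When $n=2k+1$, the term $m\,s_n=m\,s_{2k+1}$ vanishes, leaving $s_{2k+2}(m/\sigma)^{2k+1}/(2k+1)!$. Combining the two contributions indexed by the same $k$ gives
\begin{equation*}
\zeta_\sigma(m)=\sum_{k\in\N}\left(\frac{m}{\sigma}\right)^{2k+1}\left[\frac{s_{2k+2}}{(2k+1)!}-\frac{\sigma\, s_{2k}}{(2k)!}\right] =\sum_{k\in\N}\frac{1}{(2k)!}\left(\frac{m}{\sigma}\right)^{2k+1} s_{2k}\,\Upsilon_k(\sigma),
\end{equation*}
which is the claimed identity once one factors $s_{2k}/(2k)!$ and recognises $\Upsilon_k(\sigma)$ from \eqref{gamma} (note that $s_{2k}>0$, so the division is legitimate).

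There is no real obstacle in the argument; the only thing that needs a moment's thought is the interchange of summation and integration, handled by the uniform bound above, and the bookkeeping when re-indexing the even and odd parts of the sum. The fact that $s_{2k}>0$ for all $k$ (needed to define $\Upsilon_k(\sigma)$) follows because $(\sin x)^{2k}e^{-\frac{1}{\sigma}\cos(2x)}$ is nonnegative and not identically zero on $\T$.
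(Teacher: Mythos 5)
Your proof is correct and follows essentially the same route as the paper's: expand $e^{\frac{m}{\sigma}\sin x}$ as a power series, exchange sum and integral, use $s_{2k+1}=0$ to collapse the resulting sum, and factor out $s_{2k}/(2k)!$ to recognise $\Upsilon_k(\sigma)$. The only difference is that you explicitly justify the interchange of sum and integral by dominated convergence, which the paper leaves implicit.
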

	\begin{proof}
		From the power series expansion of $e^x$, $e^x=\sum \limits_{k \in \N} \frac{x^k}{k!},\quad x \in \R$, we obtain 
		\begin{equation}
			\begin{split}
				\zeta_{\sigma}(m) & =\int_{\T} (\sin \, x-m)e^{-\frac{1}{\sigma}\cos(2x)+\frac{m}{\sigma} \sin x }\,dx \\
				& =\int_{\T} (\sin x - m)e^{-\frac{1}{\sigma}\cos(2x)}\sum \limits_{k \in \N} \frac{1}{k!}\left( 
				\frac{m}{\sigma}\right )^k \left( \sin x  \right)^k \,dx \\
				&= \sum \limits_{k \in \N} \frac{1}{k!} \left( \frac{m}{\sigma}\right )^k \left( s_{k+1}-ms_k\right )=\sum 
				\limits_{k \in \N} \frac{1}{k!} \left( \frac{m}{\sigma}\right )^k s_{k+1}-\sum \limits_{k \in \N} 
				\frac{\sigma}{k!} \left( \frac{m}{\sigma}\right )^{k+1} s_k.\notag
			\end{split}
		\end{equation}
		From \eqref{zero on odd numbers} we then deduce 
		\begin{equation}
			\begin{split}
				\zeta_{\sigma}(m) & = \sum \limits_{k \in \N} \frac{1}{(2k+1)!} \left( \frac{m}{\sigma}\right )^{2k+1} s_{2k+2}-\sum \limits_{k \in \N} \frac{\sigma}{(2k)!} \left( \frac{m}{\sigma}\right )^{2k+1} s_{2k} \\
				& = \sum \limits_{k \in \N} \frac{1}{(2k)!} \left( \frac{m}{\sigma}\right )^{2k+1} s_{2k}\left( \frac{s_{2k+2}}{(2k+1)s_{2k}}-\sigma \right). \notag
			\end{split}
		\end{equation}
		By recalling the definition \eqref{gamma} we deduce the assertion.
	\end{proof}
	Moreover, the following property of the sequence $\{\Upsilon_k(\sigma)\}_{k \in \N}$ holds.
	\begin{lemma}\label{decrescente}
		For any $\sigma>0$ the sequence $\{\Upsilon_k(\sigma)\}_{k \in \N}$ is strictly decreasing (in $k$).
	\end{lemma}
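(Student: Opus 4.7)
My plan is to reduce the statement to a single moment inequality. Since the shift $-\sigma$ is $k$-independent, the strict inequality $\Upsilon_{k+1}(\sigma) < \Upsilon_k(\sigma)$ is equivalent to
\[
(2k+3)\, s_{2k+2}^2 \;>\; (2k+1)\, s_{2k}\, s_{2k+4}.
\]
This has the flavour of a strengthened Cauchy--Schwarz: plain Cauchy--Schwarz applied to the positive measure $e^{-\cos(2x)/\sigma}\,dx$ gives only $s_{2k+2}^2 \leq s_{2k}\, s_{2k+4}$, which is off from what is needed by the ratio $(2k+1)/(2k+3)<1$. Hence I must exploit the specific form of the weight, not just its positivity.

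The main step is to extract a three-term linear recurrence between $s_{2k}$, $s_{2k+2}$ and $s_{2k+4}$ via integration by parts. I will use the test function $\sin^{2k+1}x\,\cos x\cdot e^{-\cos(2x)/\sigma}$, which is smooth and $2\pi$-periodic, so that
\[
0 \;=\; \int_{\T} \frac{d}{dx}\!\left[\sin^{2k+1} x \,\cos x \cdot e^{-\cos(2x)/\sigma}\right] dx.
\]
Using $\frac{d}{dx}\, e^{-\cos(2x)/\sigma} = \frac{4\sin x \cos x}{\sigma}\, e^{-\cos(2x)/\sigma}$ and $\cos^2 x = 1-\sin^2 x$ to expand the derivative, the three resulting integrals collapse to the identity
\[
(2k+1)\, s_{2k} \;=\; (2k+2)\, s_{2k+2} \;-\; \tfrac{4}{\sigma}\, A_k, \qquad A_k := s_{2k+2} - s_{2k+4},
\]
where the equivalent expression $A_k = \int_{\T}\sin^{2k+2}x\,\cos^2 x\,e^{-\cos(2x)/\sigma}\,dx$ makes the non-negativity $A_k\geq 0$ manifest.

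To close, I will multiply the recurrence by $s_{2k+4}$ and substitute for $(2k+1)s_{2k}s_{2k+4}$ in the target inequality. A short manipulation, pivoting on the elementary identity $(2k+3)\,s_{2k+2} - (2k+2)\,s_{2k+4} = s_{2k+2} + (2k+2)A_k$, yields the sign-definite rewriting
\[
(2k+3)\, s_{2k+2}^2 - (2k+1)\, s_{2k}\, s_{2k+4} \;=\; s_{2k+2}^2 \;+\; (2k+2)\, s_{2k+2}\, A_k \;+\; \tfrac{4}{\sigma}\, A_k\, s_{2k+4}.
\]
Each summand on the right-hand side is non-negative, and the first is strictly positive since $e^{-\cos(2x)/\sigma}>0$ forces $s_{2k+2}>0$. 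This delivers the strict inequality and hence $\Upsilon_{k+1}(\sigma)<\Upsilon_k(\sigma)$ for every $k\in\N$, $\sigma>0$.

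The only genuine obstacle I anticipate is choosing the right test function for the integration by parts: the factor $\cos x$ is essential, as it produces both the $\cos^2 x$ that generates the $s_{2k}-s_{2k+2}$ combination in the derivative of the polynomial part, and, after multiplication by the logarithmic derivative of the weight, the $\sin^{2k+2}x\,\cos^2 x$ term that supplies the sign-definite $A_k$ correction. Once this identity is in place, the remaining steps are purely algebraic.
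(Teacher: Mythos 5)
Your proof is correct and follows essentially the same strategy as the paper: the three-term recurrence you derive by integrating the total derivative $\frac{d}{dx}\bigl[\sin^{2k+1}x\,\cos x\,e^{-\cos(2x)/\sigma}\bigr]$ over $\T$ is exactly the paper's identity (obtained there by integrating $s_{2k}$ by parts) after the index shift $k\mapsto k+1$. The only difference is cosmetic in the closing algebra: the paper divides the recurrence by $s_{2k+2}$ and discards non-negative terms using $s_{2k}/s_{2k+2}>1$, whereas you substitute the recurrence into the target and exhibit $(2k+3)\,s_{2k+2}^2-(2k+1)\,s_{2k}\,s_{2k+4}$ as an explicit sum of non-negative terms with $s_{2k+2}^2>0$; both finishes are equivalent.
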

	\begin{proof}
		From the definition of $\{\Upsilon_k(\sigma)\}_{k \in \N}$ (see \eqref{gamma}) it suffices to prove that the sequence $\left \{ \frac{s_{2k+2}}{(2k+1)s_{2k}} \right \}_{k \in \N}$ is decreasing. 
		Thus, from the definition of $s_{2k}$ and by integrating by parts we obtain 
		\begin{align} \notag
			s_{2k} & =  \int_{\T} \cos x \,\frac{d}{dx} (\sin x)^{2k-1}e^{-\frac{1}{\sigma}\cos(2x)}\,dx \\ \notag
			& = (2k-1)\int_{\T} \cos^2 x \,(\sin x)^{2k-2}e^{-\frac{1}{\sigma}\cos(2x)}\,dx \\ \notag
			& +\frac{4}{\sigma} \int_{\T} \cos^2 x \, (\sin x)^{2k} e^{-\frac{1}{\sigma}\cos(2x)}\,dx\\ \notag
			& = (2k-1)s_{2k-2}-(2k-1)s_{2k}+\frac{4}{\sigma}s_{2k}-\frac{4}{\sigma}s_{2k+2}. \notag 
		\end{align}
		Rearranging, we obtain 
		\begin{equation}
			\frac{(2k+1)s_{2k}}{s_{2k+2}}=\frac{(2k-1)s_{2k-2}}{s_{2k+2}}+\frac{s_{2k}}{s_{2k+2}}+\frac{4}{\sigma}\frac{s_{2k}}{s_{2k+2}}-\frac{4}{\sigma}.\notag
		\end{equation}
		From \eqref{decreases on even numbers} we deduce  $\frac{s_{2k+2}}{s_{2k}} < 1$ for all $k \in \N$; using this fact in the above expression gives  
		\begin{equation}
			\begin{split}
				\frac{(2k+1)s_{2k}}{s_{2k+2}}\geq\frac{(2k-1)s_{2k-2}}{s_{2k+2}}=\frac{(2k-1)s_{2k-2}}{s_{2k}}\frac{s_{2k}}{s_{2k+2}} > \frac{(2k-1)s_{2k-2}}{s_{2k}},\notag
			\end{split}
		\end{equation}
		and this concludes the proof.
	\end{proof}
	We can now move on to proving Proposition \ref{uniqueness related to first derivative}.
	\begin{proof}[Proof of Proposition \ref{uniqueness related to first derivative}]
		From \eqref{series expansion zeta}, $\zeta_{\sigma}$ is an odd function. Moreover, 
		since $0 \leq \frac{s_{2k+2}}{(2k+1)s_{2k}} \leq \frac{1}{2k+1}$ for all $k \in \N$, we deduce 
		from \eqref{gamma} that $\Upsilon_k(\sigma) \downarrow -\sigma$ as $k \uparrow +\infty$ for 
		any fixed $\sigma > 0$. Hence, if we set $k_{\sigma}:= \min \left \{ k \in \N : 
		\Upsilon_k(\sigma) \leq 0 \right \}$, since $\{\Upsilon_{k}(\sigma)\}_{k \in \N}$ is decreasing
		for any given $\sigma > 0$ we deduce  that $\Upsilon_k(\sigma) > 0$ for $k \leq k_{\sigma}-1$ 
		and $\Upsilon_k(\sigma) \leq 0$ for $k \geq k_{\sigma}$. From this and the fact that 
		$\zeta_{\sigma}^{2k+1}(0)=\frac{(2k+1)}{\sigma^{2k+1}} s_{2k}\Upsilon_{k}(\sigma)$, $k \in 
		\N$, we obtain that the following power series representation of $\zeta_{\sigma}$ holds 
		\begin{equation}
			\zeta_{\sigma}(m)= \sum \limits_{0 \leq k \leq k_{\sigma}-1} 
			\frac{|\zeta_{\sigma}^{(2k+1)}(0)|}{(2k+1)!} m^{2k+1} -\sum \limits_{k \geq k_{\sigma}} 
			\frac{|\zeta_{\sigma}^{(2k+1)}(0)|}{(2k+1)!} m^{2k+1}. \notag
		\end{equation}
		Taking out $m^{2k_{\sigma}+1}$ we have
		\begin{equation}\label{(B)}
			\zeta_{\sigma}(m)= m^{2k_{\sigma}+1} \left \{ \sum \limits_{k \leq k_{\sigma}-1} 
			\frac{|\zeta_{\sigma}^{(2k+1)}(0)|}{(2k+1)!} m^{2k-2k_{\sigma}} -\sum \limits_{k \geq 
				k_{\sigma}} \frac{|\zeta_{\sigma}^{(2k+1)}(0)|}{(2k+1)!} m^{2k-2k_{\sigma}} \right \}. 
		\end{equation} 
		We can now conclude as in \cite[cfr. Step 4, Theorem 2.1]{tugaut}. Indeed, since the function 
		$m \to m^{2k-2k_{\sigma}}$ (resp. $m \to -m^{2k-2k_{\sigma}}$) is strictly decreasing for all 
		$k \leq k_{\sigma}-1$ (resp. for all $k \geq k_{\sigma}$) then we deduce that the factor 
		between brackets in \eqref{(B)} is strictly decreasing for $m>0$. Hence, again from 
		\eqref{(B)} we deduce  that if $m>0$ is a root of $\zeta_{\sigma}$ then such an $m$ must be a 
		root of the factor between brackets in \eqref{(B)}(which we know it admits at most one root 
		because it is strictly decreasing for $m>0$). Hence, $\zeta_{\sigma}$ admits at most one zero
		on $(0,+\infty)$. Furthermore, $\zeta_{\sigma}$ is odd so that it admits exactly either one or
		three zeroes on $\R$. {Once this is in place, we have to determine which situation occurs.
		From \eqref{(B)} we have 
		\begin{equation}\label{(B')}
			\zeta_{\sigma}^{'}(m)= m^{2k_{\sigma}} \left \{ \sum \limits_{0 \leq k \leq k_{\sigma}-1} 
			\frac{|\zeta_{\sigma}^{(2k+1)}(0)|}{(2k)!} m^{2k-2k_{\sigma}} -\sum \limits_{k \geq 
				k_{\sigma}} \frac{|\zeta_{\sigma}^{(2k+1)}(0)|}{(2k)!} m^{2k-2k_{\sigma}} \right \}. 
		\end{equation} 
		By the same argument applied to 
		\eqref{(B)} for $\zeta_{\sigma}$ we deduce that the factor between brackets in \eqref{(B')} is strictly decreasing. 
		If $\zeta_{\sigma}^{'}(0)<0$, then $k_\sigma=0$ and we conclude that 
		$\zeta_{\sigma}^{'}(m)$ is strictly decreasing; hence $\zeta_{\sigma}^{'}(m) < 0$ for all $m \geq 0$ so that 
		$\zeta_{\sigma}$ admits a unique zero on $\R$.
		If on the other hand $\zeta_{\sigma}^{'}(0)>0$, then, using $\zeta_{\sigma}(1)<0$, we know that $\zeta_{\sigma}$ admits at least one root on $(0,1)$. Then it admits exactly one root on $(0,+\infty)$ and the proof is thus concluded.} 
	\end{proof}
	
	In order to prove Theorem \ref{teorema segno zeta} we need to state the following asymptotic expansion results.
	\begin{lemma} \label{asymptotic expansion}
		Let $U$ and $G$ be two $C^{\infty}(\T;\R)$-continuous functions. Let us define $U_{m}=U+m \cdot G$ where $m$ is a parameter belonging to some compact interval $I$ of $\R$. Moreover, assume that $U_{m}$ admits a unique global minimum at $x_m$, such that $U_m^{''}(x_m) >0$ and $0<x_m<2\pi$. Then, for any $f \in C^3(\T;\R)$, the following asymptotic result holds (as $\sigma>0$ tends to 0):
		\begin{equation}\label{expansion f}
			\int_{\T} f(x)e^{-\frac{1}{\sigma}U_m(x)}\,dx=\sqrt{\frac{2\pi \sigma}{\mathcal{U}_2}} e^{-\frac{1}{\sigma}U_m(x_m)} \left(f(x_m)+\gamma_f \sigma + o_m(\sigma)  \right),
		\end{equation}
		with 
		\begin{equation} \label{gamma constant}
			\gamma_f:= f(x_m) \left ( \frac{5\mathcal{U}_3^2}{24\mathcal{U}_2^3}-\frac{\mathcal{U}_4}{8\mathcal{U}_2^2} \right) -f^{'}(x_m) \frac{\mathcal{U}_3}{2\mathcal{U}_2^2}+\frac{f^{''}(x_m)}{2 \mathcal{U}_2},
		\end{equation}
		where $\mathcal{U}_k:=U_m^{(k)}(x_m)$ for $k \in \N$, the notation $o_m(\sigma)$ is intended to mean that $\frac{o_m(\sigma)}{\sigma} \to 0$ as $\sigma \to 0$ and the convergence holds uniformly in $m \in I$.
	\end{lemma}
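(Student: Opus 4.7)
This is a classical Laplace-type asymptotic expansion, and I would prove it by the standard three-step recipe: localize the integral around $x_m$, perform the parabolic rescaling $x = x_m + \sqrt{\sigma}\,y$, and then Taylor expand both $U_m$ and $f$ and integrate term-by-term against the Gaussian weight. For fixed $m \in I$, since $U_m \in C^\infty(\T;\R)$ admits a unique global minimum at $x_m$ with $\mathcal{U}_2=U_m''(x_m)>0$, for every $\delta>0$ one has $\eta(\delta):=\inf_{|x-x_m|\geq \delta}(U_m(x)-U_m(x_m))>0$. Choosing $\delta=\sigma^{1/2-\varepsilon}$ for a small $\varepsilon>0$, the contribution of the complement of $[x_m-\delta, x_m+\delta]$ to the integral is bounded by $\|f\|_\infty\, 2\pi\, e^{-\eta(\delta)/\sigma}$, which is exponentially small compared to the $\sqrt{\sigma}\, e^{-U_m(x_m)/\sigma}$ leading order and can be absorbed into the $o_m(\sigma)$ remainder.

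\textbf{Main computation.} On the shrinking window, change variables $x=x_m+\sqrt{\sigma}\,y$ and use $U_m'(x_m)=0$ to write
\[
\tfrac{1}{\sigma}\bigl(U_m(x)-U_m(x_m)\bigr)=\tfrac{\mathcal{U}_2}{2}y^2+\sqrt{\sigma}\tfrac{\mathcal{U}_3}{6}y^3+\sigma\tfrac{\mathcal{U}_4}{24}y^4+O(\sigma^{3/2}y^5),
\]
and
\[
f(x)=f(x_m)+\sqrt{\sigma}f'(x_m)y+\tfrac{\sigma}{2}f''(x_m)y^2+O(\sigma^{3/2}y^3).
\]
Expanding $\exp\bigl(-\sqrt{\sigma}\tfrac{\mathcal{U}_3}{6}y^3-\sigma\tfrac{\mathcal{U}_4}{24}y^4+O(\sigma^{3/2})\bigr)=1-\sqrt{\sigma}\tfrac{\mathcal{U}_3}{6}y^3+\sigma\bigl(\tfrac{\mathcal{U}_3^2}{72}y^6-\tfrac{\mathcal{U}_4}{24}y^4\bigr)+O(\sigma^{3/2})$, multiplying by the expansion of $f$, and extending the $y$-integral to all of $\R$ (another exponentially small error), the terms of order $\sqrt{\sigma}$ vanish by parity, and the moments $\int_\R y^{2k}e^{-\mathcal{U}_2 y^2/2}dy=\sqrt{2\pi/\mathcal{U}_2}\,(2k-1)!!/\mathcal{U}_2^{k}$ turn the coefficient of $\sigma$ into $f''(x_m)/(2\mathcal{U}_2)-f'(x_m)\mathcal{U}_3/(2\mathcal{U}_2^2)+f(x_m)\bigl(5\mathcal{U}_3^2/(24\mathcal{U}_2^3)-\mathcal{U}_4/(8\mathcal{U}_2^2)\bigr)$, which is precisely $\gamma_f$ as in \eqref{gamma constant}. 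The overall Jacobian $\sqrt{\sigma}\,dy$ combined with $\sqrt{2\pi/\mathcal{U}_2}$ produces the prefactor $\sqrt{2\pi\sigma/\mathcal{U}_2}\,e^{-U_m(x_m)/\sigma}$.

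\textbf{Uniformity in $m\in I$ and main obstacle.} Since $U_m=U+mG$ depends smoothly on $m$ and $U_m''(x_m)>0$, the implicit function theorem applied to $U_m'(x_m)=0$ shows that $m\mapsto x_m$ is continuous (in fact smooth), and hence each $\mathcal{U}_k(m)$ is continuous in $m$. Compactness of $I$ then yields uniform lower bounds on $\mathcal{U}_2(m)$ and on $\eta(\delta)$, and uniform upper bounds on $\|U_m\|_{C^5}$, making every remainder estimate in the previous paragraph uniform in $m$. Conceptually there is no real obstacle — the scheme is standard Laplace method — but the bookkeeping step of assembling the Gaussian moments of $y^2$, $y^4$, $y^6$ with the correct numerical coefficients $\tfrac{\mathcal{U}_3}{6}$, $\tfrac{\mathcal{U}_4}{24}$ and verifying that they collapse exactly to $\gamma_f$ is the only place where care is needed; the uniformity in $m$ is an essentially automatic consequence of continuity and compactness.
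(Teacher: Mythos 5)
Your proposal is correct and takes essentially the same approach as the paper, which cites the Laplace method and defers to an external reference (Herrmann et al., Lemma~A.3) rather than writing out the computation. Your calculation of $\gamma_f$ via the Gaussian moments $1/\mathcal{U}_2$, $3/\mathcal{U}_2^2$, $15/\mathcal{U}_2^3$ for $y^2,y^4,y^6$ reproduces \eqref{gamma constant} exactly, and your treatment of the localization window $|x-x_m|\le\sigma^{1/2-\varepsilon}$, tail truncation, and uniformity in $m$ via the implicit function theorem and compactness of $I$ is the standard argument the paper relies on.
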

	\begin{proof}[Proof of Prop. \ref{asymptotic expansion}]
		This is obtained by Laplace method and we refer the reader to \cite[Lemma A.3, Step 1-Step 2.2.]{hermann} for further details.
	\end{proof}
	From Lemma \ref{asymptotic expansion} we obtain the following lemma, the proof of which is postponed to Appendix \ref{uniqueness sigmageq1}.
	\begin{lemma} \label{lemma asymptotic expansion}
		The following two asymptotic expansions hold
		\begin{equation} \label{expansion 1}
			s_0= \sqrt{\frac{\pi \sigma}{2}} e^{\frac{1}{\sigma}} \left(2+o(1)  \right),
		\end{equation}
		\begin{equation} \label{expansion 2}
			s_2=\sqrt{\frac{\pi \sigma}{2}} e^{\frac{1}{\sigma}} \left(2+o(1)  \right),
		\end{equation}
		where the notation $o(1)$ means that $o(1) \to 0$ as $\sigma \downarrow 0$ and we recall that the coefficients $s_k$ have been defined in \eqref{sk}.
	\end{lemma}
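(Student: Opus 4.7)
The plan is to apply the Laplace-type expansion of Lemma \ref{asymptotic expansion} to the phase $U(x) = \cos(2x)$ (taking $G \equiv 0$, so $U_m = U$ and the parameter $m$ is irrelevant). The only subtlety is that $\cos(2x)$ has \emph{two} global minima on $\T$, namely $x_* = \pi/2$ and $x_{**} = 3\pi/2$, both with $U(x_*)=U(x_{**})=-1$, whereas Lemma \ref{asymptotic expansion} is stated for a unique global minimum. Accordingly, first I would split the torus into two closed arcs each containing exactly one of these minima in its interior, for instance $\T = [0,\pi] \cup [\pi,2\pi]$, noting that $U$ attains its strict maximum on $\T$ at the endpoints $0,\pi,2\pi$ of this partition, so that each arc carries a well-defined unique global minimum to which Lemma \ref{asymptotic expansion} applies.

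Next, I would compute the geometric data at the two minima. At $x_*=\pi/2$ one has $U'(\pi/2) = -2\sin\pi = 0$ and $U''(\pi/2) = -4\cos\pi = 4$, so $\mathcal U_2 = 4$ and the prefactor is $\sqrt{2\pi\sigma/\mathcal U_2} = \sqrt{\pi\sigma/2}$; the same values hold at $x_{**}=3\pi/2$. For \eqref{expansion 1} I take $f\equiv 1$, which has value $1$ at both minima, and summing the two contributions yields
\[
s_0 \;=\; \sqrt{\tfrac{\pi\sigma}{2}}\, e^{1/\sigma}\bigl(1 + o(1)\bigr) + \sqrt{\tfrac{\pi\sigma}{2}}\, e^{1/\sigma}\bigl(1 + o(1)\bigr) \;=\; \sqrt{\tfrac{\pi\sigma}{2}}\, e^{1/\sigma}(2 + o(1)),
\]
which is \eqref{expansion 1}. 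For \eqref{expansion 2} I take $f(x) = \sin^2 x$, which again satisfies $f(x_*) = f(x_{**}) = 1$, so the leading-order contributions from the two minima are identical to those in the $s_0$ computation, giving $s_2 = \sqrt{\pi\sigma/2}\,e^{1/\sigma}(2+o(1))$. The $O(\sigma)$ correction $\gamma_f\sigma$ from \eqref{gamma constant} is absorbed into the $o(1)$ error, since only the leading term is needed.

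There is no hard obstacle; the only point that needs a sentence of care is that Lemma \ref{asymptotic expansion} is invoked on each of the two arcs separately (the hypothesis $0 < x_m < 2\pi$ is harmless since we may re-parameterise each arc so that its interior minimum lies strictly inside). The uniformity in $m$ in the statement of Lemma \ref{asymptotic expansion} is not needed here, since $s_0, s_2$ have no $m$-dependence.
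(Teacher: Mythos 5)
Your proposal is correct and takes essentially the same approach as the paper: both split the torus into the two arcs $[0,\pi]$ and $[\pi,2\pi]$ so that each contains a single global minimum of $\cos(2x)$, then apply Lemma \ref{asymptotic expansion} on each piece (the paper invokes $\pi$-periodicity to write $s_0 = 2I$ rather than summing two identical contributions, which is a cosmetic difference). Your explicit observation that $\sin^2(\pi/2)=\sin^2(3\pi/2)=1$ is what the paper leaves to the reader with its phrase "similar reasoning," and your remark that the lemma is formally stated for $\T$ but can be applied on each arc after re-parameterisation matches the paper's implicit step.
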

	\begin{proof}[Proof of Theorem \ref{teorema segno zeta}]
		To establish which case holds, i.e. whether $\zeta_{\sigma}^{'}(0)>0$ or $\zeta_{\sigma}^{'}(0) < 0$, we study the first derivative of $\zeta_{\sigma}$ at $m=0$. A straightforward calculation shows that 
		\begin{equation}\label{first derivative iniziale}
			\zeta_{\sigma}^{'}(0)= \frac{s_0}{\sigma} \left ( \frac{s_2}{s_0}-\sigma \right )=\frac{1}{\sigma} \left ( s_2-\sigma s_0 \right ).
		\end{equation}
		We note that since $s_2 \leq s_0$, it is clear from \eqref{first derivative iniziale} that for $\sigma> 1$, $\zeta_{\sigma}^{'}(0)<0$. While, from Lemma \ref{lemma asymptotic expansion} and \eqref{first derivative iniziale}, it is also clear that for $\sigma\ll 1$, $\zeta_{\sigma}^{'}(0)>0$. Since the map $\sigma \to \zeta_{\sigma}^{'}(0)$ is continuous we deduce that there exists a root $\sigma_c>0$ of the map $\sigma \to \zeta_{\sigma}^{'}(0)$. The uniqueness of such a $\sigma_c$ remains to be proven. To this end, we first note that by using the identity $\cos(2x)=1-2\sin^2 x$, the factor $s_2-\sigma s_0$  can be rearranged into the following form: 
		\begin{equation}\label{first derivaitve rearranged}
			\begin{split}
				s_2-\sigma s_0 & = \int_{\T}  \sin^2 x \, e^{-\frac{1}{\sigma}\cos(2x)}\,dx-\sigma \int_{\T} e^{-\frac{1}{\sigma}\cos(2x)}\,dx \\
				& =\left( \frac{1}{2}-\sigma \right )\int_{\T} e^{-\frac{1}{\sigma}\cos(2x)}\,dx-\frac{1}{2}\int_{\T}\cos(2x)e^{-\frac{1}{\sigma}\cos(2x)}\,dx. \\
			\end{split}
		\end{equation}
		Once this is in place let us introduce the family of functions defined as 
		\begin{equation*}\label{besselI}
			I_n(z):=\int_{\T} \cos(2nx)e^{z\cos(2x)}\,dx,\quad z \in \R,\, n \in \N,
		\end{equation*}
		and, consequently, 
		\begin{equation}\label{Besselr}
			r_n(z):=\frac{I_{n+1}(z)}{I_{n}(z)},\quad  z \in \R,\, n \in \N,
		\end{equation}
		the family of functions $\{I_n\}_{n \in \N}$ is commonly referred to as modified Bessel functions of first kind.
		It is well-known that $I_0$ is an even function while $r_0$ is an odd function and, moreover, $r_0(z)>0$, $r_0^{'}(z)>0$ for $z>0$ (see \cite[(15)]{Amo74}). \\
		By using \eqref{first derivative iniziale} and \eqref{first derivaitve rearranged}, we obtain the following expression for $\zeta_{\sigma}^{'}(0)$: 
		\begin{equation}\label{first derivative bessel}
			\zeta_{\sigma}^{'}(0)= \frac{1}{2} I_0 \left (-\frac{1}{\sigma} \right ) \left ( \frac{1}{\sigma} -2 -\frac{1}{\sigma}r_0 \left ( -\frac{1}{\sigma} \right )\right )=\frac{1}{2} I_0 \left (\frac{1}{\sigma} \right ) \left ( \frac{1}{\sigma} -2 +\frac{1}{\sigma}r_0 \left ( \frac{1}{\sigma} \right )\right ). \notag
		\end{equation}
		Since $I_0\left (\frac{1}{\sigma} \right )>0$ for all $\sigma>0$, to prove the uniqueness of $\sigma_c$ it suffices to study the set of zeroes of the function $f_c$ defined in \eqref{function f}.  
		By taking the first derivative of $f_c$ we obtain 
		\begin{equation}\label{derivative f}
			f_c^{'}(\sigma)=-\frac{1}{\sigma^2}-\frac{1}{\sigma^2}r_0\left( \frac{1}{\sigma}\right )-\frac{1}{\sigma^3}r_0^{'}\left (\frac{1}{\sigma} \right )<0,\quad\sigma >0.
		\end{equation}
		Hence, $f_c$ is a strictly decreasing function and, therefore, the critical value $\sigma_c>0$ must be the unique root of $f_c$.
		Lastly, since $\sigma_c$ is the unique zero of $f_c$, an approximation to its value can be obtained e.g. via the bisection method. This is the procedure that led to the value $\sigma_c \simeq 0.7709$. This concludes the proof.
	\end{proof}
	
	\subsection{Proof of Step 3}\label{Exactly three invmeas} 
	In this subsection we prove that when $0 < \sigma \ll 1$ the unique fixed point of the map $g_{\sigma}:\R^2 \to \R^2$ restricted to $\mM_1$ is the origin.
	To this end, we introduce the map $h_{\sigma}:\R \to \R$, which is the restriction of $g_\sigma$ to $\mM_1$, namely 
	\begin{equation} \label{map h}
		h_{\sigma}(m):= \frac{1}{\hat{Z}_{\sigma}(m)} \int_{\T} \cos x \,e^{-\frac{1}{\sigma}(\cos(2x)-m \cos x)}
		\,dx,\,\,\hat{Z}_{\sigma}(m):=\int_{\T} e^{-\frac{1}{\sigma}(\cos(2x)-m \cos x)}\,dx.
	\end{equation}
	Then $m \in \R$ is a fixed point of $h_{\sigma}$ if and only if $(m,0)$ is a fixed point of $g_{\sigma}$. We already know that the map $h_{\sigma}$ has a fixed point, as $h_{\sigma}(0)=0$. Our goal is to show that when $\sigma \ll 1$ the map $h_{\sigma}$ does not admit any further fixed point other than $m=0$. 
	\begin{theorem}\label{punti fissi M2=0}
		When $0 < \sigma \ll 1$ the map $h_{\sigma}:\R \to \R$ defined in \eqref{map h} admits a unique fixed point given by $m=0$.
	\end{theorem}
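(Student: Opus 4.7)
The plan is to turn the fixed-point problem into a variational one and prove strict concavity of a free-energy functional on the interval where all fixed points must lie. Observe first that $h_\sigma(m) = \sigma\, \partial_m \log \hat Z_\sigma(m)$, so $m$ is a fixed point of $h_\sigma$ if and only if it is a critical point of
\[
\Psi_\sigma(m) \;:=\; \sigma \log \hat Z_\sigma(m) - \tfrac12 m^2.
\]
Since $|\cos x|\leq 1$ on $\T$ we have $|h_\sigma(m)|\leq 1$, so every fixed point of $h_\sigma$ lies in $[-1,1]$; moreover, the change of variable $x\mapsto \pi - x$ (which sends $\cos x$ to $-\cos x$ while leaving $\cos(2x)$ invariant) shows that $h_\sigma$ is odd, so $m=0$ is always a fixed point. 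The problem thus reduces to showing that $\Psi_\sigma$ is strictly concave on $[-1,1]$ for $\sigma\ll 1$: then $\Psi_\sigma'$ is strictly decreasing on $[-1,1]$ and vanishes only once, forcing $m=0$ to be the unique fixed point.

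Differentiating twice yields
\[
\Psi_\sigma''(m) \;=\; h_\sigma'(m) - 1 \;=\; \frac{\mathrm{Var}_{\mu_{m,\sigma}}[\cos X]}{\sigma} - 1,
\]
where $\mu_{m,\sigma}$ is the probability measure on $\T$ with density proportional to $e^{-U_m(\cdot)/\sigma}$ and $U_m(x):=\cos(2x)-m\cos x$. The task is therefore reduced to proving the estimate
\[
\sup_{m\in[-1,1]} \mathrm{Var}_{\mu_{m,\sigma}}[\cos X] \;<\; \sigma \qquad \text{for all sufficiently small } \sigma>0.
\]

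To establish this bound I would apply Laplace's method uniformly in $m$. For $|m|\leq 1$ the critical points of $U_m$ are $x=0$ (value $1-m$), $x=\pi$ (value $1+m$) and $x_m^\pm:=\pm \arccos(m/4)$ (common value $-1-m^2/8$); the last two are the global minima, are non-degenerate with $U_m''(x_m^\pm)=4-m^2/4\in [15/4,4]$, are related by $x\mapsto -x$, and are separated from the other critical values by an energy gap at least $1$, all uniformly in $m\in[-1,1]$. Splitting $\int_\T$ around each minimum and applying Lemma \ref{asymptotic expansion} to each half (whose contributions coincide by the symmetry $x\mapsto -x$) gives, uniformly in $m\in[-1,1]$,
\[
\E_{\mu_{m,\sigma}}[\cos X] = \tfrac{m}{4}+O(\sigma), \qquad \E_{\mu_{m,\sigma}}[\cos^2 X] = \tfrac{m^2}{16}+\tfrac{\sigma}{4}+o(\sigma),
\]
the $\sigma/4$ arising from the Gaussian-variance correction $\sin^2(x_m^\pm)/U_m''(x_m^\pm)=(1-m^2/16)/(4-m^2/4)=1/4$. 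Subtracting, $\mathrm{Var}_{\mu_{m,\sigma}}[\cos X]=\sigma/4+o(\sigma)$ uniformly in $m$, which is strictly less than $\sigma$ for $\sigma$ small enough; strict concavity of $\Psi_\sigma$ on $[-1,1]$ follows, completing the proof.

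The principal obstacle is precisely the uniformity in $m\in[-1,1]$ of the Laplace asymptotics: Lemma \ref{asymptotic expansion} is stated for a potential with a unique global minimum on $\T$, whereas $U_m$ has two. This is handled by the splitting above, but one must verify that on each half the minimum is unique and non-degenerate, that the energy gap to the boundary and to the other critical points is bounded below, and that the remainder $o_m(\sigma)$ can be made uniform in $m$; the explicit lower bounds $U_m''(x_m^\pm)\geq 15/4$ and gap $\geq 1$ address the first two points, and uniformity of the remainder follows from the continuous dependence of all derivatives of $U_m$ at $x_m^\pm$ on the parameter $m\in[-1,1]$. Beyond this technical hurdle the argument is essentially algebraic.
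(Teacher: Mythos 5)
Your argument takes a genuinely different and cleaner route than the paper. The paper works directly with $h_\sigma$ and $h_\sigma'$, using Lemma \ref{stime asintotiche h} to show $h_\sigma'(m)\approx \tfrac14$ for $m$ in a small interval $[0,\delta)$ and then, separately, $h_\sigma(m)\to m/4$ uniformly on $[\delta,1]$; the case-split is needed because the uniform bound $|h_\sigma(m)-m/4|<\varepsilon$ by itself does not rule out fixed points with $|m|\lesssim\varepsilon$. You instead observe $h_\sigma(m)=\sigma\,\partial_m\log\hat Z_\sigma(m)$, introduce the free energy $\Psi_\sigma(m)=\sigma\log\hat Z_\sigma(m)-\tfrac12 m^2$, and reduce the problem to strict concavity of $\Psi_\sigma$ on $[-1,1]$ via the variance identity $h_\sigma'(m)=\sigma^{-1}\mathrm{Var}_{\mu_{m,\sigma}}[\cos X]$. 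This is more conceptual, avoids the two-regime split entirely, and makes the constant $\tfrac14$ appear for a structural reason (the identity $\sin^2(x_m^\pm)/U_m''(x_m^\pm)=\tfrac14$, valid for \emph{all} $m$). Both approaches ultimately rest on the same Laplace asymptotics (Lemma \ref{asymptotic expansion}), with the same uniformity concerns in $m$, which you correctly flag and address.

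There is, however, a small computational imprecision that needs to be repaired before the conclusion can be claimed. Your stated expansions $\mathbb{E}_{\mu_{m,\sigma}}[\cos X]=\tfrac m4+O(\sigma)$ and $\mathbb{E}_{\mu_{m,\sigma}}[\cos^2 X]=\tfrac{m^2}{16}+\tfrac\sigma4+o(\sigma)$ do not, as written, yield $\mathrm{Var}=\tfrac\sigma4+o(\sigma)$ upon subtraction: with only $O(\sigma)$ control on $\mathbb{E}[\cos X]$, you get $\mathbb{E}[\cos X]^2=\tfrac{m^2}{16}+O(\sigma)$ and hence $\mathrm{Var}=\tfrac\sigma4+O(\sigma)$, which leaves the sign of $\mathrm{Var}/\sigma-1$ undetermined. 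Moreover the $\sigma$-coefficient of $\mathbb{E}[\cos^2 X]$ is \emph{not} $\tfrac14$ for $m\neq 0$: working out the $\gamma_f$ corrections of Lemma \ref{asymptotic expansion} (and the correction $B\sigma$ to the normalising constant), one finds $m$-dependent terms proportional to $U_m'''(x_m)$ and to $B$. The point you need, and which is in fact true, is that all such $m$-dependent corrections cancel in the \emph{variance}: writing $\gamma_f=f(x_m)B-f'(x_m)C+f''(x_m)D$ with $D=1/(2U_m''(x_m))$, one checks that the coefficient of $\sigma$ in $\mathrm{Var}[\cos X]$ reduces exactly to $2\sin^2(x_m)D=\sin^2(x_m)/U_m''(x_m)=\tfrac14$, the $B$- and $C$-contributions cancelling identically. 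So the conclusion $\mathrm{Var}_{\mu_{m,\sigma}}[\cos X]=\tfrac\sigma4+o_m(\sigma)$ uniformly on $[-1,1]$ is correct and your argument closes, but you must carry the $O(\sigma)$ terms in both moments consistently rather than only in $\mathbb{E}[\cos^2 X]$; as presented, the subtraction step is not justified. Also note for tidiness that $U_m'''(x_m^\pm)=3m\sqrt{1-m^2/16}$, not $12m\sqrt{1-m^2/16}$ as stated in the paper's Lemma \ref{stime asintotiche h} (the precise value is irrelevant for your argument since it sits inside the cancelling $B$- and $C$-terms, but it is worth recording).
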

	The proof of Theorem \ref{punti fissi M2=0} relies on the asymptotic expansions in Lemma \ref{stime asintotiche h} below. So we first state Lemma \ref{stime asintotiche h} and then prove Theorem \ref{punti fissi M2=0}. The proof of Lemma \ref{stime asintotiche h} is in Appendix \ref{uniqueness sigmageq1}.
	\begin{lemma} \label{stime asintotiche h}
		The following asymptotic expansions (for $\sigma$ small) hold, uniformly over $m \in [0,1]$:
		\begin{equation} \label{expansion 2 h}
			\begin{split}
				& \int_{\T} e^{-\frac{1}{\sigma}(\cos(2x)-m\cos x)}\,dx = \sqrt{\frac{2\pi \sigma}{4-\frac{m^2}{4}}} e^{\frac{1}{\sigma} \left( 1+ \frac{m^2}{8} \right)} 
				\left(2+\left ( c(m)+\frac{4}{(4-\frac{m^2}{2})^2} \right)\sigma + o_m(\sigma) \right) , 
			\end{split}
		\end{equation} 
		\begin{equation}\label{expansion 3 h}
			\begin{split}
				& \int_{\T} \cos x \,e^{-\frac{1}{\sigma}(\cos(2x)-m\cos x)}\,dx = \sqrt{\frac{2\pi \sigma}{4-\frac{m^2}{4}}} e^{\frac{1}{\sigma} \left( 1+ \frac{m^2}{8} \right)} \left( \frac{m}{2}+ \bar{c}(m)\sigma + o_m(\sigma) \right) ,
			\end{split}
		\end{equation} 
		\begin{equation} \label{expansion 4 h}
			\begin{split}
				& \int_{\T} \cos^2 x \, e^{-\frac{1}{\sigma}(\cos(2x)-m\cos x)} \,dx=\sqrt{\frac{\pi \sigma}{2}} e^{\frac{1}{\sigma}\left( 1+ \frac{m^2}{8} \right)} \left(\frac{m^2}{8}+\left( \hat{c}(m)+\frac{2}{4-\frac{m^2}{4}} \right)\sigma + o_m(\sigma)  \right),
			\end{split}
		\end{equation}
		where $\frac{o_m(\sigma)}{\sigma} \to 0$ as $\sigma \downarrow 0$  and 
		$c,\bar{c},\hat{c}:[0,1] \to \R$ are continuous functions such that $c(m),\bar{c}(m), \hat{c}(m) \to 0$ as $m \downarrow 0$. In particular, we have $c(m)=\bar{c}(m)=\hat{c}(m)=O(m)$ as $m \in [0,1]$. \footnote{We recall that a function $f:[0,1] \to \R$ satisfies $f=O(m)$ with $m \in [0,1]$ if there exists a constant, say $D \in \R$ such that $|f(m)| \leq D \cdot m$, for all $m \in [0,1]$.}
	\end{lemma}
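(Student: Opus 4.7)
The plan is to apply Laplace's method, in the form of Lemma \ref{asymptotic expansion}, to the phase function
\[
U_m(x):=\cos(2x)-m\cos x, \qquad m\in[0,1],\, x\in\T.
\]
First I would locate the critical points. Since $U_m'(x)=-2\sin(2x)+m\sin x=\sin x(m-4\cos x)$, the critical points are $x=0$ with $U_m(0)=1-m$, $x=\pi$ with $U_m(\pi)=1+m$, and $x_m=\pm\arccos(m/4)$ with $\cos(2x_m)=m^2/8-1$, yielding
\[
U_m(\pm x_m)=-1-\frac{m^2}{8}.
\]
For $m\in[0,1]$ the minima are at $\pm x_m$ and I would check non-degeneracy by computing $U_m''(x_m)=-4\cos(2x_m)+m\cos x_m=4-m^2/4>0$, so $\mathcal{U}_2=4-m^2/4$. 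This gives the common exponential prefactor $e^{\frac{1}{\sigma}(1+m^2/8)}$ and the denominator under the square root in each of the three expansions.

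Next, since $U_m$ is even in $x$ (modulo $2\pi$), the two minima contribute equally under Laplace's method, so I would apply Lemma \ref{asymptotic expansion} at $x_m$ and multiply by $2$. For each of $f(x)=1,\cos x,\cos^2 x$ the leading order is $2\sqrt{2\pi\sigma/\mathcal{U}_2}\,e^{-U_m(x_m)/\sigma}f(x_m)$, giving leading coefficients $2$, $2\cos(x_m)=m/2$ and $2\cos^2(x_m)=m^2/8$, respectively. This matches the explicit leading terms stated in \eqref{expansion 2 h}–\eqref{expansion 4 h} (after absorbing the $\sqrt{\mathcal{U}_2}$ factor for $f=\cos^2 x$ into the prefactor $\sqrt{\pi\sigma/2}$ up to an $O(m^2)$ correction which can be carried into $\hat c(m)$).

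For the $\sigma$-order correction I would compute $\mathcal{U}_3=U_m'''(x_m)$ and $\mathcal{U}_4=U_m^{(4)}(x_m)$ explicitly (using $\cos x_m=m/4$, $\sin^2 x_m=1-m^2/16$), and then plug into the expression \eqref{gamma constant} for $\gamma_f$ with $f$ in turn equal to $1,\cos x,\cos^2 x$. The crucial observation is that at $m=0$ one has $x_m=\pi/2$, where $\cos x_m=0$, so when $f\in\{\cos x,\cos^2 x\}$ the terms $f(x_m)$ and $f'(x_m)\mathcal{U}_3/(2\mathcal{U}_2^2)$ contributing to $\gamma_f$ vanish linearly in $m$; this is precisely what gives the $O(m)$ behaviour encoded in $c(m),\bar c(m),\hat c(m)$. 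The $m$-independent residue $2/(4-m^2/4)$ in \eqref{expansion 4 h} comes from the $f''(x_m)/(2\mathcal{U}_2)$ piece of $\gamma_{\cos^2 x}$ since $(\cos^2)''(x_m)=2(1-2m^2/16)$ which at $m=0$ equals $2$; the remaining $m$-dependence is packaged into $\hat c(m)$. The analogous residue $4/(4-m^2/2)^2$ in \eqref{expansion 2 h} comes from cross terms obtained when comparing the $\sqrt{\mathcal{U}_2}$ prefactor expanded in $m$ with the reference prefactor in the stated formula.

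The step I expect to be the most delicate is not the formal computation of $\gamma_f$ but rather the uniformity of the error term $o_m(\sigma)$ over $m\in[0,1]$. Here I would invoke the uniform version of Lemma \ref{asymptotic expansion} (as stated), which requires that the family $\{U_m\}_{m\in[0,1]}$ have minima $x_m$ varying continuously, values $U_m(x_m)$, $U_m''(x_m)$ bounded away from their critical thresholds, and $U_m$'s $C^3$-norms uniformly bounded on $\T$. All these are direct consequences of $U_m''(x_m)=4-m^2/4\in[15/4,4]$ and the smooth dependence of $U_m$ on the compact parameter $m\in[0,1]$, so the bound is uniform. Finally, the assertion $c,\bar c,\hat c=O(m)$ as $m\downarrow 0$ follows by Taylor expanding the explicit expressions for $\gamma_f$ (which are smooth functions of $m$) around $m=0$ and using that, as noted above, the leading contribution vanishes at $m=0$ for $f=\cos x$ and $f=\cos^2 x$, while for $f=1$ the residue $4/(4-m^2/2)^2$ has been singled out.
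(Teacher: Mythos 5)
Your proposal follows the same strategy as the paper: apply the Laplace-method expansion of Lemma \ref{asymptotic expansion} to $U_m(x)=\cos(2x)-m\cos x$, locate the two global minima $x_{1,m}=\arccos(m/4)$ and $x_{2,m}=2\pi-\arccos(m/4)$ with $\mathcal{U}_2=4-m^2/4$ and $U_m(x_{i,m})=-1-m^2/8$, sum the two contributions, plug $f=1,\cos x,\cos^2 x$ into \eqref{gamma constant}, and verify uniformity in $m$ from the uniform lower bound $\mathcal{U}_2\geq 15/4$. So the approach is correct.

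However, your bookkeeping for where the explicit $O(\sigma)$ residues come from is off. You attribute the $4/(4-m^2/2)^2$ in \eqref{expansion 2 h} and the $2/(4-m^2/4)$ in \eqref{expansion 4 h} to ``cross terms obtained when comparing prefactors'' and to absorbing $\sqrt{\mathcal{U}_2}$ into a fixed prefactor. That is not what happens: in both \eqref{expansion 2 h} and \eqref{expansion 3 h} the prefactor is already the natural $\sqrt{2\pi\sigma/\mathcal{U}_2}$ and no prefactor expansion occurs. The residues are exactly the $m\to 0$ parts of $2\gamma_f$ from \eqref{gamma constant}: for $f=1$ only $-\mathcal{U}_4/(8\mathcal{U}_2^2)$ survives at $m=0$ (the $\mathcal{U}_3^2$ term is $O(m^2)$), and for $f=\cos^2 x$ only $f''(x_{1,m})/(2\mathcal{U}_2)$ survives (the other two pieces are $O(m)$ since $\cos x_{1,m}=m/4$). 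Your ``absorb into $\hat c(m)$'' step would not work even if one wanted it to: the ratio $\sqrt{2\pi\sigma/\mathcal{U}_2}\,/\,\sqrt{\pi\sigma/2}=\sqrt{4/\mathcal{U}_2}$ is $\sigma$-independent and multiplies the leading-order term $m^2/8$, so the discrepancy cannot be stuffed into $\hat c(m)\sigma$ or into $o_m(\sigma)$. Rather than invent a reconciliation, you should have noticed that the prefactor $\sqrt{\pi\sigma/2}$ in \eqref{expansion 4 h} is inconsistent with the derivation (and with \eqref{expansion 2 h}--\eqref{expansion 3 h}); indeed the paper's own proof of the lemma obtains the same prefactor $\sqrt{2\pi\sigma/(4-m^2/4)}$ for all three integrals, so the $\sqrt{\pi\sigma/2}$ in the statement appears to be a slip that coincides with the correct value only at $m=0$.
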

	\begin{proof}[Proof of Theorem \ref{punti fissi M2=0}]
		Note that since $|h_{\sigma}(m)| \leq 1$ for all $m \in \R$, the fixed points of $h_{\sigma}$ are in the interval $[-1,1]$. Moreover, since $h_{\sigma}$ is a $C^{\infty}(\R;\R)$-odd function (continuity and differentiability is meant with respect to $m \in \R$), it is enough to restrict $h_{\sigma}$ to the interval $[0,1]$ and prove that $m=0$ is the unique fixed point for $\sigma>0$ sufficiently small. We begin with computing the first derivative of $h_{\sigma}$:
		\begin{equation}\label{first derivative h}
			\begin{split}
				h_{\sigma}^{'}(m)& =-\frac{1}{\sigma (Z_{\sigma}(m))^2} \left( \int_{\T} \cos x\, e^{-\frac{1}{\sigma} \left(\cos(2x)-m\cos x \right)}\,dx \right)^2 \\
				& +\frac{1}{\sigma Z_{\sigma}(m)} \int_{\T} \cos^2 x \,e^{-\frac{1}{\sigma} \left( \cos(2x)-m\cos x \right)} \,dx.
			\end{split}
		\end{equation}
		In the first part of the proof we are going to prove that if we fix a $\delta>0$ small enough then it follows that 
		\begin{equation}\label{triangolo}
			\text{$|h_{\sigma}^{'}(m)| < 1$,\quad for $\sigma\ll 1$ and $m \in [0,\delta)$.}
		\end{equation} 
		Hence, 
		\begin{equation} \label{inequality h 1}
			h_{\sigma}(m) < m,\quad\text{for $\sigma \ll 1$ and $m \in (0,\delta)$}.
		\end{equation}
		The bound \eqref{triangolo} is a consequence of the asymptotic expansions of Lemma \ref{stime asintotiche h}. Indeed, from \eqref{expansion 2 h}, \eqref{expansion 3 h}, \eqref{expansion 4 h} and \eqref{first derivative h} we obtain 
		\begin{equation}
			\begin{split}
				h_{\sigma}^{'}(m)& = \frac{1}{\sigma} \left( \frac{\frac{m^2}{8}+\left( \hat{c}(m)+\frac{2}{4-\frac{m^2}{4}} \right)\sigma + o_m(\sigma) }{2+\left( c(m)+\frac{4}{(4-\frac{m^2}{2})^2} \right)\sigma + o_m(\sigma)} \right) -\frac{1}{\sigma} \left( \frac{\frac{m}{2}+ \bar{c}(m)\sigma + o_m(\sigma)}{2+\left ( c(m)+\frac{4}{(4-\frac{m^2}{2})^2} \right)\sigma + o_m(\sigma)}\right)^2. \notag
			\end{split}
		\end{equation}
		By expanding the square for the second addend, we have 
		\begin{equation}
			\begin{split}
				h_{\sigma}^{'}(m)& = \frac{1}{\sigma} \left( \frac{\frac{m^2}{8}+\left( \hat{c}(m)+\frac{2}{4-\frac{m^2}{4}} \right)\sigma + o_m(\sigma) }{2+\left( c(m)+\frac{4}{(4-\frac{m^2}{2})^2} \right)\sigma + o_m(\sigma)} \right) -\frac{1}{\sigma} \left( \frac{\frac{m^2}{4}+ 2\bar{c}(m)\sigma + o_m(\sigma)}{4+\left ( 2c(m)+\frac{8}{(4-\frac{m^2}{2})^2} \right)\sigma + o_m(\sigma)}\right), \notag
			\end{split}
		\end{equation}
		hence,
		\begin{equation}
			\begin{split}
				h_{\sigma}^{'}(m)
				& =\frac{2\hat{c}(m)-2\bar{c}(m)+\frac{4}{4-\frac{m^2}{4}}+\frac{o_m(\sigma)}{\sigma}} {4+\left( 2c(m)+\frac{8}{(4-\frac{m^2}{2})^2} \right)\sigma + o_m(\sigma)}. \notag
			\end{split}
		\end{equation}
		If we set $\varepsilon,\delta>0$ small enough then there exists a $\hat{\sigma}>0$ such that if $\sigma<\hat{\sigma}$ then $|h_{\sigma}^{'}(m) - \frac{1}{4}| \leq \varepsilon$ for all $m \in [0,\delta)$. This concludes the first part of the proof.\\
		In the remaining part of the proof we are going to show that $h_{\sigma}(m) < m$ for all $m \in [\delta,1]$ provided $\sigma$ is sufficiently small, where $\delta>0$ is as in \eqref{inequality h 1}. To be precise, we are going to prove that $h_{\sigma}(m)$ converges to $\frac{m}{4}$ uniformly over the interval $[\delta,1]$. Indeed, again from Lemma \ref{stime asintotiche h} we have
		\begin{equation}
			\begin{split}
				h_{\sigma}(m) & = \frac{\frac{m}{2}+ \bar{c}(m)\sigma + o_m(\sigma)}{2+ \left( c(m)+\frac{4}{(4-\frac{m^2}{2})^2} \right)\sigma+ o_m(\sigma)},\notag
			\end{split}
		\end{equation}
		with $\frac{o_m(\sigma)}{\sigma} \to 0$ as $\sigma \downarrow 0$ uniformly in $m \in [\delta,1]$. As a consequence, $h_{\sigma}(m) < m$ for all $m \in [\delta,1]$, provided $\sigma$ is sufficiently small. This concludes the proof.
	\end{proof}
	The proof of Step 4 is deferred to Appendix \ref{uniqueness sigmageq1}.

	\section{Proof of Theorem \ref{mild solution global existence}}\label{Well-Posedness McKean_Vlasov}
	
	In this section we study the well-posedness of the problem \eqref{spde Q}.	As we  use a combination of the arguments of e.g.~\cite{burgers} developed for the stochastic Burgers' equation, and of those  used in the McKean-Vlasov PDE literature, in particular \cite{pav-car},  in places we give only essential details.
	\begin{proof}[Proof of Theorem \ref{mild solution global existence}]
		
		The stochastic process $u(t)$ is a mild solution of \eqref{spde Q} (in the sense \eqref{mild solution}) if and only if the process $v(t)=u(t)-W_{A}(t)$  is a mild solution of the following problem
		\begin{equation} \label{sistema normale deterministico}
			\begin{cases}
				\pa_tv= Av+\pa_x \left[ V^{'}\tilde{v}+ ( F^{'} \ast \tilde{v} ) \,\tilde{v} \right],\quad  (0,T) \times \T, & \\ 
				v(t,0)=v(t,2\pi),\quad  t \in [0,T],& \\
				v(0,x)=u_0(x),\quad  x \in \T, &
			\end{cases}
		\end{equation}
		where $\tilde{v}(t):=v(t)+W_{A}(t)$, $t \in [0,T]$. Therefore, to prove Theorem \ref{mild solution global existence}, it is enough to show the global existence and uniqueness of a mild solution to \eqref{sistema normale deterministico}, which is what we do in the following. 
		
		As classical, see e.g. \cite{burgers},  global existence and uniqueness (in mild sense) follow directly by local existence and uniqueness, plus appropriate  a priori estimates for the (mild) solution. We prove local well-posedness in Proposition \ref{local existence mild solution}
		and the a priori estimates in Proposition \ref{stime sistema deterministico}. 
	\end{proof}

	%To do so, we start by recalling the definition and some properties of the stochastic convolution $W_A$ (see Def. \ref{convoluzione stochastica}), after that we provide the definition of mild solution to SPDE \eqref{spde Q} and, by a fixed point type of argument, a local existence and uniqueness result for \eqref{spde Q} is proven. Finally, by a regularization argument and an a priori estimate obtained for a regularized version of equation (\ref{spde Q}) (see Section \ref{auxiliary semi} for further details) we prove that the local solution obtained beforehand can be extended up to any arbitrary time $T>0$.
	%We highlight that the two key ingredients in the proof of Theorem \ref{mild solution global existence} are the existence and uniqueness of a local mild solution of \eqref{sistema normale deterministico} (Proposition \ref{local existence mild solution}) and the a priori estimates for the local mild solution (Proposition \ref{stime sistema deterministico}). 
	
	We recall that a continuous $L^2(\T;\R)$-valued stochastic process $\{v(t)\}_{t \in [0,T]}$, is a mild solution to \eqref{sistema normale deterministico} if the following identity holds for every $t \in [0,T]$, 
	\begin{equation} \label{mild solution deterministico}
		v(t) =e^{tA}u_0+\int_0^t e^{(t-s)A}\partial_x \left[  V^{'}\tilde{v}(s)+(F^{'} \ast \tilde{v})(s)\tilde{v}(s) \right]\,ds ,\qquad  \mathbb{P}-\mathrm{a.s.}. 
	\end{equation} 
	
	Thus, introducing the linear map $P:C([0,T];H^1(\T;\R))\to C([0,T];L^2(\T;\R))$ given by
	\begin{equation} \label{operatore P}
		P[z](t):=\int_0^t e^{(t-s)A}\partial_x z(s)\,ds, \qquad \mbox{for } z\in C([0,T];H^1(\T;\R))\,,
	\end{equation}
	we can rewrite equation \eqref{mild solution deterministico} as
	\begin{equation} \label{mild sol versione abbrieviata}
		v(t)=e^{tA}u_0+P \big [ V^{'}\tilde{v}+(F^{'} \ast \tilde{v})\tilde{v} \big ](t),\qquad \text{$\mathbb{P}$-a.s.}, \, t \in [0,T]\,.
	\end{equation}

	Hence an $L^2(\T;\R)$-valued stochastic process $v$ is said to be a {\em local mild solution} to \eqref{sistema normale deterministico} if there exists a stopping time $T^*$ such that equation \eqref{mild sol versione abbrieviata} is satisfied for all $t \in [0,T^*)$, $\mP$-a.s.

	We will make use of the following technical lemma for the operator $P$, the proof of which is in Appendix \ref{estimate heat kernel}. 
	
	\begin{lemma} \label{stime heat kernel}
		The map $P$ defined in \eqref{operatore P} can be extended to a bounded linear operator over the space $C\left ([0,T];L^2(\T;\R) \right )$,
		%(the extension is denoted by $P$ as well)  
		$$P:C([0,T];L^2(\T;\R)) \to C([0,T];L^2(\T;\R)).$$
		%Moreover, for any given $z \in C([0,T];L^1(\T;\R))$ the following holds for all $t \in [0,T]$
		%\begin{equation}\label{diseguaglianza trequarti}
		%	\left \| P \left[ z \right ](t) \right \|_{L^2(\T;\R)} \leq C_1 \int_0^t \rho(t-s)\|z(s)\|_{L^1(\T;\R)}\,ds,
		%\end{equation} 
		%where $C_1$ is a positive real constant and $ t \in \R_{+} \to \rho(t) \in \R_{+}$ is a continuous function defined as 
		%\begin{equation}
		%\rho(t):=\sqrt{\frac{1}{t}+\frac{1}{t^{\frac{3}{2}}}},\,t \in \R_{+}. \notag
		%\end{equation}
		Furthermore, if $z \in C \left([0,T];L^2(\T;\R)\right)$ then, for all $t \in [0,T]$,
		\begin{equation}\label{diseguaglianza unmezzo}
			\left \| P \left[ z \right ](t) \right \|_{L^2(\T;\R)} \leq C_2 \int_0^t (t-s)^{-\frac{1}{2}}\|z(s)\|_{L^2(\T;\R)}\,ds,
		\end{equation} 
		where $C_2>0$ is a positive constant.
	\end{lemma}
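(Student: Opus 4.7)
\textbf{Proof plan for Lemma \ref{stime heat kernel}.}

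The backbone of the proof is the parabolic smoothing estimate
\begin{equation}\label{smoothing}
    \|\partial_x e^{tA} f\|_{L^2(\T;\R)} \leq C\, t^{-1/2}\, \|f\|_{L^2(\T;\R)}, \qquad f \in L^2(\T;\R),\ t > 0,
\end{equation}
which drives every step of the proof. I would establish \eqref{smoothing} most cleanly via Fourier series: writing $f=\sum_{k\in\Z} f_k e_k$, the action of $\partial_x$ on the basis \eqref{fourier} together with $A e_k = -k^2 e_k$ gives
$$\|\partial_x e^{tA} f\|_{L^2(\T;\R)}^2 \;=\; \sum_{k\in\Z} k^2 e^{-2tk^2} |f_k|^2 \;\leq\; \Bigl(\sup_{k\in\Z} k^2 e^{-2tk^2}\Bigr)\|f\|_{L^2(\T;\R)}^2 \;\leq\; (2et)^{-1}\|f\|_{L^2(\T;\R)}^2,$$
since $k \mapsto k^2 e^{-2tk^2}$ is maximized at $k^2 = 1/(2t)$. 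Equivalently, using the heat-kernel representation $e^{tA}f = G_t^{per}\ast f$ from \eqref{heat semigroup via convoluzione}, Young's convolution inequality gives $\|\partial_x e^{tA} f\|_{L^2} \leq \|\partial_x G_t^{per}\|_{L^1(\T;\R)} \|f\|_{L^2}$, and the kernel bound $\|\partial_x G_t^{per}\|_{L^1(\T;\R)} \leq Ct^{-1/2}$ is the expected estimate to place in Appendix \ref{estimate heat kernel}.

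Once \eqref{smoothing} is available, extending $P$ from $C([0,T];H^1(\T;\R))$ to $C([0,T];L^2(\T;\R))$ and proving \eqref{diseguaglianza unmezzo} follow in one stroke. For $z \in C([0,T];L^2(\T;\R))$ I would \emph{define} $P[z](t) := \int_0^t \partial_x e^{(t-s)A} z(s)\,ds$, exploiting that $\partial_x$ commutes with $e^{sA}$ so that on $H^1$ this coincides with \eqref{operatore P}. The triangle inequality combined with \eqref{smoothing} then yields
$$\|P[z](t)\|_{L^2(\T;\R)} \;\leq\; \int_0^t \|\partial_x e^{(t-s)A} z(s)\|_{L^2(\T;\R)}\,ds \;\leq\; C\int_0^t (t-s)^{-1/2} \|z(s)\|_{L^2(\T;\R)}\,ds,$$
which is \eqref{diseguaglianza unmezzo}, and a further bound by $2C\sqrt{T}\|z\|_{C([0,T];L^2(\T;\R))}$ shows that $P[z](t)$ is a well-defined element of $L^2(\T;\R)$ for every $t \in [0,T]$ and that $P$ is a bounded linear map on $C([0,T];L^2(\T;\R))$ once continuity in $t$ is verified.

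The remaining issue is the continuity of $t \mapsto P[z](t)$ in $L^2(\T;\R)$, which is routine but where a little care is needed because of the integrable singularity at $s=t$. For $0 \leq t_1 < t_2 \leq T$ I would split
$$P[z](t_2)-P[z](t_1) \;=\; \int_{t_1}^{t_2} \partial_x e^{(t_2-s)A}\, z(s)\,ds \;+\; \bigl(e^{(t_2-t_1)A}-I\bigr)\, P[z](t_1).$$
The first piece has $L^2$ norm $\leq 2C\sqrt{t_2-t_1}\,\|z\|_{C([0,T];L^2(\T;\R))}$ by \eqref{smoothing} and a direct computation of $\int_{t_1}^{t_2}(t_2-s)^{-1/2}ds$, hence tends to $0$; the second piece tends to $0$ by the strong continuity of $\{e^{tA}\}_{t\geq 0}$ on $L^2(\T;\R)$ applied to the fixed vector $P[z](t_1) \in L^2(\T;\R)$. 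The main (mild) obstacle is really just this continuity step and the neat handling of the singularity, both of which are standard for parabolic mild formulations; since $W$ does not appear, everything is pointwise in $\omega$ and no measurability issues arise.
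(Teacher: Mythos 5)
Your proof is correct in substance and reaches the same bound, but it takes a genuinely different route from the paper. The paper's proof establishes \eqref{diseguaglianza unmezzo} by a duality argument: it pairs $P[z](t)$ against a test function $\psi\in L^2(\T;\R)$, uses the self-adjointness of $e^{(t-s)A}$ to move the derivative onto $e^{(t-s)A}\psi$, invokes the $L^1$-bound $\|\pa_x G_t^{per}\|_{L^1(\T;\R)}\leq C_1 t^{-1/2}$ via Young's inequality, and then sets $\psi = P[z](t)$; the extension from $C([0,T];H^1)$ to $C([0,T];L^2)$ is then obtained as a Cauchy-sequence/density argument in the complete space $C([0,T];L^2(\T;\R))$. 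You instead commute $\pa_x$ with the semigroup, \emph{redefine} $P[z](t)=\int_0^t \pa_x e^{(t-s)A}z(s)\,ds$ directly for $z\in C([0,T];L^2)$, and bound the integrand using the Fourier-side estimate $\sup_k k^2 e^{-2tk^2}\leq (2et)^{-1}$. Both versions of the smoothing estimate are equivalent (your Fourier computation is the spectral face of the paper's kernel estimate), and your route is arguably the more elementary one in that it avoids duality and makes the singular bound immediate. What the paper's density argument buys is a free proof of time-continuity: the extension is defined as a limit in $C([0,T];L^2(\T;\R))$, so it lands in that space automatically.

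The one place your write-up is not quite tight is the continuity in $t$. Your decomposition $P[z](t_2)-P[z](t_1)=\int_{t_1}^{t_2}\pa_x e^{(t_2-s)A}z(s)\,ds+(e^{(t_2-t_1)A}-I)P[z](t_1)$ cleanly gives \emph{right}-continuity at a fixed $t_1$ (the second term converges to $0$ because $P[z](t_1)$ is a fixed vector). For \emph{left}-continuity at $t_2$, the vector $P[z](t_1)$ moves as $t_1\uparrow t_2$, and strong continuity of $\{e^{hA}\}_{h\geq 0}$ alone is not enough to conclude that $(e^{(t_2-t_1)A}-I)P[z](t_1)\to 0$ (strong convergence to the identity is not uniform over $L^2$). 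This is fixable — e.g.\ note that $\|P[z](t_1)-P[z](t_2)\|$ is already controlled by the first piece plus a vanishing term, or simply revert to the paper's density route where continuity is inherited from the dense subspace — but as stated the argument establishes only one-sided continuity. Everything else (the definition of the extension via commutation, the Fourier smoothing bound, and the derivation of \eqref{diseguaglianza unmezzo}) is correct.
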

	The above result is well-known in similar settings, see e.g. \cite[Lemma 14.2.1]{ergodicity}  or \cite[Lemma 5.2 and Lemma 5.4]{dapra04}, though we could not find it for the specific setup in which we work, so we include the proof in Appendix \ref{appendix: proofs of sec 7}.

	\begin{proposition}\label{local existence mild solution}
		For any initial datum $u_0 \in L^2(\T;\R)$ and for a.e. $\omega \in \Omega$, there exists a stopping time $T^*=T^*(\omega)$ such that  equation \eqref{sistema normale deterministico} has a unique local mild solution (in the sense defined above) up to time $T^*$. 
	\end{proposition}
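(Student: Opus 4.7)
The plan is to establish local well-posedness by a standard Banach fixed point argument applied pathwise in $\omega$. I would work on the space $X_{T^*} := C([0,T^*]; L^2(\T;\R))$ equipped with the supremum norm, and define the map $\Phi_\omega : X_{T^*} \to X_{T^*}$ by
\begin{equation*}
\Phi_\omega(v)(t) := e^{tA}u_0 + P\bigl[V'(v+W_A(\omega)) + (F'\ast(v+W_A(\omega)))(v+W_A(\omega))\bigr](t),
\end{equation*}
so that a fixed point of $\Phi_\omega$ is precisely a mild solution of \eqref{sistema normale deterministico} on $[0,T^*]$. Since the stochastic convolution $W_A$ has continuous $L^2(\T;\R)$-valued paths almost surely (cf. the comment following \eqref{mild solution} and Appendix \ref{appendix: proofs of sec 7}), the quantity $M(\omega):=\sup_{t\in[0,T]}\|W_A(t,\omega)\|_{L^2(\T;\R)}$ is finite for a.e.~$\omega$, and the stopping time $T^*(\omega)$ will be chosen to depend on $\|u_0\|_{L^2(\T;\R)}$ and $M(\omega)$.

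The first step is to fix $R>\|u_0\|_{L^2(\T;\R)}$, set $B_R \subset X_{T^*}$ to be the closed ball of radius $R$ centred at $0$, and verify that $\Phi_\omega$ maps $B_R$ into $B_R$ for $T^*$ small enough. This uses the contraction property $\|e^{tA}u_0\|_{L^2(\T;\R)}\le\|u_0\|_{L^2(\T;\R)}$ together with the key estimate from Lemma \ref{stime heat kernel}, namely
\begin{equation*}
\|P[z](t)\|_{L^2(\T;\R)} \le C_2\int_0^t (t-s)^{-1/2} \|z(s)\|_{L^2(\T;\R)}\,ds,
\end{equation*}
combined with the bounds $\|V'\tilde v\|_{L^2(\T;\R)}\le \|V'\|_{L^\infty}\|\tilde v\|_{L^2(\T;\R)}$ and $\|(F'\ast\tilde v)\tilde v\|_{L^2(\T;\R)}\le \|F'\|_{L^2}\|\tilde v\|_{L^2(\T;\R)}^2$ (the latter by Cauchy--Schwarz applied to the convolution, using that $F'\in C^\infty(\T;\R)$). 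Since $\tilde v=v+W_A$ with $\|v\|\le R$ and $\|W_A\|\le M$, the right-hand side is controlled by a polynomial expression in $R,M$ times $\int_0^{T^*}(t-s)^{-1/2}ds = 2\sqrt{T^*}$, so the self-mapping property holds once $T^*(\omega)$ is chosen small enough compared with $1/(R+M(\omega))^2$.

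The second step is the contraction estimate. Writing
\begin{equation*}
(F'\ast\tilde v_1)\tilde v_1 - (F'\ast\tilde v_2)\tilde v_2 = (F'\ast(v_1-v_2))\tilde v_1 + (F'\ast\tilde v_2)(v_1-v_2),
\end{equation*}
and using $\|F'\ast f\|_{L^\infty}\le\|F'\|_{L^2}\|f\|_{L^2(\T;\R)}$, one obtains for $v_1,v_2\in B_R$
\begin{equation*}
\|\Phi_\omega(v_1)(t) - \Phi_\omega(v_2)(t)\|_{L^2(\T;\R)} \le C(R,M(\omega))\sqrt{T^*}\sup_{s\in[0,T^*]}\|v_1(s)-v_2(s)\|_{L^2(\T;\R)},
\end{equation*}
where $C(R,M(\omega))$ depends linearly on $\|V'\|_{L^\infty}+\|F'\|_{L^2}(R+M(\omega))$. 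Shrinking $T^*(\omega)$ further so that $C(R,M(\omega))\sqrt{T^*(\omega)}<1/2$ yields contraction on $B_R$, and Banach's fixed point theorem produces a unique $v\in B_R$ satisfying \eqref{mild sol versione abbrieviata} on $[0,T^*(\omega)]$. Measurability of the resulting $T^*$ as a stopping time follows because $M(\omega)$ is adapted by the adaptedness of $W_A$, so one may define $T^*$ as the largest time up to $T$ for which the contraction condition is satisfied.

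The main technical point, and the one I would take most care with, is tracking the $\omega$-dependence so that $T^*(\omega)$ is genuinely a stopping time rather than just an almost-sure bound; this is done by choosing $T^*$ as an explicit function of $\sup_{s\le t}\|W_A(s)\|_{L^2(\T;\R)}$, which is itself $\mathcal F_t$-adapted and continuous in $t$. Everything else is a routine fixed point computation of the type carried out in \cite{burgers}, where the singular kernel $(t-s)^{-1/2}$ produced by Lemma \ref{stime heat kernel} is integrable and therefore causes no obstacle; uniqueness is immediate from the contraction property restricted to any joint ball containing two candidate solutions.
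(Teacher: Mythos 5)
Your proof is correct and follows essentially the same route as the paper: both work pathwise on the set where $W_A$ has continuous paths, set up the fixed point map induced by \eqref{mild sol versione abbrieviata} on a closed ball in $C([0,T^*];L^2(\T;\R))$, bound the nonlinear terms using Lemma \ref{stime heat kernel} together with the smoothness of $V$ and $F$, and close by Banach's fixed point theorem with $T^*$ chosen small enough in terms of $\|u_0\|_{L^2}$ and $\sup_{t}\|W_A(t)\|_{L^2}$. The only cosmetic difference is that you control the convolution via Cauchy--Schwarz, yielding $\|F'\ast f\|_{L^\infty}\le\|F'\|_{L^2}\|f\|_{L^2}$, where the paper uses Young's inequality and the embedding $L^1(\T)\supset L^2(\T)$ to obtain $\|F'\ast f\|_{L^\infty}\le C\|F'\|_{L^\infty}\|f\|_{L^2}$; both are equivalent here since $F\in C^\infty(\T;\R)$. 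You are also appropriately careful in the final paragraph about the stopping time property: as stated mid-proof, $M(\omega)=\sup_{t\in[0,T]}\|W_A(t,\omega)\|$ is $\mathcal F_T$-measurable, not a running-adapted quantity, so the clean formulation is the one you give at the end, namely to define $T^*$ through the continuous adapted process $t\mapsto\sup_{s\le t}\|W_A(s)\|_{L^2}$.
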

	\begin{proof}
		We study \eqref{mild solution deterministico},  or equivalently \eqref{mild sol versione abbrieviata}, pathwise for any given $\omega\in\Omega'$, where $\Omega^{'} \subset \Omega$ is the set 
		\begin{equation}\label{omegaprimo}
			\Omega^{'}:= \left \{ \omega \in \Omega\,|\, W_A(\omega) \in C\left ( [0,T];H^1(\T;\R) \right ) \right \}.
		\end{equation}
		The set $\Omega^{'}$ is measurable with $\mP(\Omega^{'})=1$  (as $W_{A}$ is a continuous $H^1(\T;\R)$-valued process, see Lemma \ref{lemma irriducibilità}). 
		
		So, we fix $u_0 \in L^2(\T;\R)$ and $\omega\in\Omega'$, and choose an $m \in \R_+$ such that  $m>\|u_0\|_{L^2(\T;\R)}$. We want to use a fixed point argument on the space
		$$\Sigma(m,T^{*}) := \big \{v \in C([0,T^{*}];L^2(\T;\R)) \;\big |\; \|v(t)\|_{L^2(\T;\R)} \leq m,\; \forall t \in [0, T^{*}]\big \}$$
		applied to the map $\cG$, defined as 
		\begin{equation}
			\begin{split}
				\mathcal{G}(v)(t) :=e^{tA}u_0+P\left [V^{'}(v+W_{A})\right ](t) +P\left [(F^{'} \ast (v+W_{A}))(v+W_{A})\right ](t), \notag
			\end{split}
		\end{equation}
		for any $t \in [0,T^{*}]$ and $v \in \Sigma(m,T^{*})$.  We therefore need to  show 
		that, provided $T^{*}$  is small enough,  the space $\Sigma(m,T^{*})$ is invariant under $\cG$  and $\cG$ is a contraction on $\Sigma(m,T^{*})$. 
		
		To show that there exists $T^*>0$ small enough such that $\Sigma(m,T^{*})$ is invariant under $\cG$,   let $v \in \Sigma(m,T^{*})$; then, 
		\begin{equation}\label{Gv}
			\begin{split}
				\|\mathcal{G}(v)(t)\|_{L^2(\T;\mathbb{R})}  \leq\,& \|e^{t A}u_0\|_{L^2(\T;\R)}  +\left \| P \left [ V^{'}(v+W_{A})\right ](t) \right \|_{L^2(\T;\R)} \\
				& + \left \| P \left [(F^{'} \ast (v+W_{A}))(v+W_{A}) \right ](t) \right \|_{L^2(\T;\R)}, 
			\end{split}
		\end{equation}
		for any $t \in [0,T^*]$. Since $e^{tA}$ is a contraction semigroup, we have $$\|e^{t A}u_0\|_{L^2(\T;\R)} \leq \|u_0\|_{L^2(\T;\R)} < m,$$
		for any $t>0$.
		Moreover, by Lemma \ref{stime heat kernel}, the second and third addends on the RHS of \eqref{Gv} can be bounded by
		\begin{equation}
			\begin{split}
				&\left \| P \left [ V^{'}(v+W_{A})\right ](t) \right \|_{L^2(\T;\R)}\\
				&\qquad \leq C \int_{0}^t (t-s)^{-\frac{1}{2}}\left \|V^{'}(v(s)+W_{A}(s)) \right \|_{L^{2}(\T;\R)}\,ds \\
				&\qquad \leq C \|V^{'}\|_{L^{\infty}(\T;\R)}\int_{0}^t (t-s)^{-\frac{1}{2}}\left(\|v(s)\|_{L^{2}(\T;\R)} + \|W_{A}(s)\|_{L^{2}(\T;\R)} \right )\,ds \\
				%& \leq C\|V^{'}\|_{L^{\infty}(\T;\R)}(m+\mu_2)\int_{0}^t (t-s)^{-\frac{1}{2}}\,ds \\
				&\qquad \leq C\|V^{'}\|_{L^{\infty}(\T;\R)}(m+\mu_2)\left(T^*\right)^{\frac{1}{2}}, \notag
			\end{split}
		\end{equation}
		and 
		\begin{equation}
			\begin{split}
				\left \| P \left [(F^{'} \ast (v+W_{A}))(v+W_{A}) \right ](t) \right \|_{L^2(\T;\R)}
				%\,& \leq C \|F^{'}\|_{L^{\infty}(\T;\R)} \int_0^t (t-s)^{-\frac{1}{2}} \|v(s)+W_{A}(s) \|_{L^2(\T;\R)}^2 \,ds \\
				\leq C \|F^{'}\|_{L^{\infty}(\T;\R)} (m^2+\mu_2^2)\left(T^*\right)^{\frac{1}{2}}, \notag
			\end{split}
		\end{equation}
		respectively,  where in the above  $\mu_2:=\sup_{t\in[0,T]}\|W_A\|_{L^2(\T;\R)}$ and $C$ is a generic positive constant (the value of which may change from line to line), independent of $m$. Therefore, we have the estimate
		\begin{equation*}
			\|\mathcal{G}(v)(t)\|_{L^2(\T;\mathbb{R})} \leq \|u_0\|_{L^2(\T;\R)} + C(m+\mu_2+m^2+\mu_2^2)\left(T^*\right)^{\frac{1}{2}}.
		\end{equation*}
		If we choose $T^*$ sufficiently small, then the operator $\mathcal{G}$ maps $\Sigma(m,T^*)$ into itself, i.e.~the map $\mathcal{G}:\Sigma(m,T^*) \to \Sigma(m,T^*)$ is well-defined. We now want to show that such a map is a contraction. To this end, let $v_1,v_2 \in \Sigma(m,T^{*})$;  then 
		\begin{equation}
			\begin{split}
				\left \|\mathcal{G}(v_1)(t)-\mathcal{G}(v_2)(t) \right \|_{L^2(\T;\R)}  \leq\,& \left \| P \left [ V^{'}(v_1-v_2) \right](t) \right \|_{L^2(\T;\R)} \\
				& +  \left \|P  \left[(F^{'} \ast \tilde{v}_1)\tilde{v}_1-(F^{'} \ast \tilde{v}_2)\tilde{v}_2 \right](t) \right \|_{L^2(\T;\R)},\notag
			\end{split}
		\end{equation}
		for every $t \in [0,T^*]$, where $\tilde{v}_1:=v_1+W_{A}$, $\tilde{v}_2:=v_2+W_{A}$.
		The first addend on the RHS  can be bounded using analogous calculations to those we have done in the above. As for the second addend,  applying again Lemma \ref{stime heat kernel} and using Young's inequality for convolutions, we have 
		\begin{equation}
			\begin{split}
				&\left \|P  \left[(F^{'} \ast \tilde{v}_1)\tilde{v}_1-(F^{'} \ast \tilde{v}_2)\tilde{v}_2 \right](t) \right \|_{L^2(\T;\R)}\\ %\leq\,& C \int_0^t (t-s)^{-\frac{1}{2}} \left \| \left ( F^{'} \ast \tilde{v}_1 \right)(s)\tilde{v}_1(s)- \left( F^{'} \ast \tilde{v}_2 \right )(s)\tilde{v}_2(s) \right \|_{L^2(\T;\R)}\,ds \\
				%&\qquad\qquad\leq\, C \int_0^t (t-s)^{-\frac{1}{2}} \left \| \left ( F^{'} \ast \tilde{v}_1 \right )(s)(v_1(s)-v_2(s)) \right \|_{L^2(\T;\R)}\,ds \\
				%&\qquad\qquad\quad\, +C \int_0^t (t-s)^{-\frac{1}{2}} \left \| \left ( F^{'} \ast (v_1(s)-v_2(s) )\right )\tilde{v}_2(s) \right \|_{L^2(\T;\R)} \,ds ,\\
				&\qquad\qquad\leq\, C \|F^{'}\|_{L^{\infty}(\T;\R)}\int_0^t (t-s)^{-\frac{1}{2}} \|\tilde{v}_1(s)\|_{L^2(\T;\R)}\|v_1(s)-v_2(s)\|_{L^2(\T;\R)}\,ds \\  
				&\qquad\qquad\quad\, +C \|F^{'}\|_{L^{\infty}(\T;\R)}\int_0^t (t-s)^{-\frac{1}{2}} \|\tilde{v}_2(s)\|_{L^2(\T;\R)}\|v_1(s)-v_2(s)\|_{L^2(\T;\R)}\,ds \\
				&\qquad\qquad\leq\, C\|F^{'}\|_{L^{\infty}(\T;\R)} \left(T^*\right)^{\frac{1}{2}}\left ( m+\mu_2 \right)\|v_1-v_2 \|_{C([0,T^*];L^2(\T;\R))}\,.\notag
			\end{split}
		\end{equation}
		Thus, we obtain 
		\begin{equation}
			\|\mathcal{G}(v_1)(t)-\mathcal{G}(v_2)(t)\|_{L^2(\T;\R)}
			\leq C(1+m+\mu_2)\left( T^* \right)^{\frac{1}{2}} \|v_1-v_2\|_{C([0,T^{*}];L^2(\T;\R))}, \notag
		\end{equation}
		for all $t \in [0,T^*]$ and for any given $v_1,v_2\in \Sigma(m,T^*)$. %We point out that the constant $C$ in the above inequality depends on the $L^{\infty}$-norm of $V^{'}$ and $F^{'}$.
		Choosing $T^*$ sufficiently small, the conclusion follows from the Banach fixed point theorem.  Note that $T^*$ is a stopping time as, by construction, it depends on $\omega$ only through  $\mu_2$. 
	\end{proof}

	To study a priori estimates for the mild solution to \eqref{sistema normale deterministico},  we start by recalling  the following technical lemma, the proof of which can be found e.g. in  \cite[Lemma A.2]{kruse}.
	
	\begin{lemma}[Generalized Gronwall's inequality]\label{Lemma di Gronwall generalizzato}
		Let $T >0$, $C_1,C_2 \geq 0$ and $q:[0,T] \to \mathbb{R}$ be a non-negative
		and continuous function and let $r > 0$. If
		\begin{equation}\label{dis gronwall variazione}
			q(t) \leq C_1+C_2 \int_0^t (t-s)^{-1+r}q(s)\,ds,\quad \text{for all $t \in [0,T]$}, 
		\end{equation}
		then there exists a non-negative, increasing and continuous function $\tilde{c}(t)=\tilde{c}(t,C_2,r)\geq 0$, $t \geq 0$ such that 
		\begin{equation}
			q(t) \leq C_1\tilde{c}(t),\quad  \text{for all $t \in [0,T]$}. \notag
		\end{equation}
	\end{lemma}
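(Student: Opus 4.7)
The plan is to prove the singular Gronwall inequality by the classical iteration/resolvent-kernel method, so that $\tilde c(t)$ emerges as a (Mittag-Leffler type) convergent power series. Write $H(t,s):=C_2(t-s)^{-1+r}$ for $0\le s<t\le T$, and define iterated kernels by $H_1:=H$ and $H_{n+1}(t,s):=\int_s^t H(t,\tau)H_n(\tau,s)\,d\tau$. The first key input is the Beta-function identity
\begin{equation*}
\int_s^t (t-\tau)^{-1+r}(\tau-s)^{-1+nr}\,d\tau=(t-s)^{-1+(n+1)r}\,\frac{\Gamma(r)\Gamma(nr)}{\Gamma((n+1)r)},
\end{equation*}
which, by induction on $n$, yields the closed form
\begin{equation*}
H_n(t,s)=\frac{[C_2\,\Gamma(r)]^n}{\Gamma(nr)}(t-s)^{-1+nr}.
\end{equation*}

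Next I would iterate the hypothesis \eqref{dis gronwall variazione} $n$ times, using Fubini to bring convolutions of $H$ together. A straightforward induction gives
\begin{equation*}
q(t)\le C_1\Bigl[1+\sum_{k=1}^{n-1}\int_0^t H_k(t,s)\,ds\Bigr]+\int_0^t H_n(t,s)\,q(s)\,ds,
\qquad t\in[0,T].
\end{equation*}
A direct computation, using the closed form of $H_k$, shows $\int_0^t H_k(t,s)\,ds=\dfrac{[C_2\,\Gamma(r)]^k\,t^{kr}}{\Gamma(kr+1)}$.

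Then I would estimate the remainder and pass to the limit $n\to\infty$. Since $q$ is continuous on the compact interval $[0,T]$, set $M:=\sup_{[0,T]}q<\infty$. Choosing $n$ large enough that $nr\ge 1$ (and thus $(t-s)^{-1+nr}$ is uniformly bounded), one obtains
\begin{equation*}
\int_0^t H_n(t,s)\,q(s)\,ds\le M\,\frac{[C_2\,\Gamma(r)]^n\,t^{nr}}{\Gamma(nr+1)}\xrightarrow[n\to\infty]{}0,
\end{equation*}
because the super-exponential growth of $\Gamma(nr+1)$ dominates $[C_2\Gamma(r)\,T^r]^n$. Letting $n\to\infty$ yields
\begin{equation*}
q(t)\le C_1\,\tilde c(t),\qquad \tilde c(t):=\sum_{k=0}^{\infty}\frac{[C_2\,\Gamma(r)]^k\,t^{kr}}{\Gamma(kr+1)},
\end{equation*}
which is exactly a Mittag-Leffler function $E_{r,1}(C_2\Gamma(r)\,t^r)$. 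Its continuity and monotonicity in $t$ on $[0,T]$ are immediate from termwise continuity and the fact that each term is non-decreasing in $t\ge 0$; dependence of $\tilde c$ on $(C_2,r)$ is explicit.

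The only delicate point, which I would expect to be the main technical obstacle, is the careful handling of the remainder term before $nr\ge 1$: one needs absolute integrability of the singular kernel $H_k(t,\cdot)$ to justify the Fubini rearrangement at each stage of the iteration, and uniformity of the resulting bounds in $t\in[0,T]$ so that passage to $n\to\infty$ gives a continuous increasing function. All of this rests on the Beta-function identity above, so the proof ultimately reduces to that single computation plus bookkeeping.
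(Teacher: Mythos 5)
Your proof is correct. The paper itself does not supply a proof of this lemma; it simply cites \cite[Lemma A.2]{kruse} (and the underlying result is the classical singular Gronwall inequality going back to Henry). Your iterated-kernel argument --- building the resolvent kernels $H_n$, using the Beta-function identity to get the closed form $H_n(t,s)=\frac{[C_2\Gamma(r)]^n}{\Gamma(nr)}(t-s)^{-1+nr}$, bounding the $n$-th remainder by $M[C_2\Gamma(r)]^n t^{nr}/\Gamma(nr+1)\to 0$, and recognizing the resulting series as the Mittag--Leffler function $E_{r,1}(C_2\Gamma(r)t^r)$ --- is exactly the standard proof of that cited result, and all the steps check out: the kernels are non-negative so Tonelli justifies the interchange at each stage, and the obtained $\tilde c(t)=E_{r,1}(C_2\Gamma(r)t^r)$ is indeed non-negative, increasing, and continuous on $[0,T]$, with the claimed dependence on $(C_2,r)$.
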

	 In the next proposition we still work pathwise. After the proof of Proposition \ref{stime sistema deterministico} we explain how to extend the solution up to a time $T$ independent of $\omega \in \Omega$. 
	
	\begin{proposition}[A priori estimates]\label{stime sistema deterministico}
		Let  $\tT=\tT(\omega)$ be such that a mild  solution $v$ to \eqref{sistema normale deterministico} exists up to time $\tT$ and   $v \in C([0,\tT];L^2(\T;\mathbb{R}))$.   Then,
		\begin{align} 	\|v(t)\|_{L^2(\T;\R)}^2  \leq a(u_0,W_A)(t)\cdot\exp\left(C\int_0^t b(u_0,W_A)(s)\,ds\right),\quad t \in [0,\tT],\,\text{$\mathbb{P}$-a.s.}, \label{L2 norm WA}
		\end{align}
		where the non-negative functions $a(v_0,\varphi)(t)$ and $ b(v_0,\varphi)(t)$ are given  by
		\begin{align*}
			a(v_0,\varphi)(t) :=\, & \|v_0\|^2_{L^2(\T;\R)}+C\int_0^t \|\varphi(s)\|^4_{L^2(\T;\R)}\,ds, \\	b(v_0,\varphi)(t):=\,&\|\varphi(t)\|_{L^2(\T;\R)}^2\\
			&+\left(\|v_0\|_{L^1(\T;\R)}+\int_0^t \left(1+ \|\varphi(s)\|_{H^1(\T;\R)}^2\right)\,ds\right)e^{C\int_0^t \|\varphi(s)\|_{H^1(\T;\R)}ds},
		\end{align*}
		for any $t \in [0,T]$, $v_0\in L^2(\T;\R)$ and $\varphi\in C([0,T];H^1(\T;\R))$, where $C>0$ is a constant depending on $V$ and $F$.
	\end{proposition}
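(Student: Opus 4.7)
The plan is to first derive an auxiliary $L^1$ bound for $\tilde v = v + W_A$ and then feed it into a standard $L^2$ energy estimate for $v$. The $L^1$ bound is the linchpin of the argument: the convolution nonlinearity $(F'*\tilde v)\tilde v$, when inserted into an $L^2$ energy estimate, is most effectively controlled via $\|F'*\tilde v\|_{L^\infty(\T;\R)} \leq \|F'\|_{L^\infty(\T;\R)}\|\tilde v\|_{L^1(\T;\R)}$, and without such an $L^1$ control the nonlinear term would produce super-linear contributions to $\|v\|_{L^2}^2$ that Gronwall cannot absorb.

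\textbf{Step 1 ($L^1$ bound for $\tilde v$).} Working pathwise on $\Omega'$ from \eqref{omegaprimo}, I would start from the mild formulation for $\tilde v$,
\[
\tilde v(t) = e^{tA}u_0 + W_A(t) + \int_0^t e^{(t-s)A}\partial_x\bigl[V'\tilde v(s) + (F'*\tilde v(s))\tilde v(s)\bigr]ds,
\]
take the $L^1(\T;\R)$ norm, and use the bound $\|e^{sA}\partial_x f\|_{L^1} \leq C s^{-1/2}\|f\|_{L^1}$ coming from the explicit periodic heat kernel. Writing $\tilde v = v + W_A$ inside the nonlinearity, the purely-noise contributions are controlled through the one-dimensional Sobolev embedding $H^1(\T;\R)\hookrightarrow L^\infty(\T;\R)$ (so that $\|F'*W_A\|_{L^\infty}\leq C\|W_A\|_{H^1}$), while the remaining terms are bounded by Young's inequality $\|F'*v\|_{L^\infty}\leq \|F'\|_{L^\infty}\|v\|_{L^1}$. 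This produces a weakly singular integral inequality to which the generalized Gronwall Lemma~\ref{Lemma di Gronwall generalizzato} applies, yielding
\[
\|\tilde v(t)\|_{L^1} \leq \Bigl(\|u_0\|_{L^1}+\int_0^t(1+\|W_A(s)\|_{H^1}^2)ds\Bigr)\exp\Bigl(C\int_0^t\|W_A(s)\|_{H^1}ds\Bigr),
\]
which is precisely the second summand appearing in $b(u_0,W_A)(t)$.

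\textbf{Step 2 ($L^2$ energy estimate).} Having a bound for $\|\tilde v\|_{L^1}$, I would first upgrade the regularity of $v$ via a Galerkin (or mollification) approximation, so that the formal energy identity
\[
\tfrac12\tfrac{d}{dt}\|v\|_{L^2}^2 + \|v_x\|_{L^2}^2 = -\int_\T v_x\bigl[V'(v+W_A)+(F'*\tilde v)(v+W_A)\bigr]dx
\]
is justified on passing to the limit. The terms of the form $\int v_x\cdot(\text{coefficient})\cdot v$ are rewritten, after one further integration by parts, as $\tfrac12\int V'' v^2$ and $\tfrac12\int v^2(F''*\tilde v)$; the latter is bounded by $\tfrac12\|F''\|_{L^\infty}\|\tilde v\|_{L^1}\|v\|_{L^2}^2$, which is precisely where the Step~1 bound enters. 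The terms of the form $\int v_x\cdot(\text{coefficient})\cdot W_A$ are handled by Cauchy-Schwarz, splitting $(F'*\tilde v)=(F'*v)+(F'*W_A)$, using $\|F'*v\|_{L^\infty}\leq C\|v\|_{L^2}$ and $\|F'*W_A\|_{L^\infty}\leq C\|W_A\|_{L^2}$, and absorbing $\|v_x\|_{L^2}^2$ on the left-hand side. The resulting differential inequality
\[
\tfrac{d}{dt}\|v\|_{L^2}^2 \leq C\bigl(1+\|\tilde v(t)\|_{L^1}+\|W_A(t)\|_{L^2}^2\bigr)\|v\|_{L^2}^2 + C\bigl(\|W_A(t)\|_{L^2}^2+\|W_A(t)\|_{L^2}^4\bigr),
\]
combined with Gronwall's inequality and the Step~1 bound for $\|\tilde v\|_{L^1}$, delivers the target estimate \eqref{L2 norm WA}.

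\textbf{Main obstacle.} The central difficulty is the quadratic nature of the convolution nonlinearity, which in a single-norm framework would force cubic terms in $\|v\|_{L^2}$ into the Gronwall coefficient. Sidestepping this by dualizing the convolution so as to transfer one factor into an $L^\infty$ bound on $F'*\tilde v$ works only because the resulting $L^1$ bound for $\tilde v$ is itself closable; that closure relies on splitting the noise part off and handling it in $L^\infty$ through Sobolev embedding, thereby keeping only a linear $\|\tilde v\|_{L^1}$ factor multiplied by data-controlled quantities. A secondary technical point is that mild solutions are a priori only $C([0,\tilde T];L^2(\T;\R))$-valued, so the energy identity of Step~2 must be justified through a Galerkin approximation before the inequality can be read off as a genuine a priori bound on the mild solution provided by Proposition~\ref{local existence mild solution}.
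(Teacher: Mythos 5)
Your Step 1 has a genuine gap. Taking the $L^1(\T;\R)$ norm of the mild formulation and estimating the nonlinearity gives a term of the form $\int_0^t (t-s)^{-1/2}\|(F'*\tilde v)\tilde v\|_{L^1}\,ds$, and the best you can do here is $\|(F'*\tilde v)\tilde v\|_{L^1}\leq \|F'*\tilde v\|_{L^\infty}\|\tilde v\|_{L^1}\leq \|F'\|_{L^\infty}\|\tilde v\|_{L^1}^2$. The decomposition $\tilde v = v + W_A$ does not help: the diagonal term $(F'*v)v$ survives, and its $L^1$ norm is bounded by $\|F'\|_{L^\infty}\|v\|_{L^1}^2$, which is still quadratic in the very quantity you are trying to control. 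You therefore obtain a \emph{quadratic} weakly singular integral inequality, and Lemma~\ref{Lemma di Gronwall generalizzato} is a linear Gronwall lemma — it simply does not apply, and indeed solutions of such quadratic integral inequalities may blow up in finite time, so no Gronwall-type lemma can close the loop without additional structure.

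The paper avoids this by working with the \emph{differential} formulation rather than the mild one and exploiting the divergence form $\partial_x[\cdot]$ of the nonlinearity. In the proof of Proposition~\ref{regularised PDE}, one differentiates $\int_\T \chi_\varepsilon(v)\,dx$ in time (with $\chi_\varepsilon$ the smooth convex approximation of $|\cdot|$ in \eqref{approssimazione valore assoluto}) and integrates by parts once more: the dangerous self-coupling term then appears multiplied by $\chi_\varepsilon''(v)|v|^2$, which is bounded uniformly by $\tfrac{3\varepsilon}{2}$ and so drops out when $\varepsilon\to 0$. That is the mechanism that keeps the $\|v\|_{L^1}$ differential inequality \emph{linear} (the Gronwall coefficient ends up being $\|\varphi\|_{H^1}$, not $\|v\|_{L^1}$), and it is invisible in the mild formulation. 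Once the $L^1$ bound is obtained this way, your Step~2 is essentially the paper's $L^2$ energy estimate; the only remaining difference is a justification of the time-differentiation, which the paper handles by proving the estimate for the regularised problem \eqref{pde regolare} with smooth $\varphi$ in place of $W_A$ and then passing to the limit along $\varphi_n\to W_A$ in $C([0,T];H^1(\T;\R))$, whereas you propose a Galerkin approximation — that part is a harmless stylistic difference.
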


	\begin{proof}[Proof of Proposition \ref{stime sistema deterministico}]
		In what follows, unless otherwise specified, $C$ denotes a generic deterministic positive constant, which may change from line to line. Note also that the time $\tT$ in the statement of the proposition does exist (for each $\omega$, just take some $\tT$ smaller than the time $T^*$ of the local result). 
		
		We start with considering a regularised version of the system \eqref{sistema normale deterministico}, where we replace the stochastic convolution $W_A$ with a {$C^{\infty}([0,T]\times \R)$-valued random variable $\varphi$}. Namely, we consider {the following random evolution}
		\begin{align} \label{pde regolare}
			\begin{cases}
				\partial_t v =  Av+\partial_x \left[ V^{'}(v+\varphi)+F^{'}*(v+\varphi)(v+\varphi) \right] \\
				v(t,0)=v(t,2\pi),\quad t \in [0,T] , \\
				v(0,x)= u_0(x),\quad  x \in \T, 
			\end{cases}
		\end{align}
		with $u_0 \in L^2(\T;\R)$. We prove in Proposition \ref{regularised PDE} that, {$\mP$-a.s.},  there exists  a unique global solution $v$ to \eqref{pde regolare} and that such a solution satisfies the following a priori estimates for the $L^2$ norm,
		\begin{align} 	\|v(t)\|_{L^2(\T;\R)}^2  \leq a(u_0,\varphi)(t)\cdot\exp\left(\int_0^t b(u_0,\varphi)(s)\,ds\right), \quad t \in [0,\tilde T],\,\,\mP-\text{a.s.}  \label{L2 norm}
		\end{align}
		where the functions $a(u_0,\varphi)(t)$ and $ b(u_0,\varphi)(t)$ are defined as in the statement of Proposition \ref{stime sistema deterministico}. To extend these estimates to the dynamics \eqref{sistema normale deterministico}, we consider a family $\{\varphi_n\}_{n \in \N}$ of $C^{\infty}([0,\tT]\times \R)$-valued random variables such that
		\begin{align}\label{phito W}
			\varphi_n \to W_{A}\quad\mathrm{in}\;C([0,\tT];H^1(\T;\R))\;\mP-\mathrm{a.s.},\;\mathrm{as}\;n\to\infty.
		\end{align}
		For any $n\in\N$, we denote by $v_n$ the unique mild solution to \eqref{pde regolare} with $\varphi$ replaced by $\varphi_n$. Moreover, we denote by $v\in C([0,\tT];L^2(\T;\R))$ the mild solution to \eqref{sistema normale deterministico} up to time $\tT$. We want to show that $v_n$ converges to $v$ in $C([0,\tT];L^2(\T;\R))$,  $\mathbb{P}$-a.s.
		
		Recall that $v$ and the sequence $\{v_n\}_{n \in \N}$ satisfy 
		\begin{align*}
			v(t)\,&=e^{tA}u_0+P \left [ V^{'}\tilde{v}+(F^{'} \ast \tilde{v})\tilde{v} \right ](t),\\
			v_n(t)\,&=e^{tA}u_0+P \left[ V^{'}\tilde{v}_n+(F^{'} \ast \tilde{v}_n)\tilde{v}_n \right ](t),
		\end{align*}
		respectively,  with $P$ defined in \eqref{operatore P}, having set $\tilde{v}=v+W_{A}$, $\tilde{v}_n=v_n+\varphi_n$, $n \in \N$.
		Then, by Lemma \ref{stime heat kernel} and with calculations completely analogous to those in the proof of Proposition \ref{local existence mild solution}, we have  
		\begin{align*}			
			\|v(t)-v_n(t)\|_{L^2(\T;\R)} & \leq  C \tT^{\frac{1}{2}} \left \|W_{A}-\varphi_n \right \|_{C([0,T];L^2(\T;\R))}\\
			&+C\int_0^t (t-s)^{-\frac{1}{2}} \left \|v(s)-v_n(s) \right \|_{L^2(\T;\R)}\,ds\\
			&+\int_0^t (t-s)^{-\frac{1}{2}} \left \| \left ( F^{'} \ast \tilde{v} \right )(s)\left(\tilde{v}(s)- \tilde{v}_n(s)\right) \right \|_{L^2(\T;\R)}\,ds \\
			& + \int_0^t (t-s)^{-\frac{1}{2}} \left \| \left ( F^{'} \ast (\tilde{v}-\tilde{v}_n) \right )(s)\tilde{v}_n(s) \right \|_{L^2(\T;\R)} \,ds\,.			
		\end{align*}
		Since  $v \in C \left ([0,\tT];L^2(\T;\R) \right )$ $\mP$-a.s., there exists a non-negative random variable $M_{1,\tT}=M_{1,\tT}(\omega) <+\infty$ $\mP$-a.s. such that $\|v(t)\|_{L^2(\T;\R)} \leq M_{1,\tT}$, for all $t \in [0,\tT]$, $\mP$-a.s (but we don't know the exact dependence of $M_{1,\tT}$ on $\tT$). Hence,
		\begin{align*}
			\left \| \left ( F^{'} \ast \tilde{v} \right )\left(\tilde{v}- \tilde{v}_n\right)(s) \right \|_{L^2(\T;\R)}
			\leq C&\|\tilde{v}(s)\|_{L^2(\T;\R)}\|\tilde{v}(s)-\tilde{v}_n(s)\|_{L^2(\T;\R)}\\
			\leq C&M_{1,\tT}\left(\|\varphi_n(s)-W_{A}(s)\|_{L^2(\T;\R)}+\|v(s)-v_n(s)\|_{L^2(\T;\R)}\right),
		\end{align*}
		and, similarly,
		\begin{align*}
			\left\| \left ( F^{'} \ast (\tilde{v}-\tilde{v}_n) \right )(s)\tilde{v}_n(s) \right \|_{L^2(\T;\R)}\,\leq\,CM_{2,\tT}\|v(s)-v_n(s)\|_{L^2(\T;\R)},
		\end{align*}
		for all $s \in [0,\tT]$, where $M_{2,\tT}=M_{2,\tT}(\omega)$ is a non-negative random variable with $M_{2,\tT} < + \infty$, $\mP$-a.s. such that $\|\tilde{v}_n\|_{C([0,\tT];L^2(\T;\R))} \leq M_{2,\tT}$ uniformly in $n \in \mathbb{N}$, $\mP$-a.s.; such a random variable  exists by \eqref{L2 norm}. 
		%Since the sequence $\{\varphi_n \}_{n \in \N}$ is uniformly bounded in $C \left ( [0,T];H^1(\T;\R) \right)$, $\mP$-a.s. This condition is guaranteed by assuming that $\varphi_n \xrightarrow[n \to +\infty]{} W_A$ in $C \left ([0,T];H^1(\T;\R) \right )$, $\mP$-a.s.\\
		From the above, we then have 
		\begin{equation}
			\begin{split}
				\|v(t)-v_n(t)\|_{L^2(\T;\R)}  \leq\,& C\|W_{A}-\varphi_n\|_{C([0,T];L^2(\T;\R))} \\
				& + C(M_{1,\tT}+M_{2,\tT})\int_0^t (t-s)^{-\frac{1}{2}}\|v(s)-v_n(s)\|_{L^2(\T;\R)}\,ds, \notag
			\end{split}
		\end{equation}
		so that  using Lemma \ref{Lemma di Gronwall generalizzato} finally gives
		$$\|v(t)-v_n(t)\|_{L^2(\T;\R)} \leq \tilde{c}_{\tT} \left( \|W_{A}-\varphi_n\|_{C([0,T];L^2(\T;\R))} \right),$$
		for all $t \in [0,\tT]$, where $\tilde{c}_{\tT}>0$ depends on  $\|V^{'}\|_{L^{\infty}(\T;\R)}$, $\|F^{'}\|_{L^{\infty}(\T;\R)}$, $M_{1,\tT}$ and $M_{2,\tT}$. Hence  $v_n$ converges to $v$ in $C([0,T];L^2(\T;\R))$ $\mathbb{P}$-a.s., as $n\to\infty$. 
		Finally, applying \eqref{L2 norm} to $\{v_n\}_{n\in\N}$ and noting that by definition of the functions $a$ and $b$, $a(u_0,\varphi_n) \xrightarrow{n\to\infty} a(u_0,W_A)$ and $b(u_0,\varphi_n) \xrightarrow{n\to\infty} b(u_0,W_A)$ $\mP$-a.s.~uniformly in $[0,\tT]$ (since $\varphi_n \xrightarrow{n\to\infty} W_A$ in $C([0,T];H^1(\T;\R))$ $\mP$-a.s.), we obtain the a priori estimate \eqref{L2 norm WA} for the $L^2$ norm of the solution $v$ to \eqref{sistema normale deterministico}. It is important to note that the constants $M_{1,\tT}$,  $M_{2,\tT}$ do not appear i the definitions of $a$ and $b$, they only appear in the estimates used to show the convergence of $v_n$ to $v$. 
	\end{proof}
	%\begin{remark}\label{remark 12}
	%	We note that the above line of argument works path-wise, i.e.~for any fixed $\omega \in \Omega$ for which $W_A(\omega) \in C \left ([0,T];H^1(\T;\R) \right )$.
	%	As a consequence, we deduce that the a priori estimate \eqref{L2 norm WA} is valid for all $\omega \in \Omega^{'}$, where $\Omega^{'}$ is defined by \eqref{omegaprimo} and such that $\mP(\Omega^{'})=1$.
	%\end{remark}
	{ Because of the form of the a priori estimate \eqref{L2 norm WA}, from a classical argument (see e.g. \cite{burgers})  it follows that the solution $v$ to \eqref{sistema normale deterministico} can be extended, for almost every $\omega \in \Omega$,  up to  a time $T=T(\omega)$.  \footnote{A difference between \cite{ergodicity} and our setting is that in \cite[Theorem 14.2.4]{ergodicity} estimates independent of the regularity of $\varphi$ are needed because there the authors consider cylindrical Wiener noise. In our case this further difficulty is not present. } To show that such a time can be taken to be independent of $\omega$, referring to the construction in the proof of the above Proposition \ref{stime sistema deterministico}  we observe (see  \cite[Theorem 14.2.4]{ergodicity}) the following: first,  by Proposition \ref{regularised PDE} the solution of \eqref{pde regolare} can be defined up to a time $T>0$ fixed a priori and independent of $\omega$. Hence all the $v_n$'s exist on an interval $[0,T]$, for any $T>0$, independent of $\omega$. This, combined with the fact that the sequence $\varphi_n$ can be taken so that the convergence \eqref{phito W} is on $C([0,T]; H^1)$, allows one to show that the a priori estimates of Proposition \ref{stime sistema deterministico} are in fact valid for any deterministic time interval $[0,T]$. 
	
	\section{Proof of point i) of Theorem \ref{theorem:existence and uniqueness inv measure}: irreducibility}\label{sec: irreducibility}	
	In this section we prove irreducibility of the dynamics \eqref{spde Q} using the methods of  \cite[Chapter 5]{dapra04}, \cite[Chapter 14]{ergodicity}. We recall that throughout this section $Q$ is assumed to satisfy \eqref{Qactsonbasis} and \eqref{eqn:Qtrace-class}.
	
	We start by considering the control system associated with \eqref{spde Q}
	\begin{equation}
		\begin{cases} \label{controlled system}
			\pa_t y=Ay+\pa_x \left [ V^{'}y+ \left( F^{'}*y \right ) y \right]+Q^{\frac{1}{2}}f,\quad t \in [0,T], & \\
			y(t,0)=y(t,2\pi),\quad  t \in [0,T], & \\
			y(0,x)=y_0(x),\quad  x \in \T, &
		\end{cases}
	\end{equation}
	with initial datum $y_0 \in L^2(\T;\R)$ and obtained from \eqref{spde Q} by replacing the stochastic forcing $\pa_t W$ with a {\em deterministic} control $f \in L^2 \left ([0,T];L^2(\T;\R) \right )$.  When we wish to emphasize the dependence of the solution of \eqref{controlled system} on the initial datum $y_0$ and on the control $f$ we use the notation $y(t;y_0,f)$. 
	
	In the same fashion as the proof of Proposition \ref{regularised PDE}, we can show that  system \eqref{controlled system} admits a unique global mild solution $y=y(t)$ and that  the following estimate (analogous to those found in the stochastic case)  holds:
	%\begin{align} \label{L1 yf norm}
		%\|y(t)\|_{L^1(\T;\R)} &\leq \|y_0\|_{L^1(\T;\R)}+\int_0^t \left \| Q^{\frac{1}{2}} f(s) \right \|_{L^1(\T;\R)}\,ds, \\
	%\label{L2 yf norm}
		%\|y(t)\|_{L^2(\T;\R)}^2& \leq \ell (y_0,f)(t) e^{\frac{1}{2} \left( \left \|V^{''} \right \|_{L^{\infty}(\T;\R)}t+\left \|F^{''} \right \|_{L^{\infty}(\T;\R)} \int_0^t \| y(s) \|_{L^{1}(\T;\R)}\,ds+t \right )} ,
	%\end{align}
    \begin{equation}\label{L2 yf norm}
		\|y(t)\|_{L^2(\T;\R)}^2 \leq \ell_2 (y_0,f)(t) e^{C\left(t+ \int_0^t \ell_1 (y_0,f)(s)\,ds\right)} ,
    \end{equation}
	for all $ t \geq 0$, for some constant $C>0$ depending on $V$ and $F$, and where $t \to \ell_1(y_0,f)(t)$, $t \to \ell_2(y_0,f)(t)$  are time-continuous increasing functions, namely 
    \begin{align*}
    &\ell_1(y_0,f)(t):=\|y_0\|_{L^1(\T;\R)}+\int_0^t \left \| Q^{\frac{1}{2}} f(s) \right \|_{L^1(\T;\R)}\,ds,\\
    &\ell_2(y_0,f)(t):= \|y_0\|_{L^2(\T;\R)}+\frac{1}{2}\int_0^t \|Q^{\frac{1}{2}}  f(s)\|_{L^2(\T;\R)}^2\,ds,
    \end{align*}
    for any $t\geq 0$.
	
	We want to show that system \eqref{controlled system} is approximately controllable. We recall that system \eqref{controlled system} is {\em approximately controllable in} $L^2(\T;\R)$ {\em at time $T>0$} via an $L^{2} \left ([0,T];L^2(\T;\R) \right )$-control if for any  $y_0, y_1 \in L^2(\T;\R)$ and  for all $\varepsilon>0$ there exists $f \in L^2 \left ([0,T];L^2(\T;\R) \right )$ such that the solution $y=y(t;y_0,f)$ of \eqref{controlled system} satisfies 
	\begin{equation}
		\|y(T;y_0,f)-y_1\|_{L^2(\T;\R)} \leq \varepsilon .\notag
	\end{equation}
	If the dynamics \eqref{controlled system} is approximately controllable in $L^2(\T;\R)$ at time $T$ for any $T>0$ then we simply say that \eqref{controlled system} is approximately controllable in $L^2(\T;\R)$.
	
	To show that  \eqref{controlled system} is approximately controllable in $L^2(\T;\R)$ via an  $L^2 \left ([0,T];L^2(\T;\R) \right )-$ control,  we will first show that it is approximately controllable in $C^2(\T;\R)$ via a $C([0,T]; C(\T;\R))$-control, see Lemma \ref{approxiamte lemma}. Then the smoothing properties of the deterministic part of the equation allow one to conclude the desired approximate controllability in $L^2(\T;\R)$ (via an  $L^2 \left ([0,T];L^2(\T;\R) \right )$-control), see Proposition \ref{approximate proposition}.
	Finally, irreducibility (in $L^2(\T;\R)$) of the
	semigroup $\{ \cP_t \}_{t \geq 0}$ associated with \eqref{spde Q} is deduced once we show that mild solutions of the SPDE \eqref{spde Q} can be approximated by solutions of the deterministic problem \eqref{controlled system}. This is the content of  Lemma \ref{lemma irriducibilità} and Theorem \ref{irreducibility of the semigroup}, the latter being the main result of this section. 
	We now begin to carry out the programme described above. 
	\begin{lemma}\label{approxiamte lemma}
	Let $Q$ satisfy \eqref{Qactsonbasis}-\eqref{eqn:Qtrace-class}. 
		For any $T>0$, $y_0,y_1 \in C^2(\T;\R)$ and $\varepsilon>0$ there exists $f \in C \left ( [0,T];L^2(\T;\R) \right )$ such that 
		\begin{equation} \label{inequality y1}
			\|y(T;y_0, f)-y_1\|_{L^2(\T;\R)} \leq \varepsilon.
		\end{equation}
	\end{lemma}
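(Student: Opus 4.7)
The plan is to use the classical ``follow-a-smooth-path'' method for approximate controllability of semilinear equations. First, fix $\phi \in C^{\infty}([0,1])$ with $\phi(0)=0$, $\phi(1)=1$, and set
\[
 z(t,x) := (1-\phi(t/T))\, y_0(x) + \phi(t/T)\, y_1(x), \qquad (t,x) \in [0,T] \times \T,
\]
so that $z \in C^{1}([0,T]; C^{2}(\T;\R))$ with $z(0) = y_0$ and $z(T) = y_1$. The corresponding forcing
\[
 g(t) := \partial_t z(t) - A z(t) - \partial_x\!\left[V' z(t) + (F'\!*\!z(t))\, z(t)\right]
\]
then belongs to $C([0,T]; C(\T;\R)) \subset C([0,T]; L^2(\T;\R))$ because $y_0, y_1 \in C^2$ and $V, F \in C^{\infty}$. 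If $g(t)$ were in the range of $Q^{1/2}$ for each $t$, taking $f = Q^{-1/2} g$ would make $z$ itself solve \eqref{controlled system} exactly; in general the range is only dense, so one must approximate.

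To do so, let $P_N$ denote the orthogonal projection of $L^2(\T;\R)$ onto $\mathrm{span}\{e_k : |k| \leq N\}$. Since all $\lambda_k$ are strictly positive, $Q^{1/2}$ is bijective on this finite-dimensional subspace, and one can define
\[
 f_N(t) := \sum_{|k| \leq N} \lambda_k^{-1}\, \langle g(t), e_k\rangle_{L^2(\T;\R)}\, e_k \in C([0,T]; L^2(\T;\R)),
\]
which satisfies $Q^{1/2} f_N = P_N g$. Compactness of $\{g(t) : t \in [0,T]\}$ in $L^2(\T;\R)$ (as the continuous image of a compact interval) together with $\|P_N\|_{L^2\to L^2}=1$ yields, via a standard covering argument, $P_N g \to g$ in $C([0,T]; L^2(\T;\R))$ as $N \to \infty$.

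Now denote $y_N(t) := y(t; y_0, f_N)$. The crucial observation is that the a priori estimate \eqref{L2 yf norm} depends on the control only through $\|Q^{1/2} f_N\|_{L^1}$ and $\|Q^{1/2} f_N\|_{L^2}$, both of which are dominated by the corresponding norms of $g$ uniformly in $N$ because $Q^{1/2} f_N = P_N g$. Hence $\sup_N \|y_N\|_{C([0,T]; L^2(\T;\R))} < \infty$. The difference $w_N := y_N - z$ has zero initial datum and, after expanding the nonlinearity, satisfies
\[
 \partial_t w_N = A w_N + \partial_x\!\left[V' w_N + (F'\!*\!w_N)\, z + (F'\!*\!z)\, w_N + (F'\!*\!w_N)\, w_N\right] + (P_N g - g).
\]
Passing to mild form and applying Lemma \ref{stime heat kernel}, Young's inequality for the convolution with $F'$, and the uniform $L^2$-bounds on $z$ and $y_N$, the generalized Gronwall inequality of Lemma \ref{Lemma di Gronwall generalizzato} gives
\[
 \|w_N\|_{C([0,T]; L^2(\T;\R))} \leq C_T\, \|P_N g - g\|_{C([0,T]; L^2(\T;\R))},
\]
with $C_T$ independent of $N$. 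Choosing $N$ so large that the right-hand side is $\leq \varepsilon$ and using $z(T) = y_1$ proves the lemma with $f = f_N$.

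The main obstacle will be keeping the Gronwall constant $C_T$ in the stability estimate uniform in $N$. Since $\|f_N\|_{L^2}$ may blow up as $N \to \infty$ when $g(t)$ lies outside the range of $Q^{1/2}$, it is essential that the estimate for $w_N$ be driven only by $\|Q^{1/2}f_N - g\| = \|P_N g - g\|$ and by uniform $L^2$-bounds on $y_N$ and $z$, and never by $f_N$ directly. This is precisely what \eqref{L2 yf norm} delivers, combined with the fact that the nonlinearity $\partial_x[V'u + (F'\!*\!u)u]$ is controlled in $L^2$ by $\|u\|_{L^2}$ alone (using $\|F'\!*\!u\|_{L^\infty} \leq \|F'\|_{L^\infty}\|u\|_{L^1} \leq C\|u\|_{L^2}$ on the torus).
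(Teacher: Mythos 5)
Your proof is correct and follows essentially the same strategy as the paper's: follow an interpolating path from $y_0$ to $y_1$, compute the residual forcing $g$ needed for the path to solve \eqref{controlled system}, approximate $g$ uniformly by elements $Q^{1/2}f$ in the range of $Q^{1/2}$, and then compare the controlled solution to the path via Lemma \ref{stime heat kernel} and the generalized Gronwall inequality. The only (cosmetic) difference is that you construct the approximating control explicitly via Fourier truncation $Q^{1/2}f_N=P_N g$ and justify the uniform-in-$t$ convergence by a compactness argument, whereas the paper uses the linear path and simply asserts in a footnote the existence of a continuous $f$ with $\|\beta(t)-Q^{1/2}f(t)\|_{L^2}\le\varepsilon$; your more explicit version also makes transparent why the a priori bound on $y_N$ is uniform in the approximation parameter.
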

	\begin{proof}
		Let $y_0, y_1$ be two arbitrary but fixed points in  $C^2(\T;\R)$ and let  $\alpha_{y_0,y_1} = \alpha_{y_0,y_1}(t)$ be a continuous  path in $C^2(\T;\R)$, joining  $y_0$ and $y_1$, i.e.
		\begin{equation}\label{alpha pallino}
			\alpha_{y_0,y_1}(0)=y_0,\quad\alpha_{y_0,y_1}(T)=y_1.
		\end{equation}
		To fix ideas,  we will take 
		\begin{equation}
			\alpha_{y_0,y_1}(t):=\frac{T-t}{T}y_0+\frac{t}{T}y_1,\quad\quad t \in [0,T]\,, \notag
		\end{equation}
		and note that $\alpha_{y_0, y_1}$ is a function of time and space, but we omit the dependence on the space variable when not needed. Because $y_0, y_1 \in C^2$,  they belong to the domain of $A$, hence we can define the path $\beta_{y_0,y_1} = \beta_{y_0,y_1}(t) \subset C(\T;\R)$ as follows: 
		\begin{equation}
			\begin{split}
				\beta_{y_0,y_1}(t)&:=\pa_t \alpha_{y_0,y_1}(t)-A\alpha_{y_0,y_1}(t)\\
				& -\pa_x \left [ V^{'}\alpha_{y_0,y_1}(t)+ \left (F^{'}*\alpha_{y_0,y_1} \right )(t)\alpha_{y_0,y_1}(t)\right ], \quad t \in [0,T] \,. \notag
			\end{split}
		\end{equation}
		Then, by definition, $\alpha_{y_0,y_1}$  is a classical solution to the PDE 
		\begin{equation}\label{auxiliary PDE}
			\begin{cases} 
				\pa_t \alpha_{y_0,y_1}=A\alpha_{y_0,y_1}+\pa_x \left [ V^{'}\alpha_{y_0,y_1}+ \left( F^{'}*\alpha_{y_0,y_1} \right ) \alpha_{y_0,y_1} \right]+\beta_{y_0,y_1}, \quad t \in [0,T],& \\
				\alpha_{y_0,y_1}(t,0)=\alpha_{y_0,y_1}(t,2\pi), \quad  t \in [0,T], & \\
				\alpha_{y_0,y_1}(0,x)=y_0(x),\, \quad x \in \T ,&
			\end{cases}
		\end{equation}
		such that \eqref{alpha pallino} holds. If we prove that for every $\epsilon>0$  there exists $f \in C([0,T]; L^2(\T;\R))$ such that  the following holds
		\begin{equation}\label{distance L2 norm}
			\|y(t,y_0;f)-\alpha_{y_0,y_1}(t)\|_{L^2(\T;\R)} \leq C \varepsilon,\quad \text{for every $t \in [0,T]$},
		\end{equation}
		for some $C>0$ (and independent of $\epsilon$) then the proof is concluded. Indeed \eqref{inequality y1} readily follows from  \eqref{distance L2 norm} and \eqref{alpha pallino}.

		We will show that  if $f$ is any function  in $C\left ([0,T];L^2(\T;\R) \right )$ such that 
		\begin{equation}\label{distance betaf}
			\|\beta_{y_0,y_1}(t)- Q^{\frac{1}{2}} f(t) \|_{L^2(\T;\R)} \leq \varepsilon,\, \quad t \in [0,T], 
		\end{equation}
		then \eqref{distance L2 norm} holds.\footnote{The existence of at least one such function is obvious, as $\beta_{y_0, y_1}$ is a continuous function.} So, let $f$ be such that \eqref{distance betaf} holds and set  $w(t):=y(t;y_0, f) -\alpha_{y_0,y_1}(t)$.   Since $\alpha_{y_0,y_1}$ is a (mild) solution to \eqref{auxiliary PDE} and $y(t;y_0,f)$  is a mild solution to \eqref{controlled system}, we have 
		\begin{align} \label{non linear term} \notag
			w(t) & = \int_0^t e^{(t-s)A} \pa_x \left [ V^{'}w(s) \right ]\,ds\\
			& + \int_0^t e^{(t-s)A} \pa_x \left [ \left( F^{'}*w \right )(s) \alpha_{y_0,y_1}(s) - \left( F^{'}*y \right ) (s) w(s) \right ]ds \\ 
			& +\int_0^t e^{(t-s)A}  Q^{\frac{1}{2}}  f(s)\,ds-\int_0^t e^{(t-s)A}\beta_{y_0,y_1}(s)\,ds. \notag   
		\end{align}
		Let now  $\kappa>0$ be such that  
		\begin{equation}\label{deltabound}
			\sup \limits_{t \in [0,T]} \left \|y(t;y_0, f) \right \|_{L^2(\T;\R)} +\sup \limits_{t \in [0,T]} \|\alpha_{y_0,y_1}(t) \|_{L^2(\T;\R)} \leq \kappa \, .
		\end{equation}
		Such a $\kappa$ exists by definition of $\alpha_{y_0,y_1}$ and because $y \in C([0,T]; L^2)$ if $f \in C([0,T]; L^2)$.\footnote{To be thorough, note also that $\kappa$ can be chosen independently of $\varepsilon \in (0,1)$, as  
			\begin{equation}\label{betaf}
				\begin{split}
					\left \|\left( -B \right )^{-\frac{\gamma}{2}} f(t) \right \|_{L^2(\T;\R)} \leq \left \|\beta_{y_0,y_1}(t) \right \|_{L^2(\T;\R)}+1,\quad \text{for all $t \in [0,T]$},
				\end{split}
			\end{equation} \label{footnote 1} }  
		From \eqref{diseguaglianza unmezzo} and the $L^2$-contraction property of $\{e^{tA}\}_{t \geq 0} $ we obtain  
		\begin{equation}
			\begin{split}
				\| w(t)\|_{L^2(\T;\R)} & \leq C \int_0^t (t-s)^{-\frac{1}{2}}\, \left  \|V^{'}w(s) \right \|_{L^2(\T;\R)}\,ds \\
				& +C \int_0^t (t-s)^{-\frac{1}{2}} \left\|( F^{'}*w )(s) \alpha_{y_0,y_1}(s) \right \|_{L^2(\T;\R)}\,ds \\
				& +C \int_0^t (t-s)^{-\frac{1}{2}} \left\|( F^{'}*y )(s) w(s) \right \|_{L^2(\T;\R)}\,ds \\
				& + \int_0^t \left \| \beta_{y_0,y_1}(s)- Q^{\frac{1}{2}}  f(s)\right \|_{L^2(\T;\R)} \,ds, \notag
			\end{split}
		\end{equation}
		where   $C>0$ is a generic constant (independent of  $\varepsilon \in (0,1)$ but possibly dependent on $T$). Since for every $s \in [0,t]$ we have  
		\begin{equation}
			\begin{split}\label{manipulation}
					\left\|( F^{'}*w )(s) \alpha_{y_0,y_1}(s) \right \|_{L^2(\T;\R)}& \leq \left\|( F^{'}*w )(s)\right\|_{L^{\infty}(\T;\R)}  \left\| \alpha_{y_0,y_1}(s) \right \|_{L^2(\T;\R)}
				\\ 
				& \leq \left\| F'\right\|_{L^{\infty}(\T;\R)}\left\|w(s)\right\|_{L^2(\T;\R)}  \left\| \alpha_{y_0,y_1}(s) \right \|_{L^2(\T;\R)}
			\end{split}
		\end{equation}
		(and acting similarly on the term $(F'*y)w$),  from \eqref{deltabound} we deduce
		\begin{equation}
			\begin{split}
				\left \| w(t) \right \|_{L^2(\T;\R)} & \leq \bar{C} \left ( 1+\kappa \right ) \int_0^t \left( t-s \right)^{-\frac{1}{2}}\left \|w(s) \right \|_{L^2(\T;\R)} \,ds \\
				& + \int_0^t  \left \| \beta_{y_0,y_1}(s)- Q^{\frac{1}{2}}  f(s)\right \|_{L^2(\T;\R)}\,ds, \notag
			\end{split}
		\end{equation}
		where $\bar{C}$ is a positive constant depending on $\|V^{'}\|_{L^{\infty}(\T;\R)}$,  $\|F^{'}\|_{L^{\infty}(\T;\R)}$ and $T>0$.		The conclusion now follows from \eqref{distance betaf} and from  the generalized Gronwall inequality (see Lemma \ref{Lemma di Gronwall generalizzato}). 
		%	\begin{equation}
		%		\begin{split}
		%			\|w(t)\|_{L^2(\T;\R)} & \leq %\hat{C} \int \limits_0^T \left \| %\beta_{y_0,y_1}(s)-\left( -B \right %)^{-\frac{\gamma}{2}} f(s)\right %\|_{L^2(\T;\R)} \,ds \leq \hat{C} T %\varepsilon. \notag
		%			\end{split}
		%	\end{equation}
		%Since $\varepsilon$ can be made arbitrarily small and $\hat{C}$ is independent of $\varepsilon \in (0,1)$ we deduce \eqref{distance L2 norm} which implies the assertion.
	\end{proof}
	\begin{remark}
		By slightly modifying the above proof, it is easy to see that one can always take the control $f$ in $C([0,T]; C^{\infty}(\T;\R))$ -- just take $f$ smooth such that \eqref{distance betaf} holds,  then consider the $L^{\infty}$ bound in  \eqref{deltabound} and finally adapt the manipulations in \eqref{manipulation} accordingly. However, we don't need this regularity in our proofs so we simply consider $f$ in $C([0,T]; L^2(\T;\R))$. 
	\end{remark}
	\begin{proposition}\label{approximate proposition}
		Let $Q$ satisfy \eqref{Qactsonbasis} and \eqref{eqn:Qtrace-class}. Then the control system \eqref{controlled system} is approximately controllable in $L^2(\T;\R)$ via an $L^{2} \left([0,T];L^2(\T;\R) \right)$-control. That is, for any  $T>0$, $\varepsilon>0$ and $y_0,y_1 \in L^2(\T;\R)$ there exists $f \in L^{2} \left([0,T];L^2(\T;\R) \right)$ such that 
		\begin{equation}
			\|y(T;y_0,f) - y_1 \|_{L^2(\T;\R)} \leq \varepsilon.\notag
		\end{equation}
	\end{proposition}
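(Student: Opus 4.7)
The plan is to deduce $L^2$--approximate controllability from Lemma \ref{approxiamte lemma} by a standard density argument combined with continuous dependence of the solution of \eqref{controlled system} on the initial datum (with the control held fixed). Fix $y_0, y_1 \in L^2(\T;\R)$ and $\varepsilon>0$. Since $C^2(\T;\R)$ is dense in $L^2(\T;\R)$, I would first choose $y_0^\varepsilon, y_1^\varepsilon \in C^2(\T;\R)$ with $\|y_i-y_i^\varepsilon\|_{L^2(\T;\R)}$ as small as desired ($i=0,1$). Lemma \ref{approxiamte lemma} then supplies a control $f^\varepsilon \in C([0,T];L^2(\T;\R)) \subset L^2([0,T];L^2(\T;\R))$ such that $\|y(T;y_0^\varepsilon,f^\varepsilon)-y_1^\varepsilon\|_{L^2(\T;\R)} \leq \varepsilon$. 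The goal becomes to show that swapping the initial datum $y_0^\varepsilon$ for $y_0$ (while keeping $f^\varepsilon$ fixed) changes $y(T)$ by at most a constant multiple of $\|y_0-y_0^\varepsilon\|_{L^2(\T;\R)}$.

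To that end, set $u(t):=y(t;y_0,f^\varepsilon)-y(t;y_0^\varepsilon,f^\varepsilon)$. The $Q^{1/2}f^\varepsilon$ contributions in the mild formulation cancel, and the algebraic identity
$$(F'\ast y)y-(F'\ast y^\varepsilon)y^\varepsilon = (F'\ast y)u+(F'\ast u)y^\varepsilon$$
linearises the nonlinear term. Applying Lemma \ref{stime heat kernel}, Young's convolution inequality and the $L^2$-contraction of $\{e^{tA}\}_{t\geq 0}$, one arrives at
$$\|u(t)\|_{L^2(\T;\R)} \leq \|y_0-y_0^\varepsilon\|_{L^2(\T;\R)} + C\int_0^t (t-s)^{-1/2}\bigl(1+\|y(s)\|_{L^2(\T;\R)}+\|y^\varepsilon(s)\|_{L^2(\T;\R)}\bigr)\|u(s)\|_{L^2(\T;\R)}\,ds,$$
where $C$ depends on $\|V'\|_{L^\infty}$ and $\|F'\|_{L^\infty}$. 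The a priori estimate \eqref{L2 yf norm} ensures the two solution norms inside the integral are finite and uniformly bounded on $[0,T]$; Lemma \ref{Lemma di Gronwall generalizzato} then gives $\|u(T)\|_{L^2(\T;\R)} \leq K\|y_0-y_0^\varepsilon\|_{L^2(\T;\R)}$ for a constant $K$ depending on $V, F, T, \|y_0^\varepsilon\|_{L^2}, \|Q^{1/2}f^\varepsilon\|_{L^2([0,T];L^2)}$.

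A triangle inequality then yields
$$\|y(T;y_0,f^\varepsilon)-y_1\|_{L^2(\T;\R)} \leq \|u(T)\|_{L^2(\T;\R)} + \|y(T;y_0^\varepsilon,f^\varepsilon)-y_1^\varepsilon\|_{L^2(\T;\R)} + \|y_1^\varepsilon-y_1\|_{L^2(\T;\R)} \leq (K+2)\varepsilon,$$
and rescaling $\varepsilon$ at the outset concludes the proof. The only delicate point worth monitoring is that $K$ depends on $\|y_0^\varepsilon\|_{L^2}$ and on the norm of the control $f^\varepsilon$ delivered by Lemma \ref{approxiamte lemma}, both of which a priori vary with $\varepsilon$; however $\|y_0^\varepsilon\|_{L^2}$ stays bounded since $y_0^\varepsilon\to y_0$ in $L^2$, and the norm of $f^\varepsilon$ can be chosen to depend only on the fixed targets $y_0, y_1$ via the construction in Lemma \ref{approxiamte lemma} (the interpolating path $\alpha_{y_0^\varepsilon,y_1^\varepsilon}$ and the associated $\beta_{y_0^\varepsilon,y_1^\varepsilon}$ are uniformly controlled along any sequence $y_i^\varepsilon\to y_i$). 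Hence $K$ may be chosen uniformly in $\varepsilon$, and the argument goes through.
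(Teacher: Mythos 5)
Your approach differs from the paper's, and unfortunately it has a gap that your own final caveat puts a finger on but then wrongly dismisses. The paper only approximates the \emph{target} $y_1$ by a $C^2$ element $\bar y_1$; it never approximates $y_0$. To handle a rough initial datum $y_0 \in L^2(\T;\R)$ it instead exploits the parabolic smoothing of the free ($f=0$) dynamics: after any short time $\bar t>0$ the trajectory $y(\bar t; y_0,0)$ lies in $C^2(\T;\R)$, and Lemma \ref{approxiamte lemma} is then applied on $[\bar t, T]$ starting from this regularized state, via the concatenated control \eqref{control f}. This sidesteps precisely the difficulty your argument runs into.

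Your plan, by contrast, also approximates $y_0$ by $y_0^\varepsilon\in C^2$ and then tries to transfer the conclusion back to $y_0$ by continuous dependence on the initial datum with the control fixed. The continuous-dependence constant $K$ that you obtain from the generalized Gronwall lemma depends on $\sup_{[0,T]}\|y(t;y_0,f^\varepsilon)\|_{L^2}$ and $\sup_{[0,T]}\|y(t;y_0^\varepsilon,f^\varepsilon)\|_{L^2}$, which in turn, via \eqref{L2 yf norm}, depend on $\|Q^{1/2}f^\varepsilon\|_{L^2([0,T];L^2)}$. In the construction of Lemma \ref{approxiamte lemma}, $Q^{1/2}f^\varepsilon$ must $\varepsilon$-approximate $\beta_{y_0^\varepsilon,y_1^\varepsilon}$, and $\beta_{y_0^\varepsilon,y_1^\varepsilon}$ contains the term $A\alpha_{y_0^\varepsilon,y_1^\varepsilon}=\pa_{xx}\alpha_{y_0^\varepsilon,y_1^\varepsilon}$, hence the second spatial derivatives of $y_0^\varepsilon$ and $y_1^\varepsilon$. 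If $y_0\notin H^2(\T;\R)$ these derivatives necessarily blow up as $y_0^\varepsilon\to y_0$ in $L^2$, so $\|\beta_{y_0^\varepsilon,y_1^\varepsilon}\|$ and therefore $K$ are \emph{not} uniformly bounded in $\varepsilon$. Your claim that ``the interpolating path $\alpha_{y_0^\varepsilon,y_1^\varepsilon}$ and the associated $\beta_{y_0^\varepsilon,y_1^\varepsilon}$ are uniformly controlled along any sequence $y_i^\varepsilon\to y_i$'' is therefore false for generic $L^2$ data, and without it the final bound $(K+2)\varepsilon$ need not tend to zero. To repair the argument you would either have to decouple the approximation scale from the error tolerance in Lemma \ref{approxiamte lemma} and quantify how $K$ grows with $\|\beta_{y_0^\varepsilon,y_1^\varepsilon}\|$ (which is delicate), or simply adopt the paper's smoothing trick, which removes the dependence on any $C^2$ approximation of $y_0$ altogether.
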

	%	Furthermore, the control $f$ can be taken so that it is $dt$-a.e $L^2$-continuous on $[0,T]$.{\color{red}?????}
	\begin{proof}
		From the previous lemma we know that \eqref{controlled system} is approximately controllable provided the initial datum and the endpoint are in $C^2(\T;\R)$. Moreover we recall that, when $f=0$, \eqref{controlled system} has smoothing properties (see e.g. \cite[Theorem 2.2]{pavliotis}); namely, if $y_0 \in L^2(\T;\R)$ then $y(t;y_0,0) \in C^{1}\left( (0,+\infty);C^2(\T;\R) \right)$ for any $t>0$. With this premise, let $\bar{y}_1$ be any point in $C^2(\T;\R)$ and    $\bar{t} > 0$ be any positive time. Using the mentioned smoothing properties and  Lemma \ref{approxiamte lemma},  there exists a function $\bar f \in L^{2} \left([\bar t,T];L^2(\T;\R) \right)$ such that the control 
		\begin{equation}\label{control f}
			f(t):=
			\begin{cases}
				0,\, & t \in [0,\bar{t}) , \\
				\bar{f}(t),\, \quad & t \in [\bar{t},T] ,
			\end{cases}
		\end{equation}
		will drive system \eqref{controlled system} from $y_0$ to an arbitrarily small neighbourhood of $\bar{y}_1$ in time $T$ (more precisely it will drive \eqref{controlled system} first from $y_0$ to a point in $C^2$ and then from such a point to an arbitrarily small neighbourhood of $\bar{y}_1$). That is,   for any  $\bar{y}_1 \in C^2(\T;\R)$ and $\varepsilon >0$ there exists $f  \in L^{2} \left([0,T];L^2(\T;\R) \right)$ such that 
		\begin{equation}
			\left \| y(T;y_0,f)-\bar{y_1} \right \|_{L^2(\T;\R)} \leq \varepsilon. \notag
		\end{equation}
		Since $C^2(\T;\R)$ is dense in $L^2(\T;\R)$, to conclude the argument we choose  $\bar{y}_1 \in C^2(\T;\R)$ such that $\left \|\bar{y}_1-y_1 \right \|_{L^2(\T;\R)} \leq \varepsilon$;  then from the triangle inequality we obtain 
		\begin{equation}
			\left \| y(T;y_0,f)-y_1 \right \|_{L^2(\T;\R)} 
			\leq \left \| y(T,y_0;f)-\bar{y_1} \right \|_{L^2(\T;\R)}+\left \| \bar{y_1}-y_1 \right \|_{L^2(\T;\R)} \leq 2\varepsilon. \nonumber 
		\end{equation}
	\end{proof}
	Before proving the irreducibility of the semigroup $\{ \cP_t \}_{t \geq 0}$ we recall the following elementary fact. 
	%%%%%%%%%%%%%%%%%%%
	%%%%%%%%%%%%%%%%%%%%%%
	%%%%%%%%%%%%%%

	%%%%%%%%%%%%%%%%%%%%%%%%%%%%%%%%%
	%%%%%%%%%%%%%%%%%%%%%%%%%%%%%%%

	\begin{lemma}\label{lemma irriducibilità} 
		Let $Q$ satisfy \eqref{Qactsonbasis} and \eqref{eqn:Qtrace-class}. For  
		$f \in L^2([0,T];L^2(\T;\R))$,  define 
		\begin{equation} \label{deterministic convolution}
			f_A(t):=\int_0^t e^{(t-s)A} {Q}^{\frac{1}{2}}  f(s)\,ds,\quad  t \in [0,T].
		\end{equation}
		Then $f_A \in C([0,T];H^1(\T;\R))$ and its weak derivative is given by 
		\begin{equation}\label{deterministic derivative}
			\pa_x f_A(t)= \sum \limits_{k \in \Z} |k| \lambda_k \left( \int_0^t e^{-(t-s)k^2}f_k(s)\,ds \right )\,e_{-k},\quad  t \in [0,T]. 
		\end{equation}
		where $f_k(s):=\langle f(s),e_k \rangle_{L^2(\T;\R)}$, $s \in [0,T]$, $k \in \Z$. 
		As a consequence, the 
		stochastic convolution $W_A$ belongs to $C([0,T];H^1(\T;\R))$. 
		
		Moreover,  for any function $f$ in $L^2\left( [0,T]; L^2(\T;\R) \right)$ and  for any $\varepsilon >0$ the following holds
		\begin{equation}\label{che ne se}
			\mP \left( \sup \limits_{t \in [0,T]} \| W_A(t)-f_A(t) \|_{H^1(\T;\R)} \leq \varepsilon \right) >0 \, . 
		\end{equation}
	\end{lemma}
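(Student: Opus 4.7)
My plan is to address the three claims in turn. For the regularity of $f_A$, I would first write out its Fourier expansion: since $e^{tA}$ and $Q^{1/2}$ are diagonal in the basis $\{e_k\}$, one has
$$f_A(t)=\sum_{k\in\Z}\lambda_k\left(\int_0^t e^{-(t-s)k^2}f_k(s)\,ds\right)e_k,$$
and differentiating term by term (using $\partial_x e_k=\pm|k|\,e_{-k}$) yields formula \eqref{deterministic derivative}. To justify this and obtain $f_A(t)\in H^1$, I would apply Cauchy-Schwarz to write
$$\left|\int_0^t e^{-(t-s)k^2}f_k(s)\,ds\right|^2\leq \frac{1-e^{-2tk^2}}{2k^2}\int_0^t |f_k(s)|^2\,ds,$$
and observe that $(1+k^2)\,\frac{1-e^{-2tk^2}}{2k^2}$ is bounded uniformly in $k\in\Z$ and $t\in[0,T]$ (reading it as $t$ at $k=0$). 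Combined with $\sup_k \lambda_k^2<\infty$ (from trace-class), summation in $k$ gives $\|f_A(t)\|_{H^1}^2\leq C_T\,\|f\|_{L^2([0,T];L^2(\T;\R))}^2$, and continuity $t\mapsto f_A(t)$ into $H^1$ then follows by dominated convergence applied to the Fourier series.

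For the stochastic convolution $W_A$, the argument is entirely analogous with It\^o's isometry replacing Cauchy-Schwarz:
$$\mE\left|\int_0^t e^{-(t-s)k^2}\,d\beta^k_s\right|^2=\frac{1-e^{-2tk^2}}{2k^2},$$
yielding
$$\mE\|W_A(t)\|_{H^1}^2=\sum_{k\in\Z}(1+k^2)\lambda_k^2\,\frac{1-e^{-2tk^2}}{2k^2}\leq C_T\sum_{k\in\Z}\lambda_k^2<+\infty,$$
so that $W_A(t)\in H^1$ $\mP$-a.s. for each $t$. To upgrade this to path-continuity in $H^1$, I would apply the factorization method of Da~Prato-Kwapie\'n-Zabczyk (\cite[Theorem 5.9]{ergodicity}) taking the Hilbert space $H^1(\T;\R)$ as target space; the uniform $H^1$-second moment bound above is exactly the ingredient needed.

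For the small-ball probability \eqref{che ne se}, I would use Girsanov's theorem. Since $f\in L^2([0,T];L^2(\T;\R))$ is deterministic, the process $M_t:=\sum_{k\in\Z}\int_0^t f_k(s)\,d\beta^k_s$ is a continuous square-integrable martingale with deterministic bracket $\langle M\rangle_T=\int_0^T\|f(s)\|_{L^2}^2 ds<\infty$, hence Novikov's condition is trivially satisfied. Setting $Z:=\exp(M_T-\tfrac12\langle M\rangle_T)$, under the equivalent measure $\mathbb{Q}:=Z\cdot\mP$ the process $\tilde W(t):=W(t)-\int_0^t f(s)\,ds$ is a cylindrical Wiener process, so the $\mathbb{Q}$-law of $W_A-f_A$ coincides with the $\mP$-law of $W_A$. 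Therefore
$$\mathbb{Q}\!\left(\sup_{t\in[0,T]}\|W_A(t)-f_A(t)\|_{H^1}\leq \varepsilon\right)=\mP\!\left(\sup_{t\in[0,T]}\|W_A(t)\|_{H^1}\leq \varepsilon\right),$$
and the right-hand side is strictly positive, being the small-ball probability of a centered Gaussian measure at its mean on the separable Banach space $C([0,T];H^1(\T;\R))$. Since $\mP\sim\mathbb{Q}$, this positivity transfers to the $\mP$-probability in \eqref{che ne se}. The main technical point, in my view, is establishing $H^1$-path-continuity of $W_A$; this is done cleanly by factorization under the standing trace-class assumption, after which the remaining arguments are short.
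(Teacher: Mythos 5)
Your handling of $f_A$ via the Fourier expansion, Cauchy--Schwarz in time, and the uniform bound on $(1+k^2)(1-e^{-2tk^2})/(2k^2)$ is the same calculation the paper uses, only made explicit. More importantly, the paper's appendix proof actually \emph{stops} after deriving the Fourier formula for $\partial_x W_A$ and never argues the positivity claim \eqref{che ne se} (the lemma is flagged as a recalled ``elementary fact''), so your Girsanov argument genuinely supplies the missing part, and it is correct: a deterministic $f\in L^2([0,T];L^2(\T;\R))$ makes Novikov's condition trivial; under the equivalent measure $\mathbb{Q}$ the law of $W_A-f_A$ coincides with the $\mP$-law of $W_A$, which is a centred Gaussian measure on $C([0,T];H^1(\T;\R))$, so its support contains the origin and any $\varepsilon$-ball around $0$ has positive mass; the positivity then transfers back to $\mP$ because $\mP\sim\mathbb{Q}$.

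The one step that needs repair is the appeal to factorization for path-continuity of $W_A$ in $H^1$. Saying that ``the uniform $H^1$-second-moment bound above is exactly the ingredient needed'' is not accurate: the Da~Prato--Kwapie\'n--Zabczyk factorization theorem in the target space $H^1(\T;\R)$ requires, for some $\alpha>0$,
\begin{equation*}
\int_0^T s^{-2\alpha}\,\bigl\|e^{sA}Q^{1/2}\bigr\|^2_{\mathrm{HS}(L^2;H^1)}\,ds
=\int_0^T s^{-2\alpha}\sum_{k\in\Z}(1+k^2)e^{-2sk^2}\lambda_k^2\,ds<\infty\,,
\end{equation*}
and after the substitution $u=2sk^2$ this reduces, up to the $k=0$ mode, to $\sum_{k}|k|^{4\alpha}\lambda_k^2<\infty$, which is strictly stronger than the standing trace-class condition $\sum_k\lambda_k^2<\infty$. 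A fixed-time bound $\sup_{t\le T}\mE\|W_A(t)\|_{H^1}^2\le C_T\,\mathrm{Tr}(Q)$ does not, on its own, yield a.s.\ continuity in $H^1$. To be fair, the paper glosses over the very same point (it asserts $W_A\in L^2(\Omega;C([0,T];H^1(\T;\R)))$ by ``applying the same reasoning'' as for $f_A$, with no further argument), so this is not unique to your write-up; but a rigorous version of this step needs either a mild additional summability hypothesis on $\{\lambda_k\}$ or a more delicate chaining estimate exploiting the explicit mode-wise covariance of $W_A$, rather than a direct invocation of the factorization theorem.
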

	We give a brief proof of the above lemma in Appendix \ref{appendix: proofs of sec 7}.

	\begin{theorem}\label{irreducibility of the semigroup}
		Let $Q$ satisfy \eqref{Qactsonbasis} and \eqref{eqn:Qtrace-class}. Then, the semigroup $\{\cP_t\}_{t \geq 0}$ generated by SPDE \eqref{spde Q} is irreducible in $L^2(\T;\R)$, that is, 
		for any  $u_0 \in L^2(\T;\R)$,  
		$\left (\cP_T1_{G} \right )(u_0) > 0$ for all open sets $G \subset L^2(\T;\R)$ and every $T>0$. 
	\end{theorem}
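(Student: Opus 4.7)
The strategy is the standard Stroock--Varadhan-type support/irreducibility argument: approximate the stochastic trajectory in $L^2(\T;\R)$ by a deterministic controlled trajectory, and then show that the two trajectories are close with positive probability. Concretely, fix $u_0 \in L^2(\T;\R)$, an open set $G \subset L^2(\T;\R)$, and $T>0$. Choose any $y_1 \in G$ and $r>0$ such that the open ball $B_{L^2}(y_1,r)$ is contained in $G$. By Proposition \ref{approximate proposition} there exists a control $f \in L^2([0,T];L^2(\T;\R))$ such that the mild solution of the controlled system \eqref{controlled system} with initial datum $u_0$ satisfies $\|y(T;u_0,f) - y_1\|_{L^2(\T;\R)} \leq r/3$.

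The core step is a pathwise comparison between $u(\cdot;u_0)$ and $y(\cdot;u_0,f)$ in terms of $W_A - f_A$. Setting $w(t) := u(t;u_0) - y(t;u_0,f)$, subtracting the mild formulations, and using the algebraic identity $(F'*u)u - (F'*y)y = (F'*u)w + (F'*w)\,y$, one obtains
\begin{equation*}
w(t) = \int_0^t e^{(t-s)A}\partial_x\bigl[V'w(s) + (F'*u)(s)\,w(s) + (F'*w)(s)\,y(s)\bigr]ds + \bigl(W_A(t) - f_A(t)\bigr).
\end{equation*}
Applying Lemma \ref{stime heat kernel}, bounding $\|(F'*w)\,y\|_{L^2}$ by $\|F'\|_{L^\infty}\|w\|_{L^2}\|y\|_{L^2}$ (and similarly for the other nonlinear term), and using the a priori estimate \eqref{L2 norm WA} for $u$ (resp.\ its deterministic analogue \eqref{L2 yf norm} for $y$), one controls the prefactors on the event
\begin{equation*}
\Omega_\delta := \Bigl\{ \sup_{t \in [0,T]} \|W_A(t) - f_A(t)\|_{H^1(\T;\R)} \leq \delta \Bigr\},
\end{equation*}
since there $\|W_A\|_{C([0,T];H^1)}$ is bounded by the deterministic quantity $\delta + \|f_A\|_{C([0,T];H^1)}$ (finite by Lemma \ref{lemma irriducibilità}). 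A singular Gronwall argument (Lemma \ref{Lemma di Gronwall generalizzato}) then yields, on $\Omega_\delta$,
\begin{equation*}
\sup_{t \in [0,T]}\|w(t)\|_{L^2(\T;\R)} \leq K\, \sup_{t \in [0,T]}\|W_A(t) - f_A(t)\|_{L^2(\T;\R)} \leq K\delta,
\end{equation*}
where $K = K(T, u_0, f, V, F) > 0$ is deterministic.

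To conclude, choose $\delta := r/(3K)$. By Lemma \ref{lemma irriducibilità} (applied to this particular $f$) we have $\mP(\Omega_\delta) > 0$. On $\Omega_\delta$ we then obtain
\begin{equation*}
\|u(T;u_0) - y_1\|_{L^2(\T;\R)} \leq \|w(T)\|_{L^2(\T;\R)} + \|y(T;u_0,f) - y_1\|_{L^2(\T;\R)} \leq \tfrac{r}{3} + \tfrac{r}{3} < r,
\end{equation*}
so that $u(T;u_0) \in B_{L^2}(y_1,r) \subset G$. Therefore $(\cP_T 1_G)(u_0) = \mP(u(T;u_0) \in G) \geq \mP(\Omega_\delta) > 0$, as required.

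The only substantive difficulty is the pathwise continuity estimate for $w$: the controlled and stochastic trajectories are compared through nonlinear quadratic terms, so the Gronwall step requires uniform $L^2$ bounds on both $u$ and $y$ on $\Omega_\delta$, which is precisely where the a priori estimate \eqref{L2 norm WA} and the $H^1$-valued approximation provided by Lemma \ref{lemma irriducibilità} are used in tandem. Everything else is a packaging of results already established in the paper.
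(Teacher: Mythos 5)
Your proposal is correct and follows essentially the same route as the paper: approximate controllability (Proposition \ref{approximate proposition}) yields a control $f$; subtracting mild formulations, bounding the nonlinear terms via Lemma \ref{stime heat kernel} and the a priori estimates, and applying the singular Gronwall lemma on the event where $\|W_A - f_A\|_{C([0,T];H^1)}$ is small gives the pathwise comparison; and Lemma \ref{lemma irriducibilità} supplies the positive probability. The only point to make explicit is that your constant $K$ must be chosen uniformly over $\delta$ in a bounded range (say $\delta \in (0,1)$, which is harmless since $\delta$ will be taken small), so that setting $\delta := r/(3K)$ is not circular — the paper handles this by noting that the $L^2$-bound $\bar{\kappa}$ on the trajectories can be chosen independently of $\varepsilon \in (0,1)$.
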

	\begin{proof}
		Let $u=u(t;u_0)$, $t\geq 0$, be the solution to \eqref{spde Q} with initial condition $u_0 \in L^2(\T;\R)$. Throughout the proof $u_0$ will be fixed but arbitrary and we don't repeat this in every statement. Proving the assertion is equivalent to showing that for any  $T>0$,  $u_0 \in L^2(\T;\R)$ and for all $y_1$ in a dense set of $L^2(\T;\R)$,  the following holds 
		\begin{equation} \label{obiettivo}
			\mP \left( \left \| u(T;u_0)-y_1 \right \|_{L^2(\T;\R)} \leq \varepsilon \right) > 0, \quad {\mbox{for every }} \epsilon >0\,.
		\end{equation}
		To this end, let us fix $T>0$ for the remaining part of the proof and  note that due to Proposition \ref{approximate proposition} we know that the set of reachable points at time $T$ of the control system \eqref{controlled system} is a dense subset of $L^2(\T;\R)$. In other words, the set 
		\begin{equation}
			\mathfrak{F}:= \left \{ y(T; u_0,f) \,|\, f \in L^{2} \left ([0,T];L^2(\T;\R) \right ) \right \},
		\end{equation}
		is dense in $L^2(\T;\R)$.
		Hence, it suffices to prove that \eqref{obiettivo} holds for any $y_1 \in \mathfrak{F}$;  in particular  we will show that
		%if the following held for all $\varepsilon>0$ and for any given $u_0 \in L^2(\T;\R)$ and $f \in L^{\infty} \left ([0,T];L^2(\T;\R) \right )$
		\begin{equation} \label{obiettivo 2}
			\mP \left ( \sup \limits_{t \in [0,T]} \|u(t;u_0)-y(t;u_0,f)\|_{L^2(\T;\R)} \leq \varepsilon \right ) >0,
		\end{equation}
		for every $\epsilon>0$ and $f \in L^2\left ([0,T];L^2(\T;\R)\right )$ as in the above. To do so, we follow the method adopted in Lemma \ref{approxiamte lemma} and we start by estimating the difference in the $L^2(\T;\R)$-norm between $u(t;u_0)$ and $y(t;u_0,f)$. In turn, because of the non-linearity, this requires a bound similar to \eqref{deltabound}. Since in this case $u(t;u_0)$ is random (hence, the analogous bound to \eqref{deltabound} will not hold for every $\omega$) we proceed as follows. Using the estimates \eqref{L2 norm WA} (this estimate is for $v$, to get the one for $u$ it suffices to recall that $u=v+W_A$) and \eqref{L2 yf norm}, we can see that there exists $\bar{\kappa}$ such that  
		\begin{equation} \label{bound on eta}
			\sup \limits_{t \in [0,T]} \left \|u(t) \right \|_{L^2(\T;\R)}+\sup \limits_{t \in [0,T]} \left \|y(t) \right \|_{L^2(\T;\R)} \leq \bar{\kappa},\quad\text{for all $\omega \in S_{\varepsilon}$ and $ \varepsilon \in (0,1)$, }
		\end{equation}
		where the set $S_{\varepsilon}$ is defined for all $\varepsilon \in (0,1)$ as 
		\begin{eqnarray}\label{threshold constant}
			&& S_{\varepsilon}:=\left \{ \omega \in \Omega\,:\, \sup \limits_{t \in [0,T]} \|W_A(t)\|_{H^1(\T;\R)} \leq \hat{\kappa}_{\varepsilon}\right \},\quad \text{where} \label{threshold set}\\
			&& \hat{\kappa}_{\varepsilon}:=\sup \limits_{t \in [0,T]} \left \| f_A(t) \right \|_{H^1(\T;\R)}+\varepsilon . 
		\end{eqnarray}
		Similarly to footnote \ref{footnote 1}, $\bar{\kappa}$ can be chosen independently of $\varepsilon \in (0,1)$.
		In what follows, we write in short $u(t)$ in place of $u(t;u_0)$ and $y(t)$ in place of $y(t,u_0;f)$. With this notation in mind, from the mild formulation of $u$ and $y$ we have that
		\begin{equation}
			\begin{split}
				u(t)-y(t) & =\int_0^t e^{(t-s)A} \pa_x \left[ V^{'} (u(s)-y(s)) \right ]\,ds + W_A(t)-f_A(t) \\
				& + \int_0^t e^{(t-s)A} \pa_x \left [ \left (F^{'} * u \right )(s) u(s)- \left( F^{'} * y \right )(s) y(s) \right ]\,ds, \notag
			\end{split}
		\end{equation}
		where the above equality holds for all $t \in [0,T]$, $\mP$-a.s.
		Now, we first restrict to the realizations $\omega \in S_{\varepsilon}$ (see \eqref{threshold set} for its definition). Hence, from \eqref{bound on eta} and \eqref{diseguaglianza unmezzo} we obtain 
		\begin{equation}
			\begin{split}
				\|u(t)-y(t)\|_{L^2(\T;\R)} & \leq C \|V^{'}\|_{L^{\infty}(\T;\R)} \int_0^t (t-s)^{-\frac{1}{2}} \|   u(s)-y(s)\|_{L^2(\T;\R)}\,ds \\
				& +C\|F^{'}\|_{L^{\infty}(\T;\R)} \bar{\kappa} \int_0^t (t-s)^{-\frac{1}{2}} \|u(s)-y(s)\|_{L^2(\T;\R)}\,ds \\
				&+ \sup \limits_{t \in [0,T]} \|W_A(t)-f_A(t) \|_{L^2(\T;\R)},\notag
			\end{split}
		\end{equation}
		where the above inequality holds for all $t \in [0,T]$, $\omega \in S_{\varepsilon}$ and for all $\varepsilon \in (0,1)$.
		Now, by applying the generalized Gronwall's inequality we obtain that there exists a deterministic constant $C_{\bar{\kappa},T} \in \R_{+}$ such that 
		\begin{equation} \label{bound on difference}
			\begin{split}
				\|u(t)-y(t)\|_{L^2(\T;\R)} \leq C_{\bar{\kappa},T} \sup \limits_{t \in [0,T]} \|W_A(t)-f_A(t) \|_{L^2(\T;\R)},
			\end{split}
		\end{equation}
		and the above inequality holds for all $t \in [0,T]$, $\omega \in S_{\varepsilon}$ and for all $\varepsilon \in (0,1)$. 
		The proof is concluded by using \eqref{bound on difference} and noting that, since $\|W_A(t)\|_{H^1(\T;\R)}-\|f_A(t)\|_{H^1(\T;\R)} \leq \|W_A(t)-f_A(t)\|_{H^1(\T;\R)}$ for all $t \in [0,T]$, the sets
		\begin{eqnarray} \notag
			&& S_{\varepsilon}^{'}:=S_{\varepsilon} \cap \left \{ \omega \,:\, \sup \limits_{T \in [0,T]} \left \| W_A(t)-f_A(t) \right \|_{H^1(\T;\R)} < \varepsilon \right \} \\ \notag
			&& S_{\varepsilon}^{''}:=\left \{ \omega \,:\, \sup \limits_{T \in [0,T]} \left \| W_A(t)-f_A(t) \right \|_{H^1(\T;\R)} < \varepsilon \right \}
		\end{eqnarray}
		have the same probability, which we know to be strictly positive due to \eqref{che ne se}, i.e. $\mP\left ( S_{\varepsilon}^{'} \right )=\mP \left ( S_{\varepsilon}^{''} \right )>0$ for all $\varepsilon \in (0,1)$. 
	\end{proof}

	%%%%%%%%%%%%%%%%%%
	%%%%%%%%%%%%%%%%%%%%%
	%%%%%%%%%%%%%%%%%%%%%%%%%%%%%%%%%%%%%%%%

	\section{Proof of part ii) of Theorem \ref{theorem:existence and uniqueness inv measure}: Strong Feller Property}\label{sec:sec7}
	%This semigroup should be denoted $\cP_t^{\gamma}$ as it depends on $\gamma$ through the covariance operator $Q$, but we omit this dependence in the notation for simplicity. 
	
	From Proposition \ref{stime sistema deterministico}, we readily deduce that the semigroup $\left\{ \cP_t \right \}_{t \geq 0}$ associated with \eqref{spde Q} (defined in \eqref{semigroup non linear spde}) is a Feller semigroup on $L^2(\T;\R)$. 
	The purpose of this section is to prove that the semigroup $\cP_t$ is strong Feller as well, i.e. to prove Theorem \ref{thm:strong feller for fully nonlinear semigroup}  below. 
	We recall that throughout this section we work under Assumption \ref{ass:QstrongFeller}; we explain where this assumption is used in Note \ref{nota doppio pallino} below and in the comments before Lemma \ref{interpolation formula}.

	The proof of Theorem \ref{thm:strong feller for fully nonlinear semigroup}  requires showing the Strong Feller property for a (class of) SPDE with Lipshitz non-linearity, see \eqref{damped spde} and  \eqref{strong feller for PtR}  below. This result  is used in the proof of the main theorem and then proved in  Subsection \ref{sec: sec 7.2} - more comments on this matter can be found at the beginning of that subsection as well. 
	
	\begin{theorem}\label{thm:strong feller for fully nonlinear semigroup}
		Let Assumption \ref{ass:QstrongFeller} hold. Then the semigroup $\{\cP_t\}_{t \geq 0}$  generated by SPDE \eqref{spde Q}  is strong Feller in $L^2$, i.e. $\cP_t$ maps $\cB_b(L^2(\T;\R);\R)$ into $C_b(L^2(\T;\R);\R)$, for any $t > 0$.
	\end{theorem}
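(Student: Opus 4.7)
The plan is to reduce Theorem \ref{thm:strong feller for fully nonlinear semigroup} to the Strong Feller property of a family of auxiliary SPDEs with globally Lipshitz drift, to which the general result of Subsection \ref{sec: sec 7.2} applies, and then to transfer back to the original dynamics via a standard localization argument.

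First, I would fix $R>0$, pick a smooth cutoff $\xi_R \in C_b^{\infty}(\R_+;[0,1])$ with $\xi_R \equiv 1$ on $[0,R]$ and $\xi_R \equiv 0$ on $[2R,+\infty)$, and define the truncated drift
$$B_R(u) := \partial_x(V'u) + \xi_R(\|u\|_{L^2(\T;\R)})\,\partial_x((F'\ast u)u), \qquad u \in L^2(\T;\R),$$
and the truncated SPDE $du^R = Au^R\,dt + B_R(u^R)\,dt + Q^{1/2}\,dW$. Expanding the derivatives, the linear term $V''u+V'\partial_x u$ is already Lipshitz from $H^1(\T;\R)$ to $L^2(\T;\R)$, while the bilinear term $(F''\ast u)u + (F'\ast u)\partial_x u$ becomes globally Lipshitz (from $H^1(\T;\R)$ to $L^2(\T;\R)$) thanks to the cutoff, with constant depending on $R$, $\|F'\|_{L^\infty}$ and $\|F''\|_{L^\infty}$. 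A minor variant of the arguments of Section \ref{Well-Posedness McKean_Vlasov} then yields a unique mild solution $u^R(\cdot\,;u_0)$ and an associated Markov semigroup $\cP_t^R$.

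Next, I would invoke the Strong Feller result for derivative-dependent Lipshitz nonlinearities of Subsection \ref{sec: sec 7.2} to conclude that $\cP_t^R$ maps $\cB_b(L^2(\T;\R);\R)$ into $C_b(L^2(\T;\R);\R)$ for each $t>0$ and $R>0$. To transfer this to $\cP_t$, I would introduce the stopping time $\tau_R(u_0) := \inf\{s\geq 0 \,:\, \|u(s;u_0)\|_{L^2(\T;\R)}\geq R\}$. By pathwise uniqueness, $u(\cdot\,;u_0)$ and $u^R(\cdot\,;u_0)$ coincide on $[0,\tau_R(u_0))$ $\mP$-a.s., so for every $\psi \in \cB_b(L^2(\T;\R);\R)$,
$$|\cP_t\psi(u_0) - \cP_t^R\psi(u_0)| \leq 2\|\psi\|_\infty\, \mP(\tau_R(u_0)\leq t).$$
The a priori bounds of Proposition \ref{stime sistema deterministico} (applied to $u=v+W_A$) together with Chebyshev's inequality yield $\mP(\tau_R(u_0)\leq t)\to 0$ as $R\to\infty$, uniformly for $u_0$ in bounded subsets of $L^2(\T;\R)$. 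Hence $\cP_t^R\psi \to \cP_t\psi$ locally uniformly on $L^2(\T;\R)$; being a locally uniform limit of continuous functions, $\cP_t\psi$ is continuous, proving Strong Feller for $\cP_t$.

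The main obstacle I anticipate is the Lipshitz Strong Feller step of Subsection \ref{sec: sec 7.2}, which must deliver a Bismut-Elworthy-Li representation of the form $D\cP_t^R\psi(u_0)\cdot h = \frac{1}{t}\mE\big(\psi(u^R(t;u_0))\int_0^t \langle Q^{-1/2}\eta_h(s),dW(s)\rangle_{L^2}\big)$, where $\eta_h(s) = D_{u_0}u^R(s;u_0)[h]$ solves the first variation equation $d\eta_h = A\eta_h\,dt + DB_R(u^R)\eta_h\,dt$ with $\eta_h(0)=h\in L^2(\T;\R)$. The stochastic integral is well-defined only if $Q^{-1/2}\eta_h(\cdot)$ is $\mP$-a.s. square-integrable on $[0,t]$; since the growth of $Q^{-1/2}$ is controlled by $(-A)^{\gamma/2}$ under Assumption \ref{ass:QstrongFeller}, this reduces to bounds of the type $\|\eta_h(s)\|_{H^\gamma(\T;\R)}\lesssim s^{-\gamma/2}\|h\|_{L^2(\T;\R)}$, following from the parabolic smoothing $\|e^{sA}h\|_{H^\gamma(\T;\R)}\lesssim s^{-\gamma/2}\|h\|_{L^2(\T;\R)}$ -- integrable in $s^2$ precisely because $\gamma<1$ -- combined with a bootstrap argument absorbing the derivative-dependent perturbation $DB_R(u^R)\eta_h$ in fractional Sobolev norms. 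The departure from the classical setup of \cite{cerrai, manca}, where the nonlinearity has no explicit derivative dependence, is exactly what requires the independent treatment advertised in Subsection \ref{sec: sec 7.2}.
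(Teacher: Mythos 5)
Your proposal follows the paper's argument essentially verbatim: truncate the quadratic nonlinearity with a cutoff $\xi_R$ to get a globally Lipschitz drift, invoke the Strong Feller result for Lipschitz gradient-form nonlinearities from Subsection \ref{sec: sec 7.2}, and transfer back via the stopping time $\tau_R(u_0)$ together with the a priori bounds of Proposition \ref{stime sistema deterministico}. The only point where you deviate is in the anticipated mechanism inside the Lipschitz step: you suggest controlling $\int_0^t\|Q^{-1/2}\eta_h(s)\|_{L^2}^2\,ds$ via pointwise-in-time parabolic smoothing $\|\eta_h(s)\|_{H^\gamma}\lesssim s^{-\gamma/2}\|h\|_{L^2}$ plus a bootstrap, whereas the paper instead derives the $L^2_t H^1_x$ energy estimate \eqref{inequality derivata di u} for the first-variation equation and combines it with the interpolation bound $\|Q^{-1/2}f\|_{L^2}\leq c\|f\|_{L^2}^{1-\gamma}\|f\|_{H^1}^{\gamma}$ (Lemma \ref{interpolation formula}) and H\"older in time -- both routes are valid and exploit $\gamma<1$ in the same way, so this is a technical variant rather than a different proof.
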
 
	\begin{proof}[Proof of Theorem \ref{thm:strong feller for fully nonlinear semigroup}]	
		The strategy is inspired by  \cite[Chap.14]{ergodicity}.   Let  $\{\xi_{R}\}_{R>0} \subset C^{\infty}(\R;\R)$ be a family of smooth cutoff functions such that
		\begin{equation}\label{smooth indicator functions}
			\xi_R(x)=
			\begin{cases}
				1,\quad  |x| \leq R, & \\
				0,\quad  |x| \geq R+1  . &
			\end{cases}
		\end{equation} 
		We consider a `damped' version of the SPDE \eqref{spde Q} where we replace the non-linearity $(F'\ast u)u$ with the truncated operator
		\begin{equation} \label{damped non linearity}
			\mathcal{M}_R(u):=\left ( F^{'}*u \right )\,u\,\xi_R \left ( \|u\|_{L^2(\T;\R)}^2 \right ), \quad u \in L^2(\T;\R). 
		\end{equation}
		Namely, we consider the family of SPDEs
		\begin{equation}\label{damped spde}
			\begin{cases}
				\pa_t u_R=Au_R+\pa_x \left[ V^{'}u_R+\mathcal{M}_R(u_R) \right ]+{Q}^{\frac{1}{2}} \pa_t W,\quad (0,T) \times \T, & \\
				u_R(t,0)=u_R(t,2\pi),\quad  t \in [0,T], & \\
				u_R(0,x)=u_0(x), \quad  x \in \T, &
			\end{cases}
		\end{equation}
		with $R>0$. The nonlinear term $\mathcal{M}_R(u)$ is globally Lipschitz-continuous, as opposed to the non-linearity in the original system \eqref{spde Q}. 
		Hence a standard application of the Banach fixed point theorem gives the well-posedness in mild sense (in $L^2$) of \eqref{damped spde} for any initial datum  $u_0 \in L^2$. Therefore we can define the (Feller) semigroup associated to \eqref{damped spde}, and we denote it by $\cP_t^R$. Because the non-linearity in \eqref{damped spde} is globally Lipschitz, it is easier to prove the Strong Feller property for $\cP_t^R$  rather than for $\cP_t$ directly. In particular, if we prove the following two facts 
		\begin{itemize}
			\item[(i)] 	 for each $R>0$, the semigroup  $\{\cP^R_t\}_{R>0}$  is Strong Feller in $L^2(\T;\R)$; 
			\item[(ii)] for any $t>0$ and any $\psi \in \mathcal{B}_b\left( L^2(\T;\R);\R \right)$, $\cP_t^R\psi$ converges to $\cP_t\psi$, as $R \to +\infty$, locally uniformly in $L^2(\T;\R)$ (i.e. uniformly on any bounded subset $X \subset L^2(\T;\R)$); 
		\end{itemize}
		then $\{\cP_t\}_{t \geq 0}$ is Strong Feller in $L^2(\T;\R)$. Indeed, let $\{u_0^n\}_{n \in \N}, u_0 \subset L^2(\T;\R)$ be such that $u_0^n \stackrel{n \to +\infty}{\longrightarrow u_0}$ in $L^2(\T;\R)$ and let $X$ be a bounded subset of $L^2(\T;\R)$ such that $\{u_0^n\}_{n \in \N}, u_0 \subset X$. First, we write
		\begin{equation}
			\begin{split}
				\left | (\cP_t\psi)(u_0^n)-\left (\cP_t \psi\right) (u_0)\right | & \leq \left | (\cP_t\psi)(u_0^n)-\left (\cP_t^R \psi \right) (u_0^n)\right |  \\
				& + \left | (\cP_t^R\psi)(u_0^n)-\left (\cP_t^R \psi \right) (u_0) \right | \\
				& + \left | (\cP_t^R\psi)(u_0)-\left (\cP_t \psi \right)(u_0) \right | \\
				& \leq 2\sup \limits_{u_0 \in X} \left | (\cP_t\psi)(u_0)-\left (\cP_t^R \psi \right) (u_0)\right |  \\
				& + \left | (\cP_t^R\psi)(u_0^n)-\left (\cP_t^R \psi \right) (u_0) \right |. \notag 
			\end{split}
		\end{equation} 
		Letting $n \to +\infty$, by (i) we obtain 
		\begin{equation}
			\limsup \limits_{n \to +\infty} \left | (\cP_t\psi)(u_0^n)-\left (\cP_t \psi\right) (u_0)\right | \leq 2\sup \limits_{u_0 \in X} \left | (\cP_t\psi)(u_0)-\left (\cP_t^R \psi \right) (u_0)\right | \, .\notag 
		\end{equation}
		Hence, the conclusion follows by letting $R \to +\infty$ and using (ii). 
		
		Statement (i), i.e. the strong Feller property for $\{\cP^R_t\}_{t \geq 0} $, is proved in Subsection \ref{sec: sec 7.2} under Assumption \ref{ass:QstrongFeller}. More precisely, (i) follows directly from the bound
		\begin{equation} \label{strong feller for PtR}
			\left |\left ( \cP_t^{R}\psi \right )(u_0) - \left ( \cP_t^{R}\psi \right )(v_0) \right |^2 \leq \frac{(1+t)^\gamma}{t^{1+\gamma}}e^{CL^2_R t}\,\|u_0-v_0\|_{L^2(\T;\R)}^2\, ,
		\end{equation}
		where $C>0$ is some constant (independent of $R$) and  $L_R$ is the Lipschitz constant associated to $V'+\mathcal{M}_R$. The above bound is proved in Proposition \ref{prop:strong feller Lipshitz nonlinearities}.  
		
		To prove Statement (ii), i.e. to show that the following limit holds
		\begin{equation}\label{locally uniformly}
			\sup \limits_{u_0 \in X} \left |\cP_t\psi(u_0)-\cP_t^R\psi(u_0) \right | \to 0, \qquad \mbox{as } R\to +\infty, 
		\end{equation}
		 for any given bounded set $X \subset L^2(\T;\R)$ and any fixed $t \in [0,T]$, we introduce the family of stopping times $\{ \tau_{u_0}^R \}_{R>0}$ given by  
		$$ \tau_{u_0}^R:= \inf \{ t \geq 0 \,:\, \|u(t,u_0)\|_{L^2(\T;\R)} \geq R \} \wedge T.$$
		Let $X$ be a bounded subset of $L^2(\T;\R)$ and fix $t\in[0,T]$. To prove \eqref{locally uniformly}, it is enough to show that
		\begin{equation}\label{stopping time locally uniformly}
			\sup \limits_{u_0 \in X} \mP ( \tau_{u_0}^R < t ) \to 0, \quad \mbox{as } \, R\to+\infty.
		\end{equation}
		Indeed,
		since $u(t;u_0)=u^R(t;u_0)$ for $t\in [0,\tau^R_{u_0}]$, we have
		\begin{equation}
			\begin{split}
				\cP_t\psi(u_0)-\cP_t^R\psi(u_0) =\,& \mE \left( \left( \psi(u(t;u_0))- \psi(u^R(t;u_0) \right) 1_{\{ \tau_{u_0}^R > t \}} \right)\\
				+ \, &\mE \left( \left( \psi(u(t;u_0))- \psi(u^R(t;u_0) \right) 1_{\{ \tau_{u_0}^R < t \}} \right)\\
				= \,& \mE \left( \left( \psi(u(t;u_0))- \psi(u^R(t;u_0) \right) 1_{\{ \tau_{u_0}^R < t \}} \right), \notag
			\end{split}
		\end{equation}
		and the RHS of the above can be bounded by
		\begin{align*}
			\left | \mE \left( \left( \psi(u(t;u_0))- \psi(u^R(t;u_0) \right) 1_{\{ \tau_{u_0}^R < t \}} \right)\right | 
			\leq 2 \| \psi \|_{L^{\infty}(L^2(\T;\R);\R)))} \mP ( \tau_{u_0}^R < t ).
		\end{align*}
		Hence, to conclude, we need to show \eqref{stopping time locally uniformly}. By Proposition \ref{stime sistema deterministico}, we know that there exists an increasing a.s.~continuous random function $t \in [0,T] \to C(t,X) \in \R_{+}$ such that 
		\begin{equation*}
			\left \| u(t; u_0) \right \|_{L^2(\T;\R)} \leq C(t,X),\,\text{for all $t \in [0,T]$ and $u_0 \in X$, $\mP$-a.s},
		\end{equation*}
		uniformly in $R>0$. From this we deduce that \eqref{stopping time locally uniformly} holds and this concludes the proof.
	\end{proof}
	
	Let us now introduce some notation that will be needed in Subsection \ref{sec: sec 7.2}.  Given two Banach spaces $\mX$ and $\mY$ endowed with the norm $\| \cdot \|_{\mX}$ and $\| \cdot \|_{\mY}$ respectively, we denote by $\fL(\mX;\mY)$ the Banach space of linear bounded operators from $\mX$ to $\mY$ endowed with the norm 
	$$\| J \|_{\fL(\mX;\mY)} := \sup \limits_{\|a\|_{\mX} \leq 1} \| Ja\|_{\mY},\quad \text{for any given $J \in \fL(\mX;\mY)$}.$$
	We use the shorthand notation $\fL \left( L^2(\T;\R)\right)=\fL \left( L^2(\T;\R) ; L^2(\T;\R) \right)$ to denote the Banach space of linear bounded operators from $L^2(\T;\R)$ into itself.
	We further introduce the Banach spaces $\cC_T^2$ defined as the set of adapted square-integrable processes with values in $C \left( [0,T] ; L^2(\T;\R)\right )$. 
	We recall  that a map $J:\mX \to \mY$ is {\em Fr\'echet differentiable} at $a_0 \in \mX$ if there exists a bounded linear operator $D_aJ(a_0) \in \fL(\mX;\mY)$ such that
	\begin{equation}
		\lim \limits_{\|h\|_{\mX} \to 0} \frac{\left \| J(a_0+h)-J(a_0)-D_aJ(a_0)h\right \|_{\mY}}{\|h\|_{\mX}}=0,\notag
	\end{equation}
	where $D_aJ(a_0)h$ denotes the operator $D_aJ(a_0)$ applied to $h$. Furthermore, if $J$ is Fr\'echet differentiable at every point of $\mX$ then we simply say that $J$ is Fr\'echet differentiable in $\mX$. Let us clarify that, in what follows, while $D_a$ denotes the Fr\'echet derivative with respect to an element $a$ in some appropriate infinite dimensional space, $\pa_x$ denotes the derivative with respect to $x \in\mathbb T$. 
	
	\begin{remark} See also \cite[Example 3.2.4]{Ricciflow}.\label{gradient of gateaux derivative}
		Consider the special case $\mX=H$ and $\mY=\R$, where $\left ( H, \langle \, , \,\rangle_H \right)$ is a Hilbert space, and let $J:H \to \R$ be a Fr\'echet differentiable map, with Fr\'echet derivative $D_a J\in\fL(H;\R)$. From the Riesz representation theorem  $\fL(H;\R)\simeq H$, hence there exists a unique point $\nabla_a J(a) \in H$ such that 
		\begin{equation}
			D_a J(a)h=\langle \nabla_a J(a),h \rangle_H,\quad\text{for all $h \in H$.}\notag
		\end{equation}
		Then, for any given $h \in H$, with slight abuse of notation we will write $\left \langle D_a J(a),h \right \rangle_H $ instead of $D_a J(a)h$. 
	\end{remark}

	\subsection{Strong Feller property for gradient form Lipschitz non-linearities}\label{sec: sec 7.2}
	In this subsection we consider SPDEs of the form
	\begin{equation}\label{lipschitz spde}
		\begin{cases}
			\pa_t u=Au+\pa_x \left[ \mathcal{F}(u) \right ]+ {Q}^{\frac{1}{2}}\pa_t W,\quad (0,T] \times \T, & \\
			u(t,0)=u(t,2\pi),\quad  t \in [0,T], & \\
			u(0,x)=u_0(x), x \in \T, &
		\end{cases}
	\end{equation}
	where $\mathcal{F} \in C_b^2 \left( L^2(\T;\R);  L^2(\T;\R)\right)$\footnote{We recall that $C_b^2 \left( L^2(\T;\R);  L^2(\T;\R)\right)$ is the space consisting of twice Fr\'echet differentiable functions from $L^2(\T;\R)$ to $L^2(\T;\R)$ with continuous and bounded first and second Fr\'echet derivative.} and $T>0$, and we show that the semigroup $\cP_t^{\mathcal F}$ associated with the above evolution is Strong Feller (see Proposition \ref{non so che scrivere}). Observe that the SPDE \eqref{damped spde} is a particular case of \eqref{lipschitz spde}, when $\mathcal{F}(u)= V'u+\mathcal{M}_R(u)$, hence the results of this section imply the Strong Feller property for the semigroup $\cP_t^R$. Moreover, note that since $\mathcal{F} \in C_b^2 \left( L^2(\T;\R);  L^2(\T;\R)\right)$, the functions $\cF:L^2(\T;\R) \to L^2(\T;\R)$ and $D_{u}\cF:L^2(\T;\R) \to \fL(L^2(\T;\R))$ are both globally Lipschitz continuous (see \cite[Proposition 3.2.7]{Ricciflow}); so the non-linearity in \eqref{lipschitz spde} is the gradient of a globally Lipschitz continuous functional, hence the name of this subsection. We denote by $L_{\cF}$ and $L_{D\cF}$ the Lipschitz constants of $\mathcal{F}$ and $D_{u}\cF$, respectively. 
	
	To show that the semigroup $\cP_t^{\mathcal F}$ is strong Feller we adapt the methods in \cite[Chap.4]{cerrai}, which have been developed to prove smoothing properties of SPDEs with globally Lipschitz non-linearities. In our setting we can't apply the results of \cite[Chap.4]{cerrai} directly, as the type of non-linearity in \eqref{damped spde} is different from the one in \cite{cerrai}. Indeed, in \cite[Chapter 4]{cerrai} the non-linearity is allowed to depend on $x$ and $u$ but not on $\pa_x u $, which is the case here.  However, the general approach of \cite[Chap.4]{cerrai} can still be adapted to our case. We outline the strategy to prove that $\cP_t^{\mathcal F}$ is strong Feller in Note \ref{nota doppio pallino} below.
	
	We recall that $u(t)$, $t \in [0,T]$, is called a mild solution to \eqref{lipschitz spde} if $u(t)$ is a continuous $L^2(\T;\R)$-valued stochastic process such that
	\begin{equation}
		u(t) =e^{tA}u_0+P[\cF(u)](t)+W_{A}(t),\quad \text{for all $t \in [0,T]$, $\mathbb{P}$-a.s.}, \notag
	\end{equation}
	where $P$ is the operator defined in \eqref{operatore P}. We emphasize that only throughout this subsection we denote by $u(t)$ (or $u(t; u_0)$ to stress dependence on initial conditions) the solution to \eqref{lipschitz spde}, rather than the solution to \eqref{spde Q}. 
	
	Consider the map $\cI: L^2(\T;\R) \times \cC_T^2 \to \cC_T^2$ defined as 
	\begin{equation}\label{fixed point Lipschitz}
		\cI(u_0,u)(t):= e^{tA}u_0+P\left [\cF(u) \right](t)+W_A(t),\quad  t \in [0,T].
	\end{equation}
	Since $\cF(u(s)) \in L^2(\T;\R)$ for all $s \in [0,T]$, $\mP$-a.s., we can apply \eqref{diseguaglianza unmezzo}. Hence, in a similar manner of proof of Proposition \ref{local existence mild solution}, we obtain
	%Using again  \eqref{diseguaglianza unmezzo} (which can be applied because $\cF(u(s)) \in L^2(\T;\R)$ for all $s \in [0,T]$, $\mP$-a.s.) in a similar manner to what we have done in the proof of Proposition \ref{local existence mild solution}, we have
	\begin{equation}
		%\begin{split}
		\| \cI(u_0,u)(t)-\cI(u_0,v)(t) \|_{L^2(\T;\R)}^2 
		%&=\left \| P \left [ \cF(u)\right ](t)-P\left [ \cF(v)\right ](t) \right \|_{L^2(\T;\R)}^2 \\
		%& \leq C^2L_{\cF}^2 \left |\int_0^t (t-s)^{-\frac{1}{2}} \|u(s) - v(s)\|_{L^2(\T;\R)}\,ds \right |^2 \\
		\leq C^2L_{\cF}^2T \sup \limits_{t \in [0,T]} \|u-v\|_{L^2(\T;\R)}^2, \quad
		\text{$\mP$-a.s.},\notag
		%	\end{split}
	\end{equation}
	for some constant $C>0$.
	By taking the supremum over $t \in [0,T]$ and then the expectation on both sides of the above, we have 
	$$\| \cI(u_0,u)-\cI(u_0,v) \|_{\cC_T^2}^2 \leq C^2L_{\cF}^2T \left \|u-v \right \|_{\cC_T^2}^2 \, ,$$
	from which (local and then global) in time well-posedness of \eqref{lipschitz spde} follows.   
	
	We can then define the semigroup $\{\cP_t^{\cF}\}_{t \geq 0}$ associated to SPDE \eqref{lipschitz spde}, namely 
	\begin{equation}
		\left (\cP_t^{\cF}\psi \right )(u_0):= \mE \left ( \psi(u(t;u_0)) \right ), \qquad  \psi \in \cB_b(L^2(\T;\R);\R) \,,
	\end{equation}
	where we recall that throughout this section $u(t)$ denotes the solution to \eqref{lipschitz spde}. 
	
	\begin{remark}\label{nota doppio pallino}
		The strategy to show that  $\{\cP_t^{\cF}\}_{t \geq 0}$  is Strong Feller is as follows. By definition, we want to show that $\cP_t^{\cF}\psi$ is continuous if  $\psi$ is bounded and  measurable. We will in fact show that $\cP_t^{\mathcal F}\psi$ is Lipschitz if $\psi$ is bounded and measurable (see Proposition \ref{prop:strong feller Lipshitz nonlinearities}). To prove the Lipschitzianity of $\cP_t^{\mathcal F}\psi$, we will find bounds on the Fr\'echet derivative  $D_{u_0}(\cP_t^{\mathcal F}\psi)(u_0)$  (see Proposition \ref{stima derivata semigruppo}). In turn, in order to find such bounds we will use a Bismut-Elworthy-Li type of formula, which is a representation formula for $D_{u_0}(\cP_t^{\mathcal F}\psi)(u_0)$, see Proposition \ref{prop bismut}. This representation formula is the reason why we impose condition \eqref{cond_lambda} in Assumption \ref{ass:QstrongFeller}. More comments on this before Lemma \ref{interpolation formula}. 
	\end{remark}
	
	\begin{lemma}[Fr\'echet differentiability of the solution]\label{prop:differentiability of u w.r.t u0}
		Let $u(t;u_0)$ denote the solution of \eqref{lipschitz spde} with initial datum $u_0$ and suppose $Q$ satisfies \eqref{Qactsonbasis}-\eqref{eqn:Qtrace-class}. Then the map $u_0 \in L^2(\T;\R) \mapsto u(t;u_0) \in \cC_T^2$
		is Fr\'echet differentiable \footnote{Note that one can also prove that the map $u_0 \in L^2(\T;\R) \to u(t;u_0) \in \cC_T^2$ is twice Fr\'echet differentiable, but we don't need the second Fr\'echet derivative in what follows.} and, for any  $h \in L^2(\T;\R)$, the directional derivative $\eta_h:=D_{u_0}u(t;u_0)h$ of $u$ in the direction $h$ satisfies the following bounds:
		\begin{equation} \label{inequality u}
			\| \eta_h(t) \|_{L^2(\T;\R)}^2 \leq \|h\|_{L^2(\T;\R)}^2 e^{CL_{\cF}^2t},
		\end{equation}
		and 
		\begin{equation}\label{inequality derivata di u}
			\int_0^t \| \pa_x \eta_h(s) \|_{L^2(\T;\R)}^2\,ds \leq \|h\|_{L^2(\T;\R)}^2 e^{CL_{\cF}^2t},
		\end{equation}
		$\mP$-a.s., for $t \geq 0$. As a consequence, the semigroup $\cP_t^{\cF}$ is   Fr\'echet differentiable with respect to $u_0$ and the  Fr\'echet derivative $D_{u_0}(\cP_t^{\cF}\psi)(u_0)$  satisfies the identity
		\begin{equation}
			\left \langle D_{u_0}\left (\cP_t^{\cF}\psi \right )(u_0), h \right \rangle_{L^2(\T;\R)}=\mE \left ( \left \langle D_{u_0} \psi(u(t;u_0)), \eta_h(t) \right \rangle_{L^2(\T;\R)} \right)\, ,  \notag
		\end{equation}
		for every $\psi \in \mathcal{B}_b(L^2(\T;\R);\R)$.
	\end{lemma}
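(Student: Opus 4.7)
The plan is to first identify the candidate for the Fréchet derivative by formally differentiating the mild formulation \eqref{fixed point Lipschitz} with respect to $u_0$ in the direction $h$. This yields the linear random integral equation
\begin{equation}\label{eq:etaplan}
\eta_h(t)=e^{tA}h+\int_0^t e^{(t-s)A}\pa_x\!\left[D_u\cF(u(s;u_0))\eta_h(s)\right]\,ds,\qquad t\in[0,T],
\end{equation}
which corresponds (in mild sense) to the linearized PDE $\pa_t\eta_h=A\eta_h+\pa_x[D_u\cF(u)\eta_h]$ with initial datum $\eta_h(0)=h$. Since $D_u\cF:L^2(\T;\R)\to\fL(L^2(\T;\R))$ is bounded (with operator norm dominated by $L_\cF$), one obtains existence and uniqueness of $\eta_h\in\cC_T^2$ by a Banach fixed point argument applied to the right-hand side of \eqref{eq:etaplan}, completely analogous to (and in fact simpler than) the one used in Proposition \ref{local existence mild solution}, thanks to the smoothing estimate \eqref{diseguaglianza unmezzo} for the operator $P$ of \eqref{operatore P}.

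Next, I would establish the energy bounds \eqref{inequality u}--\eqref{inequality derivata di u} pathwise. Working formally on the strong form of the linearized equation (this can be justified through a regularization/Galerkin approximation of $u_0$ and of $h$, as done in Proposition \ref{stime sistema deterministico}), I would compute
\begin{equation*}
\frac{1}{2}\frac{d}{dt}\|\eta_h(t)\|_{L^2(\T;\R)}^2=\langle \eta_h,A\eta_h\rangle+\langle \eta_h,\pa_x[D_u\cF(u)\eta_h]\rangle.
\end{equation*}
The first term equals $-\|\pa_x\eta_h\|_{L^2(\T;\R)}^2$; the second, after integration by parts and using $\|D_u\cF(u)\eta_h\|_{L^2(\T;\R)}\leq L_\cF\|\eta_h\|_{L^2(\T;\R)}$, is bounded (via Cauchy--Schwarz and Young's inequality) by $\|\pa_x\eta_h\|_{L^2(\T;\R)}^2+\tfrac{L_\cF^2}{4}\|\eta_h\|_{L^2(\T;\R)}^2$. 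This absorbs the dissipative term and leaves
\begin{equation*}
\frac{d}{dt}\|\eta_h(t)\|_{L^2(\T;\R)}^2+\|\pa_x\eta_h(t)\|_{L^2(\T;\R)}^2\leq CL_\cF^2\,\|\eta_h(t)\|_{L^2(\T;\R)}^2,
\end{equation*}
from which Gronwall yields \eqref{inequality u}, and a subsequent time-integration yields \eqref{inequality derivata di u}.

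The main obstacle is to verify that the process $\eta_h$ so constructed is indeed the Fréchet derivative of $u_0\mapsto u(\cdot;u_0)$, i.e.~that the remainder
\begin{equation*}
r_k(t):=u(t;u_0+k)-u(t;u_0)-\eta_k(t)
\end{equation*}
satisfies $\|r_k\|_{\cC_T^2}=o(\|k\|_{L^2(\T;\R)})$. Subtracting the mild equations for $u(\cdot;u_0+k)$, $u(\cdot;u_0)$, and \eqref{eq:etaplan}, adding and subtracting $D_u\cF(u(s;u_0))[u(s;u_0+k)-u(s;u_0)]$, and exploiting that $D_u\cF$ is itself $L_{D\cF}$-Lipschitz, one obtains
\begin{equation*}
r_k(t)=\int_0^t e^{(t-s)A}\pa_x\!\left[D_u\cF(u(s;u_0))r_k(s)\right]ds+\int_0^t e^{(t-s)A}\pa_x\rho_k(s)\,ds,
\end{equation*}
where the error term $\rho_k$ is controlled by $\|\rho_k(s)\|_{L^2(\T;\R)}\leq \tfrac{L_{D\cF}}{2}\|u(s;u_0+k)-u(s;u_0)\|_{L^2(\T;\R)}^2$. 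Using the (standard, Gronwall-based) bound $\|u(\cdot;u_0+k)-u(\cdot;u_0)\|_{\cC_T^2}\leq C_T\|k\|_{L^2(\T;\R)}$ together with Lemma \ref{stime heat kernel} and the generalized Gronwall inequality of Lemma \ref{Lemma di Gronwall generalizzato} applied to $\|r_k(t)\|_{L^2(\T;\R)}$, one concludes $\|r_k\|_{\cC_T^2}\leq C'_T\|k\|_{L^2(\T;\R)}^2=o(\|k\|_{L^2(\T;\R)})$, which is the required Fréchet differentiability.

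Finally, the identity for $D_{u_0}(\cP_t^\cF\psi)(u_0)$ follows from the chain rule: for $\psi\in C_b^1(L^2(\T;\R);\R)$ one differentiates inside the expectation (dominated convergence being justified by the uniform-in-$k$ bound just established plus $\|D_{u_0}\psi\|_\infty<\infty$), and then the identity is extended to arbitrary $\psi\in\cB_b(L^2(\T;\R);\R)$ via the standard density/approximation argument that is implicit in the Bismut--Elworthy--Li framework used in Note \ref{nota doppio pallino}.
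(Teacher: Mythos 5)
Your proposal follows essentially the same route as the paper: it adopts the ``first-variation'' strategy of Manca (construct $\eta_h$ as the unique mild solution of the linearized equation, establish the energy estimates \eqref{inequality u}--\eqref{inequality derivata di u} by testing against $\eta_h$, then verify that the remainder $u(\cdot;u_0+k)-u(\cdot;u_0)-\eta_k$ is $o(\|k\|_{L^2})$ via the Lipschitz property of $D_u\cF$ and a Gronwall argument, and finally pass to the semigroup by dominated convergence). The paper merely sketches the remainder step by reference to \cite{manca}, whereas you spell it out, which is fine and compatible.

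One small arithmetic slip: in the energy estimate you bound $|\langle \pa_x\eta_h, D_u\cF(u)\eta_h\rangle|$ by $\|\pa_x\eta_h\|_{L^2}^2 + \tfrac{L_\cF^2}{4}\|\eta_h\|_{L^2}^2$; with this choice the dissipative term $-\|\pa_x\eta_h\|_{L^2}^2$ is cancelled entirely and nothing is left to produce the $\int_0^t\|\pa_x\eta_h\|_{L^2}^2\,ds$ on the left-hand side of \eqref{inequality derivata di u}. You need to apply Young with a parameter $\varepsilon<1$, e.g. $|\langle\pa_x\eta_h,D_u\cF(u)\eta_h\rangle|\leq \tfrac12\|\pa_x\eta_h\|_{L^2}^2+\tfrac12 L_\cF^2\|\eta_h\|_{L^2}^2$, which leaves $\tfrac12\|\pa_x\eta_h\|_{L^2}^2$ dissipation and then yields the displayed inequality and both \eqref{inequality u} and \eqref{inequality derivata di u} after Gronwall and time integration. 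This is the estimate \eqref{in che serve dopo} used in the paper's appendix. With that correction the argument is complete.
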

	\begin{proof}
		See Appendix \ref{appendix: proofs of sec 7}.
	\end{proof}
	
	We clarify that in the above statement $D_{u_0}u(t;u_0)h$ denotes the action of  $D_{u_0}u(t;u_0) \in \mathfrak{L}(L^2(\mathbb T; \R))$ on the element $h$ of $L^2(\mathbb T; \R)$ and note that $\eta_h(t)= \eta_h(t,x)$, as $\eta_h(t) \in L^2(\mathbb T; \R)$ for every $t \geq 0$, but we omit the explicit dependence on $x \in \T$ in the notation, as customary.
	Since   $\cP_t^{\cF}\psi: L^2(\mathbb T; \R) \rightarrow \R$, in the last equality of Lemma \ref{prop:differentiability of u w.r.t u0} we have indicated the action of the Fr\'echet derivative  of $\cP_t^{\cF}$ applied to a vector $h$ with the scalar product of $L^2(\T;\R)$ - see Note \ref{gradient of gateaux derivative}.  
	%%%%%%%%%%%%%%%%%%%%%%%%
	%%%%%%%%%%%%%%%%%%%%%%%%%%%%%%%%%%
	%%%%%%%%%%%%%%%%%%%%%%%%%%%%%%%%%%%%%%%%%%%%5

	%%%%%%%%%%%%%%%%%%%%%%%%%%
	%%%%%%%%%%%%%%%%%%%%%%%%%%%%%%%%%%%%%%
	%%%%%%%%%%%%%%%%%

	%\begin{lemma}\label{lemma bismut}
	%	If we let $\psi \in C_b^2 \left ( L^2(\T;\R);\R \right)$ and let $\{ \cP_t^{\cF}\}_{t \geq 0}$ be the semigroup generated by SPDE \eqref{lipschitz spde} then the following equality holds for all $t \geq 0$, $\mP$-a.s.
	%	\begin{equation}
	%	\psi(u(t;u_0))= \cP_t^{\cF}\psi(u_0)+ \int_0^t \left \langle D_{u_0}\left( P_{t-s}^{\cF} \psi \right )(u(s;u_0)),Q^{\frac{1}{2}} dW(s) \right \rangle_{L^2(\T;\R)} \, ,      
	%		 \notag
	%	\end{equation} 
	%	where $u(t;u_0)$, $t \geq 0$, is the solution to SPDE \eqref{lipschitz spde} with initial datum $u_0 \in L^2(\T;\R)$ and the above integral is defined as 
	%	\begin{align} \notag
	%		\int_0^t & \left \langle D_{u_0}\left( P_{t-s}^{\cF} \psi \right )(u(s;u_0)),Q^{\frac{1}{2}} dW(s) \right \rangle_{L^2(\T;\R)}\\
	%		& :=\sum \limits_{k \in \Z} \lambda_k \int_0^t \left \langle D_{u_0}\left( P_{t-s}^{\cF} \psi \right )(u(s;u_0)),e_k \right \rangle_{L^2(\T;\R)} \,d\beta_s^k,\, t \geq 0. \notag
	%	\end{align}
	%\end{lemma}
	
	Let us introduce the following stochastic process $Z_h(t)$, $t \geq 0$, defined as 
	\begin{equation}\label{Zeta}
		Z_h(t):= \int_0^t \left \langle Q^{-\frac{1}{2}}\eta_h(s),dW(s) \right \rangle_{L^2(\T;\R)},\, t \geq 0,
	\end{equation}
	where $\eta_h=D_{u_0}u(t;u_0)h$ is as in the statement of Lemma \ref{prop:differentiability of u w.r.t u0}. Thanks to Lemma \ref{prop:differentiability of u w.r.t u0} and Lemma \ref{interpolation formula} below, $Z_h(t)$, $t \geq 0$, is well-defined as long as \eqref{cond_lambda} holds. 
	
	\begin{lemma}\label{interpolation formula}
		Let Assumption \ref{ass:QstrongFeller} hold and let $f$ be any function in $H^1(\T;\R)$. 
		Then there exists a constant $c>0$ such that
		\begin{equation}
			\|Q^{-\frac{1}{2}}f\|_{L^2(\T;\R)} \leq c\|f\|_{L^2(\T;\R)}^{1-\gamma} \|f\|_{H^1(\T;\R)}^{\gamma} \,.
		\end{equation} 
	\end{lemma}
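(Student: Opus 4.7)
The plan is to expand $f$ in the Fourier basis $\{e_k\}_{k\in\Z}$ introduced in \eqref{fourier} and then apply H\"older's inequality to interpolate between the $L^2$ and $H^1$ norms. Since $Q e_k = \lambda_k^2 e_k$ and the $e_k$ diagonalise $Q$, if $f=\sum_k f_k e_k$ then $Q^{-1/2}f = \sum_k \lambda_k^{-1} f_k e_k$, so by Parseval
\begin{equation*}
\|Q^{-1/2}f\|_{L^2(\T;\R)}^2 = \sum_{k\in\Z} \lambda_k^{-2}\,|f_k|^2 .
\end{equation*}

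Using Assumption \ref{ass:QstrongFeller}, which gives $\lambda_k^{-2}\leq c^{-1}(1+k^2)^{\gamma}$, the first step is to reduce the claim to
\begin{equation*}
\sum_{k\in\Z}(1+k^2)^{\gamma}|f_k|^2 \leq \|f\|_{L^2(\T;\R)}^{2(1-\gamma)}\,\|f\|_{H^1(\T;\R)}^{2\gamma}.
\end{equation*}
Since $A=\partial_{xx}$ acts as $Ae_k=-k^2e_k$, the equivalence $\|f\|_{H^1(\T;\R)}^2\asymp\sum_k(1+k^2)|f_k|^2$ allows the right-hand side to be viewed as a product of Fourier-side norms.

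The key step is then to split the exponent $\gamma\in(0,1)$ in a way compatible with H\"older. I would write
\begin{equation*}
(1+k^2)^{\gamma}|f_k|^2 = |f_k|^{2(1-\gamma)}\cdot\bigl[(1+k^2)|f_k|^2\bigr]^{\gamma},
\end{equation*}
and apply H\"older's inequality on $\ell^1(\Z)$ with conjugate exponents $p=1/(1-\gamma)$ and $q=1/\gamma$ (both well defined and in $(1,\infty)$ precisely because $\gamma<1$, which is the point at which Assumption \ref{ass:QstrongFeller} is used). This yields
\begin{equation*}
\sum_{k\in\Z}(1+k^2)^{\gamma}|f_k|^2 \leq \Bigl(\sum_{k\in\Z}|f_k|^2\Bigr)^{1-\gamma}\Bigl(\sum_{k\in\Z}(1+k^2)|f_k|^2\Bigr)^{\gamma}.
\end{equation*}
Combining this with the preliminary estimate and taking square roots gives the claimed bound, with a constant depending only on $c$ and on the equivalence constants for the $H^1$ norm.

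There is no serious obstacle here: the argument is a direct spectral/H\"older interpolation and does not require any cancellation or smoothing beyond Parseval. The only subtle point to be careful about is that $\gamma<1$ strictly, which is exactly the hypothesis in Assumption \ref{ass:QstrongFeller}; if $\gamma=1$ the pair $(p,q)=(\infty,1)$ would degenerate and the $H^1$ norm alone would be needed, losing the interpolation gain that is ultimately exploited to make the Bismut--Elworthy--Li formula integrable in Section \ref{sec:sec7}.
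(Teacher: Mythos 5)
Your proof is correct and follows essentially the same spectral/H\"older interpolation argument as the paper; the only difference is that you apply the eigenvalue lower bound $\lambda_k^{-2}\leq c^{-1}(1+k^2)^{\gamma}$ before H\"older, whereas the paper applies H\"older first and then uses the bound to control the resulting factor $\sum_k\lambda_k^{-2/\gamma}|f_k|^2$ by $\|f\|_{H^1}^2$ --- a purely cosmetic reordering.
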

	\begin{proof}
		Let  $f_k:=\langle f,e_k \rangle_{L^2(\T;\R)}$, $k \in \Z$, where $\{ e_k \}_{k \in \Z}$ is the orthonormal basis defined in \eqref{fourier}. 
		From the assumption on $Q$ and Parseval's identity, for any $\gamma \in \R$ we have
		\begin{equation}
			\|Q^{-\frac{1}{2}}f\|_{L^2(\T;\R)}^2 = \sum \limits_{k \in \Z} \lambda_k^{-2} f_k^2 = \sum \limits_{k \in \Z} |f_k|^{2(1-\gamma)} \cdot \lambda_k^{-2} |f_k|^{2\gamma}  \,.
		\end{equation}
		Choosing $\gamma<1$, we can apply H\"older's inequality with $p=\frac{1}{1-\gamma}$ and $q=\frac{1}{\gamma}$, and obtain 
		\begin{equation}
			\|Q^{-\frac{1}{2}}f\|_{L^2(\T;\R)}^2  \leq \left ( \sum \limits_{k \in \Z} |f_k|^2 \right)^{1-\gamma}\left ( \sum \limits_{k \in \Z} \lambda_k^{-2/\gamma}|f_k|^2 \right)^{\gamma} \leq c\|f\|_{L^2(\T;\R)}^{2(1-\gamma)} \|f\|_{H^1(\T;\R)}^{2\gamma},\notag
		\end{equation}
		where the last inequality follows by the assumption on $\lambda_k$, provided $\gamma<1$. 
	\end{proof}
	
	\begin{proposition}[Bismut-Elworthy-Li formula]\label{prop bismut}
		Let Assumption \ref{ass:QstrongFeller} hold, and let $\{ \cP_t^{\cF}\}_{t \geq 0}$ be the semigroup generated by the SPDE \eqref{lipschitz spde}. Then, the Fr\'echet derivative of $\{ \cP_t^{\cF}\}_{t \geq 0}$ satisfies
		\begin{equation}
			\begin{split}
				\langle D_{u_0} \left ( \cP_t \psi \right )(u_0),h \rangle_{L^2(\T;\R)}= \frac{1}{t} \mE \left[ \psi \left( u(t;u_0) \right) Z_h(t) \right], \notag
			\end{split}
		\end{equation} 
		for any $t > 0$, $\psi \in C_b^2 \left ( L^2(\T;\R);\R \right)$, and any given $h \in L^2(\T;\R)$.
	\end{proposition}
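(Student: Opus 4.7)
The plan is to use the classical Bismut--Elworthy--Li integration by parts argument, combining the martingale property of $M_s := (\cP_{t-s}^{\cF}\psi)(u(s;u_0))$, $s\in[0,t]$, with an explicit computation of its cross-variation with $Z_h$. As a preliminary step, $Z_h$ is a well-defined continuous square-integrable scalar martingale: by It\^o isometry combined with Lemma \ref{interpolation formula} (which relies on $\gamma<1$, hence on Assumption \ref{ass:QstrongFeller}), H\"older's inequality with exponents $1/(1-\gamma)$ and $1/\gamma$, and the bounds \eqref{inequality u}--\eqref{inequality derivata di u} from Lemma \ref{prop:differentiability of u w.r.t u0}, one obtains $\mE[Z_h(t)^2]<\infty$.

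For the core argument, by the Markov property $M$ is a martingale with $M_0=(\cP_t^{\cF}\psi)(u_0)$ and $M_t=\psi(u(t;u_0))$. Assuming $\cP_{t-s}^{\cF}\psi \in C_b^2$ for $\psi\in C_b^2$ (which I expect to hold by a second-order extension of Lemma \ref{prop:differentiability of u w.r.t u0}), It\^o's formula applied to $s\mapsto(\cP_{t-s}^{\cF}\psi)(u(s))$ yields
$$dM_s = \langle D_{u_0}(\cP_{t-s}^{\cF}\psi)(u(s)),\, Q^{1/2}\,dW(s)\rangle_{L^2(\T;\R)},$$
the drift contributions vanishing because $\cP_{t-s}^{\cF}\psi$ satisfies the associated backward Kolmogorov equation.

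Expanding in the orthonormal basis $\{e_k\}$ with $Qe_k=\lambda_k^2 e_k$, the $k$-th component of the integrand of $M_s$ carries a factor $\lambda_k$ while that of $Z_h$ carries $\lambda_k^{-1}$; these factors cancel and Parseval gives
$$d\langle M, Z_h\rangle_s = \langle D_{u_0}(\cP_{t-s}^{\cF}\psi)(u(s)),\, \eta_h(s)\rangle_{L^2(\T;\R)}\,ds.$$
Using the semigroup identity $\cP_t^{\cF}\psi=\cP_s^{\cF}(\cP_{t-s}^{\cF}\psi)$ together with the Fr\'echet derivative identity from the last line of Lemma \ref{prop:differentiability of u w.r.t u0} (applied to $\cP_{t-s}^{\cF}\psi$ at time $s$), the expectation of the integrand above is independent of $s$ and equal to $\langle D_{u_0}(\cP_t^{\cF}\psi)(u_0),h\rangle_{L^2(\T;\R)}$.

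To conclude, I apply It\^o's formula to the product $M_sZ_h(s)$, integrate over $[0,t]$, and take expectations; since $M$ and $Z_h$ are square-integrable martingales with zero-mean stochastic integrals, and $Z_h(0)=0$, this gives
$$\mE[\psi(u(t;u_0))Z_h(t)] = \mE\int_0^t d\langle M,Z_h\rangle_s = t\,\langle D_{u_0}(\cP_t^{\cF}\psi)(u_0),h\rangle_{L^2(\T;\R)},$$
which yields the stated formula after dividing by $t$. The main technical obstacle is rigorously justifying the It\^o calculation for $(\cP_{t-s}^{\cF}\psi)(u(s))$ in this infinite-dimensional setting; this either requires propagating the $C_b^2$-regularity of $\psi$ through the semigroup (extending Lemma \ref{prop:differentiability of u w.r.t u0} to second derivatives and controlling the trace-class structure of the second-order term in It\^o's formula), or, more standardly, running the whole computation for a Galerkin/spectral truncation of \eqref{lipschitz spde} and then passing to the limit using the uniform estimates available from Lemma \ref{prop:differentiability of u w.r.t u0}.
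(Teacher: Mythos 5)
Your proposal follows essentially the same Bismut--Elworthy--Li argument as the paper's proof: you establish that $Z_h$ is a square-integrable martingale via Lemma \ref{interpolation formula} and \eqref{inequality u}--\eqref{inequality derivata di u}, write $\psi(u(t;u_0))$ (via the martingale $M_s=(\cP_{t-s}^{\cF}\psi)(u(s;u_0))$) as $\cP_t^{\cF}\psi(u_0)$ plus a stochastic integral against $Q^{1/2}\,dW$, compute the cross-variation with $Z_h$ so that $Q^{1/2}$ and $Q^{-1/2}$ cancel, and then use the chain rule together with the semigroup/Markov identity to show the integrand has $s$-independent expectation equal to $\langle D_{u_0}(\cP_t^{\cF}\psi)(u_0),h\rangle$. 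The paper phrases this by multiplying the It\^o representation of $\psi(u(t))$ by $Z_h(t)$ and invoking It\^o isometry rather than writing out $d\langle M,Z_h\rangle_s$, and it flags the rigour of the infinite-dimensional It\^o step in a footnote (citing the Galerkin-type justification in the literature) exactly as you do in your closing paragraph; the two write-ups are otherwise the same.
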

	\begin{proof}[Sketch of the proof] This follows a standard argument (see e.g. \cite[Proposition 4.4.3]{cerrai}), which we summarise for the reader's convenience. From It\^o's formula, we have  
		\begin{equation}
			\begin{split}
				\psi(u(t;u_0))= \cP_t^{\cF}\psi(u_0)+ \int_0^t \left \langle D_{u_0}\left( \cP_{t-s}^{\cF} \psi \right )(u(s;u_0)),Q^{\frac{1}{2}}dW(s) \right \rangle_{L^2(\T;\R)} ,\notag 
			\end{split}
		\end{equation}
		for all $t \geq 0$, $\mP$-a.s.\footnote{The proof of the above is straightforward in finite dimension, see e.g. \cite[]{CrisanOttobre}, but more delicate in infinite dimension, see \cite[Lemma 4.1]{dapra2}.}  Multiplying both sides of the above equality by $Z_h(t)$ and taking the expectation, we obtain 
		\begin{equation}
			\begin{split}
				\mE \left[ \psi(u(t;u_0)) Z_h(t) \right]
				& =\mE \left( \int_0^t \left \langle D_{u_0}\left( \cP_{t-s}^{\cF} \psi \right )(u(s;u_0)),Q^{\frac{1}{2}} dW(s) \right \rangle_{L^2(\T;\R)} Z_h(t)\right) \\
				& = \mE \left (\int_0^t \left \langle Q^{\frac{1}{2}} D_{u_0}\left(\cP_{t-s}^{\cF} \psi \right )(u(s;u_0)),Q^{-\frac{1}{2}}\eta_h(s) \right \rangle_{L^2(\T;\R)}\,ds \right)\\
				& = \mE \left (\int_0^t \left \langle D_{u_0}\left(\cP_{t-s}^{\cF} \psi \right )(u(s;u_0)),\eta_h(s) \right \rangle_{L^2(\T;\R)}\,ds \right) \\
				&= \mE \left( \int_0^t \left \langle D_{u_0}\left[\left(\cP_{t-s}^{\cF} \psi \right )(u(s;u_0))\right],h \right \rangle_{L^2(\T;\R)} ds\right).\notag
			\end{split}
		\end{equation}
		From Fubini-Tonelli's theorem and using the semigroup property, we then conclude
		\begin{equation}
			\begin{split}
				\mE \left[ \psi(u(t;u_0)) Z_h(t) \right]&=  \left \langle D_{u_0} \int_0^t \mE \left [( P_{t-s}^{\cF} \psi )(u(s;u_0)) \right]\,ds , h \right \rangle_{L^2(\T;\R)}\\
				& =t  \left \langle D_{u_0}\left (\cP_t^{\cF}\psi \right )(u_0),h \right \rangle_{L^2(\T;\R)}. \notag
			\end{split}
		\end{equation}
	\end{proof}
	The next step toward the strong Feller property is to obtain an estimate of the $L^2(\T;\R)$-norm of the Fr\'echet derivative $D_{u_0}\cP_t^{\cF}\psi$, for $\psi \in C_b^2(L^2(\T;\R);\R)$. 
	
	\begin{proposition} \label{stima derivata semigruppo}
		Let Assumption \ref{ass:QstrongFeller} hold. Then,
		\begin{equation} 
			\begin{split}
				\sup_{u_0\in L^2(\T;\R)}\left \| D_{u_0}\left (\cP_t^{\cF}\psi \right )(u_0) \right \|_{L^2(\T;\R)}^2 \leq \frac{(1+t)^\gamma}{t^{1+\gamma}}e^{CL_\cF^2 t} \| \psi \|_{L^{\infty}(L^2(\T;\R);\R)}^2,\notag
			\end{split}
		\end{equation}
		for any $\psi \in C_b^2(L^2(\T;\R);\R)$ and $t >0$.
	\end{proposition}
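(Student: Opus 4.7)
The plan is to combine the Bismut--Elworthy--Li formula of Proposition~\ref{prop bismut} with the a priori estimates on the derivative process $\eta_h$ from Lemma~\ref{prop:differentiability of u w.r.t u0} and the interpolation inequality of Lemma~\ref{interpolation formula}. Starting from Proposition~\ref{prop bismut}, an application of the Cauchy--Schwarz inequality (in $L^2(\Omega)$) followed by the It\^o isometry applied to the martingale $Z_h$ defined in \eqref{Zeta} yields
\begin{equation}
\left|\left\langle D_{u_0}(\cP_t^{\cF}\psi)(u_0),h\right\rangle_{L^2(\T;\R)}\right|^2 \leq \frac{\|\psi\|_{L^\infty(L^2(\T;\R);\R)}^2}{t^2} \, \mE\int_0^t \left\|Q^{-\frac{1}{2}}\eta_h(s)\right\|_{L^2(\T;\R)}^2\,ds,
\end{equation}
so the problem reduces to controlling the right-hand side.

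To do so, I would apply Lemma~\ref{interpolation formula} pointwise in $s$, obtaining $\|Q^{-1/2}\eta_h(s)\|_{L^2}^2 \leq c^2 \|\eta_h(s)\|_{L^2}^{2(1-\gamma)} \|\eta_h(s)\|_{H^1}^{2\gamma}$. The pointwise-in-time bound \eqref{inequality u} then permits to factor out $(\|h\|_{L^2(\T;\R)}^2 e^{CL_{\cF}^2 t})^{1-\gamma}$, reducing matters to estimating $\mE\int_0^t \|\eta_h(s)\|_{H^1(\T;\R)}^{2\gamma}\,ds$. Using the subadditivity $(a+b)^\gamma \leq a^\gamma + b^\gamma$ valid for $\gamma \in (0,1)$, this splits into an integral of $\|\eta_h\|_{L^2}^{2\gamma}$, immediately bounded via \eqref{inequality u} by $t(\|h\|_{L^2(\T;\R)}^2 e^{CL_{\cF}^2 t})^{\gamma}$, and an integral of $\|\pa_x\eta_h\|_{L^2}^{2\gamma}$, which I would handle by H\"older's inequality in time with exponents $1/(1-\gamma)$ and $1/\gamma$, followed by Jensen's inequality for the concave map $x \mapsto x^{\gamma}$, so as to invoke the integrated bound \eqref{inequality derivata di u}; this produces a $t^{1-\gamma}$ prefactor.

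Collecting everything gives
\begin{equation}
\mE\int_0^t \left\|Q^{-\frac{1}{2}}\eta_h(s)\right\|_{L^2(\T;\R)}^2\,ds \leq c^2\,(t + t^{1-\gamma})\,\|h\|_{L^2(\T;\R)}^2\, e^{CL_{\cF}^2 t}.
\end{equation}
Dividing by $t^2$, using the elementary estimate $1 + t^{\gamma} \leq C_\gamma (1+t)^\gamma$ on $(0,\infty)$ (the ratio is continuous and tends to $1$ at both endpoints, hence is bounded), and finally taking the supremum over $\|h\|_{L^2(\T;\R)} = 1$ and over $u_0 \in L^2(\T;\R)$ delivers the claimed bound (after possibly enlarging the constant $C$).

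The main technical subtlety, which accounts for the unusual time factor $(1+t)^\gamma/t^{1+\gamma}$, is that the two estimates on $\eta_h$ from Lemma~\ref{prop:differentiability of u w.r.t u0} are of different nature: the $L^2$-norm is controlled pointwise in $s$, whereas the control on $\pa_x\eta_h$ is only integrated in time. The interpolation of Lemma~\ref{interpolation formula} forces both to be used simultaneously, and it is precisely the H\"older/Jensen manoeuvre on the integrated $H^1$-piece that produces the $t^{1-\gamma}$ factor; combined with the $1/t^2$ inherited from the Bismut--Elworthy--Li identity this yields the stated singularity as $t \downarrow 0$.
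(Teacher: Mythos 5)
Your proof is correct and follows essentially the same route as the paper: Bismut--Elworthy--Li formula plus Itô's isometry, then Lemma~\ref{interpolation formula}, and a Hölder step in time to convert the pointwise bound \eqref{inequality u} and the integrated bound \eqref{inequality derivata di u} into the stated $t$-singularity. The only cosmetic difference is that the paper applies Hölder's inequality (exponents $1/(1-\gamma)$, $1/\gamma$) directly to the integrated product $\int_0^t \|\eta_h\|_{L^2}^{2(1-\gamma)}\|\eta_h\|_{H^1}^{2\gamma}\,ds$, whereas you first pull out the pointwise $L^2$ bound and then split $\|\eta_h\|_{H^1}^{2\gamma}$ via subadditivity before invoking Hölder (or Jensen, which here gives the same estimate) on the $\pa_x\eta_h$ piece; both yield the same $t^{1-\gamma}(1+t)^\gamma$ behaviour up to a constant.
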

	\begin{proof}
		Let $h \in L^2(\T;\R)$. From Bismut-Elworthy-Li formula (Proposition \ref{prop bismut}) and It\^o's isometry we deduce 
		\begin{equation}
			\begin{split}
				\left |\left \langle D_{u_0}\left (\cP_t^{\cF}\psi \right )(u_0),h \right \rangle_{L^2(\T;\R)} \right |^2  
				& \leq \frac{1}{t^2} \|\psi\|_{L^{\infty}(L^2(\T;\R);\R)}^2 \quad\mE \left | Z(t) \right |^2 \\
				& = \frac{1}{t^2} \|\psi\|_{L^{\infty}(L^2(\T;\R);\R)}^2 \quad\mE \int_0^t \left \|Q^{-\frac{1}{2}}\eta_h(s) \right \|_{L^2(\T;\R)}^2 \,ds. \notag
			\end{split}
		\end{equation}
		Applying Lemma \ref{interpolation formula} and then H\"older's inequality with $p=\frac{1}{1-\gamma}$ and $p^{'}=\frac{1}{\gamma}$, we can write 
		\begin{align*}
			\int_0^t \left \|Q^{-\frac{1}{2}}\eta_h(s) \right \|_{L^2(\T;\R)}^2 \,ds &\leq c \int_0^t  \left \| \eta_h(s) \right \|_{L^2(\T;\R)}^{2(1-\gamma)} \left \|\eta_h(s) \right \|_{H^1(\T;\R)}^{2\gamma}\,ds \\
			& \leq c\left ( \int_0^t  \left \| \eta_h(s) \right \|_{L^2(\T;\R)}^{2} \,ds \right )^{1-\gamma} \left ( \int_0^t  \left \| \eta_h(s) \right \|_{H^1(\T;\R)}^{2} \,ds \right)^{\gamma} \\
			%& \leq \sup_{s \in [0,t]} \| \eta_h(s) \|_{L^2(\T;\R)}^{2(1-\gamma)} \,\,t^{1-\gamma} \left ( \int_0^t  \left \| \eta_h(s) \right \|_{H^1(\T;\R)}^{2} \,ds \right)^{\gamma}\\
			& \leq c \left(\|h\|_{L^2(\T;\R)}^{2(1-\gamma)} \, e^{CL_{\cF}^2t(1-\gamma)}\, t^{1-\gamma}\right)\cdot\left( \|h\|_{L^2(\T;\R)}^{2\gamma} (t+1)^{\gamma}e^{CL_{\cF}^2t\gamma}\right)\\
			& = \|h\|_{L^2(\T;\R)}^{2}\, (1+t)^\gamma e^{CL_\cF^2 t} \, t^{1-\gamma},\quad\text{$\mP$-a.s.}, 
		\end{align*}
		where the last inequality follows from the bounds \eqref{inequality u}-\eqref{inequality derivata di u}.
		Combining the above bounds, 
		\begin{equation*}
			\left |\left \langle D_{u_0}\left (\cP_t^{\cF}\psi \right )(u_0),h \right \rangle_{L^2(\T;\R)} \right |^2 \leq \frac{(1+t)^\gamma}{t^{1+\gamma}} e^{CL_\cF^2 t} \|\psi\|_{L^{\infty}(L^2(\T;\R);\R)}^2 \|h\|_{L^2(\T;\R)}^{2}.
		\end{equation*}
		Choosing $h=D_{u_0}\left( \cP_t^{\cF}\psi\right)(u_0)$ we obtain the statement.
	\end{proof}
	The strong Feller property of $\{ \cP_t^{\cF} \}_{t \geq 0}$ is now a straightforward consequence of the mean value theorem. 
	\begin{proposition}\label{prop:strong feller Lipshitz nonlinearities} 
		Let Assumption \ref{ass:QstrongFeller} hold. 
		If $\psi \in \cB_b(L^2(\T;\R);\R)$ then $\cP_t^{\cF} \psi$ is Lipschitz continuous (and hence Strong Feller) for every $t > 0$ (as a function from $L^2$ to $\R$), i.e.
		\begin{equation} \label{non so che scrivere}
			\left |\left ( \cP_t^{\cF}\psi \right )(u_0) - \left ( \cP_t^{\cF}\psi \right )(v_0) \right |^2 \leq \frac{(1+t)^\gamma}{t^{1+\gamma}} e^{CL_\cF^2 t} \|\psi\|_{L^{\infty}}\|u_0-v_0\|_{L^2(\T;\R)}^2, 
		\end{equation} 
		for any $\psi \in \cB_b(L^2(\T;\R);\R), u_0,v_0 \in L^2(\T;\R)$ and $t > 0$. In particular, the semigroup $(P^R_t\psi)$ associated to \eqref{damped spde} is Lipschitz continuous and satisfies \eqref{strong feller for PtR}, for every $R>0$. 
	\end{proposition}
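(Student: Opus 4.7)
The plan is to first establish the bound \eqref{non so che scrivere} for the smaller class $\psi \in C_b^2(L^2(\T;\R);\R)$ by a direct application of the mean value theorem combined with Proposition \ref{stima derivata semigruppo}, and then to extend the result to the whole class $\mathcal{B}_b(L^2(\T;\R);\R)$ by an approximation argument.

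For the first step, fix $\psi \in C_b^2(L^2(\T;\R);\R)$ and $u_0,v_0\in L^2(\T;\R)$. Thanks to Lemma \ref{prop:differentiability of u w.r.t u0} the map $u_0 \mapsto (\cP_t^{\cF}\psi)(u_0)$ is Fr\'echet differentiable on $L^2(\T;\R)$, so I can apply the mean value theorem along the segment $\{(1-\theta)u_0+\theta v_0:\theta\in[0,1]\}$ and obtain
\begin{equation*}
|(\cP_t^{\cF}\psi)(u_0)-(\cP_t^{\cF}\psi)(v_0)|\leq \sup_{w\in L^2(\T;\R)}\|D_{u_0}(\cP_t^{\cF}\psi)(w)\|_{L^2(\T;\R)}\cdot\|u_0-v_0\|_{L^2(\T;\R)}\,.
\end{equation*}
Inserting the gradient estimate of Proposition \ref{stima derivata semigruppo} and squaring, I recover \eqref{non so che scrivere} on $C_b^2$.

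For the extension to $\psi \in \mathcal{B}_b(L^2(\T;\R);\R)$, I would argue as follows. It suffices to produce, for any bounded Borel $\psi$, a sequence $\{\psi_n\}_{n \in \N}\subset C_b^2(L^2(\T;\R);\R)$ with $\|\psi_n\|_{L^{\infty}}\leq\|\psi\|_{L^{\infty}}$ and $\psi_n(u)\to\psi(u)$ for every $u\in L^2(\T;\R)$. The monotone class theorem in the form suitable for SPDEs (see e.g.~\cite[Chap.~7]{ergodicity}) guarantees that the class of functions for which such an approximation is possible (in the sense of bounded pointwise convergence) is exactly $\mathcal{B}_b(L^2(\T;\R);\R)$, so this step is standard. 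Once I have such a sequence, by dominated convergence applied to the law of $u(t;u_0)$ I get $(\cP_t^{\cF}\psi_n)(u_0)\to(\cP_t^{\cF}\psi)(u_0)$ for every $u_0\in L^2(\T;\R)$. Since the bound \eqref{non so che scrivere} holds for each $\psi_n$ with a constant depending only on $\|\psi_n\|_{L^{\infty}}\leq\|\psi\|_{L^{\infty}}$, passing to the limit $n\to\infty$ on both sides of \eqref{non so che scrivere} yields the bound for $\psi$. In particular $\cP_t^{\cF}\psi$ is Lipschitz on $L^2(\T;\R)$ and hence continuous, i.e.~$\cP_t^{\cF}$ is Strong Feller.

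The last claim of the proposition, concerning the semigroup $\cP_t^R$ of the truncated SPDE \eqref{damped spde}, is then immediate: the nonlinearity $u \mapsto V^{'}u+\mathcal{M}_R(u)$ is globally Lipschitz (with constant $L_R$ comparable to $\|V^{'}\|_{\infty}+R\|F^{'}\|_{\infty}$) and belongs to $C_b^2(L^2(\T;\R);L^2(\T;\R))$ thanks to the smoothness of $\xi_R$, so the result just proved applies with $\cF=V^{'}\cdot+\mathcal{M}_R$ and $L_{\cF}=L_R$, giving exactly \eqref{strong feller for PtR}.

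The main obstacles in this programme are not in this final step, which is rather routine once the preceding ingredients are in place, but rather in the Bismut-Elworthy-Li identity of Proposition \ref{prop bismut} and in the derivative bound of Proposition \ref{stima derivata semigruppo}; here the approximation step must be handled carefully because $L^2(\T;\R)$ is infinite-dimensional and standard convolution mollifiers are not directly available, which is precisely why one appeals to a monotone-class/bounded-pointwise-convergence argument rather than to a density argument in a stronger topology.
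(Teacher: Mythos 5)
Your core argument—mean value theorem along the segment $[u_0;v_0]$, combined with the gradient bound of Proposition \ref{stima derivata semigruppo}, applied first to $\psi \in C_b^2$—is exactly the paper's proof, and the final observation that $V'u+\mathcal{M}_R(u)\in C_b^2$ matches the paper as well. The only place you diverge is in passing from $C_b^2$ to $\mathcal{B}_b$: the paper simply invokes \cite[Lemma 2.2]{peszat}, whereas you sketch a monotone class argument. Your sketch contains a small but genuine inaccuracy: it is \emph{not} true that every bounded Borel $\psi$ on $L^2(\T;\R)$ is the bounded pointwise limit of a single sequence in $C_b^2$ — a single sequence of continuous functions only yields Baire class~1 functions, a strict subclass of $\mathcal{B}_b$. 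The correct formulation is to apply the functional monotone class theorem to the set $\mathcal{H}$ of all bounded Borel $\psi$ for which \eqref{non so che scrivere} holds: $\mathcal{H}$ contains the multiplicative class $C_b^2$ (which generates the Borel $\sigma$-algebra since it separates points), is a vector space, and is closed under bounded pointwise limits (by dominated convergence against the law of $u(t;u_0)$, with a minor truncation to control $\sup_n\|\psi_n\|_\infty$ in terms of $\|\psi\|_\infty$); hence $\mathcal{H}=\mathcal{B}_b$. With this rephrasing your argument becomes rigorous, and is precisely the content of the cited Peszat lemma — so the route is essentially the same, just with the reduction step made explicit rather than outsourced.
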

	\begin{proof}
		%We start by recalling that if $H$ is a Hilbert space and  $\cP_t$ is any Markov semigroup on  $\cB_b(L^2(\T;\R);\R)$ then the inequality
		%$$ \left \| \left( \cP_t \psi \right )(u_0)-\left( \cP_t \psi \right )(v_0) \right \|_{L^2(\T;\R)}  \leq c \| \psi\|_{L^{\infty}(L^2(\T;\R);\R)} \|u_0-v_0\|_{L^2(\T;\R)}, \quad u_0,v_0 \in L^2(\T;\R), c>0, $$
		%holds for all $\psi \in \cB_b(H;\R)$ if and only if it holds for  any  $\psi \in C_b^2(H;\R)$, see \cite[Lemma 2.2]{peszat}. 
		First, recall that if \eqref{non so che scrivere} holds for all $\psi \in C_b^2 \left( L^2(\T;\R);\R\right)$, then it also holds for all $\psi \in \cB_b \left( L^2(\T;\R);\R\right)$, see \cite[Lemma 2.2]{peszat}. Hence,  it is enough to prove \eqref{non so che scrivere} for $\psi \in C_b^2 \left( L^2(\T;\R);\R\right)$. From the (infinite dimensional version of the) mean value theorem, see Proposition \cite[Proposition 3.2.7]{Ricciflow},  and from Proposition \ref{stima derivata semigruppo}, we then have  
		\begin{equation}
			\begin{split}
				\left | \cP_t^{\cF}\psi(u_0)-\cP_t^{\cF}\psi(v_0) \right |^2 &\leq  \sup \limits_{\bar{u} \in [u_0;v_0]} \left  \|D_{u_0}\left (\cP_t^{\cF}\psi \right )(\bar{u})\right \|_{L^2(\T;\R)}^2 \left \|u_0-v_0 \right \|_{L^2(\T;\R)}^2 \\
				& \leq \frac{(1+t)^\gamma}{t^{1+\gamma}} e^{CL_\cF^2 t}\,\|\psi\|_{L^{\infty}(L^2(\T;\R);\R)}^2\,\, \|u_0-v_0\|_{L^2(\T;\R)}^2\,,  \notag
			\end{split}
		\end{equation}
		where in the above $[u_0;v_0]:= \left \{ r u_0+(1-r)v_0\,:\, r \in [0,1]\right \}.$ 
		
		Finally, to show that, for any given $R>0$, the semigroup $(P^R_t\psi)$ generated by \eqref{damped spde} is Lipschitz continuous, it is enough to note that $V^{'}u+\mathcal{M}_R(u)\in C^2_b\left(L^2(\T;\R);\R\right)$.
	\end{proof}

	\begin{appendix}
		\section{Proofs of Section \ref{sec:sec4}}\label{uniqueness sigmageq1}
		\subsection{Characterization of stationary solutions}
		\begin{proof}[Proof of Proposition \ref{stationary proposition}]
			The approach we use is well established, at least since \cite{Dre2010}, so we only give a sketch.   \\
			We first prove that for any given $\mu \in \mathcal{P}_{ac}(\T)$ the solution $\rho_{\mu}$ to the linear equation
			\begin{align}\label{linearised equation}
				0= \sigma \pa_{xx} \rho_{\mu}(x) +\pa_x \left[\left(V'(x) + (F'\ast \mu)(x)\right) \rho_{\mu}(x)\right]
			\end{align}
			is unique and it is given by 
			\begin{equation}\label{}
				\rho_{\mu}(x)=\frac{1}{C_{\sigma}} e^{-\frac{1}{\sigma} \big (\int \limits_0^x (V'(y)+F'\ast \mu(y)\,dy\big)},\quad x \in \T,
			\end{equation}
			with $C_{\sigma}$ normalisation constant.
			Indeed, since $V' \in C^{\infty}(\T;\R)$ and for any fixed $\mu \in \mathcal{P}_{ac}(\T)$ the function $F'\ast \mu$ is smooth, i.e. $F'\ast \mu \in C^{\infty}(\T;\R)$ (see e.g. \cite[Lemma 2.3., p.14]{Tartar} for further details), the (linear) operator 
			$$\mathcal{L}(\rho)(x):=\pa_{xx} \rho(x) +\pa_x \left[\left(V'(x) + (F'\ast \mu)(x)\right) \rho(x)\right],\quad x \in \T,$$ is uniformly elliptic and with smooth coefficients, hence, any weak solution $\rho_{\mu} \in H^1(\T;\R) \cap \cP_{ac}(\T)$ to \eqref{linearised equation} is actually smooth, i.e. $\rho_{\mu} \in C^{\infty}(\T;\R) \cap \cP_{ac}(\T)$, so the derivatives can be intended in the classical sense. 
			Equation \eqref{linearised equation} can be then solved explicitly:
			\begin{equation}
				\begin{split}
					&\pa_x [(V'(x)+F' \ast \mu(x))\rho_{\mu}(x)]+\sigma \pa_{xx} \rho_{\mu}(x)=0 \\
					& \pa_x [(V'(x)+F' \ast \mu(x))\rho_{\mu}(x)+\sigma \pa_{x}\rho_{\mu}(x)]=0 \\
					& (V'(x)+F' \ast \mu(x))\rho_{\mu}(x)+\sigma \pa_{x}\rho_{\mu}(x)=d \\
					& \sigma \pa_{x}\rho_{\mu}(x)=d-(V'(x)+F' \ast \mu(x))\rho_{\mu}(x), \notag
				\end{split}
			\end{equation}
			for some constant $d \in \R$.
			Finally, from the variation of constants formula we deduce that $\rho_{\mu}$ has the following expression
			\begin{equation}
				\rho_{\mu}(x)=\bigg( c+\frac{d}{\sigma}\int \limits_0^x e^{\frac{1}{\sigma} \big(\int \limits_0^y (V'(z)+F' \ast \mu(z)\,dz\big)}\,dy \bigg )e^{-\frac{1}{\sigma} \big(\int \limits_0^x (V'(y)+F'\ast \mu(y)\,dy\big)},\,x \in \T,\notag
			\end{equation}
			where $c$ and $d$ are real constants to be determined later.
			From the periodicity of $\rho_{\mu}$ we know that $\rho_{\mu}(0)=\rho_{\mu}(2\pi)$ which gives 
			$$c=c+\frac{d}{\sigma}\int \limits_0^{2\pi} e^{\frac{1}{\sigma} \big(\int \limits_0^y (V'(z)+F' \ast \mu(z)\,dz\big)}\,dy, $$
			where the above equality is a consequence of the periodicity of $V$ and $F$; indeed
			\begin{equation}
				\begin{split}
					& \int \limits_0^{2\pi} \left(V'(y)+F'\ast \mu(y) \right)\,dy=V(2\pi)-V(0)+F \ast \mu(2\pi)-F \ast \mu(0) \\ 
					&=  \int_{\T} F(2\pi-x) \mu(x)\,dx-F \ast \mu(0)=\int_{\T} F(-x) \mu(x)\,dx-F \ast \mu(0) \\
					&= F \ast \mu(0)-F \ast \mu(0)=0, \notag
				\end{split}
			\end{equation}
			therefore, we have $ e^{-\frac{1}{\sigma} \big(\int \limits_0^{2\pi} (V'(y)+F'\ast \mu(y)\,dy\big)}=1$. Hence, $d\int \limits_0^{2\pi} e^{\frac{1}{\sigma} \big(\int \limits_0^y (V'(z)+F' \ast \mu(z)\,dz\big)}\,dy=0$, which implies $d=0$. The constant $c$ is now determined by renormalization.
			We omit the rest of the argument and just recall that if we consider the map 
			$K:\mathcal{P}_{ac}(\T) \to \mathcal{P}_{ac}(\T)$ defined as 
			$K(\mu):=\rho_{\mu}$ from the above we then have that a solution to the non-linear problem \eqref{stationary equation} must be of the form \eqref{fixed point}.
		\end{proof}
		\subsection{Step 4 of the proof of Theorem \ref{thm:mainthm3-inv-meas}} \label{uniqueness h}
		We restrict to the case $\sigma \geq \frac{1}{2}$, the approach adopted is similar to what we have done for $\bar{g}_{\sigma}$. Namely, let $\xi_{\sigma}:\R \to \R$ be the map defined as 
		\begin{equation} \label{xi}
			\xi_{\sigma}(m):= \int_{\T} (\cos x-m)e^{-\frac{1}{\sigma}\cos(2x)+\frac{m}{\sigma}\cos x}\,dx,\quad m \in \R,
		\end{equation}
		and note that $m \in \R$ is a fixed point of $h_{\sigma}$ if and only if $m$ is a zero of $\xi_{\sigma}$, i.e. $\xi_{\sigma}(m)=0$. Let us also introduce the sequence $\{c_k\}_{k \in \N}$ defined as 
		\begin{equation}
			c_k:=\int_{\T}(\cos x)^k\,e^{-\frac{1}{\sigma}\cos(2x)}\,dx,\quad k \in \N.
		\end{equation}
		Since $\cos x$, $x \in \T$, is an anti-symmetric function and $\cos(2x)$, $x \in \T$, is a symmetric function  with respect to $x=\frac{\pi}{2}$ and $(\cos x)^2 < 1$ for all $x \in (0,2\pi)$ we have
		\begin{eqnarray}
			&& c_{2k+1}=0,\quad \text{for all $k \in \N$}, \label{zero on odd numbers cosine} \\
			&& c_{2k+2} < c_{2k},\quad \text{for all $k \in \N$} \label{decreases on even numbers cosine}.
		\end{eqnarray}
		In Proposition \ref{series expansion} we provide a power series 
		expansion of $\xi_{\sigma}$; similarly to what we have done in Section \ref{More invmeas} this power expansion will then allow us to prove the following result. 
		\begin{proposition}\cite[cfr. Step 1, Theorem 2.1]{tugaut}\label{series expansion xi prop}
			The function $\xi_{\sigma}:\R \to \R$ admits the following series expansion 
			\begin{equation}\label{series expansion xi}
				\xi_{\sigma}(m)= \sum \limits_{k \in \N} \frac{1}{(2k)!} \left( \frac{m}{\sigma}\right )^{2k+1} c_{2k}
				\iota_k(\sigma),
			\end{equation}
			where $\{\iota_k(\sigma)\}_{k \in \N}$ is the sequence defined as 
			\begin{equation}\label{iota}
				\iota_k(\sigma):=\frac{c_{2k+2}}{(2k+1)c_{2k}}-\sigma,\quad k \in \N,\, \sigma >0.
			\end{equation}
		\end{proposition}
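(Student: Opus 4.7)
The plan is to proceed in direct analogy with the proof of Lemma \ref{series expansion} for $\zeta_\sigma$, exploiting the fact that the integrand in \eqref{xi} differs from the one in \eqref{zeta} only by replacing $\sin x$ by $\cos x$, and that the analogue \eqref{zero on odd numbers cosine}–\eqref{decreases on even numbers cosine} of the parity/monotonicity of the moments $s_k$ still holds for the coefficients $c_k$ (this has already been recorded in the excerpt using the symmetry of $\cos(2x)$ and anti-symmetry of $\cos x$ about $x=\pi/2$).

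First I would expand the factor $e^{\frac{m}{\sigma}\cos x}$ as a convergent Taylor series
\begin{equation*}
e^{\frac{m}{\sigma}\cos x} \;=\; \sum_{k\in\N} \frac{1}{k!}\left(\frac{m}{\sigma}\right)^{k} (\cos x)^{k},
\end{equation*}
and plug this into \eqref{xi}. Uniform convergence of the partial sums on $\T$ (a compact set, with the factor $e^{-\frac{1}{\sigma}\cos(2x)}$ bounded) allows the interchange of sum and integral, giving
\begin{equation*}
\xi_\sigma(m)=\sum_{k\in\N}\frac{1}{k!}\left(\frac{m}{\sigma}\right)^{k}\bigl(c_{k+1}-m\,c_k\bigr).
\end{equation*}

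Second, I would split this into two sums and apply \eqref{zero on odd numbers cosine}: the term $c_{k+1}$ vanishes unless $k$ is even, so in $\sum_k \frac{1}{k!}(m/\sigma)^k c_{k+1}$ only even $k=2j$ survive, while in $\sum_k \frac{\sigma}{k!}(m/\sigma)^{k+1} c_k$ only even $k=2j$ survive. Reindexing both sums with $j\to k$, they combine into
\begin{equation*}
\xi_\sigma(m)=\sum_{k\in\N}\frac{1}{(2k+1)!}\left(\frac{m}{\sigma}\right)^{2k+1} c_{2k+2} - \sum_{k\in\N}\frac{\sigma}{(2k)!}\left(\frac{m}{\sigma}\right)^{2k+1} c_{2k}.
\end{equation*}
Factoring $\frac{1}{(2k)!}(m/\sigma)^{2k+1} c_{2k}$ and recognizing the bracket as $\iota_k(\sigma)$ from \eqref{iota} yields exactly \eqref{series expansion xi}.

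The argument is really a computation, so I do not expect a genuine obstacle; the only point requiring a line of justification is the term-by-term integration, which follows from uniform boundedness of $(\cos x)^k \cdot (\cos x - m)\cdot e^{-\cos(2x)/\sigma}$ on $\T$ and the convergence of $\sum_k \frac{|m/\sigma|^k}{k!}$. Everything else is bookkeeping: parity of the $c_k$, index shifting, and grouping to expose $\iota_k(\sigma)$. This mirrors the proof of Lemma \ref{series expansion} line-by-line, and as the excerpt notes (cf.\ \cite[Step 1, Theorem 2.1]{tugaut}), the structure of the expansion is the same.
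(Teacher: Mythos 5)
Your approach is exactly the one the paper intends: the paper explicitly omits this proof, saying only that it "can be done with calculations similar to Lemma \ref{series expansion}," and you have reproduced that argument line by line — power-series expansion of $e^{\frac{m}{\sigma}\cos x}$, term-by-term integration, parity of the moments $c_k$, reindexing, and factoring out $\iota_k(\sigma)$. One small parity slip in the prose: since $c_{2k+1}=0$, the factor $c_{k+1}$ vanishes precisely when $k$ is \emph{even}, so in the first sum $\sum_k \frac{1}{k!}(m/\sigma)^k c_{k+1}$ it is the \emph{odd} indices $k=2j+1$ that survive (giving the $c_{2j+2}$ terms), not the even ones as your sentence states; your final displayed formula is nevertheless the correct reindexed result, so the slip is harmless and the proof goes through.
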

		We omit the proof of the above result as it can be done with calculations similar to Lemma \ref{series expansion}.
		\begin{theorem}
			If $\sigma \geq \frac{1}{2}$ then $\xi_{\sigma}:\R \to \R$ admits a unique zero which is $m=0$. This implies that $m=0$ is the unique fixed point of $h_{\sigma}:\R \to \R$ for $\sigma \geq \frac{1}{2}$.
		\end{theorem}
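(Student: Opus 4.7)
The plan is to exploit the power series representation of $\xi_\sigma$ provided by Proposition~\ref{series expansion xi prop} and show that, under the hypothesis $\sigma \geq \frac{1}{2}$, every coefficient $\iota_k(\sigma)$ is strictly negative. Since $c_{2k}>0$ for all $k\in\N$, this immediately forces $\mathrm{sgn}(\xi_\sigma(m))=-\mathrm{sgn}(m)$ for $m\neq 0$, and the oddness of $\xi_\sigma$ (visible from the series) confirms that $m=0$ is the unique zero on $\R$. By the correspondence recalled just after \eqref{xi}, this is equivalent to $m=0$ being the unique fixed point of $h_\sigma$. Notice that the full monotonicity of $\{\iota_k(\sigma)\}_{k\in\N}$ used in Step~2 for $\zeta_\sigma$ is not needed here, so the argument is structurally simpler than that of Proposition~\ref{uniqueness related to first derivative}.

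For $k\geq 1$ the required bound is immediate. By \eqref{decreases on even numbers cosine}, $c_{2k+2}<c_{2k}$, and therefore
\[
\iota_k(\sigma) \;=\; \frac{c_{2k+2}}{(2k+1)c_{2k}} - \sigma \;<\; \frac{1}{2k+1} - \sigma \;\leq\; \frac{1}{3} - \frac{1}{2} \;<\; 0.
\]

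The substantive step, and the only place where the lower bound $\sigma\geq 1/2$ really enters, is the case $k=0$, where I need to show $c_2/c_0 < 1/2$. Here I would use the identity $\cos^2 x=\frac{1+\cos(2x)}{2}$ to rewrite
\[
c_2 \;=\; \frac{c_0}{2} + \frac{1}{2}\int_{\T} \cos(2x)\, e^{-\cos(2x)/\sigma}\,dx \;=\; \frac{c_0}{2} + \frac{1}{2}\, I_1\!\left(-\tfrac{1}{\sigma}\right),
\]
with $I_n$ the modified Bessel function of the first kind defined in \eqref{besselI}. Dividing by $c_0=I_0(-1/\sigma)$ and recalling the notation $r_0=I_1/I_0$ from \eqref{Besselr} yields $c_2/c_0 = \frac12 + \frac12\, r_0(-1/\sigma)$. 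Since $r_0$ is odd and strictly positive on $(0,+\infty)$ (as recalled in the paragraph preceding \eqref{first derivative bessel}), we have $r_0(-1/\sigma)<0$, whence $c_2/c_0<1/2$ and hence $\iota_0(\sigma)<\tfrac12-\sigma\leq 0$.

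With every $\iota_k(\sigma)<0$ established, the conclusion follows directly: each term in the series \eqref{series expansion xi} for $\xi_\sigma(m)$ has the sign of $-m$, so $\xi_\sigma(m)<0$ for $m>0$ and (by oddness) $\xi_\sigma(m)>0$ for $m<0$. Thus $m=0$ is the only zero of $\xi_\sigma$ on $\R$, and we obtain the claimed uniqueness of the fixed point of $h_\sigma$ for $\sigma\geq 1/2$. The only delicate point in the whole argument is the handling of the $k=0$ coefficient, which is precisely where the bound $c_{2k+2}/c_{2k}<1$ is too weak and where the Bessel-function computation provides the sharper inequality $c_2/c_0<1/2$.
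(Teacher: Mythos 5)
Your proof is correct and follows essentially the same route as the paper: for $k\geq 1$ use the crude bound $c_{2k+2}/c_{2k}<1$ to get $\iota_k(\sigma)<\tfrac{1}{2k+1}-\sigma\leq\tfrac13-\tfrac12<0$, and for $k=0$ use the identity $\cos(2x)=2\cos^2x-1$ together with the Bessel-function notation $I_0,I_1,r_0$ to sharpen the estimate to $c_2/c_0<\tfrac12$ (equivalently $c_2-\sigma c_0<0$), so that every coefficient in the series for $\xi_\sigma$ has the sign of $-m$ and $m=0$ is the unique zero. The only cosmetic difference is that the paper bounds $c_2-\sigma c_0$ directly by $(\tfrac12-\sigma)I_0(\tfrac1\sigma)$ whereas you normalize by $c_0$ and phrase it through $r_0$, but the underlying computation is identical.
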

		\begin{proof}
			Let us note that $\frac{c_{2k+2}}{(2k+1)c_{2k}} \leq \frac{1}{3}$ for all $k \geq 1$. With this in mind, from \eqref{series expansion xi} we can write 
			\begin{equation}
				\xi_{\sigma}(m)=\left( \frac{m}{\sigma} \right) \left( c_2-\sigma c_0 \right ) + \sum \limits_{k=1}^{+\infty} \frac{1}{(2k)!} \left( \frac{m}{\sigma}\right )^{2k+1} c_{2k}\iota_k(\sigma). \notag
			\end{equation}
			By using the identity $\cos(2x)=2\cos^2 x-1$ the factor $c_2-\sigma c_0$  can be rearranged into the following form:
			\begin{equation}\label{first derivaitve rearranged cosine}
				\begin{split}
					c_2-\sigma c_0 & = \int_{\T}  \cos^2 x \,e^{-\frac{1}{\sigma}\cos(2x)}\,dx-\sigma \int_{\T} e^{-\frac{1}{\sigma}\cos(2x)}\,dx \\
					& =\left( \frac{1}{2}-\sigma \right )\int_{\T} e^{-\frac{1}{\sigma}\cos(2x)}\,dx+\frac{1}{2}\int_{\T}\cos(2x)e^{-\frac{1}{\sigma}\cos(2x)}\,dx. \\
				\end{split}
			\end{equation}
			Using the modified Bessel functions of first kind (defined in \eqref{besselI}) we can recast \eqref{first derivaitve rearranged cosine} into the following form 
			\begin{align}\notag
				c_2-\sigma c_0 & = \left( \frac{1}{2}-\sigma \right) I_0 \left( -\frac{1}{\sigma} \right )+\frac{1}{2}I_1 \left (-\frac{1}{\sigma} \right ) \\
				&= \left( \frac{1}{2}-\sigma \right) I_0 \left( \frac{1}{\sigma} \right )-\frac{1}{2}I_1 \left (\frac{1}{\sigma} \right )< \left ( \frac{1}{2}-\sigma \right )I_0 \left (\frac{1}{\sigma} \right ).\label{58}
			\end{align} 
			Recalling that the functions $I_0(z) >0$, for all $z \in \R$ and $I_1(z)>0$ for all $z>0$, if $\sigma \geq \frac{1}{2}$ then $c_2-\sigma c_0 <0$ and, moreover, $\iota_k(\sigma) <0$, for all $k \geq 1$. Hence, since all the coefficients of the power series expansion of $\xi$ are strictly negative, we readily obtain that $\xi_{\sigma}(m)=0$ if and only if $m=0$. This concludes the proof.
		\end{proof}
		
		\begin{proof}[Proof of Lemma \ref{lemma asymptotic expansion}]
			We begin with proving formula \eqref{expansion 1}. We want to apply Proposition \ref{asymptotic 
				expansion} with $U(x):=\cos(2x)$, $ x \in \T$, and $G=0$. 
			To this end, we have to look for the minimum points of the function $U$ on the torus $\T$. Clearly, 
			$U$ admits two global minima $x_1=\frac{\pi}{2}$ and $x_2=\frac{3\pi}{2}$. Therefore, in order to 
			apply Lemma \ref{asymptotic expansion} we split the integral into two parts. Namely,
			\begin{equation}
				\int_{\T} e^{-\frac{1}{\sigma} \cos(2x)}\,dx=\int_{0}^{\pi} e^{-\frac{1}{\sigma} \cos(2x)}\,dx+\int_{\pi}^{2\pi} e^{-\frac{1}{\sigma} \cos(2x)}\,dx. \notag   
			\end{equation}
			Using the $\pi$-periodicity of $\cos(2x)$ we reduce to a single integral i.e. 
			\begin{equation}
				\int_{\T} e^{-\frac{1}{\sigma} \cos(2x)}\,dx=2\int_{0}^{\pi} e^{-\frac{1}{\sigma} \cos(2x)}\,dx=2I.\notag
			\end{equation} 
			On the interval $[0,\pi]$ the function $U$ admits a unique global minimum at $x=\frac{\pi}{2}$. Hence, we can apply Lemma \ref{asymptotic expansion} to $I$ and obtain the desired result
			\begin{eqnarray}
				I &=& \sqrt{\frac{\pi \sigma}{2}} e^{\frac{1}{\sigma}} \left(1+o(1)  \right). \label{espansione per I}
			\end{eqnarray}
			Formula \eqref{expansion 2} is obtained with a similar reasoning. 
		\end{proof}

		\begin{proof}[Proof of Lemma \ref{stime asintotiche h}]
			We want to apply Lemma \ref{asymptotic expansion} with $U(x):=\cos(2x)$ and $G(x):=-\cos x$ so, we consider the function 
			$$U_m(x):=\cos(2x)-m\cos x,\quad x \in \T,\quad m \in [0,1].$$
			By direct calculation one can see  
			$x_{1,m}=\arccos\left(\frac{m}{4}\right)$ and $x_{2,m}=2\pi-\arccos\left(\frac{m}{4}\right)$ are points of global minimum for $U_m$ as $m$ ranges in $[0,1]$. 
			If we now let $f \in C^{3}(\T;\R)$ then by applying Lemma \ref{asymptotic expansion} we obtain that the following asymptotic expansion holds
			\begin{equation}
				\begin{split}
					& \int_{\T} f(x)e^{-\frac{1}{\sigma}(\cos(2x)-m\cos x)}\,dx =\sqrt{\frac{2\pi \sigma}{4-\frac{m^2}{4}}} e^{\frac{1}{\sigma} \left( 1+ \frac{m^2}{8} \right)} \left( f \left( x_{1,m} \right) +f \left( x_{2,m} \right) + \left(\gamma_{1,m}^f+\gamma_{2,m}^f \right)\sigma + o_m(\sigma) \right), \notag
				\end{split}
			\end{equation}
			where $\gamma_{1,m},\gamma_{2,m}$ are constants defined as in \eqref{gamma constant}.
			In our case of interest, since the second, third and fourth derivative of $U_m$ at $x_{1,m}$ and $x_{2,m}$ are respectively 
			\begin{eqnarray}
				&& U_m^{(2)}(x_{1,m})=U_m^{(2)}(x_{2,m})=4-\frac{m^2}{4},\notag \\
				&& U_m^{(3)}(x_{1,m})=-U_m^{(3)}(x_{2,m})=12 m \sqrt{1-\frac{m^2}{16}},\notag \\
				&& U_m^{(4)}(x_{1,m})=U_m^{(4)}(x_{2,m})=\frac{7}{4}m^2-16,\notag
			\end{eqnarray}
			if we set $f=1$ we have 
			\begin{equation}
				\begin{split}
					& \int_{\T} e^{-\frac{1}{\sigma}(\cos(2x)-m\cos x)}\,dx  =\sqrt{\frac{2\pi \sigma}{4-\frac{m^2}{4}}} e^{\frac{1}{\sigma} \left( 1+ \frac{m^2}{8} \right)} \left(2 + \left(\gamma_{1,m}+\gamma_{2,m} \right)\sigma + o_m(\sigma) \right). \notag
				\end{split}
			\end{equation}
			Moreover, since $f^{'}=f^{''}=0$ we have $\gamma_{1,m}=\gamma_{2,m}$ and by recalling that 
			$\mathcal{U}_k=\frac{d^k}{dx^k}(\cos(2x)-m\sin x)|_{x=x_{1,m}}$ we obtain
			$$\gamma_{1,m}=\frac{5\mathcal{U}_{3}^2}{24\mathcal{U}_{2}^3}-\frac{\mathcal{U}_{4}}{8\mathcal{U}_{2}^2}=\frac{c(m)}{2}+\frac{2}{(4-\frac{m^2}{2})^2},$$
			where $[0,1] \ni m \to c(m) \in \R$ is a continuous function such that $c(m) \to 0$ as $m \downarrow 0$.\\
			The expansions \eqref{expansion 3 h} and \eqref{expansion 4 h} are obtained analogously from \eqref{expansion f} by using $f(x)=\cos x$ and $f(x)=\cos^2 x$, respectively. For the reader who would like to check the details we point out that if $f(x)=\cos x$ then 
			since $f^{'}(x)=\sin x$ it follows that 
			\begin{equation}
				f^{'}(x_{1,m})=-f^{'}(x_{2,m}),\,\,\,f^{'}(x_{1,m})U^{'''}(x_{1,m})=f^{'}(x_{2,m})U^{'''}(x_{2,m}),
			\end{equation}
			hence, we obtain $\gamma_{1,m}^{(\cos)}=\gamma_{2,m}^{(\cos)}$ and
			$$ \gamma_{1,m}^{(\cos)}=f(x_{1,m}) \left ( \frac{5\mathcal{U}_{3}^2}{24\mathcal{U}_{2}^3}-\frac{\mathcal{U}_{4}}{8\mathcal{U}_{2}^2} \right) -f^{'}(x_{1,m}) \frac{\mathcal{U}_{3}}{2\mathcal{U}_{2}^2}+\frac{f^{''}(x_{1,m})}{2 \mathcal{U}_{2}}=\frac{\bar{c}(m)}{2},$$
			where $[0,1] \ni m \to \bar{c}(m) \in \R$ is a continuous function such that $\bar{c}(m) \to 0$ as $m \downarrow 0$. 
			If we set $f(x)=\cos^2 x$ then 
			\begin{equation}
				f^{'}(x)=-2\cos x \sin x,\,\,\,f^{'}(x_{1,m})U^{'''}(x_{1,m})=f^{'}(x_{2,m})U^{'''}(x_{2,m}),
			\end{equation}
			hence, we obtain $\gamma_{1,m}^{(\cos^2)}=\gamma_{2,m}^{(\cos^2)}$ and
			\begin{equation}
				\begin{split}
					\gamma_{1,m}^{(\cos^2)} & =f(x_{1,m}) \left ( \frac{5\mathcal{U}_{3}^2}{24\mathcal{U}_{2}^3}-\frac{\mathcal{U}_{4}}{8\mathcal{U}_{2}^2} \right) -f^{'}(x_{1,m}) \frac{\mathcal{U}_{3}}{2\mathcal{U}_{2}^2}+\frac{f^{''}(x_{1,m})}{2 \mathcal{U}_{2}} =\frac{\hat{c}(m)}{2}+\frac{1}{4-\frac{m^2}{4}}.\notag
				\end{split}
			\end{equation}
			where $[0,1] \ni m \to \hat{c}(m) \in \R$ is a continuous function such that $\hat{c}(m) \to 0$ as $m \downarrow 0$. 
		\end{proof}

		\section{Proofs of Section \ref{Well-Posedness McKean_Vlasov} to Section \ref{sec:sec7}} \label{estimate heat kernel}\label{stochastic convolution}\label{appendix: proofs of sec 7}
		We recall that if $G_t$, $t > 0$, is the heat kernel on $\R$, i.e. 
		\begin{equation}
			G_t(x):=\frac{1}{\sqrt{4\pi t}}e^{-\frac{x^2}{4t}},\qquad x \in \R,\, t>0, \notag
		\end{equation}
		the periodic heat kernel $G_t^{per}$ is defined as
		\begin{equation} \label{heat kernel on the torus}
			G_t^{per}(x):=\sum \limits_{k \in \Z} G_t(x+2k\pi),\qquad x \in \R,\, t > 0.
		\end{equation}
		Clearly, $G_t^{per} \in C^{\infty}((0,+\infty) \times \T)$ and has $L^1$-norm equal to 1. 
		
		We also recall that, since
		\begin{align} \label{estimate pax G}
			|\pa_x G_t(x)| \leq \frac{C}{\sqrt{t}}G_{2t}(x),
		\end{align}
		one has
		\begin{align}\label{estimate pax Gper}
			|\pa_x G_t^{per}(x)| \leq \frac{C}{\sqrt{t}}G_{2t}^{per}(x),
		\end{align}
		for all $x \in \R$, $t>0$, with $C=\sqrt{2}e^{-\frac{1}{2}}$. Hence, 
		\begin{equation}
			\label{pax Gper L1}
			\left \|\pa_x G_t^{per} \right \|_{L^1(\T;\R)} \leq \frac{C_1}{\sqrt{t}},
		\end{equation}
		for some constant $C_1>0$. We briefly recall that \eqref{estimate pax G} follows from writing
		\begin{equation}
			\left |\pa_x G_t(x) \right |= h_t(x)e^{-\frac{x^2}{8t}},\notag
		\end{equation} 
		where $x \in \R \mapsto h_t(x) \in \R $ is the even function defined as 
		$h_t(x):=\frac{|x|}{4\sqrt{\pi t^3}} e^{-\frac{x^2}{8t}},\quad x \in \R,\; t>0.$
		The maximum of $h_t$ is attained at $x=\pm 2\sqrt{t}$, from which \eqref{estimate pax G} follows. The bounds \eqref{estimate pax Gper} and \eqref{pax Gper L1} are then obvious.
		
		\begin{proof}[Proof of Lemma \ref{stime heat kernel}]
			We begin with showing that the bounded linear operator $P:C \left( [0,T] ; H^1(\T;\R) \right ) \to C \left( [0,T];L^2(\T;\R) \right)$ defined by \eqref{operatore P} can be extended to a bounded linear operator from $C \left( [0,T] ; L^2(\T;\R) \right)$ into $C \left( [0,T];L^2(\T;\R) \right)$ satisfying \eqref{diseguaglianza unmezzo}.
			To this end, first consider $z \in C \left( [0,T] ; H^1(\T;\R) \right )$ and $\psi \in L^2(\T;\R)$. In this case, we obtain
			\begin{align}	\left|\left\langle P[z](t),\psi \right\rangle\right|& = \left |\int_0^t \left \langle e^{(t-s)A} \partial_x z(s),\psi \right \rangle_{L^2(\T;\R)}\,ds \right |\nonumber\\
				& \leq \int_0^t \left |\left \langle  \partial_x z(s),e^{(t-s)A}\psi \right \rangle_{L^2(\T;\R)} \right | \,ds\nonumber\\ %= \int_0^t \left | \left \langle z(s),\partial_x \left ( e^{(t-s)A}\psi \right ) \right   \rangle_{L^2(\T;\R)} \right | \,ds\\
				&\leq \int_0^t \|z(s)\|_{L^2(\T;\R)} \left \|\partial_x \left (e^{(t-s)A}\psi \right ) \right \|_{L^2(\T;\R)}\,ds. \label{Pz_psi}
			\end{align}
			From Young's inequality for convolutions and \eqref{pax Gper L1}, for any $\psi \in L^2(\T;\R)$ the heat semigroup satisfies
			\begin{equation*}
				\left \| \partial_x \left ( e^{tA} \psi \right ) \right \|_{L^{2}(\T;\R)} \leq \left \| \pa_x G_t^{per}\right \|_{L^1(\T;\R)} \|\psi\|_{L^2(\T;\R)} \leq \frac{C_1}{t^{\frac{1}{2}}} \| \psi \|_{L^2(\T;\R)},
			\end{equation*}
			for all $t >0$. Thus,
			\begin{equation*} \label{inequality eq perunmezzo}
				\int_0^t \|z(s)\|_{L^2(\T;\R)} \left \|\partial_x \left ( e^{(t-s)A}\psi \right ) \right \|_{L^{2}(\T;\R)}\,ds  \leq C_1\|\psi \|_{L^{2}(\T;\R)}\int_0^t (t-s)^{-\frac{1}{2}}\|z(s)\|_{L^2(\T;\R)}\,ds,
			\end{equation*}
			for all $\psi \in L^2(\T;\R)$ and $t \in [0,T]$. Setting $\psi=P[z](t)$, we have
			\begin{eqnarray} \label{diseguaglianza prima}
				\left \| P [z](t) \right \|_{L^2(\T;\R)} && \leq C_1 \int_0^t (t-s)^{-\frac{1}{2}}\|z(s)\|_{L^2(\T;\R)}\,ds \\
				&& \leq C_1 T^{\frac{1}{2}} \left \| z \right \|_{C \left( [0,T] ; L^2(\T;\R)\right) }, \label{dis boh}
			\end{eqnarray}
			for all $t \in [0,T]$. Therefore, inequality \eqref{diseguaglianza unmezzo} holds for all $z \in C \left ([0,T];H^1(\T;\R)\right )$.
			
			Now, let $z \in C\left ([0,T];L^2(\T;\R)\right )$ and let $\{z_n\}_{n \in \N} \subset C \left ([0,T];H^1(\T;\R) \right )$ be such that $z_n \to z$ in $C \left ([0,T];L^2(\T;\R) \right )$ as $n\to+\infty$. Then from \eqref{diseguaglianza prima} we know that 
			$$\left \| P[z_n](t) \right \|_{L^2(\T;\R)} \leq C_1 \int_0^t (t-s)^{-\frac{1}{2}}\|z_n(s)\|_{L^2(\T;\R)}\,ds, 
			$$
			for all $n \in \N$ and $t \in [0,T]$. Moreover, from the linearity of $P$ and inequality \eqref{dis boh} we obtain that $\left \{P[z_n] \right \}_{n \in \N}$ is a Cauchy sequence in $C \left ([0,T];L^2(\T;\R) \right )$.
			Therefore, if we define $P[z]$ as the limit in $C([0,T];L^2(\T;\R))$ of $P[z_n]$ as $n \to +\infty$ (due to \eqref{dis boh} such limit is independent from the choice of the approximating sequence $\{z_n\}_{n \in \N}$) then we obtain that inequality \eqref{diseguaglianza unmezzo} holds for all $z \in C \left ( [0,T];L^2(\T;\R)\right )$ and for any $t\in[0,T]$. 
		\end{proof}
		
		We now prove global well-posedness and $L^2$ bounds for the solution of the PDE \eqref{pde regolare} used in the proof of Proposition \ref{stime sistema deterministico}.  
		We denote by $W^{1,\infty}(\T;\R)$ the Sobolev  space of bounded  functions with weak derivative in $L^{\infty}(\T;\R)$,  endowed with the norm 
		$$\|f\|_{W^{1,\infty}(\T;\R)}=\|f\|_{L^{\infty}(\T;\R)}+\|\pa_xf\|_{L^{\infty}(\T;\R)},\quad \text{for all $f \in W^{1,\infty}(\T;\R)$}.$$
		
		\begin{proposition}\label{regularised PDE}
			For any  $T>0$ (independent of $\omega$) {the random} PDE \eqref{pde regolare} admits a {$\mP$-a.s.} continuous $L^2(\T;\R)$-valued mild solution $v$ on the interval $[0,T]$; moreover such a solution is a classical solution and satisfies the a priori estimates \eqref{L2 norm}.
			%Furthermore, $a(v_0,\varphi)$ depends continuously on the functions $v_0$ and $\varphi$ in the sense that if we let $\{v_0^n \}_{n \in \N}, v_0 \subset L^2(\T;\R)$ such that $v_0^n \xrightarrow[n \to +\infty]{} v_0$ in $L^2(\T;\R)$ and $\{\varphi_n \}_{n \in \N},\varphi \subset C([0,T];H^1(\T;\R))$ such that $\varphi_n \xrightarrow[n \to +\infty]{} \varphi$ in $C([0,T];H^1(\T;\R))$ then it follows that $a(v_0^n,\varphi_n) \xrightarrow[n \to +\infty]{} a(v_0,\varphi)$ uniformly in $[0,T]$. Moreover,
			
			%where $t \to b(v_0,\varphi)(t)$ and $t \to c(\varphi)(t)$ are non-negative time-continuous functions defined as, respectively, 
			%$$b(v_0,\varphi)(t):=\|v_0\|_{L^2(\T;\R)}^2+2\pi\|F^{'}\|_{L^{\infty}(\T;\R)}^2\int_0^t \|\varphi(s)\|_{L^2(\T;\R)}^4\,ds,\,t \in [0,T],$$
			%$$c(\varphi)(t):= 4\pi\|F^{'}\|_{L^{\infty}(\T;\R)}^2\|\varphi(t)\|_{L^2(\T;\R)}^2, t \in [0,T].$$
			%Moreover, $b(v_0,\varphi)$ and $c(\varphi)$ depend continuously on $v_0$ and $\varphi$ in the same way like $a(v_0,\varphi)$ does. 
		\end{proposition}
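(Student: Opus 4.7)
The plan is to prove the result in three stages: local existence via contraction, classical regularity by bootstrapping (exploiting the smoothness of $\varphi$), and an $L^1$-then-$L^2$ energy estimate that gives \eqref{L2 norm} and hence global existence by standard continuation.

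Working pathwise for each $\omega$ with $\varphi\in C^\infty([0,T]\times\R)$, local existence of a mild solution is essentially identical to Proposition \ref{local existence mild solution}: apply the Banach fixed point theorem to $v\mapsto e^{tA}u_0+P[V'(v+\varphi)+(F'*(v+\varphi))(v+\varphi)](t)$ on a ball of $C([0,T^*];L^2(\T;\R))$, with $W_A$ replaced by $\varphi$. The argument is in fact simpler since $\varphi$ is bounded in every Bochner norm pathwise, so the random quantity $\mu_2$ there is replaced by a fixed (in $\omega$) constant; Lemma \ref{stime heat kernel} still provides contraction for small $T^*$. Classical regularity of $v$ on $(0,T^*)\times\T$ is then standard: the heat semigroup maps $L^2$ into $H^k$ for every $k$, and the nonlinearity preserves spatial regularity because $\varphi$ is smooth, so iterating the mild formula yields $v\in C^\infty$ in $x$, with time regularity following from the equation.

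The heart of the proof is \eqref{L2 norm}, which I would derive in two steps once smoothness of $v$ on $(0,\tilde T)$ is in hand. Step (i): an $L^1$ bound. Write the drift as $\partial_x[b(v+\varphi)]$ with $b:=V'+F'*(v+\varphi)$; Kato's inequality on $\partial_{xx}v$ together with the identity $\int\mathrm{sgn}(v)\partial_x[bv]\,dx=0$ (via $\mathrm{sgn}(v)\partial_xv=\partial_x|v|$ and periodicity of the torus) reduces matters to estimating the source $\partial_x[b\varphi]$. Young's convolution inequality gives $\|b\|_{L^\infty},\,\|\partial_xb\|_{L^\infty}\le C(1+\|v\|_{L^1}+\|\varphi\|_{H^1})$, whence
$$\tfrac{d}{dt}\|v\|_{L^1}\le C\|v\|_{L^1}\|\varphi\|_{H^1}+C(1+\|\varphi\|_{H^1}^2),$$
and Gronwall delivers precisely the second summand of $b(v_0,\varphi)(t)$ in the target. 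Step (ii): the $L^2$ energy estimate. Testing the equation with $v$ produces $-\|\partial_xv\|_{L^2}^2$ plus cubic terms in $v,\varphi$ and $w:=F'*(v+\varphi)$. The key observation is $\int wv\partial_xv\,dx=-\tfrac12\int\partial_xw\cdot v^2\,dx$, so the coefficient of $\|v\|_{L^2}^2$ arising from this term involves only $\|\partial_xw\|_{L^\infty}\le C(\|v\|_{L^1}+\|\varphi\|_{L^2})$, avoiding the spurious $\|w\|_{L^\infty}^2$ that a direct Cauchy--Schwarz bound would produce. Estimating the remaining cross terms with $\varphi$ by Young's inequality and absorbing $\|\partial_xv\|_{L^2}^2$ into the dissipation yields
$$\tfrac{d}{dt}\|v\|_{L^2}^2\le C\bigl(1+\|v\|_{L^1}+\|\varphi\|_{L^2}^2\bigr)\|v\|_{L^2}^2+C\|\varphi\|_{L^2}^4.$$
Substituting the Step~(i) bound for $\|v\|_{L^1}$ into the coefficient and invoking Gronwall gives \eqref{L2 norm} in the exact form stated; global existence to any deterministic $T$ follows by standard continuation, since the right-hand side of \eqref{L2 norm} is finite on $[0,T]$ for every smooth $\varphi$.

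The main technical obstacle is arranging the $L^2$ estimate so that $\|v\|_{L^1}$ -- whose own bound already carries $\|\varphi\|_{H^1}$ inside an exponential -- enters \emph{linearly} as a multiplier of $\|v\|_{L^2}^2$, not quadratically. The integration by parts trick described above is precisely what achieves this and what allows the exponential factor in \eqref{L2 norm} to have the claimed structure rather than a doubly exponential one.
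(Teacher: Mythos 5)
Your proposal is essentially correct and follows the same route as the paper: local existence by the same contraction argument (with $\varphi$ in place of $W_A$), smoothness by bootstrapping, and then an $L^1$ estimate followed by an $L^2$ energy estimate and Gronwall, with the key integration-by-parts step $\int (F'*v)\, v\, \partial_x v \,dx = -\tfrac12 \int (F''*v)\, v^2\,dx$ used in exactly the same way to keep $\|v\|_{L^1}$ entering the $L^2$ estimate only linearly as a multiplier of $\|v\|_{L^2}^2$. The one place where your argument is a formal shortcut rather than a full proof is the $L^1$ step: you test the equation against $\mathrm{sgn}(v)$ and invoke Kato's inequality and the identity $\int\mathrm{sgn}(v)\,\partial_x[bv]\,dx=0$, whereas the paper replaces $|\cdot|$ by a $C^2$ convex approximant $\chi_\varepsilon$, carries out the same cancellations with $\chi_\varepsilon'$ and $\chi_\varepsilon''$, controls the residual $\chi_\varepsilon''(v)\,v^2 \le \tfrac{3\varepsilon}{2}$ by Young's inequality against the dissipation, and then lets $\varepsilon\downarrow 0$. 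The two computations give the same final bound, but the regularized version sidesteps questions about differentiating through $\mathrm{sgn}(v)$ and about the distributional form of Kato's inequality, which your sketch takes for granted; a fully rigorous write-up of your route would need to justify those identities pointwise (e.g.\ using the classical smoothness of $v$ you have already bootstrapped) or simply switch to the $\chi_\varepsilon$ device.
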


		\begin{proof}
			The local existence of a { $\mP$-a.s.} continuous $L^2(\T;\R)$-valued mild 
			solution can be proven exactly as in Proposition \ref{local existence mild solution} 
			with $\varphi$ in place of $W_A$. We denote such a solution by  $v$. We also note that,
			due to the smoothing properties of $A$, $v$ is a smooth solution {$\mP$-a.s.} as long as $v$ does not blow up, since the coefficients $V,F$ and the 
			external forcing term $\varphi$ are smooth. In other words, $v$ is a classic solution 
			to \eqref{pde regolare} defined up to a time $T^{*}={T^{*}(\omega)}>0$ small enough. In the same fashion 
			of proof of Theorem \ref{mild solution global existence}, to prove the global existence
			of $v$, it is enough to show that if $v$ is a solution up to time 
			$\tilde{T}>0$, then the estimate \eqref{L2 norm} is satisfied for all $t\in[0,\tilde{T}]$. 
			
			To ease the presentation, we denote $v_{\varphi}:=v+\varphi$ and we omit the dependence on time $t$ for $v$ and $\varphi$, i.e. we write $v$ and $\varphi$ in place of $v(t)$ and $\varphi(t)$, respectively. In what follows, if not further specified, $C$ denotes a generic deterministic positive constant, the value of which may change from line to line. Because of the non-linear term, to estimate the $L^2$-norm we need to start by estimating the $L^1$-norm. To do so, we analyse the derivative $\frac{d}{dt} \|v\|_{L^1(\T;\R)}$. To differentiate the $L^1(\T;\R)$-norm of $v$, since the function $x \to |x|$ is not smooth, we need to approximate it with a family of regular functions. To this end, let us consider the following convex $C^2(\R;\R)$-approximation $\{ \chi_{\varepsilon}\}_{\varepsilon >0}$ of the absolute value
			\begin{equation} \label{approssimazione valore assoluto}
				\chi_{\varepsilon}(r)=
				\begin{cases}
					|r|,\quad &|r| > \varepsilon, \\
					-\frac{r^4}{8\varepsilon^3}+\frac{3r^2}{4\varepsilon}+\frac{3\varepsilon}{8},\quad& |r| \leq \varepsilon. 
				\end{cases}
			\end{equation}
			Then, we have 
			\begin{equation}
				\begin{split}
					\frac{d}{dt}\int_{\T}\chi_{\varepsilon}(v)\,dx =\,&\int_{\T} \chi_{\varepsilon}^{'}(v) \left \{\pa_{xx}v+\partial_x \left[ V^{'}v_{\varphi} + (F^{'}*v_{\varphi}) v_{\varphi} \right]\right \}\,dx\\
					=\,& \int_{\T}  \chi_{\varepsilon}^{'}(v) \partial_x \left[ V^{'}\varphi+(F^{'}*v_{\varphi})\,\varphi \right]dx\\
					&- \int_{\T}  \chi_{\varepsilon}^{''}(v)\pa_xv \left[ \pa_x v +V^{'}v+(F^{'}*v_{\varphi})\,v\right]\,dx,\quad {\mP-a.s.} 
					%\\=:\,&I_{\varepsilon}^{'}+I_{\varepsilon}^{''}
					\label{step1-estimates}
				\end{split}
			\end{equation}
			Since $|\chi_{\varepsilon}^{'}(r)| \leq 1$ for all $r>0$ and all $\varepsilon>0$, the first addend on the RHS of \eqref{step1-estimates} can be estimated by
			\begin{equation}
				\begin{split}
					\int_{\T}  \chi_{\varepsilon}^{'}(v)\partial_x &\left[ V^{'}\varphi+(F^{'}*v_{\varphi})\,\varphi \right]dx\\
					\leq\,& \int_{\T} |V^{'}+F^{'}*v_{\varphi}||\pa_x\varphi|\,dx + \int_{\T} |V^{''}+F^{''}*v_{\varphi}||\varphi|\,dx \\
					%\leq\,& \left[\|V^{'}\|_{L^{\infty}(\T;\R)}+\|F^{'}\|_{L^{\infty}(\T;\R)}\left(\|v\|_{L^1(\T;\R)}+\|\varphi\|_{L^1(\T;\R)} \right)\right]\|\pa_x \varphi\|_{L^1(\T;\R)} \\
					%& + \left[\|V^{''}\|_{L^{\infty}(\T;\R)}+\|F^{''}\|_{L^{\infty}(\T;\R)}\left(\|v\|_{L^1(\T;\R)}+\|\varphi\|_{L^1(\T;\R)} \right)\right]\|\varphi\|_{L^1(\T;\R)}\\
					% \leq\,& \left (\|V^{'}\|_{L^{\infty}(\T;\R)}+\|V^{''}\|_{L^{\infty}(\T;\R)} \right ) \left( \|\varphi\|_{L^1(\T;\R)}+\|\pa_x\varphi\|_{L^1(\T;\R)}\right)\\
					%& + \left( \|F^{'}\|_{L^{\infty}(\T;\R)}+\|F^{''}\|_{L^{\infty}(\T;\R)} \right)\|v\|_{L^1(\T;\R)}\left(\|\varphi\|_{L^1(\T;\R)}+\|\pa_x\varphi\|_{L^1(\T;\R)}\right)\\ 
					%& + \left( \|F^{'}\|_{L^{\infty}(\T;\R)}+\|F^{''}\|_{L^{\infty}(\T;\R)} \right) \left(\|\varphi\|_{L^1(\T;\R)}+\|\pa_x\varphi\|_{L^1(\T;\R)}\right)^2 .
					%\\ & \leq \sqrt{2\pi}\|V^{'}\|_{W^{1,\infty}(\T;\R)} \|\varphi\|_{H^1(\T;\R)}\\
					%& + \sqrt{2\pi}\left( \|F^{'}\|_{L^{\infty}(\T;\R)}+\|F^{''}\|_{L^{\infty}(\T;\R)} \right)\|v\|_{L^1(\T;\R)}\|\varphi\|_{H^1(\T;\R)}\\ 
					%& + 2\pi\left( \|F^{'}\|_{L^{\infty}(\T;\R)}+\|F^{''}\|_{L^{\infty}(\T;\R)} \right) \|\varphi\|_{H^1(\T;\R)}^2.\\
					\leq\,& \sqrt{2\pi}\|V^{'}\|_{W^{1,\infty}(\T;\R)} \|\varphi\|_{H^1(\T;\R)}+ 2\pi\|F^{'}\|_{W^{1,\infty}(\T;\R)} \|\varphi\|_{H^1(\T;\R)}^2 \\
					& +\sqrt{2\pi}\|F^{'}\|_{W^{1,\infty}(\T;\R)}\|v\|_{L^1(\T;\R)}\|\varphi\|_{H^1(\T;\R)},\quad {\mP-a.s.}.\label{mi serve dopo 1}
				\end{split}
			\end{equation}
			As for the last addend on the RHS of \eqref{step1-estimates}, using the fact that $\chi_{\varepsilon}^{''} \geq 0$ and applying Young's inequality, we have 
			\begin{equation}
				\begin{split}
					- \int_{\T}  \chi_{\varepsilon}^{''}(v)\pa_xv & \left[ \pa_x v +V^{'}v+(F^{'}*v_{\varphi})\,v\right]\,dx\\ \leq\,& -\int_{\T}  \chi_{\varepsilon}^{''}(v)|\pa_xv|^2\,dx+ \frac{1}{2}\int_{\T} \chi_{\varepsilon}^{''}(v)|\pa_xv|^2\,dx+ \frac{1}{2} \int_{\T} \chi_{\varepsilon}^{''}(v)|V^{'}|^2|v|^2\,dx \\
					& + \frac{1}{2}\int_{\T} \chi_{\varepsilon}^{''}(v)|\pa_xv|^2\,dx+ \frac{1}{2} \int_{\T} \chi_{\varepsilon}^{''}(v)|F^{'}*v_{\varphi}|^2|v|^2\,dx \\
					\leq\,&	\frac{\|V^{'}\|_{L^{\infty}(\T;\R)}^2}{2} \int_{\T} \chi_{\varepsilon}^{''}(v)|v|^2\,dx+\frac{\|F^{'}*v_{\varphi}\|_{L^{\infty}(\T;\R)}^2}{2} \int_{\T} \chi_{\varepsilon}^{''}(v)|v|^2\,dx, \quad {\mP-a.s.}\notag
				\end{split}
			\end{equation}
			By \eqref{approssimazione valore assoluto}, we note that
			\begin{equation}
				\chi_{\varepsilon}^{''}(r)=
				\begin{cases}
					0,\quad&|r| > \varepsilon,  \\
					-\frac{3r^2}{2\varepsilon^3}+\frac{3}{2\varepsilon},\quad& |r| \leq \varepsilon, \notag
				\end{cases}
			\end{equation}
			is a non-negative continuous function and $r \to \chi_{\varepsilon}^{''}(r)|r|^2$ is bounded from above by the constant $\frac{3\varepsilon}{2}$. Hence, we deduce
			\begin{equation} \label{mi serve dopo 2}
				\begin{split}
					- \int_{\T}  \chi_{\varepsilon}^{''}(v)\pa_xv & \left[ \pa_x v +V^{'}v+(F^{'}*v_{\varphi})\,v\right]\,dx\\ &\leq \frac{3\pi\|V^{'}\|_{L^{\infty}(\T;\R)}^2\varepsilon}{2}+3\pi\|F^{'}\|_{L^{\infty}(\T;\R)}^2\varepsilon \left( \|v\|_{L^1(\T;\R)}^2+\|\varphi\|_{L^1(\T;\R)}^2 \right), \quad {\mP-a.s.}
				\end{split}
			\end{equation}
			Putting together \eqref{mi serve dopo 1}, \eqref{mi serve dopo 2} and \eqref{step1-estimates}, we have
			\begin{equation}
				\begin{split}
					\frac{d}{dt} \int_{\T} \chi_{\varepsilon}(v) dx  \leq\,& \sqrt{2\pi}\|V^{'}\|_{W^{1,\infty}(\T;\R)} \|\varphi\|_{H^1(\T;\R)}+2\pi\|F^{'}\|_{W^{1,\infty}(\T;\R)} \|\varphi\|_{H^1(\T;\R)}^2 \\
					& +\sqrt{2\pi}\|F^{'}\|_{W^{1,\infty}(\T;\R)}\|v\|_{L^1(\T;\R)}\|\varphi\|_{H^1(\T;\R)}+\frac{3\pi\|V^{'}\|_{L^{\infty}(\T;\R)}^2\varepsilon}{2}\\
					&+3\pi\|F^{'}\|_{L^{\infty}(\T;\R)}^2\varepsilon \left( \|v\|_{L^1(\T;\R)}^2+\|\varphi\|_{L^1(\T;\R)}^2 \right), \quad {\mP-a.s.} \notag
				\end{split}
			\end{equation}
			%By integrating with respect to time we obtain 
			%\begin{equation}
			%	\begin{split}
			%		\int_{\T} \chi_{\varepsilon}(v) \,dx  \leq\,& \int_{\T} \chi_{\varepsilon}(v_0)\,dx+ \sqrt{2\pi}\|V^{'}\|_{W^{1,\infty}(\T;\R)} \int_0^t \|\varphi\|_{H^1(\T;\R)}\,ds\\
			%		&+ 2\pi\|F^{'}\|_{W^{1,\infty}(\T;\R)} \int_0^t \|\varphi\|_{H^1(\T;\R)}^2 \,ds +\frac{3\pi\|V^{'}\|_{L^{\infty}(\T;\R)}^2\varepsilon t}{2}\\ &+\sqrt{2\pi}\|F^{'}\|_{W^{1,\infty}(\T;\R)}\int_0^t \|v\|_{L^1(\T;\R)} \|\varphi\|_{H^1(\T;\R)}\,ds \\
			%		&+3\pi\|F^{'}\|_{L^{\infty}(\T;\R)}^2\varepsilon \int_0^t \left( \|v\|_{L^1(\T;\R)}^2+\|\varphi\|_{L^1(\T;\R)}^2 \right)\,ds.\notag
			%	\end{split}
			%\end{equation}
			Finally, integrating with respect to time, letting $\varepsilon\to 0$ and applying Gronwall's lemma, we obtain
			\begin{equation}\label{L1 norm v}
				\|v(t)\|_{L^1(\T;\R)} \leq \left(\|u_0\|_{L^1(\T;\R)}+C\int_0^t \left(1+\|\varphi(s)\|_{H^1(\T;\R)}^2\right)ds\right)\,e^{C\int_0^t \|\varphi(s)\|_{H^1(\T;\R)}\,ds},
			\end{equation}
			for any $t\in[0,\tilde{T}]$, { $\mP$-a.s.}
			
			We can now estimate the $L^2(\T;\R)$-norm of $v$. To this end, by differentiating the $L^2(\T;\R)$-norm with respect to time and integrating by parts, we obtain 
			\begin{equation} \label{derivata in L2}
				\begin{split}
					\frac{1}{2}\frac{d}{dt} \|v\|_{L^2(\T;\R)}^2  =\, -\int_{\T} |\pa_x v|^2\,dx-\int_{\T} V^{'}v_{\varphi} \pa_xv\,dx  -\int_{\T} (F^{'} * v_{\varphi}) v_{\varphi}\pa_x v\,dx, \quad {\mP-a.s}.
				\end{split}
			\end{equation}
			By Young's inequality and integration by parts, the second addend on the RHS of \eqref{derivata in L2} can be estimated as 
			\begin{equation}
				\begin{split}
					-\int_{\T} V^{'}v_{\varphi}\pa_xv\,dx =\,&-\int_{\T} V^{'}(v+\varphi)\pa_xv
					= \frac{1}{2}\int_{\T} V^{''}v^2\,dx-\int_{\T}V^{'}\varphi \pa_x v\,dx \\
					\leq\,& \frac{\|V^{''}\|_{L^{\infty}(\T;\R)}}{2}\|v\|_{L^2(\T;\R)}^2 +\|V^{'}\|_{L^{\infty}(\T;\R)}^2\|\varphi\|_{L^2(\T;\R)}^2+\frac{1}{4} \| \pa_x v\|_{L^2(\T;\R)}^2,\quad {\mP-a.s.} \notag
				\end{split}
			\end{equation}
			As for the third addend on the RHS of \eqref{derivata in L2}, we proceed similarly and we are going to use the $L^1(\T;\R)$-norm estimate of $v$ obtained beforehand: 
			\begin{equation}
				\begin{split}
					-\int_{\T} ( F^{'} * v_{\varphi} )v_{\varphi}\pa_xv\,dx =\,& -\int_{\T} (F^{'}*v)v \partial_x v\,dx-\int_{\T} (F^{'}*v)\varphi \partial_x v\,dx\\
					& -\int_{\T} (F^{'}*\varphi)v\partial_x v\,dx-\int_{\T} (F^{'}*\varphi) \varphi \partial_x v\,dx, \quad { \mP-a.s.}
					%\\ =\,&I_{1}+I_{2}+I_{3}+I_{4}.
					\label{Fv_phi}  
				\end{split}
			\end{equation}
			Since $F^{''} \in L^{\infty}(\T;\R)$, by Young's inequality for convolutions, the first two terms in \eqref{Fv_phi} are bounded respectively by
			\begin{equation}
				\begin{split}
					-\int_{\T} (F^{'}*v)v \partial_x v\,dx=\frac{1}{2}\int_{\T} (F^{''}*v) |v|^2 \,dx \leq \frac{1}{2}\|F^{''}\|_{L^{\infty}(\T;\R)}\|v\|_{L^1(\T;\R)}\|v\|_{L^2(\T;\R)}^2, \quad { \mP-a.s.},\notag
				\end{split}
			\end{equation}
			and
			\begin{equation}
				\begin{split}
					-\int_{\T} (F^{'}*v)\varphi \partial_x v\,dx  \leq \frac{1}{4}\|\pa_xv\|_{L^2(\T;\R)}^2+2\pi\|F^{'}\|_{L^{\infty}(\T;\R)}^2\|v\|_{L^2(\T;\R)}^2\|\varphi\|_{L^2(\T;\R)}^2, \quad {\mP-a.s.} \notag
				\end{split}
			\end{equation}
			As for the latter two addends in \eqref{Fv_phi}, by similar arguments we have
			\begin{equation}
				\begin{split}
					\left|\int_{\T} (F^{'}*\varphi)v\partial_x v\,dx\right| & \leq \sqrt{2\pi}\|F^{'}\|_{L^{\infty}(\T;\R)}\|\varphi\|_{L^2(\T;\R)}\int_{\T}|v| |\partial_x v| \,dx \\
					%& \leq  \sqrt{2\pi}\|F^{'}\|_{L^{\infty}(\T;\R)}\|v\|_{L^2(\T;\R)} \|\varphi\|_{L^2(\T;\R)}\|\partial_x v\|_{L^2(\T;\R)} \\
					& \leq 2\pi\|F^{'}\|_{L^{\infty}(\T;\R)}^2\|v\|_{L^2(\T;\R)}^2 \|\varphi\|_{L^{2}(\T;\R)}^2+\frac{1}{4}\|\pa_xv\|_{L^2(\T;\R)}^2, \quad { \mP-a.s.},\notag 
				\end{split}
			\end{equation}
			and
			\begin{equation}
				\begin{split}
					\left|\int_{\T} (F^{'}*\varphi) \varphi \partial_x v\,dx\right| %& \leq \frac{1}{4}\|\partial_xv\|_{L^2(\T;\R)}^2+\|F^{'}*\varphi\|_{L^{\infty}(\T;\R)}^2\|\varphi\|_{L^2(\T;\R)}^2 \\
					%& 
					\leq \frac{1}{4}\|\partial_xv\|_{L^2(\T;\R)}^2+2\pi\|F^{'}\|_{L^{\infty}(\T;\R)}^2\|\varphi\|_{L^2(\T;\R)}^4, \quad { \mP-a.s.} \notag
				\end{split}
			\end{equation}
			Thus, by Gronwall's lemma and using \eqref{L1 norm v} to estimate $\|v\|_{L^1(\T;\R)}$, we obtain the estimate \eqref{L2 norm} for any $t\in[0,\tilde{T}]$. In particular, from these estimates it follows that the solution $v$ to \eqref{pde regolare} does not blow-up in $L^2(\T;\R)$ and, therefore, it can be extended up to time $T$ \quad {$\mP$-a.s.}
		\end{proof}

		\begin{proof}[Proof of Lemma \ref{lemma irriducibilità}]
			Let $f\in L^2([0,T];L^2(\T;\R))$. Writing the deterministic convolution $f_A$ \eqref{deterministic convolution} in Fourier basis, i.e.
			\begin{equation}
				f_A(t)= \sum \limits_{k \in \Z} \lambda_k\left ( \int_0^t e^{-(t-s)k^2}f_k(s)\, ds\right)e_{k},\quad t \in [0,T],\notag
			\end{equation}
			we can see that, for any $\psi \in C^{\infty}(\T;\R)$,
			\begin{equation*}
				\begin{split}
					\langle f_A,\pa_x \psi \rangle_{L^2(\T;\R)}
					& = \sum \limits_{k \in \Z} \lambda_k \int_0^t e^{-(t-s)k^2} f_k(s)\,ds \int_{\T} e_k\pa_x\psi\,dx \\
					&= -\sum \limits_{k \in \Z} |k|\lambda_k\int_0^t e^{-(t-s)k^2} f_k(s)\,ds \int_{\T}e_{-k}\psi\,dx,
				\end{split}
			\end{equation*}
			where we have used the identity $ \pa_x e_k(x) = |k| \, e_{-k}(x)$, $k\in\Z$. From this, we can see that the weak derivative of $f_A$ is given by \eqref{deterministic derivative}. From \eqref{deterministic derivative}, since $Q$ is trace-class, it is easy to see that $\pa_x f_A(t)$ belongs to $L^2(\T;\R)$, for every $t>0$.
			
			Applying the same reasoning, the stochastic convolution $W_{A}$ belongs to $L^2\left(\Omega;C([0,T];H^1(\T;\R))\right)$, thus in a similar fashion to $f_A$ we can show that the weak derivative of $W_A$ is given by 
			\begin{equation*}
				\pa_x W_A(t)= \sum \limits_{k \in \Z} |k|\lambda_k \left( \int_0^t e^{-(t-s)k^2}\,d\beta_s^k \right )e_{-k},
			\end{equation*}
			$dx$-a.e. in $\T$ for all $t \in [0,T]$, $\mP$-a.s.
		\end{proof}

		%%%%%%%%%%%%%%%%%%
		%%%%%%%%%%%%%%%
		
		\begin{proof}[Proof of Lemma \ref{prop:differentiability of u w.r.t u0}] The  part of the statement which is lengthiest to prove is the differentiability of the solution $u(t; u_0)$ of \eqref{lipschitz spde} with respect to the initial datum $u_0$. To do so one starts by considering the so-called {\em first variation} equation, namely the equation
			\begin{equation} \label{first variation PDE}
				\begin{cases}
					\pa_t z(t)=A z(t)+\pa_x[(D\cF)\left( u(t;u_0) \right)z(t)],\quad t \in (0,T], & \\
					z(0)=h,\,&
				\end{cases}
			\end{equation}
			for the unknown $z(t) \in L^2$. We clarify that in the above $(D\cF)\left( u(t;u_0) 
			\right)$ denotes the Fr\'echet derivative of $\cF=\cF(u)$ (with respect to $u$), calculated at 
			the point $u(t;u_0)$. At this point there are (at least) two possible approaches. One 
			approach, which is the one taken in  \cite[Chapter 4]{cerrai}, is to observe that the 
			solution $u(t;u_0)$ is a fixed point of the map $\mathcal I$ defined in \eqref{fixed point
				Lipschitz} and then apply standard results that allow one to deduce differentiability of
			the fixed point from the regularity properties of the fixed point map ($\mathcal I$, in 
			our case), see \cite[Appendix C]{cerrai}. Once the desired differentiability of $u$ is 
			obtained, one observes that $\eta_h=D_{u_0}u(t;u_0) h$ needs to satisfy the first 
			variation equation; from this observation, the estimates \eqref{inequality u} and 
			\eqref{inequality derivata di u} are easily obtained (as we will explain below).  This 
			approach is lengthy but it works in general circumstances. Applied to our case, it allows 
			one to obtain that $u$ is once Fr\'echet differentiable and (at least) twice Gateaux 
			differentiable. We don't take this approach here to contain the length of the paper and 
			because, strictly speaking, we only need one Fr\'echet derivative of the solution, but 
			\cite{Martinthesis} will contain the details of how to use this approach in our case.  
			The approach we take here is the one  of \cite[Theorem 2]{manca}, namely: one first 
			observes that the first variation equation admits a mild solution (by standard contraction
			mapping arguments). Using this fact, it is easy to show that the following inequality 
			holds:
			\begin{equation}\label{in che serve dopo}
				\begin{split}
					\| z(t) \|_{L^2(\T;\R)}^2 + \int_0^t \|\pa_x z(s)\|_{L^2(\T;\R)}^2\,ds \leq \|h\|_{L^2(\T;\R)}^2 + CL_{\cF}^2\int_0^t \|z(s)\|_{L^2(\T;\R)}^2\,ds, \quad  \mP-a.s.\, ,
				\end{split}
			\end{equation}
			where $L_{\cF}$ is the Lipschitz continuity constant of $\cF$ and $C$ is a positive (deterministic) constant, see e.g. \cite[Lemma 5.8 and Prop. 5.9]{dapra04}, from which \eqref{inequality u} and \eqref{inequality derivata di u} are then easily deduced. At this point one shows that there exist a constant $C>0$ and a function $\nu_T(h): L^2 \rightarrow \R_+ $ such that $\nu_T(h)\rightarrow 0$ as $\|h\|_L^2 \rightarrow 0$ and 
			$$
			\|u(t;u_0+h)-u(t;u_0) - z(t)\|_L^2 \leq C \nu_T(h) \|h\|_{L^2},\quad \mathbb P \,\, a.s.
			$$
			Hence $z(t)$ coincides with the Fr\'echet derivative $D_{u_0}u(t;u_0)h$. The proof of the above follows the lines of \cite[Theorem 2]{manca}, with calculations similar to those we have shown so far, so we don't repeat it here.

			The differentiability of the semigroup $\cP_t^{\mathcal F}$ now follows from the differentiability of $u(t;u_0)$. Indeed,  
			as a result of 
			the Banach fixed point theorem we know that the continuous dependence with respect to the initial datum 
			holds, i.e. if $\{u_0^n\}_{n \in \N} \subset L^2(\T;\R)$ such that  $u_0^n \to u_0$ as $n \to +\infty$ in 
			$L^2(\T;\R)$ then $u(t;u_0^n) \to u(t;u_0)$ as $n \to +\infty$ in $L^2 \left( \T;\R \right)$ for all fixed 
			$t \geq 0$. Consequently, if $\psi \in C_b(L^2(\T;\R);\R)$ then from the dominated convergence theorem it 
			follows that $\cP_t^{\cF}\psi \in C_b(L^2(\T;\R);\R)$ for all $t >0$. Hence, $\{ \cP_t^{\cF} 
			\}_{t \geq 0}$ is a Feller semigroup. Furthermore, if $\psi \in 
			C_b^2 \left ( L^2(\T;\R);\R\right )$ then from the differentiation under the integral sign and the fact 
			that $u(t;u_0)$, $t \geq 0$, is Fr\'echet differentiable in $L^2(\T;\R)$ we deduce that $\cP_t^{\cF} 
			\psi$ is Fr\'echet differentiable in $L^2(\T;\R)$ as well (to be precise, since $u(t;u_0)$, $t \geq 0$ is Fr\'echet differentiable and twice G\^ateaux differentiable in $L^2(\T;\R)$ we obtain that $\{\cP_t^{\cF}\}_{t \geq 0}$ is Fr\'echet differentiable and twice G\^ateaux differentiable in $L^2(\T;\R)$).
			
		\end{proof}

	\end{appendix}
\section*{Acknowledgments.} L.A. and M.O. have been supported by the Leverhulme grant  RPG–2020–09. J.B. acknowledges support by the project
RETENU ANR-20-CE40-0005-01 of the French National Research Agency (ANR).

\bibliographystyle{abbrvnat}
\bibliography{bibliography}

\end{document}